\newtheorem{thm}{Theorem}
\newtheorem{cor}[thm]{Corollary}
\newtheorem{defi}[thm]{Definition}
\newtheorem{rem}[thm]{Remark}
\newtheorem{nota}[thm]{Notation}
\newtheorem{princ}[thm]{Principle}
\newtheorem{ack}[thm]{Acknowledgement}
\newtheorem{conj}[thm]{Conjecture}
\newtheorem*{tempo*}{Template}
\newtheorem{theorem}[thm]{Theorem}
\newtheorem{corollary}[thm]{Corollary}
\newtheorem{remark}[thm]{Remark}
\newtheorem{convention}[thm]{Convention}
\newtheorem{obs}[thm]{Observation}
\newcommand\be{\begin{equation}}
\newcommand\ee{\end{equation}} 
\def\bdefi{\begin{defi}\rm}
\def\edefi{\end{defi}}
\def\bnota{\begin{nota}\rm}
\def\enota{\end{nota}}
\def\FIVE{\Pi_{1}^{1}\text{-\textup{\textsf{CA}}}_{0}}
\def\FIVEK{\Pi_{k}^{1}\text{-\textup{\textsf{CA}}}_{0}}
\def\SIX{\Pi_{2}^{1}\text{-\textsf{\textup{CA}}}_{0}}
\def\SIXk{\Pi_{k}^{1}\text{-\textsf{\textup{CA}}}_{0}}
\def\SIXK{\Pi_{k}^{1}\text{-\textsf{\textup{CA}}}_{0}^{\omega}}
\def\ATR{\textup{\textsf{ATR}}}
\def\LOC{\textup{\textsf{LOC}}}
\def\PIT{\textup{\textsf{PIT}}}
\def\PR{\textup{\textsf{PR}}}
\def\WPR{\textup{\textsf{WPR}}}
\def\Z{\textup{\textsf{Z}}}
\def\NFP{\textup{\textsf{NFP}}}
\def\seq{\textup{\textsf{seq}}}
\def\ZFC{\textup{\textsf{ZFC}}}
\def\ZF{\textup{\textsf{ZF}}}
 \def\r{\mathbb{r}}
\def\LUB{\textup{\textsf{LUB}}}
\def\u{\textup{\textsf{u}}}
\def\c{\textup{\textsf{c}}}
\def\w{\textup{\textsf{w}}}
\def\bs{\textup{\textsf{bs}}}
\def\RCA{\textup{\textsf{RCA}}}
\def\({\textup{(}}
\def\){\textup{)}}
\def\WO{\textup{\textsf{WO}}}
\def\RCAo{\textup{\textsf{RCA}}_{0}^{\omega}}
\def\ACAo{\textup{\textsf{ACA}}_{0}^{\omega}}
\def\WKL{\textup{\textsf{WKL}}}
\def\bye{\end{document}}
\def\N{{\mathbb  N}}
\def\Q{{\mathbb  Q}}
\def\R{{\mathbb  R}}
\def\L{\textsf{\textup{L}}}
\def\MPC{\textup{\textsf{MPC}}}
\def\di{\rightarrow}
\def\asa{\leftrightarrow}
\def\ACA{\textup{\textsf{ACA}}}
\def\QFAC{\textup{\textsf{QF-AC}}}
\def\HBU{\textup{\textsf{HBU}}}
\def\LGP{\textup{\textsf{LGP}}}
\def\PRE{\textup{\textsf{PRE}}}
\def\FEJ{\textup{\textsf{FEJ}}}
\def\DIN{\textup{\textsf{DIN}}}
\def\LIN{\textup{\textsf{LIND}}}
\def\LIND{\textup{\textsf{LIND}}}
\def\UCT{\textup{\textsf{UCT}}}
\def\HBC{\textup{\textsf{HBC}}}
\def\MCT{\textup{\textsf{MCT}}}
\def\eps{\varepsilon}
\def\ECF{\textup{\textsf{ECF}}}
\def\SCF{\textup{\textsf{SCF}}}
\newcommand{\T}{\mathcal{T}}
\numberwithin{equation}{section}
\numberwithin{thm}{section}
\begin{document}
\title[On Pincherle's theorem]{Pincherle's theorem in Reverse Mathematics and computability theory}
\author{Dag Normann}
\address{Department of Mathematics, The University 
of Oslo, Norway}
\email{dnormann@math.uio.no}
\author{Sam Sanders}
\address{Department of Mathematics, TU Darmstadt, Germany}
\email{sasander@me.com}

\begin{abstract}
We study the logical and computational properties of basic theorems of uncountable mathematics, in particular \emph{Pincherle's theorem}, published in 1882.  % in Reverse Mathematics and computability theory.  
This theorem states that a locally bounded function is bounded on certain domains, i.e.\ one of the first `local-to-global' principles.  
It is well-known that such principles in analysis are intimately connected to (open-cover) \emph{compactness}, but we nonetheless exhibit fundamental differences between compactness and Pincherle's theorem.    
For instance, the main question of Reverse Mathematics, namely which set existence axioms are necessary to prove Pincherle's theorem, does not have an unique or unambiguous answer, in contrast to compactness.
We establish similar differences for the \emph{computational} properties of compactness and Pincherle's theorem.  We establish the same differences for other local-to-global principles, even going back to Weierstrass. 
We also greatly sharpen the known computational power of compactness, for the most shared with Pincherle's theorem however.      
Finally, countable choice plays an important role in the previous, we therefore study this axiom together with the intimately related Lindel\"of lemma.  
\end{abstract}

%\setcounter{page}{0}
%\tableofcontents
%\thispagestyle{empty}
%\newpage
%%	
\maketitle
\thispagestyle{empty}
%-0) BABA is used to denote addition after submission to APAL.  Look for NEW as well, of course 
%0)Include Baire's normal convergence somewhere?
%1) One-sided limits
%\vspace{-0.5cm}
\section{Introduction}
\subsection{Compactness by any other name}\label{intro}
The importance of \emph{compactness} cannot be overstated, as it allows one to treat uncountable sets like the unit interval as `almost finite' while also connecting local properties to global ones. 
A famous example is \emph{Heine's theorem}, i.e.\ the \emph{local} property of continuity implies the \emph{global} property of \emph{uniform} continuity on the unit interval.  
In general, Tao writes:  
\begin{quote}
Compactness is a powerful property of spaces, and is used in many ways in many
different areas of mathematics. One is via appeal to local-to-global principles; one
establishes local control on some function or other quantity, and then uses compactness to boost the local control to global control. (\cite{taokejes}*{p.\ 168})
\end{quote}
In this light, compactness and local-to-global principles are intimately related.   
In this paper, we study the logical and computational properties of local-to-global principles with an emphasis on the \emph{fundamental differences} between the latter and compactness as studied in \cite{dagsamIII}.  % with far-reaching implications for the program \emph{Reverse Mathematics} (RM hereafter; see 
To this end, we study a typical, and historically one of the first, local-to-global principle known as \emph{Pincherle's theorem}, published around 1882 (see \cite{tepelpinch}) and formulated as follows.   
\begin{thm}[Pincherle, 1882]\label{gem}
Let $E$ be a closed and bounded subset of $\mathbb{R}^{n}$ and let $f : E \di \R$ be locally bounded. Then $f$ is bounded on $E$.
\end{thm}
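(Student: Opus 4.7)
The plan is to deduce Pincherle's theorem from the Heine--Borel covering property of $E$, precisely the local-to-global route that Tao describes in the quoted passage. First, I would unpack the local boundedness hypothesis: for each $x\in E$ there exist $\delta_{x}>0$ and $M_{x}\in\R$ such that $|f(y)|\leq M_{x}$ for all $y\in E\cap B(x,\delta_{x})$, where $B(x,\delta_{x})$ is the open ball of radius $\delta_{x}$ around $x$. The family $\mathcal{U}=\{B(x,\delta_{x}):x\in E\}$ is then an open cover of $E$.

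Second, since $E$ is closed and bounded in $\R^{n}$, the Heine--Borel theorem produces a finite subcover $B(x_{1},\delta_{x_{1}}),\dots,B(x_{k},\delta_{x_{k}})$. Setting $M=\max_{i\leq k}M_{x_{i}}$, any $y\in E$ lies in some $B(x_{i},\delta_{x_{i}})$, so $|f(y)|\leq M_{x_{i}}\leq M$, which yields the global bound. A parallel proof via \emph{sequential} compactness runs by contradiction: assuming a sequence $(y_{n})\subseteq E$ with $|f(y_{n})|>n$, the Bolzano--Weierstrass principle applied to the bounded set $E$ gives a convergent subsequence whose limit lies in $E$ by closedness, contradicting local boundedness at that limit. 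Both arguments are elementary once the relevant compactness principle is available.

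The main obstacle, and the point I expect to be pivotal for the paper's Reverse-Mathematics agenda, is the very first step: to form the cover $\mathcal{U}$ one must select witnesses $(\delta_{x},M_{x})$ simultaneously for every $x\in E$, which tacitly invokes some form of (countable or dependent) choice over the uncountable set $E$. The strength needed to secure such a choice function, together with the precise compactness principle (Heine--Borel versus Bolzano--Weierstrass versus a fan-type statement) actually used to extract the finite subcover, is presumably what makes Pincherle's theorem, despite being a textbook consequence of compactness, logically and computationally distinct from compactness itself, as foreshadowed in the introduction.
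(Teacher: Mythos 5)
Both of your arguments are correct, and both appear in formalized guise in the paper: the finite-subcover argument is exactly how $\HBU_{\c}\di\PIT_{\u}$ is proved in Theorem~\ref{mooi}, and the Bolzano--Weierstrass proof by contradiction is Theorem~\ref{ofinterest}'s derivation of $\PIT_{o}$ in $\ACAo+\QFAC^{0,1}$. Neither, however, is Pincherle's \emph{own} proof, which the paper reproduces in the appendix as Theorem~\ref{gemtoo}. Pincherle forms the auxiliary function $L(x)=\limsup\{|f(y)|: y\in E\cap B(x,r(x))\}$, which is finite at every $x\in E$ by local boundedness, and then invokes a Weierstrass-style accumulation-point principle to produce $x'\in E$ such that the limsup of $L$ over \emph{every} neighbourhood of $x'$ equals the global $\limsup_{x\in E}L(x)$; the local bound at $x'$ then squeezes this global limsup and forces it to be finite. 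As the appendix explains, the payoff of this third route is that once one defines $L(x)$ from the realisers rather than from $f$ itself, as in \eqref{dorkioplk} (mirroring \eqref{hopla} from Dini's proof of Heine's theorem), Pincherle's argument delivers the \emph{uniform} version $\PIT_{\u}$ directly, with a bound that depends only on the realisers. Your subcover argument also yields this; your Bolzano--Weierstrass argument does not read off any such modulus.

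On your closing observation about choice: you locate the choice principle at the step that assembles the cover $\mathcal{U}$, namely selecting $(\delta_x,M_x)$ for each $x\in E$. Over an uncountable $E$ this would in fact require an \emph{uncountable} choice principle, not merely countable or dependent choice. Precisely for this reason Pincherle (and the paper, following him) builds into the hypothesis that the realisers $L, r:E\di\R^{+}$ are \emph{given}: the formal statements $\PIT_{o}$ and $\PIT_{\u}$ quantify over a realiser $G^{2}$ rather than manufacturing one by choice. The countable choice that the paper shows to be genuinely at play enters elsewhere: in the sequential-compactness route one first assumes $(\forall n)(\exists\alpha\in C)(F(\alpha)>n)$ and must then invoke $\QFAC^{0,1}$ to extract an actual sequence $(\alpha_n)$ before Bolzano--Weierstrass can apply. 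So your instinct that a choice principle is the pivot of the RM phenomenon is right, but the precise location is shifted; and Theorem~\ref{dagsamsyn} shows that this particular use of countable choice cannot be eliminated without jumping to $\Z_{2}^{\Omega}$.
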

At first glance, Pincherle's theorem and compactness seem \emph{intimately} related, if not the same thing.  Indeed, Pincherle himself states in \cite{werkskes}*{p.\ 341} that his Theorem~\ref{gem} `corresponds to the celebrated Heine-Borel theorem', where the latter deals with (open-cover) compactness.   
Nonethless, we shall exhibit fundamental (and surprising) differences between Pincherle's theorem and compactness.  
As discussed in Section \ref{dackground}, we study Pincherele's theorem for the most basic case, namely restricted to Cantor space, to avoid any coding (of e.g.\ real numbers), 
lest the critical reader hold the false belief coding is the cause of our results.   
In particular, simplicity is the reason we choose Pincherle's theorem, while local-to-global principles\footnote{The use of the expression `local-global' in print is analysed in \cite{chorda}, starting in 1898 with the work of Osgood.  We discuss an earlier `local-to-global' principle by Weierstrass in Remark \ref{kowlk}.} with similar properties, even going back to Weierstrass, are discussed in Remark~\ref{kowlk}.
%BABA
Moreover, Pincherle's theorem is a statement in the language third-order arithmetic and we shall always work in a framework encompassing this language. 
The associated systems $\Z_{2}^{\Omega}$ and $\SIXK$ are described in Footnote \ref{christustepaard} and introduced in Section \ref{HCT}.

\smallskip

To be absolutely clear, the aim of this paper is to study the local-to-global principle called Pincherle's theorem (for Cantor space) in Kohlenbach's higher-order \emph{Reverse Mathematics} (see Section \ref{RM}; we use `RM' for Reverse Mathematics hereafter) and computability theory, with a strong emphasis on the \emph{differences} with compactness.  
The most significant differences can be listed as follows:
\begin{enumerate}
\renewcommand{\theenumi}{\alph{enumi}}
\item In terms of the usual scale\footnote{The system $\Z_{2}^{\Omega}:=\RCAo+(\exists^{3})$ from Section \ref{HCT} proves the same second-order sentences as second-order arithmetic $\Z_{2}$. 
% The system $\Z_{2}^{\Omega}$ proves Pincherle's theorem by Theorem~\ref{mooi}, 
Similarly, $\SIXK:=\RCAo+(S_{k}^{2})$ is a higher-order version of $\SIXk$ with a third-order functional $S_{k}^{2}$ deciding $\Pi_{k}^{1}$-formulas (involving no higher-order parameters).\label{christustepaard}} of comprehension axioms,   
 Pincherle's theorem is provable from {weak K\"onig's lemma} \emph{assuming countable choice}, while \emph{without countable choice} (and for any $k$), $\SIXK$ cannot prove Pincherle's theorem; $\Z_{2}^{\Omega}$ does not include countable choice but can prove the latter. \label{ko111l}
\item Computationally speaking, it is essentially trivial to compute a finite sub-covering of a countable covering of Cantor space or the unit interval; by contrast, the upper bound in Pincherle's theorem cannot be computed by any type two functional, while the functional defined by $(\exists^{3})$ suffices. \label{ada}
\end{enumerate}
By item \eqref{ko111l}, the so-called Main Question of RM (see \cite{simpson2}*{p.\ 2}), namely which set existence axioms are necessary to prove Pincherle's theorem, does not have an unique or unambiguous answer, while the two possible answers diverge quite dramatically.  Note that in case of compactness, both for countable (\cite{simpson2}*{IV.I}) and uncountable (\cite{dagsamIII}*{\S3}) coverings, there \emph{is} a unique answer \emph{and} countable choice has no influence.    
Moreover, since compactness and local-to-global principles coincide in second-order arithmetic, the results in this paper highlight another major difference between second- and higher-order mathematics.   
Finally, items \eqref{ko111l} and \eqref{ada} establish the `schizophrenic' nature of Pincherle's theorem: the first item suggests a certain level of constructivity\footnote{By Corollary \ref{ofmoreinterest}, Pincherle's theorem (for Cantor space) is provable in
 %$\RCAo+\WKL+\QFAC^{0,1}$; the latter system is included in 
 the \emph{classical} system of proof mining from \cite{kohlenbach3}*{Theorem~10.47}, which enjoys rather general term extraction properties.  The use of countable choice \emph{generally} means that extracting algorithms from proofs is not possible.\label{PCbrigade}}, while the second item completely denies such nature.  

\smallskip

In light of the previous, it is clear that countable choice plays an important role in this paper.   
In Section \ref{not2bad} we therefore study countable choice and the intimately\footnote{By \cite{heerlijk}*{\S3.1}, countable choice and the Lindel\"of lemma for $\R$ are equivalent over $\ZF$.} related \emph{Lindel\"of lemma}.  
In particular, we show that the {Lindel\"of lemma} is highly dependent on its formulation, namely provable from $(\exists^{3})$ versus unprovable in $\ZF$ set theory (see Section \ref{finne}).    
We also show that the Lindel\"of lemma for Baire space yields $\FIVE$ when combined with $\ACAo$ (Section \ref{kurzweil}).

\smallskip

As to the rest of this section, we discuss some background on Pincherle's theorem in Section \ref{dackground} and formulate the particular questions we will answer in this paper.  
We provide the formal definition of Pincherle's theorem (on Cantor space to avoid coding) in Section~\ref{hung}, as well as the related Heine's theorem; based on these, we can formulate the exact results to be obtained in this paper. 

\smallskip

To provide some context, \emph{general non-monotone induction} provides a natural upper bound for most of the classical theorems under investigation in this paper, both from the point of view of computability theory and RM.
A natural lower bound can be found in \emph{arithmetical transfinite recursion}.  
This observation is based on the results in \cite{dagcie18} and discussed in more detail in Remark \ref{draak}.

\smallskip

Finally, the place of this paper in a broader context is discussed in~Remark~\ref{X}. 
In particular, our paper \cite{dagsamVII} is a `successor' to the paper at hand, where the aim is to identify (many) theorems pertaining to open sets that exhibit logical and computational behaviour similar to Pincherle's theorem
as in items (a)-(b) above.

\subsection{Questions concerning Pincherele's theorem}\label{dackground}
In this section, we sketch the history and background pertaining to Pincherle's theorem, as well as the kind of technical questions we intend to answer below.  

\smallskip

First of all, as to its history, Theorem~\ref{gem} was established by \emph{Salvatore Pincherle} in 1882 in \cite{tepelpinch}*{p.\ 67} in a more verbose formulation. 
Indeed, Pincherle did not use the notion of \emph{local boundedness}, and a function is nowadays called \emph{locally bounded on $E$} if every $x\in E$ has a neighbourhood $U\subset E$ on which the function is bounded.  
In this theorem, Pincherle assumed the existence of $L, r:E\di \R^{+}$ such that for any $x\in E$ the function is bounded by $L(x)$ on the ball $B(x, r(x))\subset E$ (\cite{tepelpinch}*{p.\ 66-67}).
We refer to these functions $L, r:E\di \R^{+}$ as \emph{realisers} for local boundedness.  We do not restrict the notion of realiser to any of its established technical definitions.
We note that Pincherle's theorem is a typical `local-global' principle.

\smallskip

Secondly, as to its conceptual nature, Pincherle's theorem may be found as \cite{gormon}*{Theorem~4} in a \emph{Monthly} paper aiming to provide conceptually easy proofs of well-known theorems.  
Furthermore, Pincherle's theorem is the \emph{sample theorem} in \cite{thom2}, a recent monograph dealing with \emph{elementary real analysis}.  
Thus, Pincherle's theorem qualifies as `basic' mathematics in any reasonable sense of the word, and is also definitely within the scope of RM as it essentially predates set theory (\cite{simpson2}*{I.1}).  % (see Section \ref{prelim} for RM).  

\smallskip

Thirdly, despite the aforementioned `basic nature' of Pincherle's theorem, its proofs in \cites{gormon, thom2, tepelpinch,bartle2} actually provide `highly uniform' information: as shown in Section~\ref{forgopppp}, these proofs establish Pincherle's theorem \emph{and that the bound in the consequent only depends on the realisers $r, L:E\di \R^{+}$ for local boundedness}; in the case of \cite{tepelpinch} we need a minor modification of the proof.
In general, we shall call a theorem \emph{uniform} if the objects claimed to exist depend only on few of the theorem's parameters.  
Historically, Dini, Pincherle, Bolzano, Young, Hardy, Riesz, and Lebesgue (the first three after minor modification) have proved uniform versions of e.g.\ Heine's theorem, as discussed in Section \ref{heikel}.
More recently, uniform theorems have been obtained as part of the development of analysis based on techniques from the \emph{gauge integral}, a generalisation of Lebesgue's integral.  
We have collected a number of such uniform results in Section \ref{pproof} as they are of independent interest.

\smallskip

Fourth, as discussed in detail in Sections \ref{sum}, one of our aims is the study of the uniform version of Pincherle's theorem in which the bound in the consequent only depends on the realisers $r, L:E\di \R^{+}$.  
As it turns out, both the original and uniform versions of Pincherle's have noteworthy properties from the point of view of RM and computability theory.  
In particular, we provide answers to \eqref{kata}-\eqref{turk}, where `computable' refers to Kleene's S1-S9, as discussed in Section \ref{HCT}.
\begin{enumerate}
\renewcommand{\theenumi}{Q\arabic{enumi}}
\item How hard is it to compute the upper bound in Pincherle's theorem in terms of (some of) the data?\label{kata}
\item What is the computational strength of the ability to obtain the upper bounds from Pincherle's theorem?\label{greek}
\item What (higher-order) comprehension axioms prove (uniform and original) Pincherele's theorem?\label{turk}
\end{enumerate}
The answers provided below include some surprises as sketched in items \eqref{ko111l}-\eqref{ada} in Section \ref{intro}: while Pincherle's theorem is provable \emph{without countable choice}, the latter axiom has a major impact on the answer to \eqref{turk}. 
Indeed, Pincherle's theorem is provable from weak K\"onig's lemma \emph{given countable choice}; in absence of the latter, Pincherle's theorem is not provable in $\SIXK$ (for any $k$) though provable in $\Z_{2}^{\Omega}$, where the latter does not involve countable choice.  
Moreover, Pincherle's theorem is actually \emph{equivalent} to the Heine-Borel theorem for countable coverings (given countable choice), but the finite sub-covering in the latter is (trivially) computable, while the upper bound in the former cannot be computed by any type two functional, while the functional from $(\exists^{3})$ suffices. 

\smallskip

Fifth, we wish to stress that (original and uniform) Pincherle's theorem is naturally a statement of \emph{third-order} arithmetic.  Similarly, $\SIXK$ from Section \ref{HCT} is the higher-order version 
of $\SIXk$ involving the \emph{third-order comprehension functional}  $S_{k}^{2}$ that can decide $\Pi_{k}^{1}$-formulas (only involving first- and second-order parameters).  In other words, 
all the aforementioned is naturally at home in third-order arithmetic.  In this light, it is a natural question whether $\SIXK$ can prove Pincherle's theorem or whether $S_{k}^{2}$ can compute the upper bound in this theorem from the other data.   
As noted above, the answer is negative while the \emph{fourth-order} comprehension axiom $(\exists^{3})$ does suffice (for both proof and computation).   
As discussed in Remark \ref{NFP}, there are alternative scales that are more fine-grained, compared to the standard one based on comprehension and related fragments of $\Z_{2}$, and capture e.g.\ compactness for uncountable coverings quite well. 

\smallskip

Sixth, while Pincherle's theorem constitutes an illustrative example, it is by no means unique: we analogously study \emph{Heine's theorem} (see Section \ref{heineke}) on uniform continuity and sketch the (highly similar) approach for \emph{Fej\'er's theorem}.  
A number of similar theorems will be studied in a follow-up paper (see Remark~\ref{flurki}).  We discuss variations and generalisations of Pincherle's theorem in Remark~\ref{kowlk} and Section \ref{pitche}, the latter based on \emph{subcontinuity}, a natural weakening of continuity from the literature equivalent to local boundedness. 
% in the same way.  

\smallskip

Finally, Section \ref{not2bad} is devoted to the detailed study of certain, in our opinion, subtle aspects of the results obtained above and in \cite{dagsamIII}.  
Firstly, in light of our use of the axiom of (countable) choice in our RM-results, Section~\ref{finne} is devoted to the study of quantifier-free countable choice, its tight connection to the  \emph{Lindel\"of lemma} in particular.
In turn, we show in Section \ref{kurzweil} that the Lindel\"of lemma \emph{for Baire space}, together with $(\exists^{2})$, proves $\FIVE$, improving the results in \cite{dagsamIII}*{\S4}.  
We also show that the status of the {Lindel\"of lemma} is highly dependent on its formulation, namely provable from $(\exists^{3})$ versus unprovable in $\ZF$.    

\subsection{Answers regarding Pincherle's and Heine's theorem}\label{hung}
We list the formal definition of the Pincherle and Heine theorems, as well as answers to (Q1)-(Q3). 
 
\subsubsection{Pincherle's theorem and uniformity}\label{sum}
We formally introduce Pincherle's theorem and the aforementioned `highly uniform' version, and discuss the associated results, to be established in Sections \ref{PRS} and \ref{prm}.
Remark \ref{kiekenkkk} at the end of this section provides some historical context, lest there be any confusion there.  

\smallskip

First of all, to reduce technical details to a minimum, we mostly work with Cantor space, denoted $2^{\N}$ or $C$, rather than the unit interval; the former is homeomorphic to a closed subset of the latter anyway. 
The advantage is that we do not need to deal with the coding of real numbers using Cauchy sequences, which can get messy.  

\smallskip

Secondly, in keeping with Pincherle's use of $L, r:\R\di \R^{+}$, we say that $G:C\di \N$ is a \emph{realiser} for the \emph{local boundedness} of the functional $F:C\di \N$ if 
\[
\LOC(F, G)\equiv (\forall f , g\in C)\big[ g\in [\overline{f}G(f)] \di F(g)\leq G(f)    \big].
\]
Note that $\overline{f}n= \langle f(0), f(1), \dots, f(n-1)\rangle $ for $n\in \N$, while $g\in [\overline{f}n]$ means that $g(m)=f(m)$ for $m< n$.  
Hence, $\LOC(F, G)$ expresses that $G$ provides, for every $f\in C$, a \emph{neighbourhood} $[\overline{f}G(f)]$ in $C$ in which $F$ is bounded by $G(f)$.  
We make use of \emph{one} functional $G$ for \emph{both} the neighbourhood and upper bound, while Pincherle uses \emph{two} separate 
functions $L$ (for the upper bound) and $r$ (for the neighbourhood); as discussed in Remark \ref{nodiff}, this makes no difference.  % for any of the below results.  

\smallskip

Thirdly, the following are the \emph{original} and \emph{uniform} versions of Pincherle's theorem for Cantor space, respectively $\PIT_{o}$ and $\PIT_{\u}$.
As discussed in Section \ref{forgopppp}, Pincherle's proof from \cite{tepelpinch} (with minor modification only) yields $\PIT_{\u}$; the same holds for \cite{thom2, gormon, bartle2} without any changes to the proofs.  
%\bdefi[$\PIT_{o}$]
\be\tag{$\PIT_{o}$}
(\forall F, G:C\di \N)(\exists N\in \N)\big[  \LOC(F, G)\di (\forall g \in C)(F(g)\leq N)\big]
\ee
\vspace{-0.5cm}
\be\tag{$\PIT_{\u}$}
(\forall G:C\di \N)(\exists N\in \N)(\forall F:C\di \N)\big[  \LOC(F, G)\di (\forall g \in C)(F(g)\leq N)\big]
\ee
%\edefi
The difference in quantifier position is quite important as will become clear from our  answers (A1)-(A3) below to the questions (Q1)-(Q3) from Section \ref{intro}.

\smallskip

Fourth, it is a natural question how hard it is to compute an upper bound as in Pincherle's theorem from (some of) the data.  To this end, we consider the specification for a (non-unique) functional $M:(C\di \N)\di \N$ as follows.  
\be\tag{$\PR(M)$}
(\forall F, G:C\di \N)\big[  \LOC(F, G)\di (\forall g \in C)(F(g)\leq M(G))\big].
\ee
Any $M$ satisfying $\PR(M)$ is called a \emph{realiser}\footnote{We use the term \emph {realiser} in a quite liberal way. In fact, Pincherle realisers are witnesses to the truth of \emph{uniform} Pincherle's theorem by selecting, to each $G$, an upper bound as in $\PIT_{\u}$.  However, the set of upper bounds, seen as a function of $G$, is highly complex: the PR that selects the \emph{least} bound is computationally equivalent to $\exists^3$ from Section \ref{HCT}, which is left as an exercise.} for Pincherle's theorem $\PIT_{\u}$, or a \emph{Pincherle realiser} (PR) for short.  
A \emph{weak} Pincherle realiser additionally has the function $F$ as input, as discussed in Section \ref{introwpr}.  
In conclusion, we shall provide the following answers to the questions \eqref{kata}-\eqref{turk} from Section \ref{intro}. 
\begin{enumerate}
\renewcommand{\theenumi}{\roman{enumi}}\renewcommand{\theenumi}{A\arabic{enumi}}
\item Pincherle realisers cannot be computed (Kleene's S1-S9) from any type two functional, but some may be computed from $\exists^3$ from Section~\ref{HCT}.
\item Pincherle realisers compute realisers of $\Pi^1_1$-separation for subsets of $\N^\N$ and natural generalisations to sets of objects of type two.
\item Compactness of $2^{\N}$ for uncountable coverings is equivalent to $\PIT_{\u}$ given countable choice; $\PIT_{o}$ is equivalent to {weak K\"onig's lemma} \emph{given countable choice}. Without the latter, $\SIXK$ cannot prove $\PIT_{o}$, while $\Z_{2}^{\Omega}$ can.  
\end{enumerate}
We note the huge difference in logical hardness between the uniform and original versions of Pincherle's theorem, and the important role of countable choice.  
Nonetheless, both $\PIT_{o}$ and $\PIT_{\u}$ are provable \emph{without} this axiom by Theorem \ref{mooi}.

\smallskip

Finally, we consider the following remark on the history of the function concept.    
\begin{rem}[A function by any other name]\label{kiekenkkk}\rm
We show that Pincherle intended to formulate his theorem for \emph{any} function, not just continuous ones.  
First of all, Pincherle includes the following expression in his theorem: 
\begin{quote}
\emph{Funzione di $x$ nel senso pi\`u generale della par\'ola} (\cite{tepelpinch}*{p.\ 67}),
\end{quote}
which translates to `function of $x$ in the most general sense'.  However, discontinuous functions had already enjoyed a long history by 1882: they were discussed by Dirichlet in 1829 (\cite{didi1}); Riemann studied such functions in his 1854 \emph{Habilitationsschrift} (\cite{kleine}*{p.\ 115}), 
and the 1870 dissertation of Hankel, a student of Riemann, has `discontinuous functions' in its title (\cite{hankelijkheid}).  
We also mention \emph{Thomae's function}, similar to Dirichlet's function and introduced in \cite{thomeke}*{p.\ 14} around 1875.     

\smallskip

Secondly, Pincherle refers to a number of theorems due to Dini and Weierstrass as \emph{special cases} of his theorem in \cite{tepelpinch}*{p.\ 66-68}.
He also mentions that Dini's theorem is about continuous functions, i.e.\ it seems unlikely he just implicitly assumed his theorem to be about continuous functions.  
Finally, the proof on \cite{tepelpinch}*{p.\ 67} does not require the function to be continuous (nor does it mention the latter word).  
Since Pincherle explicitly mentions establishing \emph{una proposizione generale}, it seems unlikely he overlooked the fact that his \emph{Teorema} was about \emph{arbitrary} functions.  

\smallskip

Finally, the (modern) concept of arbitrary/general function is generally credited to Lobachevsky (\cite{loba}) and Dirichlet (\cite{didi3}) in 1834-1837.  Fourier's earlier work (\cite{fourierlachaud}) was instrumental in that he (for the first time) made a clear distinction between a function on one hand and its analytic representation on the other hand.
\end{rem}

\subsubsection{Heine's theorem and uniformity}\label{heineke}
We formally introduce \emph{Heine's theorem} and the associated `uniform' version, and discuss the associated results, to be established in Sections \ref{PRS} and \ref{prm}. 
As in the previous section, we work over $2^{\N}$.  % to avoid coding real numbers.  

\smallskip

First of all, Heine's theorem is the statement that \emph{a continuous $f:X\di \R$ on a compact space $X$ is uniformly continuous}.   
Dini's proof (\cite{dinipi}*{\S41}) of Heine's theorem makes use of a \emph{modulus of continuity}, i.e.\ a functional computing $\delta$ from $\eps>0$ and $ x\in X$ in the usual $\eps$-$\delta$-definition of continuity.  
As discussed in \cite{ruskesnokken}, Bolzano's definition of continuity involves a modulus of continuity, while his (apparently faulty) proof of Heine's theorem may be found in \cite{nogrusser}*{p.\ 575}.    
The following formula expresses that $G$ is a modulus of (pointwise) continuity for $F$ on $C$:
\be\label{XYX}\tag{$\MPC(F, G)$}
(\forall f, g\in C)(\overline{f}G(f)=\overline{g}G(f)\di F(f)=F(g)).
\ee
Secondly, we introduce $\UCT_{\u}$, the \emph{uniform} Heine's theorem for $C$.  
By Section \ref{heikel}, the proofs by Dini, Bolzano, Young, Hardy, Riesz, Thomae, and Lebesgue (\cite{dinipi,lebes1,nogrusser,younger, manon,hardy, thomeke}) establish the uniform $\UCT_{\u}$ for $[0,1]$ (with minor modification for \cite{dinipi,nogrusser, thomeke}); the same for \cite{langebaard, thom2,knapgedaan ,gormon, bartle2, hobbelig, protput ,stillebron, botsko, lebes1, bromance} without changes.  
\bdefi[$\UCT_{\u}$]
\[
(\forall G^{2})(\exists m^{0})(\forall F^{2})\big[\MPC(F, G) \di (\forall  f,g \in C)(\overline{f}m=\overline{g}m\di F(f)=F(g))  ].
\]
\edefi
The difference in quantifier position has big consequences: Heine's theorem follows from weak K\"onig's lemma by \cite{kohlenbach4}*{Prop.\ 4.10}, 
while $\UCT_{\u}$ is not provable in $\SIXK$ for any $k$.   Indeed, we prove in Section~\ref{myheinie} that $\UCT_{\u}$ is equivalent to the Heine-Borel theorem for \emph{uncountable} coverings, and hence to $\PIT_{\u}$.  

\smallskip

The computability-theoretic differences between the uniform and original versions of Heine's theorem are as follows: on one hand, assuming $\MPC(F, G)$, one computes\footnote{If $\MPC(F,G)$, one computes an associate for $F:C\di \N$ from $F$ and $\exists^2$, and one then computes an upper bound for $F$ on $C$, as the \emph{fan functional} has a computable code (\cite{noortje}*{p.\ 102}). } the upper bound from (original) Heine's theorem in terms of $F$ and $\exists^{2}$ from Section \ref{HCT}.  
% i.e.\ the third Big Five system suffices.
On the other hand, given $G$, the class of $F$ such that $\MPC(F,G)$ is equicontinuous (and finite if $F$ is restricted to $C$), but computing a \emph{modulus} of equicontinuity from $G$ is as hard as computing a PR from $G$.
In this light, the (original) Heine theorem is simpler than $\PIT_{o}$ in computability theory, while the uniform versions are equivalent both in RM and computability theory.  

\smallskip

Clearly, many theorems from the RM of $\WKL_{0}$ can be studied in the same way as Pincherle's and Heine's theorems; 
we provide one such example, namely Fej\'er's theorem in Section \ref{myheinie}.  % while a systematic study is reserved for a follow-up paper.  
Speaking of the future, the following remark discusses the place of this paper in the context of a broader research project.  
\begin{rem}[Our project]\label{X}\rm
The paper at hand is part of a series of papers \cite{dagsam, dagsamII, dagsamIII, dagsamVI, dagsamVII} that communicate the results of our joint project on the logical and computational properties of the uncountable.   As is expected, this project has some single-author spin-off papers \cite{dagcie18, samcie19, samwollic19,samsplit, sahotop,samph} as well.  Our motivating research question is based on Shore's \cite{shorecomp}*{Problem 5.1}, namely: \emph{how hard is it to compute \(S1-S9\) the objects claimed to exist by classical theorems of analysis?}  In the spirit of RM, we also study the question which set existence axioms can prove such theorems.   

\smallskip

As a first step, we studied the computational properties of \emph{compactness} for uncountable coverings in \cites{dagsam, dagsamII}.  We showed that the sub-coverings in the Heine-Borel and Vitali covering theorems cannot be computed by any type two functional.  
Nonetheless, the computational power of the former is much greater than that of the latter, even though these theorems are very closely related in the case of countable coverings.  

\smallskip

As a next step, we studied the RM of various covering theorems in \cite{dagsamIII}, with special focus on the \emph{Cousin and Lindel\"of lemmas}.  In terms of the usual scale of comprehension axioms, these theorems are not provable in $\SIXK$ for any $k$ but provable in $\Z_{2}^{\Omega}$ plus potentially countable choice.    
We also showed that the Cousin lemma is equivalent to various properties of the \emph{gauge integral} (see \cite{mullingitover}), while also studying computational properties of the aforementioned lemmas.  

\smallskip

Finally, we studied in \cite{dagsamVI} the RM and computability theory of measure theory, starting with \emph{weak compactness}, i.e.\ the combinatorial essence of the \emph{Vitali covering theorem}. 
We show that weak compactness is equally hard to prove as (full) compactness, in terms of the usual scale of comprehension axioms.  
Despite this hardness, the former is shown to have much more computational and logical strength than the latter.  
We also exhibit striking differences between our `higher-order' measure theory and `measure theory via codes' as in second-order RM (\cite{simpson2}*{X.1}). 

\smallskip

In conclusion, this paper can be viewed as a continuation of the study of compactness in \cite{dagsamIII}.  However, as discussed in Section \ref{intro},
there are fundamental differences between compactness on one hand, and local-to-global principles like Pincherle's theorem, both from the computability and RM point of view. 
In turn, \cite{dagsamVII} can be viewed as a `successor' to the paper at hand, where the aim is to identify (many) theorems pertaining to open sets that exhibit logical and computational behaviour similar to Pincherle's theorem as described in items (A1)-(A3) from Section \ref{sum}.  In fact, any theorem that behaves as in the aforementioned items is said to \emph{exhibit the Pincherle phenomenon} in \cite{dagsamVII}.  Note that open sets in the latter are studied via characteristic functions that have 
a $\Sigma_{1}^{0}$-definition (with higher-order parameters) similar to the `usual' definition of open sets in RM (see \cite{simpson2}*{II.4}).
\end{rem}

\section{Preliminaries}\label{prelim}
We sketch the program \emph{Reverse Mathematics}, as well as its generalisation to \emph{higher-order arithmetic} in Section \ref{KOH}.  
As our main results will be proved using techniques from \emph{computability theory}, we discuss the latter in Section~\ref{HCT}.
\subsection{Introducing Reverse Mathematics}\label{RM}
Reverse Mathematics (RM) is a program in the foundations of mathematics initiated around 1975 by Friedman (\cites{fried,fried2}) and developed extensively by Simpson (\cite{simpson2}) and others.  
We refer to \cite{stillebron} for a basic introduction to RM and to \cite{simpson2} for an overview of RM; we now sketch some of the aspects of RM essential to this paper.  

\smallskip
  
The aim of RM is to find the axioms necessary to prove a statement of \emph{ordinary}, i.e.\ \emph{non-set theoretical} mathematics.   
The classical base theory $\RCA_{0}$ of `computable mathematics' is always assumed.  
Thus, the aim of RM is:  
\begin{quote}
\emph{The aim of \emph{RM} is to find the minimal axioms $A$ such that $\RCA_{0}$ proves $ [A\di T]$ for statements $T$ of ordinary mathematics.}
\end{quote}
Surprisingly, once the minimal axioms $A$ have been found, we almost always also have $\RCA_{0}\vdash [A\asa T]$, i.e.\ not only can we derive the theorem $T$ from the axioms $A$ (the `usual' way of doing mathematics), we can also derive the axiom $A$ from the theorem $T$ (the `reverse' way of doing mathematics).  In light of these `reversals', the field was baptised `Reverse Mathematics'.  

\smallskip

Perhaps even more surprisingly, in the majority of cases, for a statement $T$ of ordinary mathematics, either $T$ is provable in $\RCA_{0}$, or the latter proves $T\asa A_{i}$, where $A_{i}$ is one of the logical systems $\WKL_{0}, \ACA_{0},$ $ \ATR_{0}$ or $\FIVE$ from \cite{simpson2}*{I}.  The latter four systems together with $\RCA_{0}$ form the `Big Five' and the aforementioned observation that most mathematical theorems fall into one of the Big Five categories, is called the \emph{Big Five phenomenon} (\cite{montahue}*{p.~432}).  

\smallskip

Furthermore, each of the Big Five has a natural formulation in terms of (Turing) computability (see \cite{simpson2}*{I}), and each of the Big Five also corresponds (sometimes loosely) to a foundational program in mathematics (\cite{simpson2}*{I.12}).  
The Big Five systems of RM also satisfy a linear order, as follows:
\be\label{linord}
\FIVE\di \ATR_{0}\di \ACA_{0}\di\WKL_{0}\di \RCA_{0}.
\ee
By contrast, there are many incomparable \emph{logical} statements in second-order arithmetic.  For instance, a regular plethora of such statements may be found in the \emph{Reverse Mathematics zoo} in \cite{damirzoo}.  The latter is intended as a collection of (somewhat natural) theorems outside of the Big Five classification of RM.  It is also worth noting that the Big Five only constitute a \emph{very tiny fragment} of $\Z_{2}$; on a related note, the RM of topology does give rise to theorems equivalent to $\SIX$ (\cite{mummy}), but that is the current upper bound of RM to the best of our knowledge.  % In particular, if $\SIXk$ is $
Moreover, the coding of topologies is not without problems, as discussed in \cite{hunterphd}. 
\subsection{Higher-order Reverse Mathematics}\label{KOH}
We sketch Kohlenbach's \emph{higher-order Reverse Mathematics} as introduced in \cite{kohlenbach2}.  In contrast to `classical' RM, higher-order RM makes use of the much richer language of \emph{higher-order arithmetic}.  

\smallskip

As suggested by its name, {higher-order arithmetic} extends second-order arithmetic.  Indeed, while the latter is restricted to numbers and sets of numbers, higher-order arithmetic also has sets of sets of numbers, sets of sets of sets of numbers, et cetera.  
To formalise this idea, we introduce the collection of \emph{all finite types} $\mathbf{T}$, defined by the two clauses:
\begin{center}
(i) $0\in \mathbf{T}$   and   (ii)  If $\sigma, \tau\in \mathbf{T}$ then $( \sigma \di \tau) \in \mathbf{T}$,
\end{center}
where $0$ is the type of natural numbers, and $\sigma\di \tau$ is the type of mappings from objects of type $\sigma$ to objects of type $\tau$.
In this way, $1\equiv 0\di 0$ is the type of functions from numbers to numbers, and where  $n+1\equiv n\di 0$.  Viewing sets as given by characteristic functions, we note that $\Z_{2}$ only includes objects of type $0$ and $1$.    

\smallskip

The language of $\L_{\omega}$ consists of variables $x^{\rho}, y^{\rho}, z^{\rho},\dots$ of any finite type $\rho\in \mathbf{T}$.  Types may be omitted when they can be inferred from context.  
The constants of $\L_{\omega}$ include the type $0$ objects $0, 1$ and $ <_{0}, +_{0}, \times_{0},=_{0}$  which are intended to have their usual meaning as operations on $\N$.
Equality at higher types is defined in terms of `$=_{0}$' as follows: for any objects $x^{\tau}, y^{\tau}$, we have
\be\label{aparth}
[x=_{\tau}y] \equiv (\forall z_{1}^{\tau_{1}}\dots z_{k}^{\tau_{k}})[xz_{1}\dots z_{k}=_{0}yz_{1}\dots z_{k}],
\ee
if the type $\tau$ is composed as $\tau\equiv(\tau_{1}\di \dots\di \tau_{k}\di 0)$.  
Furthermore, $\L_{\omega}$ also includes the \emph{recursor constant} $\mathbf{R}_{\sigma}$ for any $\sigma\in \mathbf{T}$, which allows for iteration on type $\sigma$-objects as in the special case \eqref{special}.  
Formulas and terms are defined as usual.  
\bdefi The base theory $\RCAo$ consists of the following axioms:
\begin{enumerate}
\item  Basic axioms expressing that $0, 1, <_{0}, +_{0}, \times_{0}$ form an ordered semi-ring with equality $=_{0}$.
\item Basic axioms defining the well-known $\Pi$ and $\Sigma$ combinators (aka $K$ and $S$ in \cite{avi2}), which allow for the definition of \emph{$\lambda$-abstraction}. 
\item The defining axiom of the recursor constant $\mathbf{R}_{0}$: for $m^{0}$ and $f^{1}$: 
\be\label{special}
\mathbf{R}_{0}(f, m, 0):= m \textup{ and } \mathbf{R}_{0}(f, m, n+1):= f( n,\mathbf{R}_{0}(f, m, n)).
\ee
\item The \emph{axiom of extensionality}: for all $\rho, \tau\in \mathbf{T}$, we have:
\be\label{EXT}\tag{$\textsf{\textup{E}}_{\rho, \tau}$}  
(\forall  x^{\rho},y^{\rho}, \varphi^{\rho\di \tau}) \big[x=_{\rho} y \di \varphi(x)=_{\tau}\varphi(y)   \big].
\ee 
\item The induction axiom for quantifier-free\footnote{To be absolutely clear, variables (of any finite type) are allowed in quantifier-free formulas of the language $\L_{\omega}$: only quantifiers are banned.} formulas of $\L_{\omega}$.
\item $\QFAC^{1,0}$: The quantifier-free axiom of choice as in Definition \ref{QFAC}.
\end{enumerate}
\edefi
\bdefi\label{QFAC} The axiom $\QFAC$ consists of the following for all $\sigma, \tau \in \textbf{T}$:
\be\tag{$\QFAC^{\sigma,\tau}$}
(\forall x^{\sigma})(\exists y^{\tau})A(x, y)\di (\exists Y^{\sigma\di \tau})(\forall x^{\sigma})A(x, Y(x)),
\ee
for any quantifier-free formula $A$ in the language of $\L_{\omega}$.
\edefi
As discussed in \cite{kohlenbach2}*{\S2}, $\RCAo$ and $\RCA_{0}$ prove the same sentences `up to language' as the latter is set-based and the former function-based.  

\smallskip

Recursion as in \eqref{special} is called \emph{primitive recursion}; the class of functionals obtained from $\mathbf{R}_{\rho}$ for all $\rho \in \mathbf{T}$ is called \emph{G\"odel's system $T$} of all (higher-order) primitive recursive functionals.  

\smallskip

We use the usual notations for natural, rational, and real numbers, and the associated functions, as introduced in \cite{kohlenbach2}*{p.\ 288-289}.  
\begin{defi}[Real numbers and related notions in $\RCAo$]\label{keepintireal}\rm~
\begin{enumerate}
\item Natural numbers correspond to type zero objects, and we use `$n^{0}$' and `$n\in \N$' interchangeably.  Rational numbers are defined as signed quotients of natural numbers, and `$q\in \Q$' and `$<_{\Q}$' have their usual meaning.    
\item Real numbers are coded by fast-converging Cauchy sequences $q_{(\cdot)}:\N\di \Q$, i.e.\  such that $(\forall n^{0}, i^{0})(|q_{n}-q_{n+i})|<_{\Q} \frac{1}{2^{n}})$.  
We use Kohlenbach's `hat function' from \cite{kohlenbach2}*{p.\ 289} to guarantee that every $q^{1}$ defines a real number.  
\item We write `$x\in \R$' to express that $x^{1}:=(q^{1}_{(\cdot)})$ represents a real as in the previous item and write $[x](k):=q_{k}$ for the $k$-th approximation of $x$.    
\item Two reals $x, y$ represented by $q_{(\cdot)}$ and $r_{(\cdot)}$ are \emph{equal}, denoted $x=_{\R}y$, if $(\forall n^{0})(|q_{n}-r_{n}|\leq {2^{-n+1}})$. Inequality `$<_{\R}$' is defined similarly.         
\item Functions $F:\R\di \R$ mapping reals to reals are represented by $\Phi^{1\di 1}$ mapping equal reals to equal reals, i.e. 
\be\tag{$\textsf{\textup{RE}}$}\label{RE}
(\forall x , y\in \R)(x=_{\R}y\di \Phi(x)=_{\R}\Phi(y)).
\ee
\item The relation `$x\leq_{\tau}y$' is defined as in \eqref{aparth} but with `$\leq_{0}$' instead of `$=_{0}$'.  Binary sequences are denoted `$f^{1}, g^{1}\leq_{1}1$', but also `$f,g\in C$' or `$f, g\in 2^{\N}$'.  
\item Sets of type $\rho$ objects $X^{\rho\di 0}, Y^{\rho\di 0}, \dots$ are given by their characteristic functions $f^{\rho\di 0}_{X}$, i.e.\ $(\forall x^{\rho})[x\in X\asa f_{X}(x)=_{0}1]$, where $f_{X}^{\rho\di 0}\leq_{\rho\di 0}1$.  
\end{enumerate}
\end{defi}
We sometimes omit the subscript `$\R$' if it is clear from context.  
Finally, we introduce some notation to handle finite sequences nicely.  
\begin{nota}[Finite sequences]\label{skim}\rm
We assume a dedicated type for `finite sequences of objects of type $\rho$', namely $\rho^{*}$.  Since the usual coding of pairs of numbers goes through in $\RCAo$, we shall not always distinguish between $0$ and $0^{*}$. 
Similarly, we do not always distinguish between `$s^{\rho}$' and `$\langle s^{\rho}\rangle$', where the former is `the object $s$ of type $\rho$', and the latter is `the sequence of type $\rho^{*}$ with only element $s^{\rho}$'.  The empty sequence for the type $\rho^{*}$ is denoted by `$\langle \rangle_{\rho}$', usually with the typing omitted.  

\smallskip

Furthermore, we denote by `$|s|=n$' the length of the finite sequence $s^{\rho^{*}}=\langle s_{0}^{\rho},s_{1}^{\rho},\dots,s_{n-1}^{\rho}\rangle$, where $|\langle\rangle|=0$, i.e.\ the empty sequence has length zero.
For sequences $s^{\rho^{*}}, t^{\rho^{*}}$, we denote by `$s*t$' the concatenation of $s$ and $t$, i.e.\ $(s*t)(i)=s(i)$ for $i<|s|$ and $(s*t)(j)=t(|s|-j)$ for $|s|\leq j< |s|+|t|$. For a sequence $s^{\rho^{*}}$, we define $\overline{s}N:=\langle s(0), s(1), \dots,  s(N-1)\rangle $ for $N^{0}\leq |s|$.  
For a sequence $\alpha^{0\di \rho}$, we also write $\overline{\alpha}N=\langle \alpha(0), \alpha(1),\dots, \alpha(N-1)\rangle$ for \emph{any} $N^{0}$.  By way of shorthand, 
$(\forall q^{\rho}\in Q^{\rho^{*}})A(q)$ abbreviates $(\forall i^{0}<|Q|)A(Q(i))$, which is (equivalent to) quantifier-free if $A$ is.   
\end{nota}

\subsection{Higher-order computability theory}\label{HCT}
As noted above, some of our main results will be proved using techniques from computability theory.
Thus, we first make our notion of `computability' precise as follows.  
\begin{enumerate}
\item[(I)] We adopt $\ZFC$, i.e.\ Zermelo-Fraenkel set theory with the Axiom of Choice, as the official metatheory for all results, unless explicitly stated otherwise.
\item[(II)] We adopt Kleene's notion of \emph{higher-order computation} as given by his nine schemes S1-S9 (see \cites{longmann, Sacks.high}) as our official notion of `computable'.
\end{enumerate}
For the rest of this section, we introduce some existing axioms which will be used below.
These functionals constitute the counterparts of $\Z_{2}$, and some of the Big Five, in higher-order RM.  % by Remark \ref{fookie}.
First of all, $\ACA_{0}$ is readily derived from:
\begin{align}\label{mu}\tag{$\mu^{2}$}
(\exists \mu^{2})(\forall f^{1})\big[ (\exists n)(f(n)=0) \di [f(\mu(f))=0&\wedge (\forall i<\mu(f))(f(i)\ne 0) ]\\
& \wedge [ (\forall n)(f(n)\ne0)\di   \mu(f)=0]    \big], \notag
\end{align}
and $\ACA_{0}^{\omega}\equiv\RCAo+(\mu^{2})$ proves the same sentences as $\ACA_{0}$ by \cite{hunterphd}*{Theorem~2.5}.   The (unique) functional $\mu^{2}$ in $(\mu^{2})$ is also called \emph{Feferman's $\mu$} (\cite{avi2}), 
and is clearly \emph{discontinuous} at $f=_{1}11\dots$; in fact, $(\mu^{2})$ is equivalent to the existence of $F:\R\di\R$ such that $F(x)=1$ if $x>_{\R}0$, and $0$ otherwise (\cite{kohlenbach2}*{\S3}), and to 
\be\label{muk}\tag{$\exists^{2}$}
(\exists \varphi^{2}\leq_{2}1)(\forall f^{1})\big[(\exists n)(f(n)=0) \asa \varphi(f)=0    \big]. 
\ee
Secondly, $\FIVE$ is readily derived from the following sentence:
\be\tag{$S^{2}$}
(\exists S^{2}\leq_{2}1)(\forall f^{1})\big[  (\exists g^{1})(\forall n^{0})(f(\overline{g}n)=0)\asa S(f)=0  \big], 
\ee
and $\FIVE^{\omega}\equiv \RCAo+(S^{2})$ proves the same $\Pi_{3}^{1}$-sentences as $\FIVE$ by \cite{yamayamaharehare}*{Theorem 2.2}.   The (unique) functional $S^{2}$ in $(S^{2})$ is also called \emph{the Suslin functional} (\cite{kohlenbach2}).
By definition, the Suslin functional $S^{2}$ can decide whether a $\Sigma_{1}^{1}$-formula (as in the left-hand side of $(S^{2})$) is true or false.  

\smallskip

We similarly define the functional $S_{k}^{2}$ which decides the truth or falsity of $\Sigma_{k}^{1}$-formulas; we also define 
the system $\SIXK$ as $\RCAo+(S_{k}^{2})$, where  $(S_{k}^{2})$ expresses that $S_{k}^{2}$ exists.  Note that we allow formulas with \emph{function} parameters, but \textbf{not} \emph{functionals} here.
In fact, Gandy's \emph{Superjump} (\cite{supergandy}) constitutes a way of extending $\FIVE^{\omega}$ to parameters of type two.

\smallskip

\noindent
Thirdly, full second-order arithmetic $\Z_{2}$ is readily derived from $\cup_{k}\SIXK$, or from:
\be\tag{$\exists^{3}$}
(\exists E^{3}\leq_{3}1)(\forall Y^{2})\big[  (\exists f^{1})Y(f)=0\asa E(Y)=0  \big], 
\ee
and we therefore define $\Z_{2}^{\Omega}\equiv \RCAo+(\exists^{3})$ and $\Z_{2}^\omega\equiv \cup_{k}\SIXK$, which are conservative over $\Z_{2}$ by \cite{hunterphd}*{Cor.\ 2.6}. 
Despite this close connection, $\Z_{2}^{\omega}$ and $\Z_{2}^{\Omega}$ can behave quite differently\footnote{The combination of the recursor $\textsf{R}_{2}$ from G\"odel's $T$ and $\exists^{3}$ yields a system stronger than $\Z_{2}^{\Omega}$.   By contrast, $\SIXK$ and $\Z_{2}^{\omega}$ do not really change in the presence of this recursor.} in combination with other axioms.   The functional from $(\exists^{3})$ is called `$\exists^{3}$', and we use the same convention for other functionals.  
Note that $(\exists^{3})\asa [(\exists^{2})+(\kappa_{0}^{3})]$ (see \cite{dagsam, samsplit}) where the latter expresses comprehension on $C$: 
\be\tag{$\kappa_{0}^{3}$}
(\exists \kappa_{0}^{3}\leq_{3}1)(\forall Y^{2})\big[\kappa_{0}(Y)=0\asa (\exists f\in C)Y(f)=0  \big].
\ee
Finally, recall that the Heine-Borel theorem (aka \emph{Cousin's lemma}) states the existence of a finite sub-covering for an open covering of a compact space. 
Now, a functional $\Psi:\R\di \R^{+}$ gives rise to the \emph{canonical} covering $\cup_{x\in I} I_{x}^{\Psi}$ for $I\equiv [0,1]$, where $I_{x}^{\Psi}$ is the open interval $(x-\Psi(x), x+\Psi(x))$.  
Hence, the uncountable covering $\cup_{x\in I} I_{x}^{\Psi}$ has a finite sub-covering by the Heine-Borel theorem; in symbols:
\be\tag{$\HBU$}
(\forall \Psi:\R\di \R^{+})(\exists \langle y_{1}, \dots, y_{k}\rangle)\underline{(\forall x\in I)}(\exists i\leq k)(x\in I_{y_{i}}^{\Psi}).
\ee
By Theorem \ref{mooi} below, $\Z_{2}^{\Omega}$ proves $\HBU$, but $\SIXK+\QFAC^{0,1}$ cannot (for any $k$).  
As studied in \cite{dagsamIII}*{\S3}, many basic properties of the \emph{gauge integral} are equivalent to $\HBU$.  
By Remark \ref{kloti}, we may drop the requirement that $\Psi$ in $\HBU$ needs to be extensional on the reals, i.e.\ $\Psi$ does not have to satisfy \eqref{RE} from Definition \ref{keepintireal}.

\smallskip

Furthermore, since Cantor space (denoted $C$ or $2^{\N}$) is homeomorphic to a closed subset of $[0,1]$, the former inherits the same property.  
In particular, for any $G^{2}$, the corresponding `canonical covering' of $2^{\N}$ is $\cup_{f\in 2^{\N}}[\overline{f}G(f)]$ where $[\sigma^{0^{*}}]$ is the set of all binary extensions of $\sigma$.  By compactness, there is a finite sequence $\langle f_0 , \ldots , f_n\rangle$ such that the set of $\cup_{i\leq n}[\bar f_{i} G(f_i)]$ still covers $2^{\N}$.  By \cite{dagsamIII}*{Theorem 3.3}, $\HBU$ is equivalent to the same compactness property for $C$, as follows:
\be\tag{$\HBU_{\c}$}
(\forall G^{2})(\exists \langle f_{1}, \dots, f_{k} \rangle )\underline{(\forall f^{1}\leq_{1}1)}(\exists i\leq k)(f\in [\overline{f_{i}}G(f_{i})]).
\ee
We now introduce the specification $\SCF(\Theta)$ for a (non-unique) functional $\Theta$ which computes a finite sequence as in $\HBU_{\c}$.  
We refer to such a functional $\Theta^{2\di 1^{*}}$ as a \emph{realiser} for the compactness of Cantor space, and simplify its type to `$3$'.  % to improve readability.
\be\tag{$\SCF(\Theta)$}
(\forall G^{2})(\forall f^{1}\leq_{1}1)(\exists g\in \Theta(G))(f\in [\overline{g}G(g)])
\ee
%\edefi
Clearly, there is no unique such $\Theta$: just add more binary sequences to $\Theta(G)$.  
In the past, we have referred to any $\Theta$ satisfying $\SCF(\Theta)$ as a \emph{special fan functional} or a \emph{$\Theta$-functional}, and we will continue to use this language.  
As to its provenance, $\Theta$-functionals were introduced as part of the study of the \emph{Gandy-Hyland functional} in \cite{samGH}*{\S2} via a slightly different definition.  
These definitions are identical up to a term of G\"odel's $T$ of low complexity by \cite{dagsamII}*{Theorem 2.6}.  As shown in \cite{dagsamIII}*{\S3}, one readily obtains a $\Theta$-functional from $\HBU$ if the latter is given; in fact, it is straightforward to establish $\HBU\asa (\exists \Theta)\SCF(\Theta)$ over $\ACA_{0}^{\omega}+\QFAC^{2,1}$.

\section{Pincherle's theorem in computability theory}\label{PRS}
We answer the questions \eqref{kata} and \eqref{greek} from Section \ref{intro}.  
In Section \ref{PRR}, we show that \emph{Pincherle realisers} (PR hereafter) from Section \ref{sum}, cannot be computed by any type two functional.  
We also show that any PR (uniformly) give rise to a functional $F$ that is not Borel measurable\footnote{A functional is \emph{Borel measurable} if the inverse of an open set under that functional is a Borel set, or equivalently, that the graph of the functional is a Borel set.}.  The latter result follows from the \emph{extension theorem} (Theorem \ref{ET}).
On a conceptual note, a special fan functional $\Theta$ provides a finite sub-covering $\Theta(G)=\langle f_{0}, \dots, f_{k}\rangle$ for the uncountable covering $\cup_{f\in C}[\overline{f}G(f)]$, 
while any PR has an equivalent formulation outputting (only) an upper bound for the length $k$ of this finite sub-covering (see Observation \ref{nondezju}).  
Despite this big difference in output information (a finite sub-covering versus only an upper bound for its length), $\Theta$-functionals and PRs will be shown to be highly similar, to the point that we can only conjecture a difference (see Conjecture~\ref{corkes}).  

\smallskip

In Section \ref{PRR2}, we discuss similar questions for $\PIT_{o}$, which is equivalent to the Heine-Borel theorem for \emph{countable} coverings by Corollary \ref{ofmoreinterest} and \cite{simpson2}*{IV.1}, given countable choice as in $\QFAC^{0,1}$.      
However, it is trivial to compute a finite sub-covering as in the \emph{countable} Heine-Borel theorem, but the upper bound in $\PIT_{o}$ cannot be computed by any type two functional by Theorem \ref{kackney}. 
In this way, we observe a fundamental \emph{computational} difference between (countable) compactness and the local-to-global principle $\PIT_{o}$, as promised in item \eqref{ada} in Section \ref{intro}. 
Intuitively, this difference is caused by the use of countable choice \emph{and} the essential use of proof-by-contradiction.  \emph{Nonetheless}, $\PIT_{o}$ is provable \emph{without countable choice} by Theorem \ref{mooi}, i.e.\ the latter axiom is not the cause of the `strange' behaviour. 

\subsection{Realisers for uniform Pincherle's theorem}\label{PRR}
In this section we show that any PR has both considerable computational strength and hardness, as captured by the following theorems.  
\begin{theorem}\label{poilo}
There is no PR that is computable in a functional of type two.
\end{theorem}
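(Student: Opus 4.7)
The plan is to argue by contradiction: assume some PR $M$ is computable in a type-two functional $\Phi^2$ via a Kleene index $e$, so that $M(G)=\{e\}(\Phi,G)$ for every $G:C\to\N$. The aim is to exhibit a specific $G^2$ together with an $F^2$ satisfying $\LOC(F,G)$ and $\sup_{g\in C} F(g) > M(G)$, contradicting $\PR(M)$.

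First I would analyse the Kleene computation of $M(G)$. Since the computation terminates with a natural-number output $N$, the underlying S1--S9 computation tree relative to $\Phi$ is well-founded, and one can extract a ``transcript'' $Q_G\subseteq C$ of the queries made to $G$ in the course of the computation. The key rigidity property is that any $G'$ agreeing with $G$ on $Q_G$ and otherwise consistent with the $\Phi$-queries encountered produces an identical computation and thus the same output $N$. This is a form of dependence on $G$ only through a $(\Phi,G)$-recursive trace, and plays the role of a ``continuity'' principle for type-two-computable functionals of type three.

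Next I would carry out a stagewise genericity construction of $G$ relative to $\Phi$. The goal is to ensure that after $\{e\}(\Phi,G)$ settles on a value $N$, there remains a cylinder $[\sigma]\subset C$ untouched by the transcript. One then picks $h\in[\sigma]$, sets $G(h)=N+1$, and extends $G$ on $C\setminus\{h\}$ so that $G(f)\ge N+1$ whenever $h\in[\overline{f}G(f)]$, while keeping $G$ small elsewhere. Define $F(h)=N+1$ and $F=0$ elsewhere: then $\LOC(F,G)$ holds by construction, but $F(h)=N+1 > N = M(G)$, contradicting $\PR(M)$. The insertion is arranged so as to agree with $G$ on $Q_G$, so the $\Phi$-computation still outputs $N$.

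The main obstacle is the genericity/extension step. A strong type-two $\Phi$ (for example, a Suslin-type functional) can perform genuine $\Pi_{1}^{1}$-quantification over $C$, so $Q_G$ can in principle probe $G$ throughout Cantor space, including inside any proposed cylinder. One must therefore exploit the fact that $\{e\}(\Phi,G)$ outputs only a natural number, and combine this with a recursion-theoretic extension argument — essentially the extension theorem advertised in the paragraph preceding the statement, used there for the Borel non-measurability consequence — to show that the class of $G$'s consistent with a fixed finite amount of $\Phi$-information is still rich enough to allow a spike of height $N+1$ to be inserted undetected. Ensuring that this insertion is simultaneously compatible with $\LOC(F,G)$ (a global constraint on $G$) and with the bookkeeping of the $\Phi$-computation is the step I expect to require the most care.
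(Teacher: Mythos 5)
You have the right shape: assume $M(G)=\{e\}(\Phi,G)$ with $\Phi$ of type two, argue that the computation only ``sees'' $G$ on a restricted set of inputs, and then insert a spike $F$ whose value exceeds $M(G)$ while preserving $\LOC(F,G)$. But the execution has a genuine gap that the proposed genericity argument does not fill. The missing fact is not a general transcript property but the specific type-two locality of Kleene computation: in $\{e\}(G,\Phi)$ with both oracles of type two, every $f\in C$ at which $G$ or $\Phi$ is queried is itself computable in $(G,\Phi)$. The paper exploits this by defining a \emph{partial} $G$ exactly on the $\Phi$-computable $f\in C$, setting $G(f)=e+1$ with $e$ the Gandy-selected $\Phi$-index of $f$; a transfinite induction on the rank of subcomputations then shows that for any total extension $\hat G$, all $G$-queries stay inside the $\Phi$-computable domain, so $M(\hat G)$ has a fixed value $a$ independent of the extension. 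The index-based definition of $G$ is what makes the last step go: the clopen set $\bigcup\{[\overline{f}G(f)]:f\in\mathrm{dom}\,G,\;G(f)\le a\}$ has measure at most $\sum_{e<a}2^{-(e+1)}<1$, so there is $g_0$ outside it; taking $\hat G>a$ off the $\Phi$-computables and $F(g_0)=a+1$, $F=0$ elsewhere, gives $\LOC(F,\hat G)$ yet $F(g_0)>M(\hat G)$, the desired contradiction.

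Your version hopes for a cylinder $[\sigma]$ untouched by the transcript, which is generally false once $\exists^2$ is available (the computation can probe every cylinder), and your repair — redefining $G$ so that $G(f)\ge N+1$ whenever $h\in[\overline{f}G(f)]$ — is precisely the move you may \emph{not} make for $f$ in the query set, since altering $G$ there changes the computation and hence the assumed output $N$. Choosing the spike point $h$ merely outside the query set is also not enough: $h$ must additionally avoid the cylinders induced by queried $f$ with small $G(f)$, which is what the index-based choice of $G$ arranges and what a stagewise construction would have to engineer explicitly. Finally, the appeal to the Extension Theorem is circular for this purpose: that theorem is a statement \emph{about} PRs, established \emph{using} one, and is not a tool for analysing type-two relative computations. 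With the locality observation and the explicit partial $G$, no forcing is needed.
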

\begin{theorem}\label{thm8.3}
There is an arithmetical  functional $F$ of type $(1\times1)\rightarrow  0$ such that for any PR $M$ we have that
$G(f) = M(\lambda g.F(f,g))$ is not Borel measurable.
\end{theorem}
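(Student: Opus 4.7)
The plan is to leverage Theorem \ref{poilo} (a PR is not computable in any type two functional) together with the Extension Theorem (Theorem \ref{ET}), which I would use in the contrapositive direction: if $G$ were Borel measurable, then $G$ could be extracted using only type-two data plus a Borel oracle, contradicting the non-computability. So the goal is to design an arithmetical $F$ so that $G(f) = M(\lambda g.F(f,g))$ encodes data that exceeds Borel complexity, essentially making $G$ into a uniform witness for the strength of $M$.

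First, I would define $F(f,g)$ by primitive recursion in $f$ and $g$ using a tree coding: for each $f \in C$, view $f$ as coding a primitive-recursive subtree $T_f$ of $\mathbb{N}^{<\omega}$ (or some suitable structure), and let $F(f,g)$ be a natural number that records how $g$ interacts with $T_f$. The design point is that $\lambda g.F(f,g)$ should admit a canonical, arithmetically-definable local-bound realizer $H_f$ (i.e.\ $\LOC(\lambda g.F(f,g), H_f)$) whose structure depends on $T_f$. Then by $\PR(M)$, the number $M(\lambda g.F(f,g))$ must upper-bound $\lambda g.F(f,g)$ uniformly on $C$, and by careful arrangement of $F$ this uniform bound records a $\Pi_1^1$-flavored rank (e.g.\ the height of $T_f$ when well-founded, with diverging values otherwise).

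Second, I would invoke Theorem \ref{ET} to argue that if $f \mapsto G(f)$ were Borel measurable, then there would be an extension/approximation that allows computing $G$, or sufficiently much of it, by a type-two functional (this is the standard use of Borel measurability: the graph is Borel, hence the computation factors through the type-two level). Combining with the construction in the first step, this would yield a $\Pi_1^1$-level discriminator computable in a type-two functional together with a Borel oracle, which can then be folded back into the hypothesis of Theorem \ref{poilo} to produce a contradiction: a PR would end up being computable in a type-two functional after all.

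The main obstacle is the first step, namely engineering $F$ so that three conditions hold simultaneously: (i) $F$ is primitive-recursive (hence arithmetical) in $(f,g)$; (ii) $\lambda g.F(f,g)$ is locally bounded with a natural realizer that $M$ is forced to cooperate with; and (iii) the unique freedom $M$ still has for its output is so constrained that the value $M(\lambda g.F(f,g))$ must, as a function of $f$, encode a non-Borel relation. In particular, care is needed to prevent $M$ from answering trivially (e.g.\ by always returning some very large but Borel-computable bound), which forces the tree coding to be tight enough that any valid upper bound is essentially the $\Pi_1^1$-rank, yielding non-Borel $G$ by the classical fact that such rank functions are strictly beyond Borel measurability.
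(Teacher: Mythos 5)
Your approach correctly aims to encode $\Pi^1_1$-level data into $f$ and read it off from the PR, but two steps are wrong.  First, the intended contradiction with Theorem~\ref{poilo} does not follow: from Borel-measurability of the single slice $G(f)=M(\lambda g.F(f,g))$ you cannot conclude that $M$ itself is computable in a type-two functional, since $G$ only exposes $M$ on one arithmetically restricted family of inputs.  The paper instead contradicts a purely classical descriptive-set-theoretic fact: there exist disjoint $\Pi^1_1$-sets (e.g.\ $X=\{(e,f):\{e\}(e,f,\mu)=0\}$ and $Y=\{(e,f):\{e\}(e,f,\mu)=1\}$) that are Borel-inseparable, and one shows directly, via Corollary~\ref{claptrap}, that any PR yields a separating set for $X,Y$ from a single parameterised application.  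Theorem~\ref{poilo} is not used; the Extension Theorem~\ref{ET} is used \emph{positively} (a PR totalises partial $\mu$-computable functionals), not in the contrapositive ``Borel graphs factor through the type-two level'' direction you describe, which is not a standard fact and does not hold.

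Second, the tree-rank encoding you sketch is undermined by exactly the difficulty you flag and do not resolve: $\PR(M)$ constrains $M$ only from below, so a single value $M(\lambda g.F(f,g))$ can always be inflated to some large constant by $M$, and nothing forces it to encode an ordinal rank.  The workaround used in the paper (the family $F_{e_0,a_0,n}$ of Theorem~\ref{thm9.1}) reads off the result of a partial $\mu$-computation not from one application but from a \emph{parameterised family} $(a_0,n)\mapsto M(F_{e_0,a_0,n})$: one fixes a $g$ coding the pre-well-ordering of the computation tree and arranges that, when $a_0$ is the true value, every $f$ whose $F_{e_0,a_0,n}$-neighbourhood contains $g$ satisfies $F_{e_0,a_0,n}(f)\geq n$, forcing $M(F_{e_0,a_0,n})\geq n$, whereas when $a_0$ is wrong the value of $M(F_{e_0,a_0,n})$ is independent of $n$.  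The answer is then the unique $a_0$ making the sequence unbounded, a bounded/unbounded dichotomy that needs no control over $M$'s exact output and is precisely the ingredient missing from your plan.
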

\begin{theorem}[Extension Theorem]\label{ET} 
Let $M$ be a PR and let $e_0$ be a Kleene index for a partially computable functional $\Phi(F) = \{e_0\}(F,\mu)$. 
Then, uniformly in $M$, $\Phi$ has a total extension \(depending on $M$\) that is primitive recursive in $M,\mu$.
\end{theorem}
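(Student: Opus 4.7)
The strategy is to encode an attempted execution of $\{e_0\}(F,\mu)$ as a type-two functional on Cantor space whose local boundedness expresses convergence of the computation, apply $M$ to extract a uniform depth bound, and then truncate the computation at this bound to obtain a total extension.

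Fix a standard formalisation of Kleene S1--S9 computation trees and let $T^{e_0}_{F,\mu}$ denote the computation tree for $\{e_0\}(F,\mu)$; the computation converges iff $T^{e_0}_{F,\mu}$ is well-founded. The only clause that introduces unbounded branching is S9, which triggers subcomputations of the form $F(g)$ for some $g:\N\to\N$ assembled from earlier stages, while all other clauses reduce primitive-recursively to $F$ and $\mu$. Working uniformly in $F$, $e_{0}$, and $\mu$, I would define a functional $H = H^{e_0}_F : C \to \N$ which, given $f\in C$, uses the initial segment of $f$ read so far to \emph{guide} one particular branch through $T^{e_0}_{F,\mu}$ (with a dovetailed coding of $f$ handling the countable branching that arises at S9 nodes), and returns the first stage at which the guided path either reaches a leaf of $T^{e_0}_{F,\mu}$ or becomes invalid. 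Crucially, the construction is arranged so that $H(f)$ is determined by the initial segment $\overline{f}H(f)$ of $f$.

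This last property means that $\LOC(H, H)$ holds \emph{unconditionally}: whenever $g\in[\overline{f}H(f)]$, the $g$-guided path agrees with the $f$-guided path through the first $H(f)$ steps, so $H(g)\le H(f)$. Applying the PR, $N := M(H)$ is therefore a uniform bound on $H$. I would then define the total extension by
\[
\tilde\Phi(F) := \begin{cases} \{e_0\}(F,\mu), & \text{if the depth-$N$ truncation of $T^{e_0}_{F,\mu}$ reaches a terminating leaf,} \\ 0, & \text{otherwise,} \end{cases}
\]
where the truncation is a bounded primitive recursion in $F,\mu$ that exhaustively enumerates nodes of $T^{e_0}_{F,\mu}$ up to depth $N$. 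If $\{e_0\}(F,\mu)$ converges, then $T^{e_0}_{F,\mu}$ is well-founded, every guided branch terminates within $N$ steps, and the truncation recovers the correct value; otherwise $\tilde\Phi(F) = 0$. The entire construction uses only primitive recursion in $F,\mu$ together with a single oracle call to $M$, establishing the required conclusion uniformly in $e_0$ and $M$.

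The main obstacle is making the coupling between $H$ and $T^{e_0}_{F,\mu}$ sufficiently tight that local boundedness of $H$ corresponds to well-foundedness of the \emph{entire} tree, not merely of each individual path. The S9 branching must be encoded into finite initial segments of $f$ in an exhaustive and faithful way, which is delicate because an S9 application $F(g)$ involves countably many sub-queries $g(n)$. A careful dovetailed bookkeeping is therefore needed to ensure that $H(f)$ is still decided by finitely many bits of $f$ while the collection $\{H(f) : f\in C\}$ collectively witnesses the termination of every node of $T^{e_0}_{F,\mu}$, so that the single bound $M(H)$ really does suffice to truncate the whole tree and not just its separate branches.
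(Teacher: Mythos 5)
Your approach is genuinely different from the paper's --- the paper obtains Theorem \ref{ET} as a one-line corollary of Theorem \ref{thm9.1}, which builds a \emph{family} of arithmetical functionals $F_{e_0,a_0,n}$ probing candidate codings of the stage-comparison pre-well-ordering $R_F$ and recovers $\{e_0\}(F,\mu)$ as the unique $a_0$ with $M(F_{e_0,a_0,n})\geq n$ for all $n$ --- but the point of divergence is exactly where your argument fails. The crux of your plan is that $\LOC(H,H)$ holds ``unconditionally'' for a total $H$ encoding paths through the computation tree. However, if $H:C\to\N$ is total and $\LOC(H,H)$ holds, then $H$ is \emph{automatically bounded} on $C$: this is what $\PR(M)$ asserts, and it is forced already by compactness, since a total $G$ for which $\overline{f}G(f)$ is a modulus is continuous on the compact space $C$ and hence has finite range. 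Meanwhile, scheme S8 applied to the type-two arguments $F$ or $\mu$ produces nodes with $\omega$ many immediate subcomputations, so a well-founded S1--S9 tree for a convergent $\{e_0\}(F,\mu)$ can have arbitrary countable ordinal rank and in particular has paths of unbounded finite length. These two facts are incompatible with your construction: there is no continuous surjection from compact $C$ onto discrete $\N$, so no locally-determined coding of the $\omega$ children of an S8-node can be both total and exhaustive. Any total $H$ you build will either be undefined at accumulation points of the path-codes (so $\LOC(H,H)$ is vacuous for lack of totality), or it will simply fail to reach paths beyond some fixed finite depth, in which case $M(H)$ says nothing about the rest of the tree. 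The final step inherits the same difficulty: a ``depth-$N$ truncation'' of an $\omega$-branching tree already has infinitely many nodes for $N\geq1$ and is therefore not a bounded primitive recursion in $F,\mu$.

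The paper circumvents this not by bounding the tree's rank by a single number, but by encoding \emph{initial segments of the rank pre-ordering} $R_F$ into $C$ via the preordering coding in Theorem \ref{thm9.1}. The functional $F_{e_0,a_0,n}$ is arranged so that any $f$ whose associated neighbourhood contains the canonical $g$ encoding all of $R_F$ is forced to have $F_{e_0,a_0,n}(f)\geq n$, precisely when $a_0$ is the correct output; the answer is then read off $\mu$-computably from the infinite family $\{M(F_{e_0,a_0,n})\}_{a_0,n\in\N}$ rather than from a single $M$-application, which is why the extension is primitive recursive in $M,\mu$ and not in $M$ alone. Your intuition that compactness should turn local termination into a global bound is the right instinct, but the object being covered must be the stage-comparison ordering, not the literal branches of the computation tree.
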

Note that Theorem \ref{ET} is the `higher-order' version of a known extension theorem.  Indeed, by Corollary \ref{ofmoreinterest}, $\PIT_{o}$ is equivalent to $\WKL$, and the latter implies: 
\begin{center}
\emph{If a partially computable $f:\N\di \N$ is bounded by a total computable function, then $f$ has a total extension.}
\end{center}
By the low basis theorem, the extension may be chosen to be of low degree.
Now, PRs are realisers for \emph{uniform} Pincherle's theorem (and for uniform $\WKL$ by Remark~\ref{flurki}), and Theorem \ref{ET} is the associated `higher-order' extension theorem,  where the concept of computability is relativised to Feferman's $\mu$ using S1-S9. 
By Corollary \ref{claptrap}, PRs also yield a higher-order version of the well-known separation theorem for $\Sigma_{1}^{0}$-sets that follows from $\WKL$ (see e.g.\ \cite{simpson2}*{I.11.7}).  The analogy with the low basis theorem will be that we can separate pairwise disjoint sets of type 2 functionals, semi-computable in $\mu$, with a set relative to  which not all semi-computable sets  are computable, so separation does not imply comprehension for sets semi-computable in $\mu$. It would be interesting to learn if some PRs can provide us with an analogue to sets of low degree. 

\smallskip

We first prove Theorem \ref{poilo}.  The proof is similar to the proof of the fact that no special fan functional $\Theta$ is computable in any type two functional (see \cite{dagsam}*{\S3}).
\begin{proof}
Suppose that $M$ is a PR and that $M$ is computable in the functional $H$ of type two. Thus there is an index $e$ such that 
$M(G) = \{e\}(G,H)$ for all total $G$ of type 2. Since $G$ and $H$ are of type 2, any $f$ such that $G(f)$ or $H(f)$ are called upon in a subcomputation of $\{e\}(G,H)$ via scheme S8, will itself be computable in $G$ and $H$.  
Note that this observation is not necessarily true for oracle calls on $G$ if $H$ were of type 3 or higher.

\smallskip

Without loss of generality, we may assume that $\exists^2$ is computable in $H$, so the machinery of Gandy selection (\cite{longmann}*{p.\ 210}) is at our disposal. We define the (partial) functional $G:C \rightarrow \N \cup \{\bot\}$ by $G(f) = e+1$ where $e$ is the index of $f$ as a function computable in $H$ obtained by application of Gandy selection.  We put $G(f) =\bot$ if $f$ is not computable in $H$.

\smallskip

Now let $\hat G$ be any total extension of $G$. If we evaluate $M(\hat G) = a$ following the assumed algorithm for $M$ from $H$, we see that we actually can replace $\hat G$ with $G$ in the full computation tree. For this we use that $G$ is partially  computable in $H$, and that we will only call upon $\hat G(f)$ for $H$-computable $f$. This step actually requires a proof by induction on the ordinal rank of the subcomputation, an argument that is standard and omitted.

\smallskip

Thus, $M(\hat G)$ is independent of the choice of $\hat G$. On the other hand, we have that for any $N$, the set of $g$ where the bounding condition $\LOC(F,G)$ forces $F(g)$ to be bounded by $N$, is a small and clopen set.  Hence, if we make sure $\hat G(g) > N$ for all $g$ not computable in $H$, we obtain a contradiction. %This ends the proof.
\end{proof}
We now prove a number of theorems, culminating in a proof of Theorem \ref{ET}.  
We assume $M$ to be a PR for the rest of this section. 
\begin{theorem}\label{thm9.1}
For each Kleene-index $e_0$ and numbers $a_0,n$, there are arithmetical, uniformly in $e_0,a_0,n$, functionals $F \mapsto F_{e_0,a_0,n}$ of type $2 \rightarrow 2$ such that if $\{e_0\}(F,\mu)\!\!\downarrow$, 
we can, independently of the choice of $M$, find the value  $a$ of the computation from $\lambda (a_0,n).M(F_{e_0,a_0,n})$ in an arithmetical manner.
\end{theorem}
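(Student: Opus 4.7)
The plan is to prove Theorem~\ref{thm9.1} by transfinite induction on the well-founded rank of the Kleene computation $\{e_0\}(F,\mu)$, establishing the conclusion uniformly in $e_0$ and the auxiliary parameters $a_0,n$. For the initial schemes (successor, projection, basic arithmetic, application of $\mu$, and the composition schemes S5--S7 and S9), the value of $\{e_0\}(F,\mu)$ is already arithmetical in $F$ and $\mu$ once the values of all strictly lower-rank subcomputations are known; one sets $F_{e_0,a_0,n}$ to be a trivially realisable dummy functional whose $M$-value is a constant, and the extraction procedure either reads $a$ off the subcomputation tables furnished by the induction hypothesis or recomputes $a$ directly.

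The core case is oracle application (S8): $\{e_0\}(F,\mu) = F(g)$ where $g(k) = \{d\}(F,\mu,k)$ and $d$ has strictly smaller rank than $e_0$. By the inductive hypothesis applied at $d$, each value $g(k)$ is arithmetically extractable from a table of the form $(k,a_0',n') \mapsto M(F_{d,a_0',n',k})$. The plan is to design, for each guess $a_0$ and precision $n$, a functional $G = F_{e_0,a_0,n} : C \to \N$ so that the specification $\PR(M)$ pins down a single bit of information about $F(g)$: the neighbourhoods $[\overline{f}G(f)]$ and bounds $G(f)$ are tailored so that any $F'$ satisfying $\LOC(F',G)$ must exceed $a_0$ on a small neighbourhood of a natural Cantor-space encoding of $g$ exactly when $F(g) > a_0$. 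Because $\PR(M)$ yields $F'(h)\leq M(G)$ for every such $F'$ and every $h\in C$, comparing $M(G)$ to $a_0$ reveals whether $F(g)\leq a_0$. A $\mu$-search over $a_0$, using these yes/no answers, then pinpoints $a = F(g)$, and by the induction hypothesis this is done arithmetically from the table $\lambda(a_0,n).M(F_{e_0,a_0,n})$.

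The principal obstacle is the apparent circularity: defining $F_{e_0,a_0,n}$ requires access to values of $g$, which by the induction hypothesis are themselves only recoverable through $M$ applied to further auxiliary functionals. The resolution is to insist that the map $F \mapsto F_{e_0,a_0,n}$ is arithmetical in $F$ \emph{alone}, with no $M$-queries inside the construction, while every use of $M$ is deferred to the outer arithmetical extraction procedure that consults $\lambda(a_0,n).M(F_{e_0,a_0,n})$. Concretely, the auxiliary parameters $(k,a_0',n',\ldots)$ demanded by nested invocations of the induction hypothesis are absorbed into the pair $(a_0,n)$ via a computable pairing, so that one master table encodes all the $M$-queries needed at every level of the computation tree. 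This packaging mirrors the corresponding construction for $\Theta$-functionals in \cite{dagsam}*{\S3}, with $\PR(M)$ replacing $\SCF(\Theta)$ throughout.
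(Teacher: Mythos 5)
Your proposal takes a fundamentally different route from the paper, and I do not see how to make it work. The paper's proof is \emph{not} a transfinite induction on the rank of the computation; it is a single global construction. It encodes every $f \in C$ as a candidate preordering $R_f$ on $\N$, defines $R_f[F]$ to be the maximal initial segment of $R_f$ consisting of ``$F$-points'' (points at which the predecessor segment is a one-step fixed point of the monotone operator $\Delta_F$ that generates the ordinal hierarchy of $\mu$-computations), and proves that $R_f[F]$ is always either a proper initial segment of the true hierarchy $R_F$ or an extension of it. The functional $F_{e_0,a_0,n}(f)$ then reads off, arithmetically in $F$ and $f$ alone, whether $f$ correctly portrays $R_F$, whether it predicts $a_0$ as the answer, or where (a concrete integer $k$) it goes wrong. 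The unique $g \in C$ coding $R_F$ itself then forces $M(F_{e_0,a_0,n}) \geq n$ exactly when $a_0$ is the true answer, so the answer is the unique $a_0$ for which $n \mapsto M(F_{e_0,a_0,n})$ is unbounded.

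The gap in your proposal is precisely at the point you flag as the ``principal obstacle,'' and your proposed resolution does not resolve it. You want to induct on the rank of $\{e_0\}(F,\mu)$, but that rank depends on $F$ and is a transfinite ordinal in general, so neither the construction of $F_{e_0,a_0,n}$ nor the outer extraction can be arithmetical if they need to unfold the computation tree level by level. In the S8 case you say the functional $G = F_{e_0,a_0,n}$ is ``tailored'' so that the bounding condition forces $F'$ to exceed $a_0$ near a Cantor-space encoding of $g$ -- but producing that encoding requires knowing $g$, and $g(k) = \{d\}(F,\mu,k)$ is itself a $\mu$-computation of possibly high rank, so it is not available arithmetically in $F$. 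Declaring that ``$F \mapsto F_{e_0,a_0,n}$ is arithmetical in $F$ alone'' while deferring all $M$-queries to the outer procedure does not dissolve the circularity: there is no concrete arithmetical definition of $F_{e_0,a_0,n}$ on offer in the S8 case, and the nested tables you would consult in the outer procedure have unbounded transfinite depth, so consulting them is not arithmetical either. The paper sidesteps all of this by never recursively querying $M$: each check (``is $R_f[F]$ a fixed point of $\Delta_F$?'', ``is $\langle e_0,a\rangle$ in its domain?'') is a one-step, arithmetical question about an arbitrary $f$, and the whole transfinite structure is supplied implicitly by the unique witness $g$ that the Pincherle realiser must account for.
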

\begin{proof}
We let $M$, $F$, $e_0$, $a_0$, and $n$ be fixed throughout.
We first need some notation. 

\smallskip

Let $R$ be a preordering of a domain $D\subseteq \N$. For $x \in D$, we denote
\begin{itemize}
\item $[x]^R =  \{y \in D \mid (y,x) \in R\}$
\item $[x]_R = \{y \in D \mid (y,x) \in R \wedge \neg((x,y) \in R)\}$
\item $R^x$ is $R$ restricted to $[x]^R$
\item $R_x$ is $R$ restricted to  $[x]_R$
\end{itemize}

\smallskip

  Let $f \in C$ and define $D_f := \{x  \mid f(\langle x,x \rangle) = 1\}$ and $R_f := \{(x,y) \mid f(\langle x,y \rangle) = 1\}$, where $x,y \in \N$.
Let $\PRE$ be the set of $f \in C$ such that $R_f$ is a preordering of $D_f$. Then $\PRE$ is a $\Pi^0_1$-set, and for each $f \not \in {\rm \PRE}$, we can find an integer $k$ such that $[\bar fk] \cap {\rm \PRE} = \emptyset.$

\smallskip

Let $\Gamma_F$ be the monotone inductive definition of $D_F = \{\langle e,\vec a , b\rangle \mid \{e\}(F, \mu , \vec a) = b\}.$
Since each valid computation $\{e\}(F,\mu , \vec a) = b$ has an ordinal rank $\| \langle e,\vec a , b\rangle \|_F < \aleph_1$, let $R_F$ be the pre-well-ordering on $D_F$ induced by $\| \cdot \|_F$. 
Then $R_F$ is the least fixed point of an, uniformly in $F$, arithmetical and monotone inductive definition $\Delta_F$ such that for all ordinals $\alpha$, $\Delta_F^{\alpha + 1}$ is an end extension of $\Delta_F^\alpha$, where we write $\Delta_F^\alpha$ for $\Delta_F^\alpha(\emptyset)$.

\smallskip

Now, let $R$ be any preordering of the domain $D \subseteq \N$. We call $x \in D$ an \emph{$F$-point} if $R^x = \Delta_F(R_x)$.  
We let $D[F]$ be the maximal $R$-initial segment consisting of $F$-points, and we let $R[F]$ be $R$ restricted to $D[F]$.

\smallskip

{\em Claim 1}:  If $R[F]$ does not contain $R_F$ as an initial segment, then $R[F]$ is an initial segment of $R_F$.
\vspace{2mm}
\newline
{\em Proof of Claim 1}.  Let $\alpha$ be the least ordinal such that $\Delta^\alpha_F$ is not an initial segment of $R[F]$. Then $\bigcup_{\beta < \alpha}\Delta_F^\beta$ is an initial segment of $R[F]$. If this is all of $R[F]$, we are through, since then $R[F]$ is an initial segment of $R_F$. If not, there is some $x \in D[F]$ such that $\bigcup_{\beta < \alpha}\Delta^\beta_F$ is an initial segment of $R_x[F]$.
Since $x$ is an $F$-point and $\Delta_F$ is monotone, we have that $R^x[F] = \Delta_F(R_x[F])$ and that $\Delta^\alpha_F$ is an initial segment of $R^x[F]$, contradicting the choice of $\alpha$. Claim 1 now follows.
\smallskip

\noindent For now, we assume that $f \in {\rm \PRE}$.

\smallskip

{\em Claim 2}:  If $R_f[F]$ is not a fixed point of $\Delta_F$, there is $k\in \N$, $\mu$-computable from $F, f$,
%identifiable from $R_f[F]$ and $f$ in an arithmetical manner, 
such that whenever $g \in {\rm \PRE}$ such that $\Delta_F(R_f[F])$ is an initial segment of $R_g[F]$ we have that $g(k) \neq f(k)$.
\vspace{2mm}
\newline
{\em Proof of Claim 2}.
If there is a pair $(y,x) \in \Delta_F(R_f[F])$ such that $f(\langle y,x \rangle) = 0$, we can just let $k = \langle y,x\rangle$ for one such pair, chosen by numerical search. 
Now assume $f(\langle y,x \rangle) = 1$ whenever $(y,x) \in \Delta_F(R_f[F])$. Now, for all $x \in D_f[F]$ we have that $(R_f)_x \subseteq (R_f)^x = \Delta_F((R_f)_x)$.  Since $\Delta_F$ is monotone, we must have that 
$R_f[F] \subseteq \Delta_F(R_f[F])$. Further, since $R_f[F]$ is not a fixed point of $\Delta_F$, there is some $x$ such that $(x,x) \in \Delta_F(R_f[F]) \setminus R_f[F]$.
Since this $x$ is not an $F$-point, and since $$f(\langle x,y \rangle) = f(\langle y , x \rangle) = 1$$ whenever $(x,y) \in \Delta_F(R_f[F])$ and $(y,x) \in \Delta_F(R_f[F])$, there must be a $y$ such that $f(\langle x,y \rangle) = f(\langle y,x \rangle) = 1$, but $(x,y) \not \in \Delta_F(R_f[F])$ or $(y,x) \not \in \Delta_F(R_f[F])$. We can find such a pair $k = \langle x,y\rangle$ or $k = \langle y,x \rangle$ by effective search. Claim 2 now follows.

\smallskip

We now define $F_{e_0,a_0,n}(f)$, where $f \in C$ is not necessarily in $\PRE$ anymore.
\bdefi\label{godallemagtig}
We define $F_{e_0,a_0,n}(f)$ by cases, assuming for each case that the previous cases fail:
\begin{enumerate}
\item For $f \not \in {\rm \PRE}$, let $F_{e_0,a_0,n}(f) = k$ for the least $k$ such that $[\bar fk] \cap {\rm \PRE}  = \emptyset$
\item There is an $a \in \N$ such that 
\begin{itemize}
\item[(2.i)] $\langle e_0,a\rangle$ is in the domain of $R_f[F]$
\item[(2.ii)] For no $b \in \N$ with $b \neq a$ do we have that $(\langle e_0,b \rangle , \langle e_0,a\rangle) \in R_f[F]$.
\end{itemize}
We then let $F_{e_0,a_0,n}(f) = 0$ if $a \neq a_0$ and $n$ if $a = a_0$.
\item $R_f[F]$ is a fixed point of $\Delta_F$. Then let $F_{e_0,a_0,n}(f) = 0$.
\item $R_f[F]$ is not a fixed point of $\Delta_F$. Then let $F_{e_0,a_0,n}(f) = k+1$, where $k$ is the number identified in Claim 2.
\end{enumerate}
\edefi
\noindent
We now prove the theorem via establishing the following final claim. 

\smallskip

{\em Claim 3}: If $\{e_0\}(F,\mu)\!\!\downarrow$, the following algorithm provides the result:
\begin{center} 
$\{e_0\}(F,\mu)$ is the unique $a_0$ for which $\{M(F_{e_0,a_0,n}) \mid n \in \N\}$ is infinite.
\end{center}
This algorithm is uniformly arithmetical in $M$, by definition. 
\vspace{2mm}
\newline
{\em Proof of Claim 3}.
Assume that $\{e_0\}(F,\mu) = a$, implying that $\langle e_0,a\rangle$ is in the well-founded part of $R_F$.

\smallskip

First assume that $a \neq a_0$. We claim that the value of $F_{e_0,a_0,n}(f)$  is independent of $n$ for all inputs $f$, implying that $M(F_{e_0,a_0,n})$ has a fixed value independent of $n$. Considering Definition \ref{godallemagtig}, we see that (1) only depends on $f$ and if (2) decides the value, we must have that the $a$ required in (2) is the value of $\{e_0\}(F,\mu)$,  we will use the case $a \neq a_0$ and the value is still independent of $n$. Clearly, if we need (3) or (4), the value is also independent of $n$.

\smallskip

Secondly, assume that  $a = a_0$. As a sub-claim, we will show that $M(F_{0,a_0,n}) \geq n$, from which the conclusion follows.
Let $g \in C\cap {\rm \PRE}$ be such that $R_F = R_g[F]$, and let $f$ be arbitrary such that $g \in [\bar fF_{e_0,a_0,n}(f)]$. We will see that $F_{e_0,a_0,n}(f) \geq n$, and our sub-claim follows.
If $f \not \in {\rm \PRE}$, we see from (1) in Definition \ref{godallemagtig} that $g\not \in [\bar fF_{e_0,a_0,n}(f)]$ since $g \in {\rm PRE}$, so (1)   does not apply in the definition of $F_{0,a_0,n}(f)$ when $g$ and $f$ are as given.
If $f \in {\rm \PRE}$, but $\langle e_0 , a_0 \rangle$ is not in the domain of $R_{f}[F]$, then by Claim 1, $R_{f}[F]$ is a proper initial segment of $R_F$, and using Claim 2 we have chosen $F_{e_0,a_0,n}(f) = k+1$ in such a way that $g(k) \neq f(k)$. This again violates our assumption on $f$.

\smallskip

Thus, by our assumption on how $f$ is related to $g$, we must have that $\langle e_0,a_0\rangle$ is in the domain of $R_{f}[F]$, and since this appearance will be in the well-founded part, there will be no competing values $b$ at the same or lower level.  Then we will use (2) from Definition \ref{godallemagtig}, and set the value of $F_{e_0,a_0,n}(f)$ to $n$.
\end{proof}
As an immediate consequence, we obtain a proof of Theorem \ref{ET}.
\begin{proof}
Let $F_{e_0,a_0,n}$ be as in the proof of Theorem \ref{thm9.1}. Define $\Psi(F) = a_0$ if $a_0$ is unique such that $M(F_{e_0,a_0,n}) \geq n$ for all $n$, and define $\Psi(F) = 0$ if there is no such unique $a_0$.
\end{proof}
Finally, we list some corollaries to the theorem. 
\begin{corollary}\label{cor9.3}
Let $\Phi^{3}$ be partial and Kleene-computable in $\mu$.
Then for any PR $M$ there is a total extension of $\Phi$ that is primitive recursive in $M$ and $\mu$.
\end{corollary}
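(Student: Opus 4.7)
The plan is to observe that Corollary \ref{cor9.3} is essentially a direct consequence of the Extension Theorem (Theorem \ref{ET}) already proved in the excerpt, so the proof amounts to unpacking the hypothesis into a form where Theorem \ref{ET} applies verbatim.

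First I would unfold what it means for $\Phi^3$ to be partial and Kleene-computable in $\mu$: by the definition of Kleene's S1--S9 relative to $\mu$, there is a Kleene index $e_0$ such that, for every type-2 functional $F$, $\Phi(F) = \{e_0\}(F,\mu)$ whenever $\{e_0\}(F,\mu)\!\!\downarrow$, and $\Phi(F)$ is undefined otherwise. In other words, the partial functional $\Phi$ coincides with the partial functional $F \mapsto \{e_0\}(F,\mu)$ appearing in the hypothesis of Theorem \ref{ET}.

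Next, with this $e_0$ in hand, I would directly invoke Theorem \ref{ET} to obtain a total extension $\Psi$ of $\Phi$ that is primitive recursive in $M$ and $\mu$, uniformly in $M$. Concretely, unwinding the proof of Theorem \ref{ET}, one may define
\[
\Psi(F) := \begin{cases} a_0 & \text{if $a_0$ is the unique number with $M(F_{e_0,a_0,n}) \geq n$ for all $n$,} \\ 0 & \text{otherwise,} \end{cases}
\]
where the arithmetical functionals $F_{e_0,a_0,n}$ are those constructed in the proof of Theorem \ref{thm9.1}. Totality is immediate from the case split. The fact that $\Psi$ extends $\Phi$ is precisely the content of Claim 3 in the proof of Theorem \ref{thm9.1}: when $\Phi(F) = a$ is defined, the set $\{M(F_{e_0,a_0,n}) : n \in \N\}$ is infinite if and only if $a_0 = a$, so $\Psi(F) = a = \Phi(F)$. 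Primitive recursiveness in $M$ and $\mu$ follows because the map $(e_0,a_0,n,F) \mapsto F_{e_0,a_0,n}$ is arithmetical in $F$ (hence primitive recursive in $\mu$), and the definition of $\Psi$ uses only a single oracle call to $M$ together with arithmetical quantification.

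The main (very minor) obstacle is merely to verify that there is genuinely no gap between ``partial Kleene-computable in $\mu$'' as phrased in the corollary and the index-based formulation used in Theorem \ref{ET}; this is a standard fact about S1--S9 relative computability. Beyond that, the corollary is simply Theorem \ref{ET} applied to a chosen index, so no substantial new argument is required.
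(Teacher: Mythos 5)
Your proof is correct and takes essentially the same approach as the paper, whose entire argument is the single sentence ``This is almost a rephrasing of Theorem \ref{ET}, modulo some coding of mixed types''; you simply spell out the unwinding and the index bookkeeping that the paper leaves implicit, and your closing caveat about reconciling ``partial Kleene-computable in $\mu$'' with the index form is exactly the ``coding of mixed types'' point the authors gesture at.
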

This is almost a rephrasing of Theorem \ref{ET}, modulo some coding of mixed types.
\begin{corollary}\label{claptrap}
Let $X$ and $Y$ be disjoint sets of functionals of type 2, both semicomputable in $\mu$.
Then, for each PR $M$, there is a set $Z$ primitive recursive in $M$ and $\mu$, that separates $X$ and $Y$.
\end{corollary}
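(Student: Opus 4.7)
The plan is to deduce this directly from Corollary \ref{cor9.3}, exactly as the classical $\Sigma^0_1$-separation follows from the analogous extension theorem for partial computable functions in ordinary computability theory.

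First I would unpack the hypothesis: since $X$ and $Y$ are semicomputable in $\mu$, fix Kleene indices $e_X, e_Y$ such that $F\in X \iff \{e_X\}(F,\mu)\!\!\downarrow$ and $F\in Y \iff \{e_Y\}(F,\mu)\!\!\downarrow$. Using the machinery of Gandy selection available once $\exists^2$ is at hand (so certainly from $\mu$), I would define a partial functional $\Phi^3$ by a ``parallel or'' search: $\Phi(F)$ runs the two potential computations simultaneously, outputs $0$ as soon as $\{e_X\}(F,\mu)$ halts, and outputs $1$ as soon as $\{e_Y\}(F,\mu)$ halts. Because $X\cap Y=\emptyset$, these two outcomes are mutually exclusive on inputs where $\Phi$ is defined, so $\Phi$ is a well-defined partial functional, Kleene-computable in $\mu$, with $\Phi(F)=0$ on $X$ and $\Phi(F)=1$ on $Y$.

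Next I would apply Corollary \ref{cor9.3} to $\Phi$: it yields a total extension $\hat\Phi$ of $\Phi$ that is primitive recursive in $M$ and $\mu$. Define
\[
Z \;:=\; \{F^{2} \mid \hat\Phi(F) = 0\}.
\]
Since $\hat\Phi$ extends $\Phi$, we have $X\subseteq Z$ and $Y\cap Z=\emptyset$, so $Z$ separates $X$ from $Y$. The characteristic function of $Z$ is obtained from $\hat\Phi$ by a single comparison with $0$, hence $Z$ is primitive recursive in $M$ and $\mu$, as required.

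The only delicate point is verifying that the ``parallel'' combination of two semicomputations in $\mu$ is itself Kleene-computable in $\mu$ in a way that preserves the required value on each set; this is routine via Gandy selection but should be stated explicitly, since it is what allows us to invoke Corollary \ref{cor9.3}. No other obstacle arises: the corollary does all the heavy lifting, and the separation statement is just its Boolean-valued reformulation.
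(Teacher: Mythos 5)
Your proof is correct and follows essentially the same path as the paper's: invoke Gandy selection (available with $\mu$ as a parameter) to obtain a partial $\mu$-computable functional worth $0$ on $X$ and $1$ on $Y$, then extend it via Corollary \ref{cor9.3} and take the preimage of $0$. The paper's proof is just a one-line version of the same argument, so there is nothing to correct.
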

\begin{proof}
Since we use $\mu$ as a parameter, we have Gandy selection in a uniform way, so there will be a partial function computable relative to $\mu$ that takes the value 0 on $X$ and 1 on $Y$. Then apply Corollary \ref{cor9.3}.
\end{proof}
As a special case, we obtain the proof of Theorem \ref{thm8.3}, as follows.
\begin{proof}
Let $X = \{(e,f) \mid \{e\}(e,f,\mu) = 0\}$ and $Y = \{(e,f) \mid \{e\}(e,f,\mu) = 1\}$.
The sets $X$ and $Y$ are Borel-inseparable disjoint $\Pi^1_1$-sets, but can be separated using one parameterised application of $M$.
\end{proof}
As another application of Corollary \ref{cor9.3} we see that the partial enumeration of all hyperarithmetical functions, which is partially computable in $\mu$, can be extended to a total enumeration primitive recursive in $M$ and $\mu$ for all Pincherle realisers $M$.  We leave further applications to the imagination of the reader.

\smallskip

It is a natural question whether there are reasonable lower bounds on PRs.  
Now, Hunter introduces a functional in \cite{hunterphd}*{p.\ 23} that constitutes a `uniform' version of $\ATR_{0}$.  
The following corollary shows that this functional is a lower bound on PRs.  
\begin{cor}
Uniformly primitive recursive in $\mu^{2}$ and any \textup{PR} $M^{3}$ there is a functional
$T:\N^\N \times (2^\N \rightarrow 2^\N) \rightarrow 2^\N$ such that when $f^{1}$ codes a well-ordering $<_f$, then $T(f,F)$ satisfies the following recursion equation for $a$ in the domain of $<_f$\textup{:}
\[
\{b : \langle b,a \rangle \in T(f,F)\} = F(\{\langle c,d \rangle \in T(f,F) : d <_f a\}).
\]
\end{cor}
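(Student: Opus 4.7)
The plan is to realise $T(f,F)$ as the output of a transfinite recursion along $<_f$ in which $F$ plays the role of the step functional, and to show that this recursion is implemented by a partial Kleene S1-S9 computation in $\mu$ whose ordinal ranks are bounded by the ranks in $<_f$. Once that is in place, the Extension Theorem (Theorem \ref{ET}) and its consequence Corollary \ref{cor9.3} convert the partial computation into a total functional primitive recursive in $\mu$ and $M$.

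In more detail, I would first fix a Kleene index $e_0$ (primitive recursive in the data) for the following recursive procedure on input $(f,F,b,a)$. Using $\mu$ to search for witnesses in the domain of $<_f$, the procedure collects, by recursive calls $\{e_0\}(f,F,\mu,c,d)$ for pairs $\langle c,d\rangle$ with $d<_f a$, the characteristic function of the set
\[
S_a \;=\; \{\langle c,d\rangle \in T(f,F) : d <_f a\},
\]
packages $S_a$ as an element of $2^{\N}$ via the standard pairing, and returns $1$ or $0$ according as $b\in F(S_a)$ or not. When $f$ codes a well-ordering, an induction on the $<_f$-rank of $a$ shows that each subcomputation is well-founded, so $\Phi(f,F,b,a):=\{e_0\}(f,F,\mu,b,a)$ is total on such inputs and, by construction, delivers the characteristic function of a set $T(f,F)$ satisfying the required recursion equation.

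Next, after the routine coding of the mixed-type parameters $(f,F,b,a)$ into a single type-$2$ input (the same coding used in Corollary \ref{cor9.3}), $\Phi$ becomes a partial functional of type $3$ that is Kleene-computable in $\mu$. Applying Corollary \ref{cor9.3} to any given PR $M$, I obtain a total extension $\tilde T$ primitive recursive in $M$ and $\mu$, uniformly in $M$. On every $(f,F)$ with $f$ coding a well-ordering the partial $\Phi$ is total, so $\tilde T$ must coincide with $\Phi$ there, and hence $\tilde T(f,F)$ automatically satisfies the stated recursion equation; on the remaining inputs $\tilde T$ may take arbitrary values, which is consistent with the statement.

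The main obstacle is the first step, namely the verification that the transfinite recursion with an \emph{external} type-two step $F$ can genuinely be expressed as an S1-S9 computation in $F$ and $\mu$ whose ordinal rank at $a$ is bounded by the $<_f$-rank of $a$. One has to be careful that each recursive call on $\langle c,d\rangle$ with $d<_f a$ is strictly earlier in the subcomputation tree, and that the intermediate sets $S_a$ are assembled as elements of $2^{\N}$ rather than as subsets of the $<_f$-domain, since $F$ expects the former. Once this set-up is honest, the invocation of Theorem \ref{ET} (through Corollary \ref{cor9.3}) is entirely analogous to the other applications in the section, so the uniformity and primitive recursiveness of $\tilde T$ in $\mu$ and $M$ follow at no extra cost.
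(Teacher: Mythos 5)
Your proposal is correct and follows essentially the same route as the paper: define the partial functional implementing the transfinite recursion via the Kleene recursion theorem relative to $\mu$, verify that it terminates with ordinal rank bounded by the $<_f$-rank whenever $f$ codes a well-ordering, and then invoke the Extension Theorem (equivalently Corollary~\ref{cor9.3} after type coding) to obtain a total extension primitive recursive in $\mu$ and $M$. The extra detail you provide about packaging $S_a$ as an element of $2^\N$ and about the well-foundedness induction is exactly the content the paper leaves implicit.
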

\begin{proof}
Let $f^{1}$ code the well-ordering $<_f$. By the recursion theorem for S1-S9. there is an index $e$, independent of $f$, such that
$T^*(f,F) := \lambda x^0.\big(\{e\}(F,f,x)\big)$ terminates and satisfies the recursion equation for $x = \langle b,a\rangle$ whenever $a$ is in the domain of $<_{f}$. By the Extension Theorem \ref{ET}, the functional $T^{*}$ has a total extension $T$ that is primitive recursive in $\mu$ and any given PR $M$.
\end{proof}
The previous results and the equivalence in Corollary \ref{eessje} suggest a strong similarity between special fan functionals and PRs.  
In fact, Theorem \ref{poilo} can be seen as a consequence of the following theorem and the properties of $\Theta$-functionals established in \cite{dagsam, dagsamII}.  
We establish (and make essential use of) the equivalences in Observation \ref{nondezju} when discussing Heine's theorem below.  
\begin{obs}\label{nondezju}
Let $G:C \rightarrow \N$. The following are equivalent for any $n \in \N$:
\begin{enumerate}
\item There is a {\rm PR} $M$ with $M(G) = n$
\item There is a special fan functional $\Theta$ such that $G(f) \leq n$ for each $f \in \Theta(G)$
\item There are $f_1 , \ldots , f_k \in C$ with $C\subset \cup_{i\leq k}[\bar f_iG(f_i)]$ and $n \geq G(f_i)$ for $i\leq k$.
\end{enumerate}
\end{obs}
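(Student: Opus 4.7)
The plan is to prove the cycle of implications (3)$\Rightarrow$(2)$\Rightarrow$(3) trivially, and then deduce the remaining equivalences by cycling through (3), with the genuinely substantive step being (1)$\Rightarrow$(3).

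First I would handle (2)$\Leftrightarrow$(3). The direction (2)$\Rightarrow$(3) is immediate: given a special fan functional $\Theta$ with $G(f) \leq n$ for all $f \in \Theta(G)$, let $\langle f_1, \ldots, f_k\rangle := \Theta(G)$; then $\SCF(\Theta)$ says exactly that these $f_i$ cover $C$ via the intervals $[\bar f_i G(f_i)]$, and the bound on $G(f_i)$ is assumed. For (3)$\Rightarrow$(2), I would define $\Theta(G) := \langle f_1,\dots,f_k\rangle$ using the given sequence, and for any other input $G'$ use $\HBU_\c$ (available in our $\ZFC$ metatheory) together with the axiom of choice to pick an arbitrary finite sub-covering witness. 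This gives a functional satisfying $\SCF$ with the desired property at $G$.

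Next I would prove (3)$\Rightarrow$(1) similarly. Given $f_1,\dots,f_k$ as in (3), define $M(G) := n$ and, for $G' \neq G$, let $M(G') := \max_{j\leq m} G'(g_j)$ where $\langle g_1,\dots,g_m\rangle$ is a sub-covering guaranteed by $\HBU_\c$ and chosen via $\QFAC^{2,1}$. To verify $\PR(M)$, suppose $\LOC(F,G')$ holds; for any $h \in C$, the covering property gives some index with $h \in [\bar g_j G'(g_j)]$, and $\LOC(F,G')$ then yields $F(h) \leq G'(g_j) \leq M(G')$. At our distinguished $G$, the argument uses the $f_i$'s directly and gives $F(h) \leq G(f_i) \leq n = M(G)$.

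The heart of the observation is (1)$\Rightarrow$(3). Given a PR $M$ with $M(G) = n$, I would define an auxiliary functional
\[
F(g) := \min\{\, G(f) : f \in C,\ g \in [\bar f G(f)]\,\},
\]
which is well-defined since $f = g$ always belongs to the set, giving $F(g) \leq G(g)$. The definition of minimum immediately yields $\LOC(F,G)$: if $g \in [\bar f G(f)]$ then $F(g) \leq G(f)$. Hence $\PR(M)$ applied to this $F$ delivers $F(g) \leq n$ for every $g \in C$, which unfolds to: for each $g \in C$ there exists $f \in C$ with $G(f) \leq n$ and $g \in [\bar f G(f)]$. Consequently $C \subset \bigcup\{[\bar f G(f)] : G(f) \leq n\}$, and $\HBU_\c$ extracts a finite subcovering $f_1,\dots,f_k$, each with $G(f_i) \leq n$, as required.

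The main obstacle I anticipate is ensuring the auxiliary $F$ in the (1)$\Rightarrow$(3) step is well-defined without appealing to more than is available — but taking a minimum over a nonempty set of natural numbers is harmless, and the rest reduces to the already-established compactness property $\HBU_\c$ of Cantor space. No essential use of countable choice is needed in this core argument, which is what makes the equivalence tight and uniform in the parameter $n$.
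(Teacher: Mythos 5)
The paper states Observation~\ref{nondezju} without a proof, so I assess your argument on its own merits. The decomposition is sound: (2)$\Leftrightarrow$(3) and (3)$\Rightarrow$(1) are, as you say, routine reassembly in the \textsf{ZFC} metatheory, and the substantive step (1)$\Rightarrow$(3) via the auxiliary functional $F(g):=\min\{G(f): g\in[\bar f G(f)]\}$ is exactly right: the set is nonempty (take $f=g$), $\LOC(F,G)$ holds immediately from the definition of $F$ as a minimum, and $\PR(M)$ then forces $F(g)\leq M(G)=n$ for all $g\in C$, i.e.\ $C\subseteq\bigcup\{[\bar f G(f)]: G(f)\leq n\}$.

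There is one imprecision in the closing line of (1)$\Rightarrow$(3). You invoke $\HBU_{\c}$ to extract the finite subcover, but $\HBU_{\c}$ as formulated concerns the \emph{canonical} covering $\cup_{f\in C}[\bar f G(f)]$ of a type-two functional; applying it to $G$ directly gives a finite subcover with no control over the values $G(f_i)$, so it does not yield the $G(f_i)\leq n$ you need. Fortunately no compactness step is needed at all: every member of the family $\{[\bar f G(f)]: G(f)\leq n\}$ is a basic cylinder $[\sigma]$ with $|\sigma|\leq n$, so the family contains at most $2^{n+1}-1$ distinct sets, and choosing one witness $f_i$ for each cylinder that actually occurs produces the finite sequence for item (3) outright. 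Alternatively you may apply $\HBU_{\c}$ to $F$ rather than $G$, noting that $[\bar g F(g)]=[\bar f G(f)]$ for any $f$ realising the minimum defining $F(g)$, and then replace each $g_j$ in the resulting finite subcover by such a minimiser $f_j$. With this small repair the proof is complete and correct.
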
  
Despite these similarities, there are certain fundamental differences between special fan functionals and Pincherle realisers, leading to the following conjecture.  
Even if it turns out to be incorrect, we still expect that there is no \emph{uniform} way to compute a $\Theta$-functional from an instance of $M$, even modulo $\exists^2$.
\begin{conj}\label{corkes}
There is an $M_{0}^{3}$ satisfying $\PR(M_{0})$ such that no $\Theta^{3}$ as in $\SCF(\Theta)$ is computable \textup{(S1-S9)} in $M_{0}^{3}$.   
\end{conj}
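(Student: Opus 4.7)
The plan is to construct $M_{0}$ by a diagonalisation over Kleene indices so that for every $e$ that purports to define a $\Theta$-functional, there is a type two input $G_{e}$ on which $\{e\}(G_{e},M_{0})$ either diverges or outputs a finite sequence $\langle f_{1},\dots,f_{k}\rangle$ failing $\SCF$, i.e., missing some $f\in C$ with $f\notin [\bar{f}_{i}G_{e}(f_{i})]$ for all $i\leq k$. The central idea is to exploit the information-theoretic gap made precise in Observation \ref{nondezju}: $M_{0}(G)$ only records the existence of an upper bound $n$, whereas a $\Theta$-output must produce specific binary witnesses realising such a bound, and many distinct sub-coverings yield the same PR value. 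If we can arrange $M_{0}$ so that these witnesses are genuinely hidden, no S1--S9 extraction relative to $M_{0}$ can recover them uniformly.

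First, I would set up an approximation framework: a partial specification of $M_{0}$ on a countable set of $G$'s, together with a consistency constraint guaranteeing that every extension remains a legitimate PR, i.e., each committed value $M_{0}(G)=n$ must be bounded below by the actual bound that any $F$ with $\LOC(F,G)$ can attain (equivalently, by the data in Observation \ref{nondezju}(3)). Second, I would process the Kleene indices $e$ one by one, at each stage choosing $G_{e}$ so that the computation $\{e\}(G_{e},M_{0})$ forces only finitely many new commitments on $M_{0}$, and so that the output sequence (if any) can be shown to miss at least one binary sequence in $C$ which we deliberately leave ``uncovered'' by the choice of $G_{e}$. Finally, after exhausting all indices, one extends $M_{0}$ to a total PR in an arbitrary legitimate way (for instance, by taking the least upper bound at every remaining $G$, which exists by $\exists^{3}$ in the metatheory).

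The principal obstacle is that S1--S9 with a type three oracle $M_{0}$ is far less constrained than S1--S9 with a type two oracle. In the proof of Theorem \ref{poilo}, the key leverage was that any $f$ queried inside $\{e\}(G,H)$ is itself computable in $H$, which allowed an $\hat{G}$-invariance argument via Gandy selection. No analogue is immediately available here: a subcomputation $\{e'\}(G',M_{0})$ may query $M_{0}$ at type two inputs $G'$ that depend heavily on $G_{e}$ and on the specific extension of $M_{0}$, and the ordinal-rank induction used in Theorem \ref{poilo} does not straightforwardly deliver a bound on the collection of queried $G'$'s. A new structural analysis of subcomputations of type three oracles, perhaps isolating a notion of ``$M_{0}$-reachable'' $G'$ analogous to ``$H$-computable $f$'', seems to be the missing ingredient.

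As a preliminary reduction it may help to examine specific candidate $M_{0}$'s, such as the canonical PR extracted from a fixed $\Theta$ via Observation \ref{nondezju}, or a ``minimal'' $M_{0}$ whose value at $G$ is the least admissible bound. For such concrete $M_{0}$'s one can at least test whether natural S1--S9 strategies for recovering witnesses (e.g.\ searching through binary trees using $\exists^{2}$, which is computable in any PR via Corollary \ref{cor9.3}) are doomed by the absence of an effective way to verify the covering property $\underline{(\forall f\in C)}(\exists i\leq k)(f\in [\bar{f}_{i}G(f_{i})])$ uniformly in $M_{0}$ alone. Establishing this for one well-chosen $M_{0}$ would already prove the conjecture; extending the argument to handle arbitrary diagonalisation stages is the final hurdle.
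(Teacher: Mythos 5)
The statement you are attempting to prove is labelled a \emph{Conjecture} in the paper (Conjecture~\ref{corkes}) and is left open; the authors explicitly hedge, writing that even if it turns out to be incorrect they still expect no \emph{uniform} way to compute a $\Theta$-functional from a PR, even modulo $\exists^{2}$. There is therefore no proof in the paper to compare your attempt against, and you should be aware that you are proposing a plan for an open problem, not reconstructing an existing argument.

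Judged on its own, your sketch is a sensible first attack, and you correctly and honestly identify the central obstruction yourself: the $\hat{G}$-invariance argument of Theorem~\ref{poilo} leans on the fact that, for a \emph{type two} oracle $H$, every $f$ queried inside $\{e\}(G,H)$ is itself $H$-computable, so one has a handle on the reachable inputs. With $M_{0}$ of type three, a subcomputation may invoke $M_{0}(G')$ at type two $G'$ built from the entire partial history of the construction, and no analogue of the ``queried arguments are computable in the oracle'' invariant is currently available; the ordinal-rank induction does not by itself bound which $G'$'s can appear. Your own diagnosis of this gap is accurate and is indeed the missing ingredient. A second, secondary gap you do not address is consistency-maintenance across stages: it is not enough, when handling index $e$, to leave some $f$ ``uncovered'' relative to $G_{e}$ — you must also ensure that the already-committed values $M_{0}(G)$ from earlier stages (together with whatever commitments the current computation $\{e\}(G_{e},M_{0})$ forces) do not already encode enough to reconstruct a valid sub-covering via Observation~\ref{nondezju}(3), and that the eventual total extension by ``least admissible bound'' does not collapse $M_{0}$ to something from which a $\Theta$ is recoverable via Corollary~\ref{cor9.3} or the Extension Theorem~\ref{ET}. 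In short: the strategy is plausible, the gaps are real and partly flagged by you, and no completion is known.
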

We finish this section with a remark on the exact formulation of (realisers for) local boundedness; recall that we used \emph{one functional} $G$ in $\LOC(F, G)$. 
\begin{rem}\label{nodiff}\rm
In order to be faithful to the original formulation of Pincherle, the bounding condition has to be given by two functionals $G_1$ and $G_2$, as follows: 
\[
\LOC^*(F, G_1,G_2)\equiv (\forall f , g\in C)\big[ g\in [\overline{f}G_1(f)] \di F(g)\leq G_2(f)    \big].
\]
Let $M^*$ be a functional which on input $(G_{1}, G_{2})$ provides an upper bound on $C$ for $F$ satisfying $\LOC^{*}(F, G_{1}, G_{2})$.  
A PR $M$ can be reduced to such $M^{*}$, and vice versa, as follows: $M(G) = M^*(G,G)$ and  $M^*(G_1,G_2) = M(\max\{G_1,G_2\})$.  % they are essentially equivalent. 
\end{rem}

\subsection{Realisers for original Pincherle's theorem}\label{PRR2}
In this section, we study the computational properties of realisers for $\PIT_{o}$.  
As discussed in Section \ref{introwpr}, there are two possible candidates for such realisers, in contrast to $\PIT_{\u}$, where there was only one natural choice.  
We shall study the most natural (and weakest) candidate and argue why the other (stronger) candidate does not really qualify as a realiser.  
Perhaps surprisingly, our natural candidate exhibits `extreme' computational behaviour similar to PRs, as shown in Section \ref{stream2}, despite the lack of uniformity.
\subsubsection{A realiser by any other name}\label{introwpr}
In this section, we sketch two possible candidates for the notion `realiser for $\PIT_o$', and argue for the study of the most natural (and weakest) one.  
These two kinds of realisers arise from the two possible kinds of realisers for $\ATR_{0}$: based on \eqref{WPR1} and \eqref{WPR2} respectively.  The latter formulas are classically equivalent, but yield very different realisers, as discussed now.  
 \begin{rem}\rm
 In \cite{dagsam,dagsamII} we proved that a special fan functional $\Theta$ (with Feferman's $\mu$) computes a realiser for $\ATR_0$ as follows: given a total ordering `$\prec$' and an arithmetical operator `$\Gamma$', we can compute a pair $(x,y)$ such that either $x$ codes a $\Gamma$-chain over $\prec$, or $y$ codes a $\prec$-descending sequence. This is a realiser for:
\be\label{WPR1}
 \neg \WO(\prec) \vee (\exists X\subset \N)(\textup{$X$ is a $\Gamma$-chain over $\prec$}).
 \ee
The situation is different for PRs: \emph{if} $\prec$ is a well-ordering, \emph{then} we can compute the unique $\Gamma$-chain $X$, and by the Extension Theorem \ref{ET}, there is for any $\PR$ $M$, 
a total functional $\Delta(\prec,\Gamma)$ that is primitive recursive in $M, \mu$, and that gives us a $\Gamma$-chain (over $\prec$) \emph{assuming} $\prec$ is a well-ordering.  
The difference is that for PRs, one does not obtain an infinite  $\prec$-descending sequence when $\Delta(\prec,\Gamma)$ is not a $\Gamma$-chain.
Thus, any PR yields a realiser for: 
\be\label{WPR2}
\WO(\prec)\di (\exists X\subset \N)(\textup{$X$ is a $\Gamma$-chain over $\prec$}).
\ee
In conclusion, a difference between special fan functionals and PRs is that the former yield a `stronger' realiser, namely for \eqref{WPR1}, while the latter provide a `weaker' realiser, namely (only) for \eqref{WPR2}.
\end{rem}
In light of the previous remark, we could define two kinds of realisers for $\PIT_{o}$, namely one similar to \eqref{WPR1} and one similar to \eqref{WPR2}.  
However, the first option is not really natural in light of the fact that any PR only provides a realiser for \eqref{WPR2}.  
Thus, we shall study only realisers for $\PIT_{o}$ based on \eqref{WPR2}, namely as follows. 
\bdefi A \emph{weak} Pincherle realiser is any functional $M_{o}^{3}$ satisfying
\be\tag{$\WPR(M_{o})$}
(\forall F^2,G^2 )(\LOC(F,G) \rightarrow (\forall f^1)(F(f) \leq M_o(F,G))).
\ee
\edefi
Note that $\WPR(M_{o})$ is just $\PR(M)$ where $F$ is an additional input.  
Finally, it is an interesting exercise to show that a realiser for $\PIT_{o}$ based on \eqref{WPR1} has the same computational and RM properties as the functional $\kappa_{0}^{3}$ from Section~\ref{HCT}.  
This observation also suggests that \eqref{WPR1} does not yield a natural notion of realiser.  

\subsubsection{A weak realiser for Pincherle's theorem}\label{stream2}
We study the computational properties of weak Pinchere realisers (WPRs hereafter) from the previous section.  %  their computational properties. 

\smallskip

Now, by Theorem \ref{mooi} and Corollary \ref{ofmoreinterest}, $\PIT_o$ is provable without countable choice, and equivalent to $\WKL$ given countable choice.  
These results suggest that $\PIT_{o}$ enjoys a certain constructive nature (see in particular Footnote \ref{PCbrigade}).  Now, $\WKL$ is equivalent to the Heine-Borel theorem for \emph{countable coverings}, and a finite sub-covering of the latter (say for $C$ or $[0,1]$) is trivially computable.  
By contrast, it is impossible to compute any WPR from any type two functional, as follows. 
\begin{theorem}\label{kackney}
There is no functional $M_o$ at type level 3, computable in any type 2 functional, such that $\WPR(M_{o})$.
\end{theorem}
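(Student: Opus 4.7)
The plan is to adapt the proof of Theorem \ref{poilo} to the setting where $M_{o}$ takes $F$ as an additional argument. Assume for contradiction that $M_{o}$ satisfies $\WPR(M_{o})$ and is Kleene computable in some type two functional $H$ via index $e_{0}$, so that $M_{o}(F,G)=\{e_{0}\}(F,G,H)$; without loss of generality $\exists^{2}$ is computable in $H$ and Gandy selection is available.

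I would first define two partial functionals $F_{0},G:C\to\N$, both partial computable in $H$ and defined exactly on the $H$-computable binary sequences: put $F_{0}(f)=0$ and $G(f)=e+1$ where $e$ is the Gandy-selected Kleene index of $f$. The ordinal-rank induction from the proof of Theorem \ref{poilo} now applies to both oracles at once: in the computation $\{e_{0}\}(\hat F,\hat G,H)$ for any total extensions $\hat F,\hat G$ of $F_{0},G$, every oracle call to $\hat F$ or $\hat G$ lands at an $H$-computable input, where these extensions agree with $F_{0},G$. Hence the output $N:=M_{o}(\hat F,\hat G)$ is a fixed natural number, independent of how $F_{0},G$ are extended off their domain.

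Next I would produce a specific pair $(\hat F,\hat G)$ witnessing the contradiction. Since distinct $H$-computable sequences receive distinct Gandy-selected indices, the clopen set $S:=\bigcup\{[\overline{f}G(f)]:f\text{ is }H\text{-computable and }G(f)\leq N\}$ has measure at most $\sum_{e=0}^{N-1}2^{-(e+1)}=1-2^{-N}<1$. The $H$-computable binary sequences form a countable (hence measure-zero) set, so there is a non-$H$-computable $g_{0}\in C\setminus S$. Let $\hat G(f)=G(f)$ on $H$-computable $f$ and $\hat G(f)=N+2$ elsewhere, and let $\hat F(g_{0})=N+1$ while $\hat F(g)=0$ for $g\neq g_{0}$ (so $\hat F$ extends $F_{0}$ since $g_{0}$ is non-$H$-computable). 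A short case split verifies $\LOC(\hat F,\hat G)$: for $H$-computable $f$ with $G(f)\leq N$ one has $g_{0}\notin[\overline{f}G(f)]$; for $H$-computable $f$ with $G(f)>N$ the bound $\hat F\leq N+1\leq G(f)$ suffices; and for non-$H$-computable $f$, $\hat G(f)=N+2\geq\hat F(g_{0})=N+1$. But $\WPR(M_{o})$ then forces $\hat F(g_{0})\leq M_{o}(\hat F,\hat G)=N$, contradicting $\hat F(g_{0})=N+1$.

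The main technical obstacle is ensuring the ordinal-rank induction still pins $M_{o}(\hat F,\hat G)$ down to a fixed value $N$ when both arguments vary over total extensions; this is where it matters that $F_{0}$ and $G$ are partial computable in $H$ on a common domain, so the inductive closure argument from Theorem \ref{poilo} applies uniformly in both oracles simultaneously.
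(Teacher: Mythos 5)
Your proof is correct and takes essentially the same route as the paper's: you lock the value $N=M_o(\hat F,\hat G)$ by the ordinal-rank induction over both oracles at once (which is the point where partial $H$-computability of both $F_0$ and $G$ on the same domain is used), show via the measure estimate that the covered set $S$ and the countable $H$-computable set together omit some $g_0$, and then place a value exceeding $N$ at $g_0$ while adjusting the realiser off the $H$-computable functions so $\LOC(\hat F,\hat G)$ still holds. The only cosmetic differences from the paper are that you bump $\hat F$ at a single non-$H$-computable point rather than on a positive-measure clopen set, and you use the Gandy index $+1$ directly as the realiser where the paper packages the same injectivity into an abstract $F^*$ with values $>1$.
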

\begin{proof} The proof follows the pattern of our proofs  of similar results.  Let $H$ with $\mu\leq_{\textup{Kleene}} H$ be any type 2 functional, and assume that $M_o$ is computable in $H$.  Let $F^*$ be partially $H$-computable and injective on the set of $H$-computable functions, taking only values $>1$ and let $G^*$ be the constant 0. Then $\LOC(G^*,F)$ for any total extension $F$ of $F^*$. 

\smallskip

The computation of $M_o(G^*,F) = N$ from $H$ will then only make oracle calls $F(f) = F^*(f)$ or $G(f) = 0$ for a countable set of $f$'s enumerable by an $H$-comptutable function. Define $G(f) := N+1$ if $f$ is neither in this enumerated set nor in any neighbourhood induced by $F^*(f)$ where $F^*(f) \leq N$, and 0 elsewhere.  Also define $F(f) = F^*(f)$ when the latter is defined, and $N+1$ elsewhere.  We now still have that $M_0^*(G,F) = N$ and $\LOC(G,F)$, but not that $N$ is an upper bound for $G$. This is the desired contradiction, and the theorem follows.
\end{proof}
Recall that `$\WPR$' stands for `weak PR' in the theorem.  
Despite this suggestive name, the combination of Theorem \ref{horniii} and Theorem \ref{mooier} yields a model that satisfies $\SIXK$, but falsifies $\PIT_{\u}$ and is lacking any and all realisers for Pincherle's theorem. 
A functional of type two is \emph{normal} if it computes the functional $\exists^{2}$.  
\bdefi\label{nikeh}
For normal $H^{2}$, the type structure ${\mathcal M}^H = \{{\mathcal M}^H_k\}_{k \in \N}$ is defined as ${\mathcal M}^H_0 = \N$ and ${\mathcal M}^H_{k+1}$ consists of all $\phi:{\mathcal M}^H_k \rightarrow \N$ computable in $H$ via Kleene's S1-S9.
The set ${\mathcal M}^H_1$ is the \emph{1-section} of $H$; the restriction of $H$ to $\mathcal{M}_{1}^{H}$ is in ${\mathcal M}^H_2$.
\edefi
\begin{theorem}\label{horniii}
For any normal $H^{2}$, the type structure ${\mathcal M}^H$ is a model for $\QFAC^{0,1}$, $ \neg \PIT_{\u}$, $\PIT_{o}$, $(\forall M_{o})\neg\WPR(M_{o})$, and $(\forall M_u) \neg \PR(M_u)$.  % and $(\forall M_{o}^{*})\neg\MPR(M_{o}^{*})$. 
\end{theorem}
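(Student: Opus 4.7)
The plan is to verify the five conjuncts in turn, exploiting that normality of $H$ makes Feferman's $\mu$, Gandy selection, and the relativised low-basis theorem available inside $\mathcal{M}^{H}$. For $\QFAC^{0,1}$, a quantifier-free $A(n,f)$ is uniformly $\exists^{2}$-decidable in $(n,f,H)$, so taking $Y(n):=\{e_{n}\}(H)$ for $e_{n}:=\mu e.\ [\{e\}(H)\text{ total}\wedge A(n,\{e\}(H))]$ yields $Y\in\mathcal{M}^{H}_{2}$ by Gandy selection. For $\PIT_{o}$, note that the $1$-section of $H$ is closed under the Turing jump (since $Y'$ is $\Sigma^{0}_{1}(Y)$ and thus decided by $\exists^{2}$), so the low-basis theorem relativised to $H$ places a path through every infinite binary tree in $\mathcal{M}^{H}_{1}$ back into $\mathcal{M}^{H}_{1}$; hence $\mathcal{M}^{H}\models\WKL$. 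Combining this with $\QFAC^{0,1}$ and the implication $\WKL+\QFAC^{0,1}\vdash\PIT_{o}$ from Corollary \ref{ofmoreinterest} gives $\mathcal{M}^{H}\models\PIT_{o}$.

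For $\neg\PIT_{\u}$ and $(\forall M_u)\neg\PR(M_u)$ I construct a single uniform counterexample in the style of Theorem \ref{poilo}. Define $G_{0}\in\mathcal{M}^{H}_{2}$ by $G_{0}(f):=1+(\mu e.\ \{e\}(H)=_{1}f)$, using that extensional equality between $H$-computable binary sequences is $\Pi^{0}_{1}(H)$ and hence $\exists^{2}$-decidable. Given any $N\in\N$, let $U_{N}:=\bigcup_{e<N}[\overline{f_{e}}(e+1)]$ with $f_{e}:=\{e\}(H)$ (when this denotes an element of $C$); a direct diagonal counting argument produces an $H$-computable $g_{0}\in C\setminus U_{N}$. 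Define $F_{N}\in\mathcal{M}^{H}_{2}$ by $F_{N}(g):=N+1$ if $g=_{1}g_{0}$ and $F_{N}(g):=0$ otherwise. The only non-trivial case of $\LOC(F_{N},G_{0})$ is $g=g_{0}\in[\overline{f}G_{0}(f)]$, but the defining property of $g_{0}$ forces the least $H$-index of $f$ to be $\geq N$, yielding $G_{0}(f)\geq N+1=F_{N}(g_{0})$. Thus $\{F_{N}\}_{N\in\N}$ witnesses $\mathcal{M}^{H}\models\neg\PIT_{\u}$ at $G_{0}$, and specialising $N:=M_{u}(G_{0})$ for arbitrary $M_{u}\in\mathcal{M}^{H}_{3}$ yields $\mathcal{M}^{H}\models\neg\PR(M_{u})$.

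For $(\forall M_o)\neg\WPR(M_o)$ I adapt the proof of Theorem \ref{kackney} to keep everything inside $\mathcal{M}^{H}$. Given $M_{o}\in\mathcal{M}^{H}_{3}$, let $F^{*}\in\mathcal{M}^{H}_{2}$ be $f\mapsto(\mu e.\ \{e\}(H)=_{1}f)+2$ (injective on $C\cap\mathcal{M}^{H}_{1}$, values $>1$) and $G^{*}\equiv 0$, so that $\LOC(G^{*},F^{*})$ is immediate. Put $N:=M_{o}(G^{*},F^{*})$ and let $E\subseteq\mathcal{M}^{H}_{1}$ be the $H$-enumerable countable set of oracle-query points in the S1-S9 computation of $M_{o}(G^{*},F^{*})$. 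Redefine $G\in\mathcal{M}^{H}_{2}$ to be $0$ on $E\cup\bigcup\{[\overline{f'}F^{*}(f')]:F^{*}(f')\leq N\}$ (the second union is finite by injectivity of $F^{*}$, so membership is $H$-decidable) and $N+1$ elsewhere. A two-case split on whether $F^{*}(f)\leq N$ establishes $\LOC(G,F^{*})$ in $\mathcal{M}^{H}$, while oracle-preservation on $E$ combined with extensionality of S1-S9 computations forces $M_{o}(G,F^{*})=N$. Any $H$-computable $g_{0}$ outside $E$ and the forbidden union then satisfies $G(g_{0})=N+1>N=M_{o}(G,F^{*})$, refuting $\WPR(M_{o})$. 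The main obstacle is precisely the bookkeeping in this step: verifying that $E$ is $H$-enumerable in a usable form so that $G$ genuinely lies in $\mathcal{M}^{H}_{2}$, and that extensionality of Kleene's schemes pins the value of $M_{o}$ unchanged under the oracle modification.
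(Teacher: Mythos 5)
Your decomposition tracks the paper's proof closely: the same auxiliary ``index functional'' is used to refute $\PIT_{\u}$, the same oracle-perturbation refutes weak Pincherle realisers (adapting Theorem~\ref{kackney}), and $\QFAC^{0,1}$ is extracted from the selection machinery available for normal $H$. Your route to $\PIT_{o}$ via jump-closure, $\WKL$, and Corollary~\ref{ofmoreinterest} is a legitimate alternative to the paper's direct argument, which instead runs the sequential-compactness proof of Theorem~\ref{ofinterest} inside $\mathcal{M}^{H}$ using $\exists^2$ to extract a convergent subsequence; both work.

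There is, however, a recurring technical gap that, as written, breaks every one of your auxiliary definitions. You define $e_n$, $G_0$, and $F^{*}$ via a \emph{least}-index search $\mu e.\ [\{e\}(H)=_1 f]$ and justify this by noting that extensional equality of $H$-computable binary sequences is $\Pi^0_1(H)$, hence $\exists^2$-decidable. That justification addresses the wrong obstacle. Equality between two functions \emph{already known to be total and $H$-computable} is indeed cheap; the problem is that to run the $\mu$-search you must, for each $e$ in increasing order, \emph{decide} whether $\{e\}(H)$ is total before you may discard it. For a normal type-$2$ functional $H$ the domain relation $\{e\}(H,n)\!\downarrow$ is $H$-\emph{semi}computable but not $H$-decidable (for $H=\exists^2$ it is $\Pi^1_1$-complete), so totality of $\{e\}(H)$ cannot be decided uniformly in $e$, and the minimisation may hang on a divergent index below the first good one. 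Consequently your $G_0$, $F^{*}$, and $e_n$ need not lie in $\mathcal{M}^{H}_2$, and the claimed membership is unjustified. The paper sidesteps this exactly by using Gandy selection, which picks a canonical (not least) index from the non-empty $H$-semicomputable set $\{e : \{e\}(H)=_1 f\}$ without ever deciding membership. Substituting Gandy-selected indices for your $\mu$-searches repairs the argument: your $\LOC(F_N,G_0)$ verification, the specialisation $N:=M_u(G_0)$ for $\neg\PR$, and the oracle-perturbation for $\neg\WPR$ all go through unchanged, since they use only that an index $<N$ forces $f$ into a fixed finite list, and that the index functional restricted to $\mathcal{M}^{H}_1$ is total and $H$-computable.
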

\begin{proof}
Fix $N\in \N$ and let $H^*(f) = e+1$ where $e$ is some $H$-index for $f$ found using Gandy selection. The first claim follows readily from Gandy selection. The second claim is proved as for \cite{dagsamIII}*{Theorem 3.4} by noting that $H^*$ restricted to the finite set $\{f_1 , \ldots , f_k\}$ of functions $f$ with $H^*(f) \leq N$ does not induce a sufficiently large sub-covering of $C$ to guarantee that all $F$ satisfying the bounding condition induced by $H^*$ is bounded by $N$. 

\smallskip

In order to prove the third claim, let $G\in {\mathcal M}^H_2$ be arbitrary, and let $F \in {\mathcal M}^H_2$ satisfy the bounding condition induced by $G$. Assume that $F$ is unbounded. Then, employing Gandy selection we can, computably in $H$, find a sequence $\{f_i\}_{i \in \N}$ such that $F(f_i) > i$ for all $i$. Using $\exists^2$ we can then find a convergent subsequence and compute its limit $f$. Then $F$ will be bounded by $G(f)$ on the set $[\bar fG(f)]$, contradicting the choice of the sequence $f_i$. 

\smallskip

In order to prove the fourth claim, assume that $M_o \in {\mathcal M}^H_3$ is a WPR in ${\mathcal M}^H$. Let $F$ be the constant zero, and let $M_o( F,H^*) = N$. We now use that $M_o$ is computable in $H$,  that thus the computation tree of $M_o(F,H^*)$ in itself is computable in $H$. In particular, the set of functions $f$ such that $F(f)$ is called upon in the computation of   $M_o(F,H^*)$ will have an enumeration computable in $H$. By a standard diagonal argument, any basic open neighbourhood will contain a function that is computable in $H$, but not in this set.

\smallskip

Let $f_1 , \ldots , f_k$ be as in the argument for the second claim. There is a basic neighbourhood in $C$ disjoint from $\bigcup_{i = 1}^k[\bar f_iH^*(f_i)]$, and thus there is some $f$ computable in $H$ that is not in any of the neighbourhoods $[\bar f_iH^*(f_i)]$,  and such that $F(f)$ is not called upon in the computation of $M_o(F,H^*)$. We may now define $F_N$ so that $F_N(f) = 0$ if $F(f) = 0$ is used in the computation of $M_o(G, H^*)$ or if $f \in [\bar f_i(H^*(f_i)]$ for $i = 1 , \ldots , k$, and we let $F_N(f) = N+1$ otherwise. Then the computation of $M_o(F_N,H^*)$ yields the same value as the computation of $M_o(F,H^*)$ and $F_N$ still satisfies the bounding condition induced by $H^*$, but the output does not give an upper bound for $F_N$.
The fifth  claim follows from the fourth claim.
\end{proof}
This theorem can be sharpened: $\Z_{2}^{\omega}$ cannot prove $\PIT_{o}$ by Corollary \ref{hantonio}.

\smallskip

Finally, we study a number of concepts in this paper (compactness, the Lindel\"of property, local-to-global principles) and it is a central question how hard it is to compute the objects in the associated theorems. 
In particular, one wonders whether there are natural upper and lower bound in terms of computational hardness.  We provide a positive answer to this question in the following remark. 
\begin{rem}[Bounds to computational hardness]\label{draak}\rm
First of all, the most complex step in reducing a covering of $2^{\N}$ or $[0,1]$ to a \emph{finite} one lies in reducing a general covering to a \emph{countable} one, as illustrated by the following three results.    
\begin{enumerate}
\item The existence of arithmetical coverings of $[0,1]$ or $2^{\N}$ with no hyper-arithmetically enumerable sub-coverings; this result is implicit in \cite{dagsamIII}*{\S4}.
\item The \emph{Lindel\"of lemma} states that general coverings can be reduced to countable coverings.  We show in Section \ref{not2bad} that this lemma for Baire space, called $\LIND(\N^{\N})$, combined with $(\exists^{2})$, implies $\Pi_{1}^{1}$-comprehension.  
\item Given $(\exists^{2})$, realisers for $\LIND(\N^\N)$ are computationally equivalent to the closure operator for \emph{non-monotone inductive definitions} on $\N$, and to $S^{2}+\Theta^{3}$ (see \cite{dagcie18}), all computationally powerful (and hard) objects.
\end{enumerate}
Secondly, in light of these results, general non-monotone induction is a natural upper bound for most of the classical theorems under investigation in this paper, from the point of view of computability. 
Moreover, it seems the associated results in \cite{dagcie18}*{\S3.1} can be converted to equivalences over $\RCAo$, i.e.\ general non-monotone induction is also a natural upper bound from the point of view of RM.
As to lower bounds, any PR computes Hunter's functional from \cite{hunterphd}*{p.\ 23} and the latter yields a `uniform' version of $\ATR_{0}$.  Thus, Hunter's functional provides a lower bound for uniform Pincherle's theorem, and all other theorems.  

\smallskip

Thirdly, the following figure provides a concise and elegant (but not entirely precise) overview of the above.
\be\tag{$\textsf{H}$}
\begin{array}{ccccc}
\left\{\begin{array}{c}
\textup{unbounded} \\
\textup{transfinite}\\
\textup{recursion} \\
\end{array}\right\}
&\di &
\left\{\begin{array}{c}
\textup{compactness} \\
\textup{and related}\\
\textup{theorems} \\
\end{array}\right\}
&\di &
\left\{\begin{array}{c}
\textup{bounded} \\
\textup{transfinite}\\
\textup{recursion} \\
\end{array}\right\}
\end{array}
\label{H}
\ee
Here, `unbounded' transfinite recursion takes place over the countable ordinals, while `bounded' transfinite recursion involves a fixed countable ordinal given by a code.
Note that $\Pi^1_2$-comprehension does not follow from the principles in \eqref{H}.  
%Thus, the latter also fall outside of the usual scale of comprehension axioms.  
\end{rem}
\section{Pincherle's and Heine's theorem in Reverse Mathematics}\label{prm}
Motivated by question \eqref{turk}, we classify the original and uniform versions of Pincherle's and Heine's theorems within the framework of higher-order RM.
In particular, we show that $\PIT_{o}$ is provable from \emph{weak K\"onig's lemma} \emph{given countable choice} as in $\QFAC^{0,1}$, but without the latter even $\SIXK$ cannot prove $\PIT_{o}$ (for any $k$); the system $\Z_{2}^{\Omega}$ of course proves $\PIT_{o}$ and does not involve countable choice.
To be absolutely clear, $\PIT_{o}$ is provable \emph{without $\QFAC^{0,1}$} by Theorem \ref{mooi}, but its logical hardness drops tremendously given $\QFAC^{0,1}$ in light of Corollary~\ref{ofmoreinterest} and Theorem~\ref{dagsamsyn}.  
The main question of RM, namely which set existence axioms are necessary to prove $\PIT_{o}$, therefore does not have an unique or unambiguous answer.
In this way, we observe a fundamental logical difference between countable compactness and the local-to-global principle $\PIT_{o}$, as discussed in Section~\ref{intro}.
\subsection{Pincherle's theorem and second-order arithmetic}\label{prm1}
We formulate a supremum principle which allows us to easily obtain $\HBU$ and $\PIT_{\u}$ from $(\exists^{3})$; this constitutes a significant improvement over the results in \cite{dagsamIII}*{\S3}.   
We show that $\PIT_{\u}$ is not provable in any $\SIXK$, but that $\ACAo+\QFAC^{0,1}$ proves $\PIT_{o}$.

\smallskip

First of all, a formula $\varphi(x^{1})$ is called \emph{extensional on $\R$} if we have
\[
(\forall x, y\in \R)(x=_{\R}y\di \varphi(x)\asa \varphi(y)).  
\]
Note that the same condition is used in RM for defining open sets as in \cite{simpson2}*{II.5.7}.  
\begin{princ}[$\LUB$]
For second-order $\varphi$ \(with any parameters\), if $\varphi(x^{1})$ is extensional on $\R$ and $\varphi(0)\wedge \neg\varphi(1)$, there is a least $y\in [0,1]$ such that $(\forall z \in (y, 1])\neg\varphi(z)$.  
\end{princ}
Secondly, we have the following theorem, which should be compared\footnote{Keremedis proves in \cite{kermend} that the statement \emph{a countably compact metric space is compact} is not provable in $\ZF$ minus the axiom of foundation.  This theorem does follow when the axiom of countable choice is added.} to \cite{kermend}*{\S2}.    
The reversal of the final implication is proved in Corollary \ref{eessje}, using $\QFAC^{0,1}$.
\begin{thm}\label{mooi}
The system $\RCAo$ proves $(\exists^{3})\di \LUB\di \HBU\di \HBU_{\c}\di  \PIT_{\u}$.
\end{thm}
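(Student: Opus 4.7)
The plan is to prove the chain one implication at a time, since each step is fairly short once the right witness or setup is in place.

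First, for $(\exists^{3})\to\LUB$: because $\exists^{3}$ decides any formula whose quantifiers range only over type-$0$ and type-$1$ objects (with any higher-order parameters), we may for each rational $q\in\Q\cap[0,1]$ compute a truth value for the second-order statement $(\exists z\in(q,1])\varphi(z)$. The desired threshold is
\[
y := \sup\{\,q\in\Q\cap[0,1] : (\exists z\in(q,1])\varphi(z)\,\},
\]
obtained by a straightforward binary-digit refinement using $\exists^{3}$ (with $\mu^{2}$ available as well). Extensionality of $\varphi$ on $\R$ plus the density of $\Q$ in $\R$ shows that $y$ is the least element of $[0,1]$ on whose tail $(y,1]$ the formula $\varphi$ fails.

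Second, for $\LUB\to\HBU$: fix $\Psi:\R\to\R^{+}$ and set
\[
\varphi(x) \equiv (\exists \langle y_{1},\dots,y_{k}\rangle)\bigl([0,x]\cap[0,1]\subseteq \textstyle\bigcup_{i\leq k}I_{y_{i}}^{\Psi}\bigr).
\]
This formula is second-order in the sense of \LUB, has $\Psi$ as higher-order parameter, and is extensional on $\R$ in $x$ because $[0,x]\cap[0,1]$ depends only on $x$ as a real. Clearly $\varphi(0)$ holds via the sequence $\langle 0\rangle$. Suppose toward a contradiction that $\neg\varphi(1)$; by $\LUB$, let $y\in[0,1]$ be least with $\varphi$ false on $(y,1]$. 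Minimality yields, for every $\delta>0$, some $z\in(y-\delta,y]$ with $\varphi(z)$; choosing $\delta=\Psi(y)$ and appending $y$ to a finite cover of $[0,z]$ produces a finite cover of every $[0,w]$ with $w<y+\Psi(y)$. If $y+\Psi(y)>1$ this gives $\varphi(1)$, a contradiction; otherwise $w:=y+\Psi(y)/2\in(y,1]$ satisfies $\varphi(w)$, again contradicting the choice of $y$.

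Third, for $\HBU\to\HBU_{\c}$: the excerpt itself attributes this to \cite{dagsamIII}*{Theorem~3.3}. Concretely, identify $C=2^{\N}$ with the Cantor ternary subset of $[0,1]$ and, given $G:C\to\N$, extend the induced canonical covering of $C$ to a canonical covering $\Psi$ of $[0,1]$ by assigning to each point outside $C$ a small radius disjoint from $C$; applying $\HBU$ to $\Psi$ and restricting the resulting finite sub-covering back to $C$ yields a witness for $\HBU_{\c}$. Finally, for $\HBU_{\c}\to\PIT_{\u}$: given $G^{2}$, let $\langle f_{1},\dots,f_{k}\rangle$ be the finite sub-covering of $C$ provided by $\HBU_{\c}$ and set $N:=\max_{i\leq k}G(f_{i})$. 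For any $F^{2}$ with $\LOC(F,G)$ and any $g\in C$, pick $i\leq k$ with $g\in[\overline{f_{i}}G(f_{i})]$; then $\LOC(F,G)$ gives $F(g)\leq G(f_{i})\leq N$. Since $N$ depends only on $G$, this is exactly the uniformity demanded by $\PIT_{\u}$.

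The main obstacle is the $\LUB\to\HBU$ step: one must verify extensionality of $\varphi$ as a formula about the real $x$, check that appending $y$ to the cover \emph{strictly} advances coverage past $y$ (the strict positivity of $\Psi(y)$ is essential here), and handle the right endpoint $1$ cleanly in the case analysis. The other three implications are close to routine once the right witnesses are identified.
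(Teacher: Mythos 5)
Your decomposition and three of the four implications closely track the paper's own proof. For $(\exists^{3})\to\LUB$ you invoke that $\exists^{3}$ decides every second-order formula with arbitrary higher-order parameters and then do an interval-halving search; this is, if anything, a more precise justification than the paper's brief appeal to $S_{k}^{2}$, which as defined only admits type-one parameters while $\LUB$ allows any. For $\LUB\to\HBU$ you set up essentially the same covering formula $\varphi$, apply $\LUB$ to find the least threshold $y$, and derive a contradiction by appending $I_{y}^{\Psi}$ to a finite cover of some $[0,z]$ with $z$ just below $y$; this is exactly the paper's argument, with a bit more edge-case bookkeeping. And $\HBU_{\c}\to\PIT_{\u}$ is verbatim the paper's step: bound any $F$ with $\LOC(F,G)$ by $\max_{i\leq k}G(f_{i})$ over the finite subcover.

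The genuine gap is in $\HBU\to\HBU_{\c}$. Your ``concrete'' sketch --- identify $C$ with a Cantor subset of $[0,1]$, extend $G$'s canonical covering to a covering $\Psi$ of $[0,1]$ by giving points off $C$ radii disjoint from $C$, apply $\HBU$, and restrict the finite subcover back to $C$ --- implicitly uses $(\exists^{2})$ in at least two places that $\RCAo$ alone does not supply: to define $\Psi$ you must decide whether a real number (given as a Cauchy sequence) lies in $C$ and, when it does, decode it into the corresponding binary/ternary string to look up $G$; and to restrict the finite subcover $\langle y_{1},\dots,y_{k}\rangle$ back to sequences $\langle f_{1},\dots,f_{k}\rangle$ you must again perform that decision and decoding for each $y_{i}$. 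The paper closes this step over $\RCAo$ with an excluded-middle case split on $(\exists^{2})\vee\neg(\exists^{2})$: if $(\exists^{2})$ holds, the translation you describe is available; if $\neg(\exists^{2})$, every $G^{2}$ is continuous on Baire space by \cite{kohlenbach2}*{Prop.~3.7}, and then $\WKL$ (a consequence of $\HBU$) gives uniform continuity and hence boundedness of $G$ on Cantor space by \cite{kohlenbach4}*{Prop.~4.10}, which yields $\HBU_{\c}$ directly. Citing \cite{dagsamIII}*{Theorem~3.3} for this implication is of course legitimate, but the sketch you add on top of it is not an $\RCAo$ argument as written and would need this case split to go through in the stated base theory.
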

\begin{proof}
For the first implication, note that $(\exists^{3})$ can decide the truth of any formula $\varphi(x)$ as in $\LUB$, as the former implies $S_{k}^{2}$ for any $k^{0}$ and therewith $\SIXk$.  
Hence, the usual interval-halving technique yields the least upper bound as required by $\LUB$.  
For the second implication, fix $\Psi:\R\di \R^{+}$ and consider 
\[
\varphi(x)\equiv x\in [0,1] \wedge (\exists w^{1^{*}})(\forall y^{1}\in [0,x])(\exists z\in w)(y\in I_{z}^{\Psi}), 
\]
which is clearly extensional on $\R$.  Note that $\varphi(0)$ holds with $w=\langle 0\rangle$, and if $\varphi(1)$, then $\HBU$ for $\Psi$ follows.  In case $\neg\varphi(1)$, we use $\LUB$ to find 
the least $y_{0}\in [0,1)$ such that $(\forall z>_{\R} y_{0})\neg\varphi(z)$.  However, by definition $[0, y_{0}-\Psi(y_{0})/2]$ has a finite sub-covering (of the canonical covering provided by $\Psi$), and hence clearly so does $[0, y_{0}+\Psi(y_{0})/2]$, a contradiction. 
For the final implication, to obtain $\PIT_{\u}$, let $F_{0}, G_{0}$ be such that $\LOC(F_{0}, G_{0})$ and let $w_{0}^{1^{*}}$ be the finite sequence from $\HBU_{\c}$ for $G=G_{0}$.  
Then $F_{0}$ is clearly bounded by $\max_{i<|w_{0}|}G_{0}(w(i))$ on Cantor space, and the same holds for \emph{any} $F$ such that $\LOC(F, G_{0})$, as is readily apparent. 

\smallskip

Finally, $\HBU\di \HBU_{\c}$ is readily proved \emph{given} $(\exists^{2})$, since the latter provides a functional which converts real numbers into their binary representation(s).  
Moreover, in case $\neg(\exists^{2})$ all functions on Baire space are continuous by \cite{kohlenbach2}*{Prop.\ 3.7}.  
Hence, $\HBU_{\c}$ just follows from $\WKL_{0}$ (which is immediate from $\HBU$): the latter lemma suffices to prove that a continuous function is uniformly continuous on Cantor space by \cite{kohlenbach4}*{Prop.~4.10}, and hence bounded.  
The law of excluded middle $(\exists^{2})\vee \neg (\exists^{2})$ finishes this part, as we proved $\HBU\di \HBU_{\c}$ for each disjunct.   
\end{proof}
The first part of the proof is similar to Lebesgue's proof of the Heine-Borel theorem from \cite{lebes1}.  We also note that Bolzano used a theorem similar to $\LUB$ (see \cite{nogrusser}*{p.\ 269}).
We now identify which comprehension axioms can prove $\PIT_{\u}$. 
\begin{thm}\label{mooier}
The system ${\Z}_{2}^{\Omega}$ proves $\PIT_{\u}$, while no system $\FIVEK^{\omega}+\QFAC^{0,1}$ \($k\geq1$\) proves it.  
\end{thm}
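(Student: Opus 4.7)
The plan is to handle the two halves of the theorem separately; each is essentially already packaged into machinery established earlier in the paper.

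For the positive direction, I would simply invoke Theorem~\ref{mooi}, which gives the chain $(\exists^{3}) \rightarrow \LUB \rightarrow \HBU \rightarrow \HBU_{\c} \rightarrow \PIT_{\u}$ over $\RCAo$. Since $\Z_{2}^{\Omega} \equiv \RCAo + (\exists^{3})$ by definition, the first claim is immediate, with no further work required.

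For the negative direction, the strategy is to build a type structure in which $\FIVEK^{\omega} + \QFAC^{0,1}$ holds but $\PIT_{\u}$ fails. The candidate is exactly the type structure $\mathcal{M}^{H}$ from Definition~\ref{nikeh}, taking $H := S_{k}^{2}$. Since $S_{k}^{2}$ for $k\geq 1$ computes $\exists^{2}$, the functional $H$ is normal in the sense of Definition~\ref{nikeh}, so Theorem~\ref{horniii} applies and hands us a model of $\QFAC^{0,1} + \neg \PIT_{\u}$ for free. The remaining point is to verify that $\mathcal{M}^{H}$ also models $\FIVEK^{\omega}$, i.e.\ that $(S_{k}^{2})$ holds in the structure. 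But by the very definition of $\mathcal{M}^{H}_{2}$ as the collection of functionals $\phi : \mathcal{M}^{H}_{1} \to \N$ computable in $H = S_{k}^{2}$ via S1-S9, the restriction of $S_{k}^{2}$ to $\mathcal{M}^{H}_{1}$ is trivially a member of $\mathcal{M}^{H}_{2}$ and satisfies the defining clause of $(S_{k}^{2})$ for all function parameters living in $\mathcal{M}^{H}_{1}$. Hence $\mathcal{M}^{S_{k}^{2}} \models \FIVEK^{\omega} + \QFAC^{0,1} + \neg \PIT_{\u}$, which is what we need.

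The main obstacle is really just bookkeeping: one has to check that the arguments in the proof of Theorem~\ref{horniii} (Gandy selection, the use of convergent subsequences via $\exists^{2}$, and the diagonalisation that produces an unbounded $F$ on a small covering) go through with $H = S_{k}^{2}$ rather than an arbitrary normal type-two functional. This is essentially automatic because Theorem~\ref{horniii} is stated for \emph{any} normal $H$, and $S_{k}^{2}$ qualifies. The only subtlety to watch is that the $\Sigma_{k}^{1}$-comprehension asserted by $(S_{k}^{2})$ is restricted to formulas with first- and second-order parameters (as noted in Footnote~\ref{christustepaard} and Section~\ref{HCT}), so one should confirm that the witness $S_{k}^{2}\!\restriction \mathcal{M}^{H}_{1}$ continues to decide such formulas correctly inside the model; this is clear because the relevant $\Sigma_{k}^{1}$-truth is absolute between the ambient universe and $\mathcal{M}^{H}$ when the parameters lie in $\mathcal{M}^{H}_{1}$.
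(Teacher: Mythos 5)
Your positive direction is exactly the paper's: cite Theorem~\ref{mooi} and note $\Z_{2}^{\Omega} \equiv \RCAo + (\exists^{3})$. Nothing more to say there.

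The negative direction has the right skeleton (a model $\mathcal{M}^{H}$ \`a la Definition~\ref{nikeh}, relying on Theorem~\ref{horniii} for $\QFAC^{0,1}$ and $\neg\PIT_{\u}$) but buries a genuine gap in the sentence ``this is clear because the relevant $\Sigma_{k}^{1}$-truth is absolute between the ambient universe and $\mathcal{M}^{H}$.'' That absoluteness is the crux, not a triviality, and it is \emph{not} a ZFC theorem once $k\geq 2$. What you actually need is: for every $f\in\mathcal{M}^{H}_{1}$, a $\Sigma_{k}^{1}$ formula $\psi(f)$ that is true in $\mathsf{V}$ has a witness $g\in\mathcal{M}^{H}_{1}$. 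For $k=1$ this follows from the Gandy basis theorem ($\Sigma_{1}^{1}$ witnesses exist below $\mathcal{O}$, hence are computable in the Suslin functional), but for $k\geq 2$ there is no such basis theorem in ZFC alone: a nonempty $\Sigma_{k}^{1}$ set need not contain any member computable in $S_{k}^{2}$. The paper rescues this by working under $\mathsf{V}=\mathsf{L}$, where the $\Delta_{2}^{1}$-good well-ordering of $\N^{\N}$ yields the required uniformization/basis behaviour and hence absoluteness of $\Pi_{k}^{1}$ for any $A\subseteq\N^{\N}$ closed under computability in the $S_{j}^{2}$'s, and then observes that the model so constructed, being countable, remains a model back in $\mathsf{V}$. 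Without explicitly invoking $\mathsf{V}=\mathsf{L}$ (or an equivalent absoluteness apparatus) and checking that the $1$-section of $H$ is closed under the relevant basis construction, your argument for $\mathcal{M}^{H}\models (S_{k}^{2})$ does not go through.

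One further difference worth noting, though it is not a gap: you take $H:=S_{k}^{2}$ and build a separate countermodel for each $k$. The paper instead packs the whole hierarchy into $S_{\omega}^{2}(k,f):=S_{k}^{2}(f)$ and takes $\mathcal{M}^{S_{\omega}^{2}}$, so that $\mathcal{M}^{S_{\omega}^{2}}_{1}$ is closed under computability in \emph{every} $S_{j}^{2}$ simultaneously. This produces a single model refuting $\PIT_{\u}$ while satisfying $\SIXK+\QFAC^{0,1}$ for all $k$ at once, and it also makes the absoluteness clause (closure under all $S_{j}^{2}$) easier to state and verify. If you do want to run the fixed-$k$ variant, you would still need the $\mathsf{V}=\mathsf{L}$ argument, together with a check that the $\Sigma_{k}^{1}$-uniformizing witness is in fact a $\Sigma_{k}^{1}$-singleton, hence $\Delta_{k}^{1}$, hence computable in $S_{k}^{2}$; none of this is ``automatic bookkeeping.''
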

\begin{proof}
The first part follows from Theorem \ref{mooi}.  
For the second part, we construct a countable model $\mathcal{M}$ for $\FIVEK^{\omega} + \QFAC^{0,1}$ assuming that $\textsf{V = L}$, where $\textsf{L}$ is G\"odel's universe of constructible sets. This is not a problem, since the model $\mathcal M$ we construct also is a model in the full set-theoretical universe $\textsf{V}$. However, this means that when we write $S^2_k$ in this proof, we really mean the relativised version  $(S^2_k)^{\textsf{L}}$. The advantage is that due to the $\Delta^1_2$-well-ordering of $\N^\N$ in \textsf{L}, if a set $A \subseteq \N^\N$ is closed under computability relative to all $S^2_k$, all $\Pi^1_k$-sets are absolute for $(A,{\textsf{L}})$ and hence  $(S^2_k)^A$ is a sub-functional of  $(S^2_k)^{\textsf{L}}$ for each $k$. We now drop the superscript `\textsf{L}' for the rest of the proof.
Put $S^2_\omega(k,f) := S^2_k(f)$ and note that $S^2_\omega$ is a normal functional in which all $S^2_k$ are computable.
Let ${\mathcal M} = {\mathcal M}^{S^2_\omega}$ be as in Definition \ref{nikeh}. 
This model is as requested by Theorem \ref{horniii}, i.e.\ $\neg\PIT_{\u}$ holds.  
\end{proof}
The model $\mathcal{M}$ can be used to show that many classical theorems based on uncountable data cannot be proved in any system $\SIXK+ \QFAC^{0,1}$,
e.g.\ the Vitali covering lemma and the uniform Heine theorem from Section~\ref{heineke}. 

\smallskip

Finally, we show that $\PIT_{o}$ is much easier to prove than $\PIT_{\u}$.
By contrast, weak Pincherle realisers, i.e.\ \emph{realisers} for $\PIT_{o}$, are not computably by any type two functional as established in Section \ref{PRR2}.  
\begin{thm}\label{ofinterest}
The system $\ACAo+\QFAC^{0,1}$ proves $\PIT_{o}$.  
\end{thm}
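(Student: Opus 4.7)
The plan is a straightforward contradiction/compactness argument, in the spirit of the proof of the third claim of Theorem~\ref{horniii}, but now carried out in the formal system $\ACAo + \QFAC^{0,1}$ rather than in a type structure. Fix $F,G:C\di\N$ with $\LOC(F,G)$ and suppose towards a contradiction that $F$ is unbounded on $C$, i.e.\ $(\forall n^0)(\exists g\in C)(F(g)>n)$.

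First, I would convert this hypothesis to a form to which $\QFAC^{0,1}$ applies. Define $F^*:\N^\N\di\N$ by $F^*(g):=F(\lambda k.\min(g(k),1))$, which agrees with $F$ on $C$ and makes the predicate $F^*(g)>n$ quantifier-free in the sense of $\L_\omega$. The assumption then reads $(\forall n^0)(\exists g^1)(F^*(g)>n)$, and $\QFAC^{0,1}$ supplies a sequence $\phi:\N\di\N^\N$ with $F^*(\phi(n))>n$ for all $n$; setting $f_n:=\lambda k.\min(\phi(n)(k),1)\in C$ gives a sequence of binary sequences with $F(f_n)>n$.

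Next I would construct a limit point $f\in C$ of $\{f_n\}_{n\in\N}$ by an arithmetical tree-pruning argument using $(\exists^2)$, which is available in $\ACAo$. Inductively, having defined $\bar fk$ so that infinitely many $n$ satisfy $f_n\in[\bar fk]$, the statement ``infinitely many $n$ satisfy $f_n\in[\bar fk* \langle i\rangle]$'' is arithmetical in the parameters $F,G$ and the sequence $\{f_n\}$, so $(\exists^2)$ lets us decide it and pick the appropriate bit $f(k)\in\{0,1\}$. This produces $f\in C$ such that for every $k$, infinitely many indices $n$ satisfy $f_n\in[\bar fk]$.

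Finally, apply $\LOC(F,G)$ at this $f$: taking $k:=G(f)$, infinitely many $f_n$ lie in $[\bar fG(f)]$ and hence satisfy $F(f_n)\leq G(f)$; but for all $n>G(f)$ with $f_n\in[\bar fG(f)]$ we have $F(f_n)>n>G(f)$, a contradiction. Therefore $F$ is bounded on $C$, which yields $\PIT_o$. The only mildly delicate step is the first one—phrasing the unboundedness assumption with a quantifier-free matrix so that $\QFAC^{0,1}$ directly applies; the limit-point construction and the final contradiction are standard in $\ACAo$.
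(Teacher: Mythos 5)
Your proof is correct and follows essentially the same route as the paper's: suppose $F$ unbounded, apply $\QFAC^{0,1}$ to obtain a sequence $\alpha_n$ with $F(\alpha_n)>n$, extract a limit point (the paper by invoking the $\ACA_0$-provable sequential compactness of Cantor space, you by an explicit arithmetical tree-pruning via $\exists^2$), and derive a contradiction from $\LOC(F,G)$ at that limit. The remarks about recasting the unboundedness hypothesis with a quantifier-free matrix are a sensible technical refinement, but the argument is the same.
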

\begin{proof}
Recall that $\ACA_{0}$ is equivalent to various convergence theorems by \cite{simpson2}*{III.2}, i.e.\ $\ACAo$ proves that a sequence in Cantor space has a convergent subsequence.  
Now let $F, G$ be such that $\LOC(F, G)$ and suppose $F$ is unbounded, i.e.\ $(\forall n^{0})(\exists \alpha\leq 1)(F(\alpha)>n)$.  Applying $\QFAC^{0,1}$, we get a sequence $\alpha_{n}$ in Cantor space such that $(\forall n^{0})(F(\alpha_{n})>n)$.
By the previous, the sequence $\alpha_{n}$ has a convergent subsequence, say with limit $\beta\leq_{1}1$.  By assumption, $F$ is bounded by $G(\beta)$ in $[\overline{\beta}G(\beta)]$, which contradicts the fact that $F(\alpha_{n})$ becomes 
arbitrarily large close enough to $\beta$.   
\end{proof}
We show that $\PIT_{o}\asa \WKL$ in Corollary \ref{ofmoreinterest}.  On one hand, for conceptual reasons\footnote{The $\ECF$-translation is discussed in the context of $\RCAo$ in \cite{kohlenbach2}*{\S2}.  
Intuitively speaking, $\ECF$ replaces any object of type two or higher by an RM-code. 
%BABA
Applying $\ECF$ to $\PIT_{o}$, we obtain a sentence equivalent to $\WKL_{0}$, and hence $\PIT_{o}$ has the first-order strength of $\WKL$.\label{ecf}}, $\PIT_{o}$ cannot be stronger than $\WKL$ in terms of first-order strength.  On the other hand, reflection upon the previous proof suggests that any proof of $\PIT_{o}$ has to involve $\ACAo$.  Thus, the aforementioned equivalence is surprising.    

\subsection{Pincherle's theorem and uncountable Heine-Borel}
We establish that (versions of) Pincherle's theorem and the Heine-Borel theorem are equivalent.  % starting with the following. 
\begin{thm}\label{krooi} 
The system $\ACA_{0}^{\omega}+\QFAC^{2,1}$ proves 
\be\label{worski}
\HBU_{\c}\asa \HBU\asa (\exists \Theta)\SCF(\Theta)\asa \PIT_{\u}\asa (\exists M)\PR(M).
\ee
\end{thm}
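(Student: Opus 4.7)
Our strategy is to close a cycle of implications. Theorem~\ref{mooi} already provides $\HBU \to \HBU_{\c} \to \PIT_{\u}$ in $\RCAo$, and the converse $\HBU_{\c} \to \HBU$ is established in \cite{dagsamIII}*{Theorem~3.3}. Three further links are essentially bookkeeping. Firstly, $\HBU_{\c} \to (\exists\Theta)\SCF(\Theta)$ is obtained by Skolemising the defining $(\forall G^{2})(\exists\vec f)$-statement of $\HBU_{\c}$ via $\QFAC^{2,1}$; its matrix asserts that a finite union of basic clopen sets covers $C$ and is thus decidable from $G$ and $\vec f$ using $\exists^{2}$, hence quantifier-free in the required sense. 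Secondly, $(\exists\Theta)\SCF(\Theta) \to (\exists M)\PR(M)$ is witnessed by $M(G) := \max\{G(f) : f\in\Theta(G)\}$: for any $F$ with $\LOC(F,G)$ and any $g\in C$, pick $f\in\Theta(G)$ with $g \in [\overline{f}G(f)]$ to conclude $F(g) \leq G(f) \leq M(G)$. Thirdly, $(\exists M)\PR(M) \to \PIT_{\u}$ is immediate by setting $N := M(G)$. It remains only to close the cycle with $\PIT_{\u} \to \HBU_{\c}$, which is also the essence of direction $(1)\Rightarrow(3)$ of Observation~\ref{nondezju}.

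The main obstacle is this closing implication. Fix $G^{2}$ and let $N$ witness $\PIT_{\u}$ for $G$. Consider the classically-defined set $\mathcal{S} := \{\sigma\in 2^{\leq N} : (\exists h\in [\sigma])(G(h) = |\sigma|)\}$ together with $V := \bigcup_{\sigma\in\mathcal{S}}[\sigma]$. Although $\mathcal{S}$ is $\Sigma^{1}_{1}$-defined in $G$ and hence not available as a set in $\ACAo$, by construction it lies inside the \emph{finite} set $2^{\leq N}$, so $V$ is a finite union of basic clopen sets. We prove $V = C$ by contradiction. Suppose $V \neq C$; then $C\setminus V$ is a non-empty clopen subset of $C$ and hence contains some basic clopen $[\tau]$. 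Define the functional $F_{\tau}(g) := (N+1)\cdot\chi_{[\tau]}(g)$, which is quantifier-free definable using only $N$ and $\tau$. To verify $\LOC(F_{\tau}, G)$, let $g \in [\overline{f}G(f)]$: if $g\notin[\tau]$ then $F_{\tau}(g) = 0 \leq G(f)$; if $g\in[\tau]$ and $G(f)\leq N$, then $\overline{f}G(f) \in \mathcal{S}$ with witness $f$, so $[\overline{f}G(f)] \subseteq V$ and thus $g\in V\cap[\tau]=\emptyset$, absurd; so $G(f) > N$, yielding $F_{\tau}(g) = N+1 \leq G(f)$. Hence $\LOC(F_{\tau}, G)$ holds, yet $F_{\tau}$ takes value $N+1$ on any extension of $\tau$ in $C$, contradicting the choice of $N$. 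We conclude $V = C$.

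From $V = C$ the finite sub-covering is extracted as follows: each $\sigma \in 2^{N+1}$ lies in some $[\sigma']$ with $\sigma'\in\mathcal{S}$ a prefix of $\sigma$, so by the definition of $\mathcal{S}$ there is $h_{\sigma}\in[\sigma']$ with $G(h_{\sigma})=|\sigma'|\leq N$, and then $[\sigma]\subseteq [\sigma'] = [\overline{h_{\sigma}}G(h_{\sigma})]$; hence for each $G$ there is a finite list $\vec f$ covering $C$ with $G$-bound $\leq N$. Uniformising over $G$ by applying $\QFAC^{2,1}$ to this $(\forall G)(\exists\vec f)$-statement (whose matrix is again decidable from $G$, $\vec f$, and $\exists^{2}$) produces $\Theta$ with $\SCF(\Theta)$, and thereby $\HBU_{\c}$, closing the cycle. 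The delicate point throughout is that $\mathcal{S}$ is not effectively computable from $G$ in $\ACAo$; however, neither the construction of $F_{\tau}$ (which uses only the classical existence of $\tau$ under the assumption $V\neq C$) nor the Skolemisation via $\QFAC^{2,1}$ (which hands us $\vec f$ as a function of $G$) requires an explicit enumeration of $\mathcal{S}$.
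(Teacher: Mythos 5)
Your cycle of implications, and in particular the key direction $\PIT_{\u}\di\HBU_{\c}$, matches the paper's proof: what you call ``$V=C$'' is exactly the paper's claim \eqref{contrje}, and your contradiction via $F_{\tau}$ supported on a basic clopen $[\tau]$ is a mild variant of the paper's $F_{0}$ supported on the single point $f_{0}$ (both constructions witness a violation of $\PIT_{\u}$; yours avoids the use of $\exists^{2}$ to decide $h=_{1}f_{0}$, which is a small aesthetic improvement but not a different idea). The remaining links ($\HBU_{\c}\asa\HBU\asa(\exists\Theta)\SCF(\Theta)$ from \cite{dagsamIII}, the definition $M(G):=\max\{G(f):f\in\Theta(G)\}$, and the triviality of $(\exists M)\PR(M)\di\PIT_{\u}$) also coincide with the paper, so the proposal is correct and essentially the same proof.
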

\begin{proof}
The first two equivalences in \eqref{worski} are in \cite{dagsamIII}*{Theorem 3.3}, while $\HBU_{\c}\di \PIT_{\u}$ may be found in Theorem \ref{mooi}. 
By \cite{dagsamIII}*{\S2.3}, $\Theta$ as in $\SCF(\Theta)$ computes a finite sub-covering on input an open covering of Cantor space (given by a type two functional); hence $(\exists \Theta)\SCF(\Theta)\di (\exists M)\PR(M)$ follows in the same was as for $\HBU_{\c}\di \PIT_{\u}$ in the proof of Theorem \ref{mooi}.  
Finally, $(\exists M)\PR(M)\di \PIT_{\u}$ is trivial, and we now prove the remaining implication $\PIT_{\u}\di \HBU_{\c}$ in $\ACAo+\QFAC^{1,1}$.  
To this end, fix $G^{2}$ and let $N_{0}$ be the bound from $\PIT_{\u}$.  
We claim:
\be\label{contrje}
(\forall f^{1}\leq 1)(\exists g^{1}\leq 1)(G(g)\leq N_{0}\wedge f\in[ \overline{g}G(g)]).
\ee
Indeed, suppose $\neg\eqref{contrje}$ and let $f_{0}$ be such that $(\forall g\leq 1)( f_{0}\in [\overline{g}G(g)]\di G(g)> N_{0})$.
Now use $(\exists^{2})$ to define $F_{0}^{2}$ as follows: $F_{0}(h):=N_{0}+1$ if $h=_{1}f_{0}$, and zero otherwise.   
By assumption, we have $\LOC(F_{0}, G)$, but clearly $F(f_{0})>N_{0}$ and $\PIT_{\u}$ yields a contradiction.  
Hence, $\PIT_{\u}$ implies \eqref{contrje}, and the latter provides a finite sub-covering for the canonical covering $\cup_{f\leq 1}[\overline{f}G(f)]$.  
Indeed, apply $\QFAC^{1,1}$ to \eqref{contrje} to obtain a functional $\Phi^{1\di 1}$ providing $g$ in terms of $f$.  
The finite sub-covering (of length $2^{N_{0}}$) then consists of all $\Phi(\sigma*00\dots)$ for all binary $\sigma$ of length $N_{0}$.  
\end{proof}
By the previous proof, a Pincherle realiser $M$ provides an upper bound, namely $2^{M(G)}$, for the size of the finite sub-covering of the canonical covering of $G$, but the contents 
of that covering is not provided (explicitly) in terms of $M$. 
This observed difference between special fan functionals and Pincherle realisers also supports the conjecture that $\Theta$-functionals are not computable in any PR as in Conjecture \ref{corkes}.

\smallskip

The previous theorem is of historical interest: Hildebrandt discusses the history of the Heine-Borel theorem in \cite{wildehilde} and qualifies Pincherle's theorem as follows.
\begin{quote}
Another result carrying within it the germs of the Borel Theorem is due to S. Pincherle [\dots] (\cite{wildehilde}*{p.\ 424})
\end{quote}
The previous theorem provides evidence for Hildebrandt's claim, while the following two corollaries provide a better result, for $\PIT_{\u}$ and $\PIT_{o}$ respectively.  
Note that the base theory is conservative over $\RCA_{0}$ by the proof of \cite{kohlenbach2}*{Prop.\ 3.1}.
\begin{cor}\label{eessje}
The system $\RCAo+\QFAC^{0,1}$ proves $\PIT_{\u}\asa \HBU_{\c}\asa \HBU$.
\end{cor}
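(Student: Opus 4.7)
The forward chain $\HBU \di \HBU_{\c} \di \PIT_{\u}$ is already in Theorem \ref{mooi} over the weaker base $\RCAo$, so the plan is to prove the two reversals $\PIT_{\u}\di\HBU_{\c}$ and $\HBU_{\c}\di\HBU$ over $\RCAo+\QFAC^{0,1}$, working around the use of $\QFAC^{2,1}$ and $(\exists^{2})$ that made Theorem \ref{krooi} require a stronger base theory.

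For $\PIT_{\u}\di\HBU_{\c}$, I would fix $G^{2}$, obtain the bound $N_{0}$ from $\PIT_{\u}$, and head for the intermediate statement
\[
(*)\quad (\forall f\leq_{1}1)(\exists g\leq_{1}1)\big(G(g)\leq N_{0}\wedge f\in[\overline{g}G(g)]\big).
\]
The key move is to replace the external use of $(\exists^{2})$ in Theorem \ref{krooi} by an internal case split on $(\exists^{2})\vee\neg(\exists^{2})$. Under $(\exists^{2})$ I would redo Theorem \ref{krooi} verbatim: given a putative counterexample $f_{0}$ to $(*)$, the functional $F_{0}$ taking value $N_{0}+1$ at $f_{0}$ and $0$ elsewhere is definable using $\exists^{2}$, satisfies $\LOC(F_{0},G)$, and contradicts $\PIT_{\u}$. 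Under $\neg(\exists^{2})$ every type two functional is continuous by \cite{kohlenbach2}*{Prop.\ 3.7}, $\WKL$ is available via $\PIT_{\u}\di\PIT_{o}\di\WKL$ and the forthcoming Corollary \ref{ofmoreinterest}, and a continuous $G:C\di\N$ is then bounded on compact $C$; the canonical covering of $G$ therefore draws from a finite set of basic open neighbourhoods and $\HBU_{\c}$ follows directly, so $(*)$ is moot in this branch. In the $(\exists^{2})$-branch, $(*)$ is applied only at the $2^{N_{0}}$ points $\sigma*0^{\omega}$ with $\sigma\in 2^{N_{0}}$, and the finitely many witnesses $g_{\sigma}$, selected via $\QFAC^{0,1}$, supply the finite sub-covering demanded by $\HBU_{\c}$: any $f\in C$ agrees with one such $\sigma*0^{\omega}$ on its first $N_{0}$ bits, and $G(g_{\sigma})\leq N_{0}=|\sigma|$ forces $f\in[\overline{g_{\sigma}}G(g_{\sigma})]$.

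For $\HBU_{\c}\di\HBU$ the same case split would apply. Under $(\exists^{2})$, the continuous surjection $\pi:C\di[0,1]$ given by $\pi(f):=\sum_{n}2^{-n-1}f(n)$ translates a gauge $\Psi:\R\di\R^{+}$ into $G^{2}$ by letting $G(f)$ be the least $n$ with $\pi([\overline{f}n])\subseteq I_{\pi(f)}^{\Psi}$; the binary digits of the positive real $\Psi(\pi(f))$ needed for this definition are read off via $\exists^{2}$. Pushing the sub-covering provided by $\HBU_{\c}$ forward along $\pi$ yields $\HBU$. Under $\neg(\exists^{2})$ the gauge $\Psi$ itself is continuous, and $\HBU_{\c}\di\PIT_{\u}\di\WKL$ gives uniform continuity of $\Psi$ together with a positive lower bound on $[0,1]$, from which a finite sub-covering falls out by elementary subdivision.

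The principal obstacle will be the $\neg(\exists^{2})$-branches: one must ensure that $\PIT_{\u}$ (respectively $\HBU_{\c}$) delivers $\WKL$ without smuggling in $\exists^{2}$, and that $\WKL$ combined with continuity really suffices both to bound $G$ on $C$ and to handle $\Psi$ on $[0,1]$. Both rely on the forthcoming Corollary \ref{ofmoreinterest} and on standard compactness arguments for continuous integer-valued functions on $C$, so the delicate point is essentially the correct ordering of implications within $\RCAo+\QFAC^{0,1}$.
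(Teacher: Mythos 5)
Your overall architecture matches the paper's: take $\HBU\di\HBU_{\c}\di\PIT_{\u}$ from Theorem \ref{mooi}, and prove $\PIT_{\u}\di\HBU_{\c}$ by the excluded-middle split on $(\exists^{2})\vee\neg(\exists^{2})$, using $\QFAC^{0,1}$ on a discretized version of $(*)$ in the $(\exists^{2})$-branch. That part of your plan is sound.

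There is, however, a genuine circularity in how you obtain $\WKL$ for the $\neg(\exists^{2})$-branch. You invoke ``the forthcoming Corollary \ref{ofmoreinterest}'' for $\PIT_{o}\di\WKL$, but the proof of the reverse direction of Corollary \ref{ofmoreinterest} in the paper consists precisely of a pointer back to ``the first part of the proof of Corollary \ref{eessje}.'' So $\PIT_{o}\di\WKL$ cannot be deferred; it is a concrete step that must be established \emph{inside} the present proof, before the case split. The paper does this as follows: given a binary tree $T\leq_{1}1$ with no path, $(\forall f\leq 1)(\exists n)(\overline{f}n\not\in T)$; applying $\QFAC^{1,0}$ (already part of $\RCAo$) and quantifier-free induction one obtains $H^{2}$ with $H(f)$ the \emph{least} $n$ such that $\overline{f}n\not\in T$. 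This $H$ is continuous on $C$ with itself as modulus of continuity, hence locally bounded with $G:=H$ as realiser, so $\PIT_{o}$ bounds $H$ and $T$ is finite. Without this explicit argument your $\neg(\exists^{2})$-branch has nothing to stand on.

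A smaller point: you reprove $\HBU_{\c}\di\HBU$ from scratch via a continuous surjection $\pi:C\di[0,1]$ and another $(\exists^{2})$-split. This isn't wrong, but it is unnecessary work: the paper just cites that $\HBU_{\c}\di\HBU$ was already established over $\RCAo$ in \cite{dagsamIII}*{Theorem 3.3}. If you do insist on the pullback argument, be careful that you are defining $G$ on $C$ (not $[0,1]$) so no extensionality issues arise, and that you push the finite $C$-subcover forward using surjectivity of $\pi$ rather than injectivity; your sketch handles this correctly but the details deserve spelling out.
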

\begin{proof}
The reverse implications are immediate (over $\RCAo$) from Theorem~\ref{mooi}.  For the first forward implication, $\PIT_{o}$ readily implies $\WKL$ as follows: If a tree $T\leq_{1}1$ has no path, i.e.\ $(\forall f\leq 1)(\exists n)(\overline{f}n\not \in T)$, then using quantifier-free induction and $\QFAC^{1,0}$, there 
is $H^{2}$ such that $(\forall f\leq 1)(\overline{f}H(f)\not \in T)$ and $H(f)$ is the least such number.  Clearly $H^{2}$ is continuous on Cantor space and has itself as a modulus of continuity.   
Hence, $H^{2}$ is also locally bounded, with itself as a realiser for this fact.  By $\PIT_{o}$, $H$ is bounded on Cantor space, which yields that $T\leq_{1} 1$ is finite.   

\smallskip

Secondly, if we have $(\exists^{2})$, then the (final part of the) proof of Theorem \ref{krooi} goes through by applying $\QFAC^{0,1}$ to 
\be\label{contrje2}
(\forall \sigma^{0^{*}}\leq 1)(\exists g^{1}\leq 1)\big(|\sigma|=N_{0} \di (G(g)\leq N_{0}\wedge \sigma\in[ \overline{g}G(g)])\big).
\ee
rather than using \eqref{contrje}.  
On the other hand, if we have $\neg(\exists^{2})$, then \cite{kohlenbach2}*{Prop.~3.7} yields that all $G^{2}$ are continuous on Baire space.  
Since $\WKL$ is given, \cite{kohlenbach4}*{4.10} implies that all $G^{2}$ are uniformly continuous on Cantor space, and hence have an upper bound there.  The latter immediately 
provides a finite sub-covering for the canonical covering of $G^{2}$, and $\HBU_{\c}$ follows.  Since we are working with classical logic, we may conclude $\HBU_{\c}$ by invoking the law of excluded middle $(\exists^{2})\vee \neg(\exists^{2})$.

\smallskip

Finally, $\HBU_{c}\di \HBU$ was proved over $\RCAo$ in \cite{dagsamIII}*{Theorem 3.3}. 
\end{proof}
The previous proof suggests that the RM of $\HBU$ is rather robust: given a theorem $\T$ such that $[\T+(\exists^{2})]\di \HBU\di \T\di \WKL$, we `automatically' obtain $\HBU\asa \T$ over the same base theory, using the previous `excluded middle trick'. 

\smallskip

Note that $\QFAC^{0,1}$ is interesting in its own right as it is exactly what is needed to prove the \emph{pointwise} equivalence
between epsilon-delta and sequential continuity for Polish spaces, i.e.\ $\textsf{ZF}$ alone does not suffice (see \cite{kohlenbach2}*{Rem.\ 3.13}). 
Nonetheless, we can do without $\QFAC^{0,1}$ in Corollary \ref{eessje}, which is perhaps surprising in light of the differences between special fan functionals and PRs.  On the other hand, the proof proceeds by contradiction and lacks computational content.  
We also need \textsf{IND}, the induction axiom for all formulas in the language of higher-order arithmetic; the following base theory has the first-order strength of Peano arithmetic.    
\begin{cor}\label{eessjelk}
The system $\RCAo+\textsf{\textup{IND}}$ proves $\PIT_{\u}\asa \HBU_{\c}$.
\end{cor}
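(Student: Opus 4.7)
The plan is to adapt the proof of Corollary~\ref{eessje} by replacing the single appeal to $\QFAC^{0,1}$ with a \emph{finite} choice principle, which is provable in $\RCAo+\textsf{\textup{IND}}$ by induction on the length of the sequence of witnesses to be assembled; since the induction statement existentially quantifies over binary functions (packaged as sections of a single type-$1$ object), this genuinely requires full $\textsf{\textup{IND}}$ rather than the $\Sigma^{0}_{1}$-induction automatic in $\RCAo$. The reverse implication $\HBU_{\c}\di\PIT_{\u}$ is already given by Theorem~\ref{mooi} over $\RCAo$.

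For the forward direction, I would split on $(\exists^{2})\vee\neg(\exists^{2})$ as in Corollary~\ref{eessje}. In the case $(\exists^{2})$, fix $G^{2}$, let $N_{0}$ be the bound from $\PIT_{\u}$, and first derive \eqref{contrje} \emph{without any choice}: if \eqref{contrje} fails with counterexample $f_{0}$, then $(\exists^{2})$ allows me to define $F_{0}(h):=N_{0}+1$ when $h=_{1}f_{0}$ and $0$ otherwise, so that $\LOC(F_{0},G)$ holds while $F_{0}(f_{0})>N_{0}$ contradicts $\PIT_{\u}$, exactly as in the proof of Theorem~\ref{krooi}. Next, for each binary $\sigma$ of length $N_{0}$, applying \eqref{contrje} to $f_{\sigma}:=\sigma*00\dots$ produces some $g\leq_{1}1$ with $G(g)\leq N_{0}$ and $\overline{g}G(g)=\overline{f_{\sigma}}G(g)$; finite choice over the $2^{N_{0}}$-element index set $\{\sigma:|\sigma|=N_{0}\}$ then yields a sequence $(g_{\sigma})_{|\sigma|=N_{0}}$ of such witnesses. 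For any $h\leq_{1}1$, taking $\sigma:=\overline{h}N_{0}$ and using $G(g_{\sigma})\leq N_{0}=|\sigma|$ gives $\overline{h}G(g_{\sigma})=\overline{\sigma}G(g_{\sigma})=\overline{g_{\sigma}}G(g_{\sigma})$, so $h\in[\overline{g_{\sigma}}G(g_{\sigma})]$; hence $(g_{\sigma})_{|\sigma|=N_{0}}$ is the required finite sub-covering of $\HBU_{\c}$.

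In the case $\neg(\exists^{2})$, the derivation of $\WKL$ from $\PIT_{o}$ used in Corollary~\ref{eessje} goes through verbatim, as it relies only on quantifier-free induction and $\QFAC^{1,0}$, both part of $\RCAo$. With $\WKL$ and $\neg(\exists^{2})$ in hand, \cite{kohlenbach2}*{Prop.~3.7} and \cite{kohlenbach4}*{Prop.~4.10} make $G^{2}$ uniformly continuous on Cantor space with some modulus $M$; setting $L:=\max(M,K)$ where $K:=\max\{G(\sigma*00\dots):|\sigma|=M\}$ is a finite maximum constructible in $\RCAo$, the sequence $(\sigma*00\dots)_{|\sigma|=L}$ directly witnesses $\HBU_{\c}$, because $G(\sigma*00\dots)\leq K\leq L=|\sigma|$ forces every $h\in[\sigma]$ to also lie in $[\overline{\sigma*00\dots}\,G(\sigma*00\dots)]$ by uniform continuity. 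The classical disjunction $(\exists^{2})\vee\neg(\exists^{2})$ then yields $\HBU_{\c}$ in both cases.

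The main obstacle is the formal statement and verification of the finite choice lemma in $\RCAo+\textsf{\textup{IND}}$: the induction step must construct a length-$(n{+}1)$ coded sequence of binary witnesses by appending a new witness to a length-$n$ sequence via a standard pairing code, which is a routine primitive-recursive construction in $\RCAo$, but the body of the induction is not arithmetical and contains a type-$1$ existential quantifier, so the appeal to full $\textsf{\textup{IND}}$ is essential. Once this finite-choice lemma is recorded, the rest of the proof merely rearranges arguments already present in the proofs of Theorems~\ref{mooi}, \ref{krooi} and Corollary~\ref{eessje}.
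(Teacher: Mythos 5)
Your proposal is correct and follows essentially the same route as the paper: derive \eqref{contrje} from $\PIT_{\u}$ without choice (via the $F_{0}$ trick), replace the single application of $\QFAC^{0,1}$ by a finite-choice argument carried out by full $\textsf{IND}$, and dispatch the case $\neg(\exists^{2})$ by deriving $\WKL$ and invoking the (uniform) continuity of $G^{2}$. The only cosmetic divergences are that the paper indexes by binary strings of length $N_{0}+1$ rather than $N_{0}$ (both suffice) and you spell out the finite-maximum computation in the $\neg(\exists^{2})$ case more explicitly than the paper does.
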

\begin{proof}
We only need to prove the forward direction.  In case $\neg(\exists^{2})$, proceed as in the proof of Corollary \ref{eessje}.  In case $(\exists^{2})$, fix $G^{2}$ and prove \eqref{contrje} as in the proof of the theorem. 
Let $\sigma_1 , \dots ,\sigma_{2^{N_{0}+1 }}$ enumerate all binary sequences of length $N_0 + 1$ and define  $f_i := \sigma_i*00\dots$ for $i\leq 2^{N_{0}+1}$.
Intuitively speaking, we now apply \eqref{contrje} for $f_i$ and obtain $g_i$ for each $i\leq 2^{N_{0}+1}$.  Then $\langle g_1 , \ldots , g_{2^{N_0 + 1}}\rangle$ provides the finite sub-covering for $G$.
Formally, it is well-known that $\ZF$ proves the `finite' axiom of choice via mathematical induction (see e.g.\ \cite{tournedous}*{Ch.\ IV}).  Similarly, one uses $\textsf{IND}$ to prove the existence of the
aforementioned finite sequence based on \eqref{contrje}.
\end{proof}
The following corollary also follows via the above `excluded middle trick'.  
%BABA 
Note that the $\ECF$-translation mentioned in Footnote \ref{ecf} converts both the previous and the following result to the equivalence between $\WKL$ and the boundedness of continuous functions on $C$ (see \cite{simpson2}*{IV}).
\begin{cor}\label{ofmoreinterest}
The system $\RCAo+\QFAC^{0,1}$ proves $\WKL\asa\PIT_{o}$.  
\end{cor}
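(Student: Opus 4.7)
The plan is to establish the two directions separately, leveraging work already done in the preceding corollaries and theorems, and using the "excluded middle trick" flagged in the discussion after Corollary \ref{eessje}.

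For the reverse direction $\PIT_o \di \WKL$, I would essentially reproduce the argument already given in the proof of Corollary \ref{eessje}. Given a binary tree $T\leq_1 1$ with no infinite path, quantifier-free induction together with $\QFAC^{1,0}$ (which is part of $\RCAo$) produces a functional $H^2:C\di\N$ sending each $f\leq_1 1$ to the least $n$ with $\overline{f}n\notin T$. Since membership in $T$ depends only on an initial segment, $H$ is its own modulus of continuity on $C$, and in particular $\LOC(H,H)$ holds. Applying $\PIT_o$ to the pair $(H,H)$ yields a uniform bound on $H$, whence $T$ has only finitely many binary extensions of bounded length and is finite, contradicting K\"onig's lemma for finitely branching trees (provable in $\RCAo$). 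Hence every infinite binary tree has an infinite path.

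For the forward direction $\WKL \di \PIT_o$, I would split on the disjunction $(\exists^{2}) \vee \neg(\exists^{2})$, which is legitimate since we work in classical logic. In the case $(\exists^2)$, we have $\ACAo$ available, and then Theorem \ref{ofinterest} gives $\PIT_o$ directly from $\ACAo + \QFAC^{0,1}$ (weak K\"onig's lemma is not even needed here). In the case $\neg(\exists^2)$, \cite{kohlenbach2}*{Prop.~3.7} tells us that every functional $F^2$ is continuous on Baire space, and in particular on $C$. For such continuous $F$, $\WKL$ suffices, by \cite{kohlenbach4}*{Prop.~4.10}, to prove that $F$ is uniformly continuous on $C$, hence bounded; therefore the conclusion of $\PIT_o$ holds trivially (and the antecedent $\LOC(F,G)$ is not even consulted in this branch). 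Combining the two cases, $\PIT_o$ follows from $\WKL$.

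There is essentially no obstacle: the work has already been done by Theorem \ref{ofinterest}, the $\WKL$-part of the proof of Corollary \ref{eessje}, and the continuity results of Kohlenbach. The only substantive point is noticing that since $\ACAo$ is exactly $\RCAo+(\exists^2)$ modulo $(\mu^2)$, the disjunction on $(\exists^2)$ collapses the apparent gap between $\WKL$ (which appears too weak to run the subsequence-extraction proof of Theorem \ref{ofinterest}) and $\ACAo$: in the branch where $(\exists^2)$ fails, mere continuity of all type-two objects makes $\WKL$ alone sufficient to force boundedness on $C$. This is precisely the "excluded middle trick" the paper emphasises, and it is what makes the first-order strength of $\PIT_o$ drop all the way down to that of $\WKL$, in agreement with the $\ECF$-analysis in Footnote \ref{ecf}.
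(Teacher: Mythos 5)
Your proof is correct and matches the paper's proof essentially step for step: the reverse direction reuses the tree argument from Corollary \ref{eessje}, and the forward direction splits on $(\exists^{2})\vee\neg(\exists^{2})$, citing Theorem \ref{ofinterest} in the first case and Kohlenbach's continuity results together with $\WKL$ in the second. Your observation that in the $\neg(\exists^{2})$ branch every $F^{2}$ is already bounded on $C$ (so the antecedent $\LOC(F,G)$ is not even consulted) is a mild shortcut compared to the paper, which routes through $\HBU_{\c}\di\PIT_{\u}\di\PIT_{o}$ via Theorem \ref{mooi}, but the underlying reasoning is identical.
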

\begin{proof}
The reverse direction is immediate by the first part of the proof of Corollary~\ref{eessje}.  
For the forward direction, working in $\RCAo+\QFAC^{0,1}+\WKL$, first assume $(\exists^{2})$ and note that Theorem \ref{ofinterest} yields $\PIT_{o}$ in this case. 
Secondly, again working in $\RCAo+\QFAC^{0,1}+\WKL$, assume $\neg(\exists^{2})$ and note that all functions on Baire space are continuous by \cite{kohlenbach2}*{Prop.\ 3.7}.  
Hence, $\HBU_{\c}$ just follows from $\WKL$ as the latter suffices to prove that a continuous function is uniformly continuous (and hence bounded) on Cantor space (\cite{kohlenbach4}*{Prop.~4.10}).  
By Theorem \ref{mooi}, we obtain $\PIT_{\u}$, and hence $\PIT_{o}$.
The law of excluded middle $(\exists^{2})\vee \neg (\exists^{2})$ now yields the forward direction, and we are done.    
\end{proof}
Let $\PIT_{o}'$ be $\PIT_{o}$ without a realiser for local boundedness, i.e.\ as follows:
\[
(\forall F^{2}) \big[ \underline{(\forall f \in C)(\exists n^{0})(\forall g\in C)\big[ g\in [\overline{f}n] \di F(g)\leq n    \big]  } \di (\exists m^{0})(\forall h \in C)(F(h)\leq m)  \big].
\]
By the previous proof, we have $\PIT_{o}\asa \PIT_{o}'\asa \WKL$ over $\RCAo+\QFAC^{0,1}$.  

\smallskip

It is a natural question (see e.g.\ \cite{montahue}*{\S6.1.1}) whether countable choice is essential in the above.  
The following theorem provides the strongest possible answer.  Motivated by this result, a detailed RM-study of $\QFAC^{0,1}$ may be found in Section~\ref{not2bad}.  
\begin{thm}\label{dagsamsyn}
There is a model of $\Z_{2}^{\omega}$ in which $\neg\PIT_{o}$ holds.  
\end{thm}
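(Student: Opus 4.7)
My plan is to build a countable type structure $\mathcal{M}$ which models $\Z_2^\omega$ but contains a pair $(F,G):C\to\N$ in $\mathcal{M}_2$ with $\LOC(F,G)$ yet $F$ unbounded on $\mathcal{M}_1\cap C$. By Theorem \ref{ofinterest}, any such $\mathcal{M}$ must violate $\QFAC^{0,1}$, so the construction will encode this failure directly and then exploit it.

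First I fix, inside $L$, a strictly increasing cofinal sequence of countable ordinals $\alpha_0<\alpha_1<\cdots$ with $\alpha:=\sup_k\alpha_k$, chosen so that $(S_j^2)^L\in L_{\alpha_j}$ for every $j$, and set $\mathcal{M}_1:=\bigcup_k(L_{\alpha_k}\cap\N^\N)$. This $\mathcal{M}_1$ is countable and individually closed under every $S_k^2$, but it is not closed under sequences $n\mapsto g_n$ with $g_n\in L_{\alpha_{n+1}}\setminus L_{\alpha_n}$---precisely the choice failure I will exploit. For $k\geq2$ I let $\mathcal{M}_k$ consist of those functionals on $\mathcal{M}_{k-1}$ that are $L_\alpha$-definable with parameters from the lower types; the restriction of each $S_k^2$ to $\mathcal{M}_1$ belongs to $\mathcal{M}_2$, so $\mathcal{M}\models\Z_2^\omega$.

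Next I select a sequence $(f_n)_{n\in\N}$ with $f_n\in L_{\alpha_{n+1}}\setminus L_{\alpha_n}$ satisfying two conditions: (i) the $f_n$'s are topologically discrete inside $\mathcal{M}_1\cap C$, meaning each $f_n$ admits an isolating basic cylinder $[\overline{f_n}k_n]$ such that $[\overline{f_n}k_n]\cap\mathcal{M}_1\cap\{f_m:m>k_n\}=\emptyset$; and (ii) no $h\in\mathcal{M}_1\cap C$ is an accumulation point of $\{f_n\}$ in the ambient $C$. Condition (ii) is arranged by a diagonal argument against the countably many reals at each level: when choosing $f_n\in L_{\alpha_{n+1}}\setminus L_{\alpha_n}$, I force its initial segment to steer it away from every real already enumerated in $L_{\alpha_n}\cap C$, so that any real in some fixed $L_{\alpha_k}$ is separated from all $f_m$ with $m$ sufficiently large. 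I then define $F(f_n):=n$ and $G(f_n):=k_n$, and at every other $h\in\mathcal{M}_1\cap C$ set $F(h):=0$ and $G(h):=1+\max\{m:f_m\text{ lies in a suitably small neighbourhood of }h\}$, which is finite by (ii). Both $F$ and $G$ are $L_\alpha$-definable from $(f_n)$, so $F,G\in\mathcal{M}_2$.

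Finally I verify $\mathcal{M}\models\LOC(F,G)$ point by point---using (i) at $h=f_n$ and (ii) at all other $h$---and observe that $F(f_n)=n$ makes $F$ unbounded on $\mathcal{M}_1\cap C$, so $\mathcal{M}\models\neg\PIT_o$. The main obstacle is the joint realization of (i) and (ii) in the construction of $(f_n)$: I must interleave the diagonalization against accumulation points inside $\mathcal{M}_1\cap C$ with the complexity constraint $f_n\in L_{\alpha_{n+1}}\setminus L_{\alpha_n}$, while also ensuring that the extension of $F$ and $G$ off $\{f_n\}$ remains in $\mathcal{M}_2$. This bookkeeping, which concretely realizes the failure of $\QFAC^{0,1}$ predicted by Theorem \ref{ofinterest}, is where the proof does its real work.
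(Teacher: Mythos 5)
Your approach diverges in execution from the paper's---which reuses the $\mathcal{M}$ of \cite{dagsamIII}*{Theorem 3.4}, already certified as a model of $\Z_{2}^{\omega}$ with a ready-made $\neg\HBU_{\c}$-witness $F^{2}$, and only constructs the companion function $H$---but more importantly it has two substantive gaps. The claim $\mathcal{M}\models\Z_{2}^{\omega}$ is asserted rather than shown, and the condition you impose, ``$(S_{j}^{2})^{L}\in L_{\alpha_{j}}$,'' cannot be taken literally: a total type-$2$ functional on all constructible reals has cardinality continuum and lies in no $L_{\beta}$ for $\beta<\omega_{1}^{L}$, so at best you mean a code for a $\Pi_{j}^{1}$-complete set is there. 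Even so, what is required is not merely that $S_{k}^{2}{\upharpoonright}\mathcal{M}_{1}$ lie in $\mathcal{M}_{2}$ but that it \emph{correctly decide} $\Sigma_{k}^{1}$-formulae with parameters from $\mathcal{M}_{1}$, i.e.\ that $\mathcal{M}_{1}$ be $\Pi_{k}^{1}$-absolute. The paper secures this by taking $\mathcal{M}_{1}$ closed under computability relative to all $S_{k}^{2}$ and invoking the $\Delta_{2}^{1}$ well-ordering of the reals in $L$; your $\bigcup_{k}(L_{\alpha_{k}}\cap\N^{\N})$ is not shown to be so closed, and without absoluteness the restricted $S_{k}^{2}$'s do not witness $(S_{k}^{2})$, so the conclusion does not follow from the construction as given.

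The second gap is the definition and $\mathcal{M}_{2}$-membership of $G$. For $\LOC(F,G)$ at $h\notin\{f_{n}\}$ you need every $f_{m}\in[\overline{h}G(h)]$ to satisfy $m\leq G(h)$, but you fix $G(h)$ via a ``suitably small'' neighbourhood, which is the very quantity being determined---the definition is circular, and the formula $1+\max\{m:\dots\}$ does not in fact produce a cylinder length excluding the high-index $f_{m}$'s. Condition (ii) does guarantee that \emph{some} finite $N$ works, and one could take the least such, but you then need the resulting functional to be $L_{\alpha}$-definable with parameters from $\mathcal{M}_{1}$, which is not apparent: that bound depends on the least $k$ with $h\in L_{\alpha_{k}}$, and for $G\in\mathcal{M}_{2}$ its restriction to each $L_{\alpha_{k}}\cap C$ must be obtainable from $S_{k}^{2}$ and a fixed parameter at that level, not from the full sequence $(f_{n})$ (which, by design, lies outside $\mathcal{M}_{1}$). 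This is precisely where the paper's proof does the real work: $H'$ is locally repaired to $H$ so that $H{\upharpoonright}A_{k}$ is computable in $S_{k}^{2}$ together with the finite generator set of $A_{k}$, and a measure estimate shows $H$ remains unbounded. You acknowledge this is ``where the proof does its real work,'' which is accurate, but you do not carry it out, and as written there is no argument that any $G$ meeting your informal description actually lives in $\mathcal{M}_{2}$.
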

\begin{proof}
Our starting point is the proof of \cite{dagsamIII}*{Theorem 3.4}; the latter states that there is a type structure $\mathcal{M}$ satisfying $\Z_{2}^{\omega}$ in which $\HBU_{\c}$ is false. 
To this end, a specific functional $F^{2}$ in $\mathcal{M}$ is identified (see Definition \ref{effkes} below), and the latter is shown to have the following property: for any total extension $G$ of $F$, the canonical covering associated to $G$ does not have a finite sub-covering, i.e.\ $\HBU_{\c}$ is clearly false in $\mathcal{M}$.  
To obtain the theorem, we shall define another functional $H^{2}$ such that in the associated model we have $\LOC(H, F)$ but $H$ is unbounded on Cantor space.  % i.e.\ the theorem follows. 

\smallskip

First of all, let $A \subseteq \N^\N$ be a countable set such that all $\Pi^1_k$-statements with parameters from $A$ are absolute for $A$.
Also, let $S^2_k$ be the characteristic function of a complete $\Pi^1_k$-set for each $k$; we also write $S^2_k$ for the restriction of this functional to $A$. 
Clearly, for $f \in \N^\N$ computable in any $S^2_k$ and some $f_1 , \ldots , f_n$ from $A$, $f$ is also in $A$.   We now introduce some essential notation.   
\begin{convention}\label{connie}\rm
Since $A$ is countable, we write it as an increasing union $ \bigcup_{k \in \N}A_n$, where $A_0$ consists of the hyperarithmetical functions, and for $k > 0$ we have:
\begin{itemize}
\item there is an element in $A_k$ enumerating $A_{k-1}$,
\item $A_k$ is the closure of a finite set $g_1 , \ldots , g_{n_k}$ under computability in $S^2_k$.
\end{itemize}
For the sake of uniform terminology, we rename $\exists^2$ to $S^2_0$ and let the associated finite sequence $g_1 , \ldots , g_{n_0}$ be the empty list. 
\end{convention}
Secondly, the following functional $F$ was first defined in \cite{dagsamIII}*{Def.\ 3.6}.
\bdefi[The functional $F$]\label{effkes}
Define $F(f)$ for $f \in A$ as follows.
\begin{itemize} 
\item If $f \not \in 2^\N$, put $F(f) := 0$.
\item If $f \in 2^{\N}$, let $k$ be minimal such that $f \in A_k$. We put $F(f): = 2^{k+2+e}.$ where $e$ is a `minimal' index for computing $f$ from $S^2_k$ and $\{g_1 , \ldots , g_{n_k}\}$ as follows:
the ordinal rank of this computation of $f$ is minimal and $e$ is minimal among the indices for $f$ of the same ordinal rank.
\end{itemize}
\edefi
Note that the only property we need of $F$ in the proof of \cite{dagsamIII}*{Lemma 3.7.(ii)} is that the restriction of $G$ to $A_k$ is computable in $g_{1}, \dots, g_{n_{k}}$ and $S_{k}^{2}$.
We shall make use of the fact that $\sum_{k,e \in \N}2^{-(k + 2 + e)} = 1$, which is also used in the proof in \cite{dagsamIII} that the covering induced by $F$ cannot have a finite sub-covering .

\smallskip

Thirdly, we consider the type two functional $H'(f) := (\mu k)(f \in A_k)$ defined on $A \cap C$. We will not necessarily have that $\LOC(H',F)$, but will see that a minor modification $H$ will  both be unbounded and satisfy $\LOC(H,F)$.
Let  $f \in A_k \setminus A_{k-1}$, and assume that there is some $f'$ such that $H'(f) > F(f')$ while $f(m) = f'(m)$ for all $m < F(f')$, that is, $H'$ does not satisfy the bounding condition $\LOC(H', F)$ at $f$ as induced by  $F(f')$. With this choice of $f$, $H'(f) = k$, and there is a pair $i,e$ with $2^{i + 2 + e} < k$ such that $e$ is an index for computing  $f'$ from $S^2_i$ and $g_1 , \ldots , g_{n_i}$ and such that $f(m) = f'(m)$ for $m < 2^{i + 2 + e}$. For each $k$, the number of such pairs $(i,e)$ is finite and we have that $i < k$ in this case. We now define $H(f) := H'(f)$ if $\LOC(H', F)$ is satisfied at $f$ in the sense described above, and otherwise $H(f): = 0$.

\smallskip

Since $i < k$ for the relevant pairs $(i,e)$, we can \emph{decide} from $S^2_k$ and $g_1 , \ldots , g_{n_k}$ if $(i,e)$ is relevant, i.e. if $e$ is an index  for some $f'$ in $A_i \setminus A_{i-1}$ from $S^2_i$ and $g_1 , \ldots , g_{n_i}$.  
In this way, $H$ restricted to $A_k$ is computable  in $S^2_k$ and $g_1 , \ldots , g_{n_k}$. Moreover, the union of the finite sets of neighbourhoods where $F$ bounds a function to a value at most $ k$, is a clopen set of measure strictly below $ 1$.  
Hence, there are elements $f$ in $A_k$ for which $H(f) = k$, namely every base element contains an element in $A_k \setminus A_{k-1}$. Thus $H$ is unbounded but satisfies $\LOC(H, F)$ by construction.

\smallskip

Finally, all functions computable from $F$, $H$ and some $S^2_k$ and $f_1 , \ldots f_n$ from $A$ will be in $A$. So, we can construct one model for $\neg \PIT_{o}$ satisfying $\SIXk$ for all $k$. This model will obviously not satisfy $\QFAC^{0,1}$.
\end{proof}
\begin{cor}\label{hantonio}
A proof of $\PIT_{o}$ either requires $\WKL+\QFAC^{0,1}$ or $\Z_{2}^{\Omega}$, i.e.\ $\Z_{2}^{\omega}$ does not suffice. 
\end{cor}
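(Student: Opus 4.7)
The plan is to derive this corollary as an immediate consequence of results already established in the paper, by combining one negative result with two positive ones.

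First I would read off the negative half directly from Theorem \ref{dagsamsyn}: that theorem constructs a type structure in which $\Z_{2}^{\omega}$ holds but $\PIT_{o}$ fails, so $\Z_{2}^{\omega}$ cannot prove $\PIT_{o}$. This handles the ``$\Z_{2}^{\omega}$ does not suffice'' clause of the corollary.

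Next I would verify that the two systems named in the statement actually do prove $\PIT_{o}$, so that the dichotomy is meaningful. On one hand, Corollary \ref{ofmoreinterest} establishes $\RCAo + \QFAC^{0,1} \vdash \WKL \leftrightarrow \PIT_{o}$, giving the first branch. On the other hand, by Theorem \ref{mooi}, $\RCAo$ proves the chain $(\exists^{3}) \to \LUB \to \HBU \to \HBU_{\c} \to \PIT_{\u}$, and since $\PIT_{\u}$ trivially implies $\PIT_{o}$ (the uniform version has a stronger quantifier structure but yields the same bound), the system $\Z_{2}^{\Omega} = \RCAo + (\exists^{3})$ proves $\PIT_{o}$, giving the second branch.

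Putting these together produces the dichotomy stated: in the absence of $(\exists^{3})$ one cannot merely strengthen comprehension to any $\SIXK$ (and hence not to their union $\Z_{2}^{\omega}$) to obtain $\PIT_{o}$, so countable choice (as in $\QFAC^{0,1}$) together with $\WKL$ must be invoked, or else one must pass to the genuinely fourth-order comprehension $(\exists^{3})$.

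There is no real obstacle here, as all the work has been done in Theorem \ref{dagsamsyn}, Theorem \ref{mooi}, and Corollary \ref{ofmoreinterest}; the only mild care needed is to note that the model of Theorem \ref{dagsamsyn} satisfies \emph{every} $\SIXK$ simultaneously (via the functional $S^{2}_{\omega}$ built from the family $\{S^{2}_{k}\}_{k \in \N}$ used in that proof), which is precisely what is needed to refute provability in $\Z_{2}^{\omega} = \cup_{k}\SIXK$ rather than merely in each $\SIXK$ separately.
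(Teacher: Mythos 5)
Your proposal is correct and matches the paper's intended (though unstated, since the corollary is left without an explicit proof) argument: the negative half is exactly Theorem \ref{dagsamsyn}, the $\WKL+\QFAC^{0,1}$ branch is Corollary \ref{ofmoreinterest}, and the $\Z_{2}^{\Omega}$ branch is Theorem \ref{mooi} plus the trivial implication $\PIT_{\u}\to\PIT_{o}$. One small note: the $S^{2}_{\omega}$ device is what appears in the proof of Theorem \ref{mooier}; the model in Theorem \ref{dagsamsyn} is instead built on a countable base set $A$ closed under all $S^{2}_{k}$, but the upshot you need — that a single model satisfies every $\SIXK$ at once — is exactly what the final sentence of that proof asserts.
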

The previous corollary does not change if we generalise $\PIT_{o}$ to $\PIT_{o}'$.
Indeed, the underlined formula in $\PIT_{o}'$ expresses that $F$ is locally bounded (without a realiser) and $\Z_{2}^{\Omega}$ readily proves that there is $G^{2}$ such that $\LOC(F, G)$.  

\smallskip

The theorem also gives rise to the following `disjunction' in which the two disjuncts are independent.  
Many similar results may be found in \cite{samsplit}, despite being rare in second-order arithmetic. 
The following is a corollary to Corollary \ref{eessje}.
\begin{cor}
The system $\RCAo$ proves $\WKL_{0}\asa [\ACA_{0}\vee \PIT_{o}]$.
\end{cor}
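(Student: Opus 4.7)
The plan is to prove both directions separately, using the ``excluded middle trick'' invoked for Corollary \ref{eessje} for the forward direction and collecting known implications for the reverse direction.

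For the reverse direction $[\ACA_0 \vee \PIT_o] \di \WKL$, I would argue inside $\RCAo$ by cases on the disjunction. The implication $\ACA_0 \di \WKL$ is a classical result from \cite{simpson2}. For $\PIT_o \di \WKL$, the argument appearing in the first paragraph of the proof of Corollary \ref{eessje} is already carried out in $\RCAo$: given a tree $T \leq_1 1$ with no path, quantifier-free induction together with $\QFAC^{1,0}$ (both of which are built into $\RCAo$) yields a functional $H^2$ with $\overline{f}H(f) \notin T$ for all $f \leq_1 1$, and $H$ is its own modulus of continuity, hence locally bounded with itself as realiser; $\PIT_o$ then bounds $H$ on Cantor space, so $T$ is finite. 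Thus both disjuncts yield $\WKL$ in the base theory.

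For the forward direction $\WKL \di [\ACA_0 \vee \PIT_o]$, I would work in $\RCAo+\WKL$ and split on $(\exists^2) \vee \neg(\exists^2)$ (which is valid in classical logic). If $(\exists^2)$ holds, then since $(\exists^2)$ is equivalent to $(\mu^2)$ (see Section \ref{HCT}) and $\ACA_0^\omega = \RCAo + (\mu^2)$ proves the same second-order sentences as $\ACA_0$ (by \cite{hunterphd}*{Theorem~2.5}), we obtain $\ACA_0$ as the left disjunct. If $\neg(\exists^2)$ holds, then by \cite{kohlenbach2}*{Prop.~3.7} every functional $F^2$ is continuous on Baire space; combined with $\WKL$, \cite{kohlenbach4}*{Prop.~4.10} then gives that every such $F$ is uniformly continuous on Cantor space, and hence bounded there. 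This directly yields $\PIT_o$: given any $F, G$ with $\LOC(F, G)$, the functional $F$ in particular is bounded on $C$ by the above, supplying the required $m^0$.

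The argument is essentially a direct assembly of results already proved in the paper, and the only mildly delicate point is verifying that the proof of $\PIT_o \di \WKL$ from Corollary \ref{eessje} goes through with only $\QFAC^{1,0}$ (not $\QFAC^{0,1}$), but this is clear from reading that proof. No serious new obstacle arises; the ``disjunction'' formulation is simply the cleanest packaging of the excluded middle trick, since one disjunct absorbs the discontinuous case and the other absorbs the continuous case.
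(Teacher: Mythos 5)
Your proposal is correct and follows essentially the same route as the paper: the reverse direction is assembled from $\ACA_0\di\WKL$ and the $\QFAC^{1,0}$-only argument for $\PIT_o\di\WKL$ (which is indeed the first paragraph of the proof of Corollary \ref{eessje}), and the forward direction is the excluded-middle split on $(\exists^2)\vee\neg(\exists^2)$, with $\ACA_0$ absorbing the discontinuous case and $\WKL$ handing over $\PIT_o$ in the continuous one. You also correctly pinpoint the one place where care is needed, namely that the $\neg(\exists^2)$ branch of Corollary \ref{ofmoreinterest} uses no countable choice --- which is exactly the remark the paper itself makes.
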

\begin{proof}
The reverse direction is immediate, while the forward direction follows from considering $(\exists^{2})\vee \neg(\exists^{2})$ as in the proof of Corollary \ref{eessje}.
Note that in case $\neg(\exists^{2})$, no choice is used in this proof of the latter corollary.  
\end{proof}
Based on the known classifications of compactness from \cite{simpson2}*{III and IV}, the previous disjunction reads as: countable compactness is equivalent to `sequential compactness or local-to-global'. 

\smallskip

One further improvement of Theorem \ref{krooi} is possible, using the \emph{fan functional} as in \eqref{FF}, where `$Y^{2}\in \textsf{cont}$' means that $Y$ is continuous on $\N^{\N}$.
\be\tag{$\textsf{\textup{FF}}$}\label{FF}
(\exists \Phi^{3})(\forall Y^{2}\in \textsf{\textup{cont}})(\forall f, g\in C)(\overline{f}\Phi(Y)=\overline{g}\Phi(Y)\di Y(f)=Y(g)).
\ee
Note that the previous two corollaries only dealt with third-order objects, while the following corollary \emph{connects} third and fourth-order objects.
\begin{cor}\label{foepfoep}
The system $\RCAo+\textsf{\textup{FF}}+\QFAC^{2,1}$ proves $\HBU_{\c}\asa (\exists \Theta)\SCF(\Theta)$.
\end{cor}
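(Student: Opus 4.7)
The forward direction is immediate: given any $\Theta$ satisfying $\SCF(\Theta)$ and any $G^{2}$, the finite list $\Theta(G)$ is by definition a witness to $\HBU_{\c}$ for $G$. The task is therefore to produce a $\Theta$-functional from $\HBU_{\c}$ in the weaker base theory $\RCAo+\textsf{\textup{FF}}+\QFAC^{2,1}$, where the $(\exists^{2})$ available in Theorem~\ref{krooi} has been replaced by the (typically weaker) fan functional $\textsf{FF}$.

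The plan is to use the now-familiar ``excluded middle trick'' $(\exists^{2})\vee\neg(\exists^{2})$. In the disjunct where $(\exists^{2})$ holds, $\RCAo$ upgrades to $\ACAo$, and Theorem~\ref{krooi} directly yields $\HBU_{\c}\di (\exists\Theta)\SCF(\Theta)$ using the hypothesis $\QFAC^{2,1}$. In the disjunct where $\neg(\exists^{2})$ holds, Kohlenbach's result \cite{kohlenbach2}*{Prop.\ 3.7} makes every $G^{2}$ continuous on Baire space, and hence also on $C$; the point is that we can then define a suitable $\Theta$ explicitly via $\textsf{FF}$, with no appeal to $\HBU_{\c}$ at all.

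For this explicit construction, set $\Phi$ to be the fan functional provided by $\textsf{FF}$ and, given $G^{2}$, put $m:=\Phi(G)$, so that $(\forall f,g\in C)(\overline{f}m=\overline{g}m\di G(f)=G(g))$ in the $\neg(\exists^{2})$ case. For each $\sigma\in 2^{m}$ let $g_{\sigma}:=\sigma*00\dots$ and $k_{\sigma}:=G(g_{\sigma})$; these are finitely many integers, and primitive recursion in $\RCAo$ computes $N:=\max\bigl(m,\max_{\sigma\in 2^{m}}k_{\sigma}\bigr)$. Define
\[
\Theta(G):=\langle \tau*00\dots : \tau\in 2^{N}\rangle.
\]
To verify $\SCF(\Theta)$, take any $f\in C$, set $\tau=\overline{f}N$ and $\sigma=\overline{f}m=\tau\!\upharpoonright\! m$; then $g_{\tau}:=\tau*00\dots$ agrees with $g_{\sigma}$ on the first $m$ bits, so $G(g_{\tau})=k_{\sigma}\leq N$, whence $[\overline{g_{\tau}}G(g_{\tau})]\supseteq[\tau]\ni f$, as required.

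The place I expect to have to be careful is Case~2: one must check that the construction of $\Theta$ genuinely lives in $\RCAo+\textsf{\textup{FF}}$ and does not smuggle in $(\exists^{2})$. The only non-trivial operations are (i) evaluating $G$ at finitely many explicit points $g_{\sigma}$ and (ii) taking a maximum over a finite, explicitly indexed set, both of which are handled by ordinary primitive recursion on $\mathbf{R}_{0}$. Once this is in hand, the two disjuncts combine under classical logic to give $\HBU_{\c}\di (\exists\Theta)\SCF(\Theta)$, and the equivalence in the corollary follows. Note that $\QFAC^{2,1}$ is used only to invoke Theorem~\ref{krooi} in Case~1 and plays no role in Case~2.
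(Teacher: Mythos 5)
Your proposal is correct and follows the paper's route: the excluded-middle split on $(\exists^2)$, with Case~1 invoking Theorem~\ref{krooi} over $\ACAo+\QFAC^{2,1}$ and Case~2 using Kohlenbach's continuity result plus $\textsf{FF}$ to build $\Theta$ outright without touching $\HBU_{\c}$. The only divergence is the explicit $\Theta$ in Case~2: the paper simply outputs all $\sigma*00\dots$ with $|\sigma|=\Phi(G)$, which verifies $\SCF(\Theta)$ only if $G(g)\leq\Phi(G)$ on those nodes --- a point the paper leaves implicit. Your version is the more careful one: you probe $G$ on the $2^{\Phi(G)}$ tails $g_\sigma$, bound its finitely many possible values by $N$, and output all $\tau*00\dots$ with $|\tau|=N$, so that uniform continuity with modulus $\Phi(G)$ yields $G(g_\tau)=G(g_\sigma)\le N$, hence $[\overline{g_\tau}G(g_\tau)]\supseteq[\tau]\ni f$. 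That patch is exactly what makes the $\SCF(\Theta)$-verification go through, and the finite enumeration and maximum are indeed available by primitive recursion in $\RCAo$, so your worry about smuggling in $(\exists^2)$ is unfounded. You also correctly isolate the role of $\QFAC^{2,1}$ to Case~1.
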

\begin{proof}
We only need to prove the forward implication. 
Working in $\RCAo+\textsf{\textup{FF}}$, assume $(\exists^{2})$ and note that the forward implication follows from Theorem \ref{krooi}.    
In case of $\neg(\exists^{2})$, all functions on Baire space are continuous by \cite{kohlenbach2}*{Prop.\ 3.7}.  
Hence, $\Phi(Y)$ from $\textsf{FF}$ provides a modulus of uniform continuity for \emph{any} $Y^{2}$.
A special fan functional $\Theta$ is then defined as outputting the finite sequence of length $2^{\Phi(Y)}$ consisting of all sequences $\sigma*00$ for binary $\sigma$ of length $\Phi(Y)$.
\end{proof}
The base theory in the previous corollary is a (classical) conservative extension of $\WKL_{0}$ by \cite{kohlenbach2}*{Prop.\ 3.15}, which is a substantial improvement over the base theory $\ACAo$ from \cite{dagsamIII}*{Theorem 3.3}.  
One similarly proves $ \PIT_{\u}\asa (\exists M)\PR(M)$.  % over the same base theory.

\smallskip

Next, we use the above `excluded middle trick' in the context of the axiom of extensionality on $\R$.  
 \begin{rem}[Real extensionality]\label{kloti}\rm
 The trick from the previous proofs involving $(\exists^{2})\vee\neg(\exists^{2})$ has another interesting application, namely that $\HBU$ does not really change if we drop the extensionality condition \eqref{RE} from Definition \ref{keepintireal} for $\Psi^{1\di 1}$.  In particular, $\RCAo$ proves $\HBU\asa \HBU^{+}$, where the latter is $\HBU$ generalised to \emph{any} functional $\Psi^{1\di 1}$ such that $\Psi(f)$ is a positive real, i.e.\ $\Psi^{1\di 1}$ need not satisfy \eqref{RE}.  To prove $\HBU\di \HBU^{+}$, note that $(\exists^{2})$ yields a functional $\xi$ which converts $x\in [0,1]$ to a unique binary representation $\xi(x)$, choosing $\sigma*00\dots$ if $x$ has two binary representations; then $\lambda x.\Psi(\r(\xi(x)))$ with $\r(\alpha):=\sum_{n=0}^{\infty}\frac{\alpha(n)}{2^{n}}$ satisfies \eqref{RE} restricted to $[0,1]$, even if $\Psi^{1\di 1}$ does not, and we have $\HBU\di \HBU^{+}$ assuming $(\exists^{2})$.  In case of $\neg(\exists^{2})$, all functionals on Baire space are continuous by \cite{kohlenbach2}*{Prop~3.7}, and $\HBU\di \WKL$ yields that all functions on Cantor space are uniformly continuous (and hence bounded).  
Now, consider $\Psi$ as in $\HBU^{+}$ and note that for $\lambda\alpha.\Psi(\r(\alpha))$ there is $n_{0}\in \N$ such that $(\forall \alpha \in C)(\Psi(\r(\alpha))> \frac{1}{2^{n_{0}}})$.  Hence, the canonical covering of $\Psi$ has a finite sub-covering consisting of $\r(\sigma_{i}*00\dots)$ where $\sigma_{i}$ is the $i$-th binary sequence of length $n_{0}+1$.  
 i.e.\ $\HBU\di \HBU^{+}$ follows in this case. % and the law of excluded middle finishes this proof.  
 \end{rem}
 Next, we discuss a number of variations and generalisations of Pincherle's theorem, and a local-global principle due to Weierstrass, with similar properties.
 %BABA   
\begin{rem}[Other local-to-global principles]\label{kowlk}\rm
First of all, Pincherle describes the following theorem in a footnote on \cite{tepelpinch}*{p.\ 67}: 
\begin{quote}
\emph{Let $E$ be a closed, bounded subset of $\mathbb{R}^{n}$ and let $f : E \di \R$ be locally bounded away from $0$.  Then $f$  has a positive infimum on $E$}.
\end{quote}
He states that this theorem is proved in the same way as Theorem \ref{gem} and provides a generalisation of Heine's theorem as proved by Dini in \cite{dinipi}.  
We could formulate versions of the centred theorem, and they would be equivalent to the associated versions of Pincherle's theorem.    
Restricted to \emph{uniformly continuous} functions, the centred theorem is studied in \emph{constructive} RM (\cite{brich}*{Ch.\ 6}).  
Recall from Remark~\ref{kiekenkkk} that Pincherle works with \emph{possibly discontinuous} functions.  % not just continuous ones.    

\smallskip
\noindent
Secondly, the following theorem generalises $\PIT_{o}$ to Baire space: 
\be\tag{$\PIT_{b}$}
(\forall F^{2}, G^{2})\big[  \LOC_{b}(F, G)\di (\exists H^{2} \in \textsf{cont})(\forall g^{1})(F(g)\leq H(g))\big],
\ee
where $\LOC_{b}(F, G)$ is $\LOC(F, G)$ with Cantor space replaced by Baire space.  Clearly, $\PIT_{b}+\WKL$ implies $\PIT_{o}$, while $\PIT_{b}$ has no first-order strength in light of the $\ECF$-translation.  
Hence, $\PIT_{b}$ is at least `as hard' to prove as $\PIT_{o}$, while the Lindel\"of lemma for Baire space, combined with $(\exists^{2})$, proves $\PIT_{b}$.  We could also let $H^{2}$ in $\PIT_{b}$ be a step function, similar to the majorants in the Lebesgue integral.

\smallskip

Thirdly, the first proof\footnote{Weierstrass does not enunciate his result in full detail in \cite{amaimennewekker}*{p.\ 202}.  The proof however does apply to \emph{any} locally uniformly convergent series of functions, and his definition of (local) uniform convergence is also fully general in that no continuity is mentioned.} in Weierstrass' 1880 paper `Zur Funktionenlehre' (see \cite{amaimennewekker}*{p.\ 203}) establishes the following local-to-global principle: \emph{any locally uniformly convergent function on an interval is uniformly convergent}.
Local uniform convergence is uniform convergence in a neighbourhood of every point of the interval.  One readily shows that this local-to-global principle \emph{for Cantor space} is equivalent to $\PIT_{o}$ over $\RCAo$.    
In general, Weierstrass tended to lecture extensively (in Berlin)  on his research before (eventually) publishing it.  Pincherle actually attended Weierstrass' lectures around 1878 and published an overview in \cite{pinkersgebruiken}.   

\smallskip

Fourth, the following principle \emph{looks} similar to $\QFAC^{0,1}$ restricted to $C$, but does not provide a choice functional for zeros of $Y$: only a shrinking neighbourhood in which zeros of $Y$ \emph{can} be found is given.  For $Y(f, n)$ increasing in $n$, we have
\[
(\forall n^{0})(\exists f\in C)(Y(f, n)=0) \di (\exists g\in C)(\forall n^{0})(\exists f\in [\overline{g}n])(Y(f, n)=0).
\]
One readily proves this sentence in either $\RCAo+\ACA_{0}+\QFAC^{0,1}$ or $\Z_{2}^{\Omega}$.  However, taking $Y(f, n)=0$ to be the characteristic function of $F(f)\geq n$, one also immediately obtains $\PIT_{0}'$.  Thus, this principle behaves similarly to $\PIT_{o}$ and its kin. 
We note that the above principle is inspired by the Bolzano-Weierstrass theorem in the introduction of Section \ref{pproof}.
\end{rem}
 Finally, the fourth item in the Remark \ref{kowlk} gives rise to the `local-to-global principle' $\LGP$, which is a generalisation of $\PIT_{o}$ and implies $\HBU$ by Theorem~\ref{koknk}.
\bdefi[$\LGP$]
For any $\L_{2}$-formula $A(f, n)$ \emph{with type two parameters}:
\[
 (\forall f\in C)(\exists n^{0})(\forall g\in [\overline{f}n])A(g, n)  \di (\exists m^{0})(\forall h\in C)A(h,m) \big], 
\]
where $A(f,n)$ is `increasing in $n^{0}$', i.e.\ $(\forall f^{1}, n^{0}, m^{0})(n\leq m \di [A(f, m)\di A(f, n)])$.
\edefi
Note that $\LGP$ trivially implies $\PIT_{o}'$.  
We also have the following theorem.
\begin{thm}\label{koknk}
 The system $\RCAo+\LGP+\textsf{\textup{IND}}$ proves $\HBU$.
\end{thm}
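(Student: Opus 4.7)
The plan is to prove $\HBU_{\c}$ and then invoke $\HBU_{\c}\di \HBU$, which holds over $\RCAo$ by \cite{dagsamIII}*{Theorem 3.3}.  Fix $G^{2}$ and apply $\LGP$ to the $\L_{2}$-formula with type two parameter $G$ given by
\[
A(h,n)\equiv (\exists g\in C)\big(h\in [\overline{g}G(g)]\wedge G(g)\leq n\big).
\]
The monotonicity condition imposed by $\LGP$ is immediate: any $g$ witnessing $A(h,n)$ also witnesses $A(h,m)$ when $n\leq m$, since $G(g)\leq n\leq m$.  To verify the antecedent of $\LGP$ at a fixed $f\in C$, I take $n := G(f)$; for every $g\in [\overline{f}G(f)] = [\overline{f}n]$ the choice $g' := f$ demonstrates $A(g,n)$, because $g\in [\overline{g'}G(g')]$ and $G(g') = G(f) = n$.

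The conclusion of $\LGP$ therefore produces some $m_{0}\in \N$ with
\[
(\forall h\in C)(\exists g\in C)\big(h\in [\overline{g}G(g)]\wedge G(g)\leq m_{0}\big). \tag{$*$}
\]
This is already a $\PIT_{\u}$-style uniform statement, and it remains to convert it into an explicit finite sub-covering.  Enumerate the $K := 2^{m_{0}+1}$ binary strings of length $m_{0}+1$ as $\sigma_{1},\dots,\sigma_{K}$ and set $h_{i} := \sigma_{i}*00\dots\in C$.  Applying $(*)$ to each $h_{i}$ yields a $g_{i}\in C$ with $h_{i}\in [\overline{g_{i}}G(g_{i})]$ and $G(g_{i})\leq m_{0}<|\sigma_{i}|$.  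Since $h_{i}$ agrees with $\sigma_{i}$ on its initial segment of length $|\sigma_{i}|$, the string $\overline{g_{i}}G(g_{i})$ is a prefix of $\sigma_{i}$, so $[\sigma_{i}]\subseteq [\overline{g_{i}}G(g_{i})]$.  As the cylinders $[\sigma_{1}],\dots,[\sigma_{K}]$ partition $C$, the sequence $\langle g_{1},\dots,g_{K}\rangle$ is a finite sub-covering of the canonical covering of $G$, which establishes $\HBU_{\c}$.

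The principal obstacle is the formal construction of the finite sequence $\langle g_{1},\dots,g_{K}\rangle$ in the weak base theory.  Following the pattern of Corollary \ref{eessjelk}, I shall proceed by induction on $k\leq K$ applied to the statement \emph{there exists a sequence $\langle g_{1},\dots,g_{k}\rangle$ such that each $g_{i}$ witnesses $(*)$ for $h_{i}$}; the base case $k=0$ is trivial and the inductive step re-invokes $(*)$ at $h_{k+1}$.  The induction formula is not quantifier-free and involves the type two parameter $G$ together with an existential quantifier over $C$, so this instance lies outside the induction scheme of $\RCAo$; this is precisely where the hypothesis $\textsf{\textup{IND}}$ enters.
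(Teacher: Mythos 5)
Your proof is correct and follows essentially the same strategy as the paper: apply $\LGP$ to an appropriate increasing $\L_{2}$-formula with type-two parameter $G$ and then extract an explicit finite sub-covering from the resulting uniform bound $m_{0}$ via $\textsf{IND}$, exactly as in Corollary \ref{eessjelk}, finishing with $\HBU_{\c}\di\HBU$. The one substantive difference is that your formula $A(h,n)\equiv(\exists g\in C)\bigl(h\in[\overline{g}G(g)]\wedge G(g)\leq n\bigr)$ places the bound on the existentially quantified witness, whereas the paper's displayed $A(g,n)$ has `$G(g)\leq n$' on the free variable, which makes the inner existential vacuous (take $h:=g$) and the antecedent of $\LGP$ false for $G$ that is not already locally bounded by itself; your version, for which the antecedent is trivially realised by $h:=f$ and $n:=G(f)$, is evidently what was intended.
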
 
\begin{proof}
Fix $G^{2}$ and let $A(g, n)$ be the following `increasing' formula
\[
(\exists h\in C)( G(g)\leq n\wedge (g\in [\overline{h}G(h)])  ).
\]
Note that the antecedent of $\LGP$ holds and obtain $\HBU$ as for Corollary \ref{eessjelk}.
\end{proof}
Let $\LGP^{-}$ be the principle $\LGP$ with $A$ restricted to $\L_{2}$-formulas, i.e.\ type two parameters are not allowed in $A$.
\begin{cor}\label{koknk2}
 The system $\Z_{2}^{\omega}+\QFAC^{0,1}$ proves $\LGP^{-}$, and hence the latter does not imply $\HBU$ over $\RCAo+\textup{\textsf{IND}}$.
\end{cor}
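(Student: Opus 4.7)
The plan is to prove the two parts of the corollary separately. First, I derive $\LGP^{-}$ inside $\Z_{2}^{\omega}+\QFAC^{0,1}$ by a sequential-compactness argument modelled on the proof of Theorem~\ref{ofinterest}, exploiting the fact that $\L_{2}$-formulas have bounded analytical complexity and are therefore uniformly decided by some $S_{k}^{2}$. I then combine this with the model $\mathcal{M}^{S_{\omega}^{2}}$ from the proof of Theorem~\ref{mooier} to obtain the non-implication.

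For the first part, fix an $\L_{2}$-formula $A(f,n)$ (possibly involving type-zero and type-one parameters, but no type-two ones) that is increasing in $n$ and satisfies the premise of $\LGP^{-}$. Since $A$ has some fixed analytical complexity, $S_{k}^{2}$ for suitable $k$ yields a characteristic functional $\chi_{A}$ with $\chi_{A}(f,n)=0\leftrightarrow A(f,n)$. Suppose, toward a contradiction, that the conclusion of $\LGP^{-}$ fails, i.e.\ $(\forall m^{0})(\exists h\in C)(\chi_{A}(h,m)\neq 0)$. This statement is quantifier-free in the parameter $\chi_{A}$, so $\QFAC^{0,1}$ produces a sequence $(h_{m})_{m\in\N}$ in $C$ with $\neg A(h_{m},m)$ for every $m$. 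Since $(\exists^{2})$ is readily derivable from $S_{1}^{2}$, a standard $\ACAo$-style argument (compare \cite{simpson2}*{III.2}) extracts a convergent subsequence $h_{m_{k}}\to\beta\in C$. Applying the premise of $\LGP^{-}$ at $\beta$ furnishes $n_{0}$ with $A(g,n_{0})$ for all $g\in[\overline{\beta}n_{0}]$. Choose $k$ large enough that both $m_{k}\geq n_{0}$ and $h_{m_{k}}\in[\overline{\beta}n_{0}]$; then $A(h_{m_{k}},n_{0})$ holds, and the monotonicity clause on $A$ promotes this to $A(h_{m_{k}},m_{k})$, contradicting $\neg A(h_{m_{k}},m_{k})$.

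For the independence assertion, suppose toward a contradiction that $\RCAo+\textsf{IND}+\LGP^{-}\vdash\HBU$. Combining this with the first part yields $\Z_{2}^{\omega}+\QFAC^{0,1}+\textsf{IND}\vdash\HBU$, and hence, via Theorem~\ref{mooi}, also $\Z_{2}^{\omega}+\QFAC^{0,1}+\textsf{IND}\vdash\PIT_{\u}$. However, the model $\mathcal{M}^{S_{\omega}^{2}}$ constructed in the proof of Theorem~\ref{mooier} already validates $\Z_{2}^{\omega}+\QFAC^{0,1}$ while refuting $\PIT_{\u}$; because $\mathcal{M}^{S_{\omega}^{2}}_{0}=\N$ coincides with the standard integers, every instance of $\textsf{IND}$ is automatically valid in this model, giving the desired contradiction.

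The main (and essentially only) delicate point is the uniform availability of $\chi_{A}$ inside $\Z_{2}^{\omega}$: since each $S_{k}^{2}$ decides $\Pi_{k}^{1}$-formulas \emph{without} type-two parameters, this step requires exactly the syntactic restriction built into $\LGP^{-}$. This is precisely why the analogous argument fails for the full $\LGP$, where the witness $A$ used in Theorem~\ref{koknk} to derive $\HBU$ involves the type-two parameter $G$.
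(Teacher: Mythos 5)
Your proof is correct and follows essentially the same route as the paper's: replace $A$ by a characteristic functional obtained from a suitable $S^2_k$ (possible precisely because $\LGP^{-}$ bans type-two parameters), apply $\QFAC^{0,1}$ to produce a sequence of counterexamples, extract a convergent subsequence via $(\exists^2)$, and derive a contradiction at the limit point from the premise of $\LGP^{-}$ together with the monotonicity clause. The second part is, in both treatments, the observation that the first part combined with Theorems~\ref{mooi} and~\ref{mooier} (whose model $\mathcal{M}^{S^2_\omega}$ has standard $\N$ and hence satisfies $\textsf{IND}$) would otherwise force $\Z_2^\omega+\QFAC^{0,1}$ to prove $\PIT_{\u}$.
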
 
\begin{proof}
The second part follows from the first part and Theorems \ref{mooi} and \ref{mooier}.  For the first part, replace $A(f, n)$ by $Y(f, n)=0$, where the latter is defined in terms of $S_{k}^{2}$. 
Apply $\QFAC^{0,1}$ to $(\forall n^{0})(\exists h\in C)(Y(h,n)>0)$ to find a sequence $h_{n}$ in $C$ such that $(\forall n^{0})(Y(h_{n}, n)>0)$.  By $(\exists^{2})$, the sequence $h_{n}$ has a convergent sub-sequence, say to $f_{0}\in C$, and 
$ (\forall  n^{0})(\exists g\in [\overline{f_{0}}n])Y(g, n) $ now readily follows. 
\end{proof}
 
\subsection{Subcontinuity and Pincherle's theorem}\label{pitche}
We study an equivalent version of Pincherle's theorem based on an existing notion of continuity, called \emph{sub-continuity}.  
As it happens, sub-continuity is actually used in (applied) mathematics in various contexts: see e.g.\ \cite{gordon3}*{\S4.7}, \cite{migda}*{\S14.2}, \cite{mizera}*{p.\ 318}, and \cite{lola}*{\S4}.  

\smallskip

First of all, in a rather general setting, local boundedness is equivalent to the notion of \emph{sub-continuity}, introduced by Fuller in \cite{voller}.  
The equivalence between sub-continuity and local boundedness (for first-countable Haussdorf spaces $X$ and functions $f:X\di \R$) may be found in \cite{roykes}*{p.\ 252}.  For the purposes of this paper, we restrict ourselves to $I\equiv [0,1]$, which simplifies the definition as follows.  
\bdefi[Subcontinuity]
A function $f:\R\di \R$ is \emph{sub-continuous on $I$} if for any sequence $x_{n}$ in $I$ convergent to $ x\in I$, $f(x_{n})$ has a convergent subsequence.  
\edefi
Secondly, the equivalence between sub-continuity and local boundedness (without realisers) can then be proved as in Theorem \ref{forgu}.  The weak base theory in the latter constitutes a surprise: sub-continuity 
has a typical `sequential compactness' flavour, while local boundedness has a typical `open-cover compactness' flavour.  The former and the latter are classified in the RM of resp.\ $\ACA_{0}$ and $\WKL$ ($\HBU$).      
\begin{thm}\label{forgu}
The system $\RCAo+\QFAC^{0,1}$ proves that a function $f:\R\di \R$ is locally bounded on $I$ if and only if it is sub-continuous on $I$.   Only the reverse direction uses countable choice. 
\end{thm}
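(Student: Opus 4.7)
The plan is to treat the two directions separately and, for the harder (forward) direction, to exploit the higher-order \emph{excluded middle trick} $(\exists^2)\vee\neg(\exists^2)$ that the paper uses repeatedly (e.g.\ in the proofs of Theorem \ref{mooi} and Corollary \ref{eessje}).

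For the reverse direction (sub-continuity $\di$ local boundedness), I would argue by contradiction. Suppose $f$ is sub-continuous on $I$ but fails to be locally bounded at some $x_0\in I$. Then for each $n\in\N$ the quantifier-free formula $(\exists y\in I)(|y-x_0|<1/n \wedge |f(y)|>n)$ holds. Applying $\QFAC^{0,1}$ yields a sequence $(y_n)$ in $I$ with $|y_n-x_0|<1/n$ and $|f(y_n)|>n$. Then $y_n\to x_0$, but every subsequence of $(f(y_n))$ is unbounded in absolute value, contradicting sub-continuity at $x_0$. This is the only step that genuinely needs countable choice, and it matches the pattern of many uses of $\QFAC^{0,1}$ earlier in the paper (e.g.\ in the proof of Theorem \ref{ofinterest}).

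For the forward direction (local boundedness $\di$ sub-continuity), I would split on $(\exists^2)\vee\neg(\exists^2)$. If $(\exists^2)$ holds, we have $\ACAo$ and hence the Bolzano--Weierstrass theorem for bounded sequences of reals: given $x_n\to x\in I$, local boundedness at $x$ provides $\delta>0$ and $M$ with $|f(y)|\leq M$ whenever $|y-x|<\delta$, so eventually $|f(x_n)|\leq M$, and Bolzano--Weierstrass supplies a convergent subsequence. If $\neg(\exists^2)$ holds, then by \cite{kohlenbach2}*{Prop.~3.7} every $f:\R\di\R$ is continuous, so $f(x_n)\to f(x)$ and sub-continuity is trivial (the whole sequence, hence every subsequence, converges). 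In neither branch is $\QFAC^{0,1}$ invoked.

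The main obstacle I anticipate is verifying the $\neg(\exists^2)$ branch cleanly, specifically transferring ``all functionals on Baire space are continuous'' to representations of reals satisfying the extensionality condition \eqref{RE}: one must check that a continuous witness on Cauchy-sequence representatives descends to a pointwise continuous map on $\R$, so that the conclusion $f(x_n)\to f(x)$ is actually justified for the sequence representation at hand. Once this is settled, the classical disjunction $(\exists^2)\vee\neg(\exists^2)$ closes the forward direction without any use of countable choice, exactly matching the claim that only the reverse direction requires $\QFAC^{0,1}$.
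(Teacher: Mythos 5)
Your proposal is essentially correct and follows the same excluded-middle strategy as the paper. The one small structural difference is that you apply the law of excluded middle $(\exists^{2})\vee\neg(\exists^{2})$ only to the forward direction, whereas the paper case-splits the entire biconditional at once and observes that under $\neg(\exists^{2})$ both sub-continuity and local boundedness become trivially true (every $f:\R\di\R$ is sequentially and $\eps$--$\delta$ continuous), so the equivalence is immediate. Your decomposition is a little cleaner in that it makes explicit that the reverse direction needs no case distinction at all and uses only $\QFAC^{0,1}$, while the paper's is more symmetric; both are valid.

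The obstacle you flag in the $\neg(\exists^{2})$ branch is real if you insist on \cite{kohlenbach2}*{Prop.~3.7}, which speaks of functionals on Baire space, but it evaporates once you cite the right result: the paper uses \cite{kohlenbach2}*{Prop.~3.12}, which asserts directly that under $\neg(\exists^{2})$ every $f:\R\di\R$ (in the sense of \eqref{RE}-respecting $\Phi^{1\di 1}$) is everywhere sequentially continuous and everywhere $\eps$--$\delta$-continuous. With that citation there is no transfer problem from Cauchy-sequence representatives to reals to check, and in fact you get both local boundedness and sub-continuity at once, so even the reverse direction is trivial in that branch. One more minor point: you call the formula $(\exists y\in I)(|y-x_{0}|<1/n\wedge |f(y)|>n)$ quantifier-free; it is actually $\Sigma^{0}_{1}$ (the relations $<_{\R}$ carry an existential number quantifier), but as the paper notes this still falls within the reach of $\QFAC^{0,1}$ by absorbing the number quantifier into the choice function, so no harm is done.
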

\begin{proof}
We establish the equivalence in $\RCAo+\QFAC^{0,1}$ in two steps: first we prove it assuming $(\exists^{2})$ and then prove it again assuming $\neg(\exists^{2})$.  
The law of excluded middle as in $(\exists^{2})\vee \neg(\exists^{2})$ then yields the theorem.  

\smallskip

Hence, assume $(\exists^{2})$ and suppose $f:\R\di \R$ is sub-continuous on $I$ but not locally bounded.  The latter assumption implies that there is $x_{0}\in I$ such that 
\be\label{contrake}\textstyle
{(\forall n^{0})(\exists x\in I)(|x-x_{0}|<_{\R}\frac{1}{n+1}\wedge |f(x)|>_{\R}n)}.  
\ee
Both conjuncts in \eqref{contrake} are $\Sigma_{1}^{0}$-formula, i.e.\ we may apply $\QFAC^{0,1}$ to \eqref{contrake} to obtain $\Phi^{0\di 1}$ such that for $y_{n}:=\Phi(n)$ and $x_{0}$ as in \eqref{contrake}, we have 
\be\label{missyoumuch}\textstyle
(\forall n\in \N)(|y_{n}-x_{0}|<_{\R}\frac{1}{n+1}\wedge |f(y_{n})|>_{\R}n), 
\ee
Clearly $y_{n}$ converges to $x_{0}$, and hence for some function $g:\N\di \N$, the subsequence $f(y_{g(n)})$ converges to some $y\in \R$ by the sub-continuity of $f$.  
However, $f(y_{g(n)})$ also grows arbitrarily large by \eqref{missyoumuch}, a contradiction, and the reverse implication follows.  

\smallskip

Next, again assume $(\exists^{2})$; for the forward implication, suppose $f$ is locally bounded and let $y_{n}$ be a sequence in $I$ convergent to $x_{0}\in I$.  
Then there is $k\in \N$ such that for all $y\in B(x_{0}, \frac{1}{2^k})$, $|f(y)|\leq k$.  However, for $n$ large enough, $y_{n}$ lies in $B(x, \frac{1}{2^{k}})$, implying that $|f(y_{n})|\leq k$ for $n$ large enough.  
In other words, the sequence $f(y_{n})$ eventually lies in the interval $[-k, k]$, and hence has a convergent subsequence by $(\exists^{2})$ and \cite{simpson2}*{I.9.3}.
Thus, $f$ is sub-continuous and we are done with the case $(\exists^{2})$.

\smallskip

Finally, in case that $\neg(\exists^{2})$, any function $f:\R\di \R$ is everywhere sequentially continuous and everywhere $\eps$-$\delta$-continuous by \cite{kohlenbach2}*{Prop.\ 3.12}.  
Hence, any $f:\R\di \R$ is also sub-continuous on $I$ and locally bounded on $I$, and the equivalence from the theorem is then trivially true.  
\end{proof}
Next, the statement \emph{a continuous function on $C$ is bounded} is equivalent to $\WKL_{0}$ by \cite{simpson2}*{IV.2.3}.   
The statement \emph{a sub-continuous function on $C$ is bounded} is a generalisation of the first statement, and also a special case of e.g.\ \cite{voller}*{Theorem~2.1} or \cite{systemenouveau}*{Theorem~2.4}.
Following Theorem \ref{forgu}, the second statement is a variation of Pincherle's theorem and denoted $\PIT_{o}''$; sub-continuity on $C$ is obvious.  
\begin{thm}
The system $\Z_{2}^{\omega}$ cannot prove $\PIT_{o}''$, while $\RCAo+\QFAC^{0,1}$ proves $\WKL\asa \PIT_{o}''$.
\end{thm}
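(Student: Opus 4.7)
The plan is to leverage the machinery already developed for $\PIT_o$ together with the equivalence between sub-continuity and local boundedness from Theorem~\ref{forgu}, both transposed to functionals $F:C \to \N$.

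For the unprovability in $\Z_2^\omega$, I would apply Theorem~\ref{dagsamsyn}: its type structure $\mathcal{M}$ satisfies $\Z_2^\omega$ and contains functionals $F, H$ of type two with $\LOC(H, F)$ while $H$ is unbounded on Cantor space. Since $\mathcal{M}$ includes $(\exists^2)$ by construction (it is built over the normal functional $S^2_\omega$, which subsumes $\exists^2$), I would verify that the forward direction of Theorem~\ref{forgu}, rephrased for $C$, holds inside $\mathcal{M}$: if $y_n \to f_0$ in $C$, then for $n$ large the sequence $y_n$ lies in $[\overline{f_0}F(f_0)]$, so $H(y_n) \le F(f_0)$ eventually; a bounded sequence of naturals has a (eventually constant) convergent subsequence by pigeonhole, so $H$ is sub-continuous on $C$ in $\mathcal{M}$, falsifying $\PIT_{o}''$.

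For the reverse direction $\PIT_{o}'' \to \WKL$ over $\RCAo + \QFAC^{0,1}$, I would recycle the derivation of $\WKL$ from $\PIT_o$ inside the proof of Corollary~\ref{eessje}: given a pathless binary tree $T$, one builds $H^2$ assigning to each $f$ the least $n$ with $\overline{f}n \notin T$, which is continuous on $C$ with itself as a modulus of continuity. Continuity trivially yields sub-continuity (convergent sequences map to convergent, hence bounded, sequences of naturals), so $\PIT_{o}''$ forces $H$ to be bounded on $C$, hence $T$ to be finite. For the forward direction $\WKL \to \PIT_{o}''$, I would use the excluded-middle split $(\exists^2) \vee \neg(\exists^2)$ already deployed throughout the paper. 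Under $\neg(\exists^2)$, every $F:C\to\N$ is continuous by \cite{kohlenbach2}*{Prop.~3.7}, and $\WKL$ delivers uniform continuity and hence boundedness on $C$ via \cite{kohlenbach4}*{Prop.~4.10}, so $\PIT_{o}''$ holds vacuously. Under $(\exists^2)$, if $F:C\to\N$ is sub-continuous but not locally bounded, there is $f_0 \in C$ with $(\forall n)(\exists g \in [\overline{f_0}n])(F(g) > n)$; applying $\QFAC^{0,1}$ produces a sequence $g_n \to f_0$ with $F(g_n) > n$, an unbounded $\N$-valued sequence, contradicting sub-continuity. Hence $F$ is locally bounded, and Corollary~\ref{ofmoreinterest} together with the equivalence $\PIT_o \asa \PIT_{o}'$ yields that $F$ is bounded.

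The only real obstacle is the routine bookkeeping of transposing Theorem~\ref{forgu} from $f:\R \to \R$ on $I$ to $F:C\to\N$ on $C$: the basic open sets $[\overline{f}n]$ replace the balls $B(x,2^{-k})$, and the Bolzano-Weierstrass step simplifies because bounded $\N$-valued sequences admit a constant subsequence by pigeonhole, circumventing any appeal to \cite{simpson2}*{I.9.3}. Everything else consists of quoting Theorems~\ref{dagsamsyn} and~\ref{forgu} and Corollaries~\ref{eessje} and~\ref{ofmoreinterest} from the earlier sections.
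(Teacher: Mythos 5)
Your proposal is correct and follows essentially the paper's route: the first claim via Theorem~\ref{dagsamsyn} together with the arithmetical implication (locally bounded $\Rightarrow$ sub-continuous, hence $\PIT_{o}''\di\PIT_{o}$), and the second via $\QFAC^{0,1}$ for sub-continuous $\Rightarrow$ locally bounded, Corollary~\ref{ofmoreinterest} for $\WKL\di\PIT_{o}'\di\PIT_{o}''$, and \cite{simpson2}*{IV.2.3} -- which you re-derive from a pathless tree -- for $\PIT_{o}''\di\WKL$. The case split on $(\exists^{2})$ you spell out in the forward direction is the same one the paper implicitly invokes by pointing to the proof of Theorem~\ref{forgu}.
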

\begin{proof}
Similar to the proof of Theorem \ref{forgu}, $\ACAo$ proves that a locally bounded function on $C$ is sub-continuous.  
Hence, $\PIT_{o}''\di \PIT_{o}$ without the use of countable choice, and the first part of the proof follows from Theorem \ref{dagsamsyn}.
For the second part, similar to the proof of Theorem \ref{forgu}, a sub-continuous function on $C$ is locally bounded, using $\QFAC^{0,1}$.  
Hence we have $\PIT_{o}\di \PIT_{o}''$, and $\PIT_{o}'' \di \WKL$ is immediate by \cite{simpson2}*{IV.2.3}.  Corollary \ref{ofmoreinterest} now finishes the proof.  
\end{proof}
As to similar statements, discontinuous functions can be unbounded, but a sub-continuous function on $\N^{\N}$ is bounded above by a continuous function, following Remark \ref{kowlk}.
By contrast, restricting $\PIT_{o}$ to quasi-continuous functions (see e.g.\ \cite{berserk}) yields a theorem provable from $\WKL$ alone.  

\smallskip

Finally, the definition of sub-continuity in \cite{migda,noiri} is based on sequences, while the (general topological) definition of sub-continuity in \cite{voller} is based on \emph{nets}, the generalisation of sequences to possibly uncountable index sets (see e.g.\ \cite{ooskelly}*{p.\ 65}).  
As studied in \cite{samnet}, the definition of sub-continuity involving nets implies local boundedness \emph{without the use of countable choice} as in $\QFAC^{0,1}$.
Moreover, $\PIT_{o}'$ formulated with nets is equivalent to $\WKL$ over $\RCAo+\QFAC^{0,1}$ as well.

\subsection{Heine's theorem, Fej\'er's theorem, and compactness}\label{myheinie}
We prove that the uniform versions of Heine's theorem from Section \ref{heineke} are equivalent to $\HBU_{\c}$.
We prove similar results for Heine's theorem \emph{for the unit interval} and the related \emph{Fej\'er's theorem}.
The latter states that for continuous $f:\R\di \R$, the Ces\`aro mean of the partial sums of the Fourier series uniformly converges to $f$. 
Throughout this section, the use of $\QFAC^{0,1}$ can be removed in favour of $\textsf{IND}$ as in the proof of Corollary \ref{eessjelk}.
The following proof shows that the same type three functionals are realisers for $\PIT_{\u}$ and $\UCT_{\u}$.
\begin{thm}\label{roofer}
The system $\RCAo+\QFAC^{0,1}$ proves $\UCT_{\u}\asa \HBU_{\c}$
\end{thm}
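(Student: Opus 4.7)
The plan is to establish the two implications separately, relying on a direct finite-covering construction for the easier backward direction and on the construction of an auxiliary functional to which $\UCT_{\u}$ is applied in the harder forward direction.

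For $\HBU_{\c}\di\UCT_{\u}$, I fix $G^{2}$ and let $\langle f_{1},\dots,f_{k}\rangle$ be the finite sub-cover provided by $\HBU_{\c}$; I then claim $m:=\max_{i\leq k}G(f_{i})$ witnesses $\UCT_{\u}$ for $G$. Indeed, for arbitrary $F^{2}$ with $\MPC(F,G)$ and any $f,g\in C$ with $\overline{f}m=\overline{g}m$, I pick $i\leq k$ with $f\in[\overline{f_{i}}G(f_{i})]$; since $G(f_{i})\leq m$ the equality of $m$-prefixes forces $g\in[\overline{f_{i}}G(f_{i})]$ as well, and two applications of $\MPC(F,G)$ centred at $f_{i}$ yield $F(f)=F(f_{i})=F(g)$.

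For the forward direction $\UCT_{\u}\di\HBU_{\c}$, given $G^{2}$ I let $m$ be the uniform modulus supplied by $\UCT_{\u}$ applied to $G$ and aim for the sub-claim: for every binary $\sigma$ of length $m$ there exists $g\in C$ with $G(g)\leq m$ and $\overline{g}G(g)$ a prefix of $\sigma$. Once this sub-claim is available, coding $\sigma$ as a natural number and applying $\QFAC^{0,1}$ yields a map $\sigma\mapsto g_{\sigma}$, and $\{g_{\sigma}: |\sigma|=m\}$ is then a finite sub-cover of $C$ of size $2^{m}$: for any $f\in C$ with $\sigma:=\overline{f}m$, the prefix property together with $G(g_{\sigma})\leq m$ gives $f\in[\overline{g_{\sigma}}G(g_{\sigma})]$.

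To prove the sub-claim, I argue by contradiction: assume some $\sigma_{0}$ of length $m$ violates it, so every $g$ with $G(g)\leq m$ has $\overline{g}G(g)$ failing to be a prefix of $\sigma_{0}$. A short case analysis on the value of $G(g)$ then delivers the dichotomy that every canonical ball $[\overline{g}G(g)]$ is either contained in $[\sigma_{0}]$ or disjoint from $[\sigma_{0}]$, and moreover any $f\in[\sigma_{0}]$ must satisfy $G(f)>m$. I then define $F:C\di\N$ by $F(f):=f(m)$ if $\overline{f}m=\sigma_{0}$ and $F(f):=0$ otherwise, which is an unproblematic $\RCAo$-definition since the guard is decidable on finite sequences. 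The ball dichotomy yields $\MPC(F,G)$: whenever $g\in[\overline{f}G(f)]$, either both $f,g$ lie in $[\sigma_{0}]$, in which case $G(f)>m$ forces $g(m)=f(m)$, or both avoid $[\sigma_{0}]$ and $F$ vanishes on both. Finally, $F(\sigma_{0}*0*00\dots)=0\neq 1=F(\sigma_{0}*1*00\dots)$ even though both inputs share the $m$-prefix $\sigma_{0}$, contradicting the choice of $m$ as a uniform modulus in $\UCT_{\u}$. The main obstacle is precisely the ball dichotomy and the concomitant verification of $\MPC(F,G)$, which rely crucially on the (strong form of the) negation of the sub-claim; since the auxiliary $F$ is continuous and definable from $G$ and $\sigma_{0}$ without invoking $(\exists^{2})$, the whole forward argument stays within $\RCAo+\QFAC^{0,1}$ and bypasses the excluded-middle case split $(\exists^{2})\vee\neg(\exists^{2})$ used in Corollary \ref{eessje}.
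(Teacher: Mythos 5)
Your proposal is correct and takes essentially the same approach as the paper: the backward direction reads off the uniform modulus from a finite sub-cover, and the forward direction establishes the same covering sub-claim by contradiction via the identical prefix-defined auxiliary $F$ and then invokes $\QFAC^{0,1}$ (over finite strings of length $m$) to extract the finite sub-cover. Your closing remark about ``bypassing'' the excluded-middle split is not a genuine departure: the paper's proof of this very theorem (as opposed to that of Corollary~\ref{eessje}, which it cites only for the overall outline) already defines the same continuous $F$ from a finite prefix and thus already avoids the $(\exists^{2})\vee\neg(\exists^{2})$ case split, which is only needed in Corollary~\ref{eessje} because the auxiliary functional there is the discontinuous $F_{0}$ of Theorem~\ref{krooi}.
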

\begin{proof}
The reverse implication is immediate.
The rest of the proof is based on that of $\PIT_{\u} \di \HBU_{\c}$ in Corollary \ref{eessje}: for fixed $G$, let $N$ be as in $\UCT_{\u}$. Then for $g^{1} \leq 1$ there is $f^{1} \leq 1$ such that $G(f) \leq N$ and $\bar g(G(f)) = \bar f(G(f))$.
Because, if this is not the case, there is a binary sequence $s$ of length $N$ such that for all $f$ extending $s$ we have that $G(f) > N$. Then we can define $F(f) = 0$ if $f$ does not extend $s$ and $F(f) = f(N)$ if $f$ extends $s$. Then $F$ has a modulus of continuity given by $G$, but not a modulus of uniform continuity given by $N$. 
\end{proof}
The previous results establish the equivalence between the uniform version of Heine's theorem and the Heine-Borel theorem for uncountable coverings, \emph{in the case of Cantor space}.  
One similarly proves the equivalence between the Heine-Borel theorem $\HBU$ and uniform Heine's theorem \emph{for the unit interval}, as follows.  
\begin{princ}[$\UCT_{\u}^{\R}$]
For any $\eps>_{\R}0$ and $g:(I\times \R)\di \R^{+}$, there is $\delta>_{\R}0$ such that for any $f:I\di \R$ with modulus of continuity $g$, we have
\[
(\forall x, y\in I)(|x-y|<_{\R}\delta)\di |f(x)-f(y)|<_{\R}\eps ).
\]
\end{princ}
We shall prove that $\UCT_{\u}^{\R}$ is equivalent to the uniform version of Fej\'er's theorem.  
We follow the approach in \cite{kohabil}*{p.\ 65} and we define $I_{\pi}\equiv[-\pi, \pi]$.
\bdefi\label{popolop}
Define $\sigma_{n}(f, x) :=\frac{1}{n} \sum_{k=0}^{n-1}S(k, f, x)$, where
$S(n,f,x):= \frac{a_{0}}{2} +  \sum_{k=1}^{n} ( a_{k} \cdot \cos(kx)+b_{k} \cdot \sin(kx) )$ and
$ a_{k} := \frac{1}{\pi}\int_{-\pi}^{\pi}   f (t)\cos(kt)dt, b_{k} :=\frac1\pi \int_{-\pi}^{\pi}   f (t) \sin(kt)dt$.
\edefi
Note that Fej\'er's theorem already deals with \emph{uniform} convergence, i.e.\ the notion of 
convergence in $\FEJ_{\u}$ below is `super-uniform' in that it only depends on the modulus of continuity for the function.  
\begin{princ}[$\FEJ_{\u}$]
For any $k\in\N$ and $g:(I_{\pi}\times \R)\di \R^{+}$, there is $N\in \N$ such that for any $f:I_{\pi}\di \R$ with modulus of continuity $g$ and $f(0)=0$, we have
\be\label{trop}\textstyle
(\forall n\geq N, x\in I_{\pi})(|\sigma_{n}(f, x)-f(x)|<\frac1k) \wedge (\forall y\in I_{\pi}, n\in \N)(|\sigma_{n}(f, y)|\leq nN).
\ee
\end{princ}
Note that functions like $\sin x$ and $e^{x}$ can be defined in $\RCAo$ by \cite{simpson2}*{II.6.5}, while $\WKL$ is needed to make sure $\sigma_{n}$ makes sense by \cite{simpson2}*{IV.2.7}.
\begin{thm}\label{fejerpejerpotloodgat}
The system $\RCAo+\WKL$ proves $\UCT_{\u}^{\R}\asa \FEJ_{\u}$.
\end{thm}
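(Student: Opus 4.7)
The plan is to prove the two implications separately, using the formula
\[
\sigma_{n}(f,x) - f(x) = \tfrac{1}{2\pi}\int_{-\pi}^{\pi}\bigl(f(x-t)-f(x)\bigr)K_{n}(t)\,dt,
\]
where $K_{n}$ is the Fej\'er kernel (which is non-negative, has total mass $1$, and satisfies $K_{n}(t)\leq C/(n\sin^{2}(\delta/2))$ for $|t|\geq \delta$). Integration of continuous functions is available in $\RCAo+\WKL$ by \cite{simpson2}*{IV.2.7}, so the objects in Definition \ref{popolop} are well-defined. To translate between $I=[0,1]$ and $I_{\pi}=[-\pi,\pi]$ we use the affine change of variable $x\mapsto (x+\pi)/(2\pi)$ together with a subtraction of $f(-\pi)$, which preserves moduli of continuity up to an explicit rescaling by a factor depending only on $\pi$; this lets us swap between $\UCT_{\u}^{\R}$ and its $I_{\pi}$-version freely.

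For $\UCT_{\u}^{\R}\di \FEJ_{\u}$, fix $g$ and $k$ and apply $\UCT_{\u}^{\R}$ (on $I_{\pi}$) with $\eps = 1/(2k)$ to obtain a modulus $\delta$ of uniform continuity depending only on $g$ and $k$. Splitting the integral above at $|t|=\delta$, the inner part is bounded by $\eps$ using $\int K_{n} = 1$, and the outer part is bounded by $\frac{2\|f\|_{\infty}}{n \sin^{2}(\delta/2)}$, which is smaller than $\eps$ for $n\geq N$ with $N$ depending only on $\delta$ and $\|f\|_{\infty}$. The uniform bound $\|f\|_{\infty}$ follows from $f(0)=0$ plus uniform continuity of $f$ with modulus $\delta$, giving $\|f\|_{\infty}\leq \lceil \pi/\delta\rceil$. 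For the second conjunct of \eqref{trop}, note that each coefficient $a_k, b_k$ in Definition \ref{popolop} is bounded by $2\|f\|_{\infty}$, whence $|S(k,f,y)|\leq (2k+1)\cdot 2\|f\|_{\infty}$ and $|\sigma_n(f,y)|\leq 2(n+1)\|f\|_{\infty}$, which is dominated by $nN$ after enlarging $N$ by a constant depending only on $g$.

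For $\FEJ_{\u}\di \UCT_{\u}^{\R}$, fix $g$ and $\eps>0$ and reduce to $I_{\pi}$ with $f(-\pi)=0$, noting that $f(0)$ can be absorbed similarly. Apply $\FEJ_{\u}$ with $k := \lceil 3/\eps\rceil$ to obtain $N$ depending only on $g$ and $\eps$; then $|\sigma_{N}(f,x)-f(x)|<\eps/3$ for every admissible $f$ and every $x\in I_{\pi}$. By the second clause of \eqref{trop}, $|\sigma_{N}(f,y)|\leq N^{2}$ uniformly in $y$, and $\sigma_{N}$ is a trigonometric polynomial of degree at most $N-1$, so its derivative is bounded by (a constant multiple of) $N^{3}$, whence it has a modulus of uniform continuity $\delta := \eps/(C N^{3})$ for $\eps/3$, with $C$ absolute. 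Combining via
\[
|f(x)-f(y)|\leq |f(x)-\sigma_{N}(f,x)|+|\sigma_{N}(f,x)-\sigma_{N}(f,y)|+|\sigma_{N}(f,y)-f(y)|<\eps
\]
for $|x-y|<\delta$ yields $\UCT_{\u}^{\R}$, since $\delta$ depends only on $g$ and $\eps$. The Bernstein-type estimate for the derivative of $\sigma_{N}$ can be replaced by an elementary argument: $\sigma_{N}$ is a finite sum of $\cos(kx)$ and $\sin(kx)$ with coefficients bounded by $2\|\sigma_{N}\|_{\infty}/(N-k)$-type quantities, all of which is formalisable in $\RCAo$.

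The main obstacle is the bookkeeping in the reverse direction: one must argue that the bound $|\sigma_{N}|\leq N^{2}$ (which is part of $\FEJ_{\u}$ and therefore \emph{given}) suffices to control the modulus of uniform continuity of the trigonometric polynomial $\sigma_{N}$ by an elementary polynomial-in-$N$ estimate, without circularly appealing to uniform continuity of $f$ itself. Everything else is a routine verification inside $\RCAo+\WKL$, using that the kernel estimates and the arithmetic of trigonometric polynomials (including bounding derivatives by degrees times sup-norms) are provable there.
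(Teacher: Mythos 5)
Your argument is correct and follows the same architecture as the paper. The reverse direction is exactly the triangle-inequality decomposition of $|f(x)-f(y)|$ through $\sigma_{N}(f,\cdot)$, with $\FEJ_{\u}$ controlling the outer two terms and the second conjunct of \eqref{trop} giving a modulus of uniform continuity for $\sigma_{N}$ depending only on $N$ and not on $f$; there is indeed no circularity, since the bound on $|\sigma_{n}|$ is \emph{supplied by} $\FEJ_{\u}$ itself, as you note. The two proofs differ only in how the forward direction obtains the required uniform modulus of convergence: the paper imports Kohlenbach's explicit proof-mining modulus $\Psi(f,k)=48(k+1)\cdot\|f\|_{\infty}\cdot(\omega_{f}(2(k+1))+1)^{2}$ from \cite{kohabil}*{p.\ 65} as a black box and observes that, via $\UCT_{\u}^{\R}$, the quantity $\omega_{f}$ depends only on $g$, whereas you unfold essentially the same estimate directly by splitting the Fej\'er-kernel integral
\[
\sigma_{n}(f,x)-f(x)=\tfrac{1}{2\pi}\int_{-\pi}^{\pi}\bigl(f(x-t)-f(x)\bigr)K_{n}(t)\,dt
\]
at $|t|=\delta$ and using $\int K_{n}=1$ on the inner part and the tail bound on the outer part. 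Both routes rest on the same small lemma that $\|f\|_{\infty}$ is controlled by the modulus once $f(0)=0$ (your $\lceil\pi/\delta\rceil$ versus the paper's $\lceil 2\pi/\delta_{1}\rceil$ at $\eps=1$). Where you are more explicit than the paper is in producing the modulus of uniform continuity for $\sigma_{N}$ via a Bernstein-type derivative bound; the paper merely asserts that such a modulus, depending only on $n$, follows from the second conjunct of \eqref{trop}. Citing Kohlenbach buys brevity and a ready-made constant; your self-contained kernel argument is more elementary and avoids an external reference, at the cost of having to verify the standard Fej\'er-kernel inequalities inside $\RCAo+\WKL$, which is routine but not free.
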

\begin{proof} 
For the forward implication, the modulus of uniform convergence $\Psi$ for Fej\'er's theorem from \cite{kohabil}*{p.\ 65} is
\[
\Psi(f, k):=48(k+1)\cdot \|f\|_{\infty} \cdot(\omega_{f}(2(k+1))+1)^{2},
\]
for a modulus of uniform continuity $\omega_{f}:\N\di \N$ for $f$.  Note that we can replace $\|f\|_{\infty}$ by $16\omega_{f}(1)$ if $f(0)=0$.  
Due to the high level of uniformity of $\Psi$, the first conjunct of \eqref{trop} immediately follows from $\UCT_{\u}^{\R}$.  For the the second conjunct of \eqref{trop}, fix $g$ and apply $\UCT_{\u}^{\R}$ for $\eps=1$ to obtain $\delta_{1}$ as in the latter.  
Now note that any $f$ such that $f(0)=0$ and $g$ is a modulus of continuity for $f$, we have $(\forall x\in I_{\pi})(|f(x)|\leq N)$ where $N=\lceil \frac{2\pi}{\delta_{1}}\rceil$.  Intuitively, this $N$ is a `uniform' bound for $f$ that only depends on a modulus 
of continuity for the latter.  By definition, this also yields a uniform bound for $\sigma_{n}(f, x)$ (in terms of $n$ and $N$ only). 

\smallskip

For the reverse implication, note that $\UCT_{\u}^{\R}$ does not change if we additionally require $f(0)=0$, since we can consider $f_{0}(x):=f(x)-f(0)$, which has the same modulus of continuity as $f$.  
Now fix $g$ as in $\UCT_{\u}^{\R}$, fix $x, y\in I_{\pi},\eps>0$ and consider 
\be\label{argo}
|f(x)-f(y)|\leq |f(x)-\sigma_{n}(f,x)|+|\sigma_{n}(f, x)-\sigma_{n}(f, y)|+|f(y)-\sigma_{n}(f, y)|
\ee
for $f$ with $g$ as modulus of continuity and $f(0)=0$.  
The first and third part of the sum in \eqref{argo} are both below $\eps/3$ for $n$ large enough.  Such number, with the required independence properties, is provided by $\FEJ_{\u}$.  
Moreover, $\sigma_{n}(f, x)$ is uniformly continuous on $I_{\pi}$ with a modulus which depends on $n$ but not $f$ due to the second conjunct of \eqref{trop}.  
Hence, \eqref{argo} implies that $f$ is uniformly continuous in the sense required by $\UCT_{\u}^{\R}$.
\end{proof}
Using a proof similar to that of Theorem \ref{roofer}, we obtain.  
\begin{cor}
 $\RCAo+\WKL+\QFAC^{0,1}$ proves $\UCT_{\u}^{\R}\asa \FEJ_{\u}\asa \HBU$.
\end{cor}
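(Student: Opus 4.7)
The first equivalence $\UCT_{\u}^{\R}\asa\FEJ_{\u}$ is exactly Theorem~\ref{fejerpejerpotloodgat}, so the plan reduces to establishing $\UCT_{\u}^{\R}\asa\HBU$ over $\RCAo+\WKL+\QFAC^{0,1}$. Both directions will employ the excluded middle trick $(\exists^{2})\vee\neg(\exists^{2})$ used repeatedly above, which lets us handle the discontinuous case with $(\exists^{2})$ available while exploiting \cite{kohlenbach2}*{Prop.~3.7} in the continuous case.

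For the easier direction $\HBU\di\UCT_{\u}^{\R}$ I would use a standard Lebesgue-number argument that requires no choice. Given $g:(I\times\R)\di\R^{+}$ and $\eps>_{\R}0$, set $\Psi(x):=g(x,\eps/2)$ and invoke $\HBU$ to obtain a finite sub-covering $\{(x_{i}-\Psi(x_{i}),x_{i}+\Psi(x_{i}))\}_{i\leq N}$ of $I$. Any $\delta$ strictly below the Lebesgue number of this finite covering works: for $|x-y|<\delta$, both points lie in a common $(x_{i}-\Psi(x_{i}),x_{i}+\Psi(x_{i}))$, and two applications of the modulus condition at $x_{i}$ give $|f(x)-f(y)|\leq|f(x)-f(x_{i})|+|f(x_{i})-f(y)|<\eps$ for any $f$ with modulus $g$.

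For the harder direction $\UCT_{\u}^{\R}\di\HBU$, if $\neg(\exists^{2})$ holds then every functional on Baire space (hence on $\R$) is continuous by \cite{kohlenbach2}*{Prop.~3.7}, and $\WKL$ alone suffices to prove that such functionals are uniformly continuous and hence bounded on Cantor space, yielding $\HBU_{\c}$ and then $\HBU$ via \cite{dagsamIII}*{Theorem~3.3}, exactly as in the final lines of the proof of Theorem~\ref{mooi}. If $(\exists^{2})$ holds, I would follow the template of Theorem~\ref{roofer}: fix $\Psi:I\di\R^{+}$, define $g(x,r):=\Psi(x)/2$ independently of $r$, and apply $\UCT_{\u}^{\R}$ at $\eps:=1$ to obtain $\delta>_{\R}0$. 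The key claim is that for every $y\in I$ there exists $x\in I$ with $\Psi(x)\geq_{\R}\delta$ and $|y-x|<_{\R}\Psi(x)-\delta/2$; if this failed at some $y_{0}\in I$, I would use $(\exists^{2})$ to construct a step-type $f:I\di\R$ that has $g$ as a modulus of continuity yet takes values differing by $1$ on a pair of points within distance $\delta$, contradicting the conclusion of $\UCT_{\u}^{\R}$. Applying $\QFAC^{0,1}$ to the key claim over a $\delta/2$-mesh of rationals in $I$ then extracts the desired finite sub-covering for $\Psi$.

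The main obstacle is the fine-tuning of the counter-example $f$ in the case $(\exists^{2})$: one must arrange that $f$ respects real equality in the sense of \eqref{RE} while engineering the failure of uniform continuity precisely where the key claim fails, so that the oscillation is contained in a region where the modulus $\Psi/2$ remains valid. The rest is routine once the parallel with Theorem~\ref{roofer} is set up, and the use of the excluded middle trick lets us bypass any subtlety arising when $\exists^{2}$ is unavailable.
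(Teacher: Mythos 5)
Your decomposition — Theorem~\ref{fejerpejerpotloodgat} for $\UCT_{\u}^{\R}\asa\FEJ_{\u}$, then the excluded-middle trick plus the Theorem~\ref{roofer} template for $\UCT_{\u}^{\R}\asa\HBU$ — matches what the paper intends: it gives no written proof for the corollary, saying only that it follows ``using a proof similar to that of Theorem~\ref{roofer}.'' The direction $\HBU\di\UCT_{\u}^{\R}$ by a Lebesgue-number argument and the $\neg(\exists^{2})$ case via \cite{kohlenbach2}*{Prop.~3.7} are both in order.

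There is, however, a genuine gap in the $(\exists^{2})$ case. With $g(x,r):=\Psi(x)/2$ independent of $r$, the modulus condition reads $(\forall\eps>_{\R}0)(\forall x,y\in I)(|x-y|<_{\R}\Psi(x)/2\di|f(x)-f(y)|<_{\R}\eps)$, i.e.\ $f$ must be \emph{constant} on $B(x,\Psi(x)/2)$ for every $x\in I$. Since $\Psi>_{\R}0$ everywhere, these balls cover the connected interval $[0,1]$, so the only $f$ with this modulus are the constants. Consequently $\UCT_{\u}^{\R}$ applied to your $g$ is vacuously satisfied by \emph{any} $\delta>_{\R}0$; the extracted $\delta$ carries no information about $\Psi$, and no ``key claim'' about $\Psi$ can be drawn from it. Concretely, your proposed step-type counter-example cannot admit this $g$ as a modulus at all: the jump point $c$ would have to lie outside $B(x,\Psi(x)/2)$ for every $x$, which fails already at $x=c$. (Your ``key claim'' is also stronger than what $\HBU$ asserts, but this is a secondary issue once the vacuity problem is fixed.) The correct analogue of Theorem~\ref{roofer} requires a modulus that \emph{scales with} $\eps$, e.g.\ $g(x,\eps):=\min(\eps,1)\cdot\Psi(x)$, and the counter-example should be a ramp whose local slope is compatible with $g$ rather than a sharp step — mirroring how $F(f)=f(N)$ in the Cantor-space proof reads a coordinate at depth $N$ rather than imposing a cut. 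With that change in place, your remaining worry about extensionality \eqref{RE} is indeed routine to resolve using $(\exists^{2})$.
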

By the previous, realisers for $\FEJ_{\u}$ and  $\UCT_{\u}$ are equi-computable modulo $\exists^2$. When coded as functionals of pure type 3, these realisers are all essentially PRs, modulo a computable scaling.
Similarly, many theorems from analysis yield analogous uniform versions, and there are at least two sources: on one hand, as noted above, 
the redevelopment of analysis based on techniques from the gauge integral (as in e.g.\ \cite{bartle3}) yields uniform theorems.  
On the other hand, as hinted at in the proof of Theorem \ref{fejerpejerpotloodgat}, Kohlenbach's \emph{proof mining} program is known to produce highly uniform results (see e.g.\ \cite{kohlenbach3}*{Theorem~15.1}), which yield uniform versions, like $\FEJ_{\u}$ for Fej\'er's theorem.
We finish this section with some conceptual remarks. 
\begin{rem}[Atsuji spaces]\rm
A metric space $X$ is called \emph{Atsuji} if for any metric space $Y$, any continuous function $f:X\di Y$ is uniformly continuous.
The RM study of {Atsuji spaces} may be found in \cite{withgusto}*{\S4}, and one of the results is that the Heine-Borel theorem for countable coverings of $[0,1]$ is equivalent to the latter being Atsuji.
Theorem \ref{roofer} may be viewed as a generalisation (or refinement) establishing that $\HBU$ is equivalent to $[0,1]$ being `uniformly' Atsuji, i.e.\ as in $\UCT_{\u}$.
\end{rem}
\begin{rem}[Other uniform theorems]\label{flurki}\rm
It is possible to formulate uniform versions (akin to $\PIT_{\u}, \UCT_{\u}$, and $\FEJ_{\u}$) of many theorems.  
For reasons of space, we delegate the study of such theorems to a future publication.  
We point the reader to \cite{gauwdief}*{Example 2} and \cite{taokejes} for `real-world' examples using $\HBU$ by two Fields medallists. 
We also provide the example of \emph{uniform} weak K\"onig's lemma $\WKL_{\u}$:
\[
(\forall G^{2})(\exists m^{0})(\forall T\leq_{1}1)\big[(\forall \alpha \in C)(\overline{\alpha}G(\alpha)\not\in T)\di (\forall \beta \in C)( \overline{\beta}m\not\in T )     \big],
\]
Note that $\WKL_{\u}$ expresses that a binary tree $T$ is finite if it has no paths, \emph{and} the upper bound $m$ only depends on a realiser $G$ of `$T$ has no paths'.  
It is fairly easy to show that $\WKL_{\u}$ is equivalent to $\HBU$ by adapting the proof of Theorem \ref{eessje}.
\end{rem}
\begin{rem}[Finer than comprehension]\label{NFP}\rm
The results in \cite{dagsamIII} and this paper imply that e.g.\ $\HBU$ cannot be proved in $\Z_{2}^{\omega}+\QFAC^{0,1}$, while $\Z_{2}^{\Omega}$ suffices. 
Given the weak first-order strength of the former compared to the latter, it is an understatement to say that (higher-order) comprehension (embodied by $S_{k}^{2}$ and $(\exists^{3})$) does not capture open-cover compactness well, i.e.\ there is no natural fragment equivalent to $\HBU$.   
Similar claims can be made for the Lindel\"of property and related concepts.  
As it turns out, the \emph{neighbourhood function principle} $\NFP$ from \cite{troeleke1} give rise to a hierarchy that captures e.g.\ $\HBU$ and the Lindel\"of lemma well.  
Indeed, as shown in \cite{samph}*{\S4}, there are natural fragments of $\NFP$ that are equivalent to the latter. 
\end{rem}
\section{Some subtleties of higher-order arithmetic}\label{not2bad}
This section is devoted to the detailed study of certain, in our opinion, subtle aspects of the results obtained above and in \cite{dagsamIII}.  
Firstly, in light of our use of the axiom of (countable) choice in our RM-results, Section~\ref{finne} is devoted to the study of $\QFAC^{0,1}$, its tight connection to the  \emph{Lindel\"of lemma} in particular.
In turn, we show in Section \ref{kurzweil} that the Lindel\"of lemma \emph{for Baire space}, called $\LIND_{4}$ in \cite{dagsamIII}, together with $(\exists^{2})$, proves $\FIVE$, improving the results in \cite{dagsamIII}*{\S4}.  

\subsection{A finer analysis: the role of the axiom of choice}\label{finne}
Our above proofs often make use of the axiom of countable choice, and its status in RM is studied in this section.
We first discuss some required preliminaries in Section~\ref{introp}, and then study the tight connection between $\QFAC^{0,1}$ and the \emph{Lindel\"of lemma} in Section~\ref{rose}.  
We also show that the logical status of the latter is highly dependent on its formulation (provable in a weak fragment of $\Z_{2}^{\Omega}$ versus unprovable in $\ZF$).    

\subsubsection{Historical and mathematical context}\label{introp}
To appreciate the study of countable choice and the Lindel\"of lemma, some mathematical/historical facts are needed.  

\smallskip

First of all, many of the results proved above or in \cite{dagsamIII} make use of the axiom of choice, esp.\ $\QFAC^{0,1}$ in the base theory.  
Whether the axiom of choice is really necessary is then a natural RM-question (posed first by Hirshfeldt; see \cite{montahue}*{\S6.1}).  
Moreover, $\QFAC^{0,1}$ also figures in the grander scheme of things: e.g.\ the local equivalence of `epsilon-delta' and sequential continuity is not provable in \textsf{ZF} set theory, 
while $\QFAC^{0,1}$ yields the equivalence in a general context (\cite{kohlenbach2}*{Rem.\ 3.13}).   
Finally, countable choice for subsets of $\R$ is equivalent to the fact that $\R$ is a Lindel\"of space over $\ZF$ (\cite{heerlijk}).  Thus, the role of $\QFAC^{0,1}$ is connected to the status of the \emph{Lindel\"of property}, i.e.\ that every open covering has a countable sub-covering.

\smallskip

Secondly, the previous points give rise to a clear challenge: find a version of the Lindel\"of lemma equivalent to $\QFAC^{0,1}$, over $\RCAo$.  
An immediate difficulty is that the aforementioned results from \cite{heerlijk} are part of set theory, while the framework of RM is much more minimalist by design; for instance, what is a (general) open covering in $\RCAo$?
Fortunately, the pre-1900 work by Borel and Schoenflies on open-cover compactness provides us with a suitable starting point.  %as follows.   

\smallskip

Thirdly, we consider Lindel\"of's \emph{original} lemma from \cite{blindeloef}*{p.\ 698}.
\begin{quote}
Let $P$ be any set in $\R^{n}$ and construct for every point of $P$ a sphere $S_{P}$ with $x$ as center and radius $\rho_{P}$, where the latter can vary from point to point; there exists a countable infinity $P'$ of such spheres such that every point in $P$ is interior to at least one sphere in $P'$.  
\end{quote}
A similar formulation was used by Cousin in \cite{cousin1}.  
However, these coverings are `special' in that for $x\in \R^{n}$, one \emph{knows} the open set covering $x$, namely $B(x, \rho(x))$, similar to our notion of canonical covering.  % covering $x$. 
By contrast, a (general) open covering of $\R$ is such that for every $x\in \R$, there \emph{exists} a set in the covering containing $x$.  
This is the modern definition, and one finds its roots with Borel (\cite{opborrelen}) as early as 1895 (and in 1899 by Schoenflies), the same year Cousin published \emph{Cousin's lemma} (aka $\HBU$) in \cite{cousin1}.  

\smallskip

Motivated by the above, we shall study the Borel-Schoenflies formulation of the Lindel\"of lemma (and $\HBU$) in Section~\ref{rose}.  This version turns out to be equivalent to $\QFAC^{0,1}$ on the reals, and also provides further nice results.  

\subsubsection{A rose by many other names}\label{rose}
We formulate versions of the Heine-Borel theorem and Lindel\"of lemma based on the 1895 and 1899 work of Borel and Schoenflies on open-cover compactness (\cite{opborrelen, schoen2}).
These versions provide a nice classification involving $\QFAC^{0,1}$ and show that the logical status of the Lindel\"of lemma is highly dependent on its formulation (provable in weak fragments of $\Z_{2}^{\Omega}$ versus unprovable in $\ZF$).    
We note that Schoenfield in \cite{schoen2}*{Theorem V, p.~51} first reduces an uncountable covering to a countable sub-covering, and then to a finite sub-covering.  

\smallskip

For our purposes it suffices that open coverings are `enumerated' by $2^{\N}$ and have rational endpoints.  
As discussed in Remark \ref{subm}, this restriction is insignificant in our context.
As to notation, $J_{g}^{\Psi}$ is the open set $ (\Psi(g)(1), \Psi(g)(2))$ for $\Psi:C\di \Q^{2}$, 
while we say that \emph{$\Psi:C\di \R^{2}$ provides an open covering of $\R$} if $(\forall x\in \R)(\exists g\in C)(x\in J_{g}^{\Psi})$. 
We first study the following version of the Lindel\"of lemma for the real line.  
\bdefi[$\LIN^{\bs}$] For every open covering of $\R$ provided by $\Psi:C\di \Q^{2}$, there exists $\Phi:\N\di C$ such that $(\forall x\in \R)(\exists n\in \N)(x\in J_{\Phi(n)}^{\Psi})$.
\edefi
To gauge the strength of $\LIND^{\bs}$, we first prove that $\QFAC^{0,1}$ in Corollary \ref{ofmoreinterest} may be replaced by the latter.  
While this theorem also follows from Theorem \ref{corekl}, the following proof is highly illustrative.  
%$\LIN^{\bs}$. 
\begin{thm}
The system $\RCAo+\LIN^{\bs}$ proves $\WKL\asa\PIT_{o}$.  
\end{thm}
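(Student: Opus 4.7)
The plan is to follow the architecture of Corollary \ref{ofmoreinterest}, replacing the single use of $\QFAC^{0,1}$ there with an application of $\LIN^{\bs}$. The reverse direction $\PIT_o \di \WKL$ transfers verbatim from the proof of Corollary \ref{eessje}: given a binary tree $T \leq_1 1$ without paths, quantifier-free induction together with $\QFAC^{1,0}$ (both already in $\RCAo$) yields a modulus $H^2$ with $(\forall f \leq_1 1)(\overline{f}H(f) \notin T)$; the functional $H$ is its own realiser for local boundedness, so $\PIT_o$ bounds $H$ on $C$ and forces $T$ to be finite. No choice on reals is consumed by this direction.

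For the forward direction $\WKL \di \PIT_o$, I would apply the excluded middle trick on $(\exists^2) \vee \neg(\exists^2)$. When $\neg(\exists^2)$ holds, \cite{kohlenbach2}*{Prop.\ 3.7} makes every $F^2$ continuous on Baire space, hence $\WKL$ yields uniform continuity of $F$ on $C$ via \cite{kohlenbach4}*{Prop.\ 4.10}, and so $F$ is bounded there; this disjunct needs no choice at all. When $(\exists^2)$ holds, the strategy is to extract from $\LIN^{\bs}$ enough countable choice to run the proof of Theorem \ref{ofinterest}.

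Concretely, given a formula $(\forall n^0)(\exists g \in C)A(n,g)$ with $A$ quantifier-free (which in Theorem \ref{ofinterest} is $F(g) > n$ applied to the supposed unbounded $F$), I would define $\Psi \colon C \di \Q^2$ as follows, using $(\exists^2)$ to decide the case distinction. If $h(0) = 0$, decode $h' = \lambda i.\, h(i+1)$ as a pair $\langle n, g\rangle$ (say $n$ is the position of the first $1$ in $h'$ and $g$ is the tail), and set $\Psi(h) := (n - \tfrac14,\, n + \tfrac54)$ if $A(n,g)$, and $\Psi(h) := (-10,-9)$ otherwise. If $h(0) = 1$, decode the tail as a numeral $k$ and set $\Psi(h) := (-k-1,\, -k/2)$. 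By hypothesis this provides an open covering of $\R$ with rational endpoints. Applying $\LIN^{\bs}$ we obtain $\Phi \colon \N \di C$ whose image countably sub-covers $\R$. For each fixed $n \in \N$, the half-integer $n + \tfrac12 \geq 0$ lies in some $J_{\Phi(m)}^{\Psi}$, and by the choice of the interval lengths this forces $\Phi(m) = 0 \ast \langle n, g_n\rangle$ with $A(n, g_n)$; using $(\exists^2)$ to search for the least such $m$ defines the choice functional $n \mapsto g_n$. Inserting this in the proof of Theorem \ref{ofinterest} (which otherwise requires only $(\exists^2)$ to extract a convergent subsequence) yields $\PIT_o$ in the $(\exists^2)$ disjunct, closing the argument.

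The main obstacle is purely combinatorial: arranging the case-distinction on $h$ and the widths of the rational intervals so that every relevant half-integer is covered \emph{only} by intervals of the form $(n - \tfrac14, n + \tfrac54)$ corresponding to an actual witness for exactly that $n$, while the fallback/negative intervals never intrude. This is a routine bookkeeping exercise once $(\exists^2)$ is available to decode $h$ and to carry out the final minimisation step; the conceptual content is that $\LIN^{\bs}$ internalises the countable sub-selection that was previously outsourced to $\QFAC^{0,1}$.
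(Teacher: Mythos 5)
Your proof is correct in structure and carries the same reverse direction and the same $\neg(\exists^2)$ disjunct as the paper, but the $(\exists^2)$ half of the forward direction is organised quite differently. Where you build a general-purpose encoding of pairs $\langle n, g\rangle$ into binary sequences and a tailored interval pattern so that $\LIN^{\bs}$ simulates a full instance of countable choice, the paper exploits the proof-by-contradiction setup more directly: once one supposes the locally bounded $F$ to be unbounded on $C$, the map $\Psi(x) := (-F(x), F(x))$ is \emph{already} an open covering of $\R$ with rational (indeed integer) endpoints, and any countable sub-covering $\Phi$ from $\LIN^{\bs}$ automatically satisfies $(\forall m)(\exists n)(F(\Phi(n)) > m)$, from which $\QFAC^{0,0}$ (part of $\RCAo$) extracts the diverging sequence. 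So the paper avoids all the decoding machinery, the case split on $h(0)$, and the witness-certification of $A(n,g)$ that your construction requires, at the price of being hand-tailored to the single choice instance needed in Theorem \ref{ofinterest}. Your construction is in effect a special case of what the paper later isolates as Theorem \ref{corekl}, namely that $\RCAo + \LIN^{\bs}$ proves $\QFAC^{0,1}_{\R}$; if you intend to use this route it is cleaner to extract that lemma first and then cite it, rather than re-derive a bespoke instance inline, and you should still patch the bookkeeping gaps you flag (in particular making $\Psi$ total on sequences with no ``first $1$'' in the decoded tail, and arranging that the negative intervals for $k \in \N$ really do cover every $y \leq -1/4$). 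Modulo that bookkeeping the argument goes through.
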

\begin{proof}
The reverse implication is immediate from (the proof of) Corollary \ref{eessje}.  
The proof of the forward implication in Corollary \ref{ofmoreinterest} makes use of $\QFAC^{0,1}$ \emph{once}, namely to conclude from $(\forall n^{0})(\exists \alpha\leq 1)(F(\alpha)>n)$ the existence of a 
sequence $\alpha_{n}$ in Cantor space such that $(\forall n^{0})(F(\alpha_{n})>n)$ in the proof of Theorem \ref{ofinterest}.  This application of $\QFAC^{0,1}$ can be replaced by $\LIN^{\bs}$ as follows: since $F$ is unbounded on Cantor space, 
$\Psi(x):=(-F(x), F(x))$ yields an open covering of $\R$, and the countable sub-covering $\Phi$ provided by $\LIN^{\bs}$ is such that $(\forall m\in \N)(\exists n\in \N)( F(\Phi(n))>m )$.  Applying $\QFAC^{0,0}$ now yields the sequence $\alpha_{n}$. 
 % and we are done.    
\end{proof}  
The previous proof goes through, but becomes a lot messier, if we assume $\Psi$ from $\LIND^{\bs}$ has $[0,1]$ or $\R$ as a domain, rather than Cantor space.   
This is the reason we have chosen the latter domain. 
As expected, we also have the following theorem.   
\begin{thm}\label{corekl}
$\RCAo+\LIN^{\bs}$ proves $\QFAC^{0,1}_{\R}$, i.e.\ for all $F:\R\di \N$, we have
\be\label{rivool}
(\forall n\in \N)(\exists x\in \R)(F(x, n)=0)\di (\exists Y^{0\di 1})(\forall n\in \N)(F(Y(n), n)=0).
\ee
\end{thm}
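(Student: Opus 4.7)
My plan is to design an open covering $\Psi\colon C \to \Q^{2}$ of $\R$ so that any countable sub-covering produced by $\LIN^{\bs}$ necessarily encodes witnesses to $F(\cdot,n) = 0$ for every $n \in \N$. The key device, inspired by the use of $\LIN^{\bs}$ in the preceding theorem, is to single out a discrete set of ``problem points'' $p_{n} := 2n$ which can only be covered by intervals that explicitly encode a real $y$ with $F(y,n) = 0$; any countable sub-covering is then forced to furnish such an encoding for each $n$.

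Concretely, I would split $C = 2^{\N}$ into two disjoint parametrising classes via a fixed recursive decoding. \emph{Safe} codes correspond to pairs of rationals $(q_{1},q_{2})$ with $q_{2} - q_{1} < 1$; for such $g$, put $\Psi(g) := (q_{1},q_{2})$ if the interval avoids every $p_{n}$, and $\Psi(g) := (0,0)$ (an empty open interval) otherwise. \emph{Witness} codes correspond to triples $(n,k,h)$ with $n,k \in \N$ and $h \in C$, where $h$ is viewed, via Kohlenbach's hat construction from Definition~\ref{keepintireal}, as coding a real $y_{h} \in \R$; set $\Psi(g) := (2n - 2^{-k-1},\, 2n + 2^{-k-1})$ whenever $F(y_{h}, n) = 0$, and $\Psi(g) := (0,0)$ otherwise. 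Since $F$ is a parameter, $\Psi$ is a legitimate third-order functional of $\RCAo$ via $\lambda$-abstraction. Crucially, $\Psi$ does cover $\R$: every real outside $\{p_{n} : n \in \N\}$ lies in a small rational interval missing this discrete set, hence in some safe interval; and each $p_{n}$ is covered by a witness interval because the hypothesis supplies $y \in \R$ with $F(y,n) = 0$, which can be encoded by an appropriate $h$.

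Now apply $\LIN^{\bs}$ to obtain $\Phi\colon \N \to C$ with $\bigcup_{m} J_{\Phi(m)}^{\Psi} \supseteq \R$. For each $n$, let $m_{n}$ be the least $m$ such that $p_{n} \in J_{\Phi(m)}^{\Psi}$; this search terminates by the covering property, and hence defines a function of $n$ in $\RCAo$, since membership of a rational in a rational interval is decidable. No safe code covers any $p_{n}$, so $\Phi(m_{n})$ must be a witness code, decoding as $(n', k, h_{n})$. The containment $p_{n} \in (2n' - 2^{-k-1},\, 2n' + 2^{-k-1})$ forces $n' = n$ because distinct problem points are at distance $\geq 2$, and it forces $F(y_{h_{n}},n) = 0$, as otherwise $\Psi(\Phi(m_{n}))$ would be empty and could not contain $p_{n}$. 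Setting $Y(n) := y_{h_{n}}$ yields the required total $Y\colon \N \to \R$ witnessing~\eqref{rivool}.

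The main technical hurdle is calibrating the two classes of codes so that the construction is self-consistent: the union of safe intervals must cover exactly $\R \setminus \{p_{n} : n \in \N\}$, while the hypothesis alone must suffice to cover each $p_{n}$ by a witness interval. Once this balance is achieved, the extraction of $Y$ from $\Phi$ is pure bounded search in $\RCAo$, and no choice principle beyond $\LIN^{\bs}$ is invoked.
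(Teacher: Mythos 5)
Your covering design is appealing, but the proposal as written has a genuine gap: the functional $\Psi\colon C\to\Q^{2}$ you specify is \emph{not} $\lambda$-definable in $\RCAo$. Both the ``safe'' and ``witness'' decodings require reading an \emph{unboundedly long} prefix of $g\in 2^{\N}$ (unary or self-delimiting encodings of natural numbers $n,k$ or of rational endpoints $q_1,q_2$), and then falling back to a default when the decoding does not terminate, e.g.\ when $g$ is a constant-$1$ tail. Detecting non-termination of such a prefix scan is a $\Sigma^{0}_{1}$ condition on $g$, and G\"odel's $T$ plus $\QFAC^{1,0}$ gives you primitive recursion and bounded search only --- not $\mu$-recursion on type-one inputs. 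In particular, a total, continuous $\Psi\colon C\to\Q^{2}$ cannot unboundedly decode a prefix of its argument (it would be discontinuous at the constant sequence), so in a $\neg(\exists^{2})$ scenario your $\Psi$ simply does not exist as an object of the model. The assertion ``$\Psi$ is a legitimate third-order functional of $\RCAo$ via $\lambda$-abstraction'' therefore needs an argument, and there is none.

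The paper handles exactly this point by case-splitting on $(\exists^{2})\vee\neg(\exists^{2})$: in the positive case $\exists^{2}$ is invoked \emph{explicitly} (``Using $\exists^{2}$, define $G\colon C\to\N$ \dots'') precisely to make the piecewise definition of the covering legitimate, while in the negative case the $\LIN^{\bs}$-based argument is dropped entirely and replaced by a direct continuity argument: $\neg(\exists^{2})$ forces every $F\colon\R\to\N$ to be continuous, hence locally constant, so the antecedent already yields rational witnesses and $\QFAC^{0,0}$ finishes. Your proof needs the same excluded-middle split; once you add it, the geometric idea --- discrete problem points $p_{n}=2n$ that only witness codes can cover, forcing a witness into each $\Phi(m_{n})$ --- is a sound and arguably cleaner alternative to the paper's expanding-interval construction $\Psi(x):=(-G(x),G(x))$, which instead forces witnesses through unboundedness of the countable sub-covering. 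Just be prepared to say a bit more about the surjectivity of $h\mapsto y_{h}$ and about the (bounded, since $q_{2}-q_{1}<1$) test for whether a rational interval misses all $p_{n}$, which you currently leave implicit.
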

\begin{proof}
In case of $\neg(\exists^{2})$, all functions on the reals are continuous by \cite{kohlenbach2}*{Prop.~3.12}, and the antecedent of \eqref{rivool} then implies $(\forall n\in \N)(\exists q\in \Q)F(q, n)=0$; 
by definition, $\QFAC^{0,0}$ is included in $\RCAo$ and finishes this case.  
In case of $(\exists^{2})$, we fix $F:\R\di \N$ such that $(\forall n\in \N)(\exists x\in \R)(F(x, n)=0)$.  Now use $(\exists^{2})$ to define $\textsf{inv}(x)$ as $0$ if $x=_{\R}0$ and $1/x$ otherwise; note that:
\be\label{foiklo}
(\forall n\in \N)(\exists x\in [0,1])\big(F(x, n)\times F(\textsf{inv}(x), n)\times F(-x, n)\times F(-\textsf{inv}{(x)}, n)=0 \big).  
\ee
Thus, we may assume that  $(\forall n\in \N)(\exists x\in [0,1])(F(x, n)=0)$. Using $\exists^{2}$, define $G: C\di \N$ as follows for $f\in C$ and $w_{n}=\langle 1\dots 1\rangle$ with length $n$:
\be\label{Defg}
G( w_{n}*f):=
\begin{cases}
n+2 & \textup{ if } (\forall i\leq n)F(\mathbb{r}(\pi(f,n)(i)),i )=0 \wedge f(0)=0 \\
1 & \textup{ if there is no such $n$ }\\
\end{cases}, 
\ee
where $\mathbb{r}(x)=\sum_{i=0}^{\infty}\frac{x(i)}{2^{i}}$ and $\pi^{(1\times 0)\di 1^{*}}$ is a `decoding' function, i.e.\ $\pi(f, n)$ produces a finite sequence $w^{1^{*}}$ in Baire space on input  $f^{1}$ coding $w$ and its length $|w|=n$. 

\smallskip

Since $\exists^{2}$ can compute a binary representation of any real in the unit interval, we have $(\forall n\in \N)(\exists x\in C)F(\mathbb{r}(x), n)=0$, and $\Psi(x):=(-G(x), G(x))$ yields an open covering of $\R$.  Then $\LIN^{\bs}$ provides $\Phi^{0\di 1}$ such that the countable sub-covering $\cup_{n\in \N}(-G(\Phi(n)), G(\Phi(n)))$ still covers $\R$.  
Hence, $(\forall m^{0})(\exists n^{0})(G(\Phi(n))>m+1)$, and applying $\QFAC^{0,0}$, there is $g^{1}$ such that  $(\forall m^{0})(G(\Phi(g(m)))>m+1)$.  In the latter, the first case of $G$ from \eqref{Defg} must always hold, 
and we have  that $(\forall m^{0})(F(\mathbb{r}(\pi(\Phi(g(m)))(m)),m )=0$, as required. 
\end{proof}
\begin{cor}
The system $\ZF$ cannot prove $\LIND^{\bs}$.
\end{cor}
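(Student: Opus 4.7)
The plan is to obtain this corollary as a direct consequence of Theorem~\ref{corekl} together with the classical independence of countable choice from $\ZF$. By Theorem~\ref{corekl}, over $\RCAo$ the principle $\LIND^{\bs}$ implies $\QFAC^{0,1}_{\R}$, i.e.\ the statement that every $F:\R\times\N\di\N$ with $(\forall n)(\exists x\in\R)(F(x,n)=0)$ admits a choice function $Y:\N\di\R$. Since $\RCAo$ embeds faithfully into $\ZF$ (the finite-type hierarchy is absolutely available in set theory, and the combinators, recursor, and $\QFAC^{1,0}$ of the base theory are all $\ZF$-provable), the implication $\LIND^{\bs}\di\QFAC^{0,1}_{\R}$ holds in $\ZF$ as well.

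Thus it suffices to exhibit a model of $\ZF$ in which $\QFAC^{0,1}_{\R}$ fails. This is classical: already the Feferman--Levy model, or the Cohen basic model, produces a $\ZF$-universe in which $\R$ is a countable union of countable sets, so that in particular countable choice for nonempty subsets of $\R$ must fail. Equivalently, as noted in Section~\ref{introp} and in \cite{heerlijk}, countable choice for subsets of $\R$ is $\ZF$-equivalent to the Lindel\"of property of $\R$, so any $\ZF$-model in which $\R$ is not Lindel\"of (and such models have long been known) supplies the required failure.

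Combining the two ingredients: if $\ZF$ proved $\LIND^{\bs}$, then by the transferred Theorem~\ref{corekl} it would also prove $\QFAC^{0,1}_{\R}$, contradicting the classical independence result. Hence $\ZF\not\vdash\LIND^{\bs}$.

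The only point requiring care is the translation in the first step: one must verify that the formulation of $\QFAC^{0,1}_{\R}$ used in Theorem~\ref{corekl} (a type-theoretic statement about functionals $F:\R\di\N$ represented by Cauchy sequences) really does entail set-theoretic countable choice for families of nonempty subsets of $\R$. This is routine, since given any sequence $\langle A_n\rangle_{n\in\N}$ of nonempty subsets of $\R$ in a $\ZF$-model, the characteristic functional $F(x,n):=0\asa x\in A_n$ satisfies the hypothesis of $\QFAC^{0,1}_{\R}$ verbatim, and a witnessing $Y$ is exactly a choice function. With this identification in place the corollary is immediate, and I expect no further obstacle.
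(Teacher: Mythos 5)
Your proposal is correct and follows the same skeleton as the paper's proof: both first invoke Theorem~\ref{corekl} to get $\LIND^{\bs}\di\QFAC^{0,1}_{\R}$ and then argue that $\ZF$ does not prove $\QFAC^{0,1}_{\R}$. The only divergence is in how that last unprovability is witnessed. The paper routes through analysis: by (the proof of) Kohlenbach's Prop.~4.1, $\QFAC^{0,1}_{\R}$ yields the pointwise equivalence of epsilon-delta and sequential continuity for $f:\R\di\R$, and Herrlich shows this equivalence is independent of $\ZF$. You instead go directly to set theory, citing the Feferman--Levy or Cohen models in which countable choice for nonempty subsets of $\R$ fails. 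Both are sound; your route is somewhat more self-contained on the set-theoretic side, while the paper's keeps the argument inside the circle of analysis-flavoured consequences of $\QFAC^{0,1}_{\R}$ that it is already discussing (Section~\ref{introp}). Your cautionary remark about the translation is well placed and correctly resolved: the full set-theoretic type structure interprets $\RCAo$ in $\ZF$ (the base theory's only choice principle, $\QFAC^{1,0}$, is available via the least-number operator), and the characteristic functional of a given sequence $\langle A_n\rangle$ of nonempty subsets of $\R$ is definable without choice and is extensional on codes of reals, so $\QFAC^{0,1}_{\R}$ really does entail $\mathsf{AC}_\omega(\R)$ in such a model.
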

\begin{proof}
By the proof of \cite{kohlenbach4}*{Prop.\ 4.1}, $\QFAC^{0,1}_{\R}$ suffices to prove that for any $f:\R\di \R$ and $x\in \R$, $f$ is `epsilon-delta' continuous at $x$ if and only if $f$ is sequentially continuous at $x$.  
However, this equivalence is independent of $\ZF$ (\cite{heerlijk}). 
\end{proof}
In hindsight, the previous theorem is not \emph{that} surprising: applying $\QFAC^{1, 0}$ to the conclusion of $\LIND^{\bs}$, we obtain a functional which provides for each $x\in \R$ 
an interval $J_{g}^{\Psi}$ covering $x$, while we only assume $(\forall x\in \R)(\exists g\in C)(x\in J_{g}^{\Psi})$, i.e.\ a typical application of the axiom of choice.    
Indeed, the functional $\Phi$ from $\LIND^{\bs}$ is essential to the proof of the theorem, and it is a natural question what the status is of 
the following \emph{weaker} version which only states the \emph{existence} of a countable sub-covering, but does not provide a sequence of reals which \emph{generates} the sub-covering.  
\bdefi[$\LIN^{\bs}_{\w}$]
 For every open covering of $\R$ provided by $\Psi:C\di \Q^{2}$, there is a sequence $\cup_{n\in \N}(a_{n}, b_{n})$ covering $\R$ such that $(\forall n \in\N)(\exists x \in \R)[(a_{n}, b_{n}) = J_{x}^{\Psi} ]$
\edefi
We also study the associated version of the Heine-Borel theorem. 
\bdefi[$\HBU^{\bs}$] 
For every open covering of $[0,1]$ provided by $\Psi:C\di \Q^{2}$, there exists a finite sub-covering, i.e.\
$(\exists  y_{1}, \dots, y_{k}\in C)(\forall x\in \R)(\exists i\leq k)(x\in J_{y_{i}}^{\Psi})$.
\edefi
In contrast to its sibling, $\LIND_{\w}^{\bs}$ is provable in $\ZF$, as follows.  
\begin{thm}
The system $\Z_{2}^{\Omega}$ proves $\HBU^{\bs}$ and $\LIND_{\w}^{\bs}$, while $\Z_{2}^{\Omega}+\QFAC^{0,1}_{\R}$ proves $\LIND^{\bs}$.
\end{thm}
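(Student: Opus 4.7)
The plan is to derive all three principles from the strength of $(\exists^{3})$, which both supplies the requisite higher-order comprehension on $C$ and yields $\HBU$ via Theorem~\ref{mooi}; the extra axiom $\QFAC^{0,1}_{\R}$ will be invoked only at the last step, where a sequence of witnesses from $C$ is genuinely required.

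For $\HBU^{\bs}$, I would fix $\Psi:C\to\Q^{2}$ providing an open covering of $[0,1]$ and use $(\exists^{3})$ (equivalently $(\kappa_{0}^{3})$ on $C$) to decide, uniformly in $\sigma\in 2^{<\N}$, whether $(\exists g\in [\sigma])(x\in J_{g}^{\Psi})$. A binary-search selector then produces a functional $\chi:[0,1]\to C$ with $x\in J_{\chi(x)}^{\Psi}$ for every $x\in[0,1]$; this $\chi$ is extensional in $x$ since $J_{g}^{\Psi}$ depends only on $x$ as a real. Setting $\Psi'(x):=\min(x-\Psi(\chi(x))(1),\,\Psi(\chi(x))(2)-x)>_{\R}0$, each canonical interval $I_{x}^{\Psi'}$ lies inside $J_{\chi(x)}^{\Psi}$, so applying $\HBU$ to $\Psi'$ produces $x_{1},\dots,x_{k}$ whose canonical intervals cover $[0,1]$, and $\langle\chi(x_{1}),\dots,\chi(x_{k})\rangle$ then witnesses $\HBU^{\bs}$.

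For $\LIND^{\bs}_{\w}$, the key observation is that $\Psi$ takes values in the countable set $\Q^{2}$. Via $(\exists^{3})$, the set
\[
S:=\{(a,b)\in\Q^{2} : a<b\wedge (\exists g\in C)(\Psi(g)=(a,b))\}
\]
has a decidable characteristic function. Since $S$ is coded as a subset of $\N$, $\RCAo$ enumerates it as $((a_{n},b_{n}))_{n\in\N}$. By the definition of $S$ each $(a_{n},b_{n})$ equals $J_{g_{n}}^{\Psi}$ for some $g_{n}\in C$, and for every $x\in\R$ the hypothesis on $\Psi$ yields some $g\in C$ with $\Psi(g)=(a,b)\in S$ and $x\in(a,b)$, so the enumeration covers $\R$.

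The passage from $\LIND^{\bs}_{\w}$ to $\LIND^{\bs}$ is the main obstacle, and is precisely where $\QFAC^{0,1}_{\R}$ enters: a map $\Phi:\N\to C$ must be produced, not merely a sequence of intervals. Using the injective encoding $\iota(g):=\sum_{i=0}^{\infty}2g(i)/3^{i+1}$ of $C$ into the Cantor set in $[0,1]$, whose inverse on its image is computable from $(\exists^{2})$, the statement $(\forall n)(\exists g\in C)(\Psi(g)=(a_{n},b_{n}))$ becomes an instance of $\QFAC^{0,1}_{\R}$; applying the latter yields $Y:\N\to\R$, and then $\Phi(n):=\iota^{-1}(Y(n))$ is the required sequence. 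That $\Z_{2}^{\Omega}$ alone does not suffice here is genuine: although $(\exists^{3})$ decides which rational intervals appear, it offers no uniform method for selecting a coherent sequence of pre-images in $C$, and this selection is exactly the service rendered by countable choice on the reals.
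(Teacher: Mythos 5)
Your proof of $\LIND_{\w}^{\bs}$ is correct and actually \emph{avoids} the detour through $\HBU^{\bs}$ that the paper takes: the paper first generalises $\HBU^{\bs}$ to $[-N,N]$, decides the resulting arithmetical-in-$(\exists^{3})$ formula, and applies $\QFAC^{0,0}$; you instead observe directly that the set of rational interval-pairs hit by $\Psi$ is decidable via $(\exists^{3})$ and so enumerable in $\RCAo$. This is cleaner and buys you $\LIND_{\w}^{\bs}$ without any compactness input. Your final step to $\LIND^{\bs}$ via an encoding of $C$ into $[0,1]$ and $\QFAC^{0,1}_{\R}$ is also fine (the paper asserts the same but leaves the encoding implicit).

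However, there is a genuine gap in your proof of $\HBU^{\bs}$. The binary-search selector $\chi$ does not in general satisfy $x\in J_{\chi(x)}^{\Psi}$. At each level $\sigma$ you maintain the invariant that $\{g\in[\sigma]:x\in J_{g}^{\Psi}\}$ is non-empty, but this set need not be \emph{closed} in $C$ (since $\Psi$ is an arbitrary, possibly wildly discontinuous, type-two functional), so the limit of the surviving nodes can fall outside it. Concretely: fix $h\in C$ and define $\Psi(g)=(0,1)$ for $g\neq h$ and $\Psi(h)=(2,3)$; then for $x=1/2$ every open neighbourhood of $h$ contains a witness, so a greedy leftmost search (with $h=\bar 0$) returns $\chi(x)=h$, yet $x\notin J_{h}^{\Psi}$. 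The paper's proof of $\HBU^{\bs}$ avoids this entirely: it re-runs the $\LUB$ interval-halving argument, where the predicate $\varphi(x)$ only asserts \emph{existence} of a finite sub-cover, and all that is ever used of the limit point $y_{0}$ is that some $g_{0}\in C$ covers it; no selector is constructed. To repair your argument you could replace the $C$-valued selector $\chi$ by a $\Q^{2}$-valued one — take the least rational pair $(a,b)$ with $a<x<b$ and $(\exists g\in C)(\Psi(g)=(a,b))$, which is a numerical search decidable via $(\exists^{3})$ — and then recover the finitely many required elements of $C$ at the very end by a finite-choice argument, treating the relevant formula as quantifier-free modulo $(\exists^{3})$, as the paper itself does in \eqref{murki}.
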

\begin{proof}
To prove $\HBU^{\bs}$ from $(\exists^{3})$, use the same proof as for $\HBU$ in Theorem~\ref{mooi}.  
Note that the point $y_{0}$ in the proof of the latter is such that we only need to know that is \emph{has} a covering interval, namely $y_{0}\in J_{g_{0}}^{\Psi}$ for some $g_{0}\in C$; note that this interval need not be centred at $y_{0}$.
To obtain $\LIND^{\bs}_{\w}$ from $\HBU^{\bs}$, note that the latter readily generalises to $[-N,  N]$, implying
\begin{align}
(\forall N \in \N)(\exists  a_{0},b_{0},  \dots, a_{k}, b_{k}\in \Q )\big[(\forall y\in& [-N, N])(\exists i\leq k)\big(y\in  (a_{i}, b_{i})\big)\label{murki}\\ 
&\wedge (\forall i\leq k)(\exists f\in C) ( (a_{i}, b_{i})= J_{f}^{\Psi}  )\big].\notag
\end{align}
where $\Psi:C\di \Q^{2}$ provides an open covering of $\R$; the formula in square brackets in \eqref{murki} is treated as quantifier-free by $(\exists^{3})$. 
Applying $\QFAC^{0,0}$, \eqref{murki} yields $\LIND^{\bs}_{\w}$.  Apply $\QFAC^{0,1}$ and $(\exists^{2})$ to the final formula in $\LIND^{\bs}_{\w}$ to obtain $\LIND^{\bs}$.  
\end{proof}
As it turns out, $\LIND^{\bs}_{\w}$ and $\LIND^{\bs}$ are even \emph{finitistically reducible}\footnote{Note that $\RCAo+(\kappa_{0}^{3})+\QFAC^{0,1}$ is conservative over $\WKL_{0}$ by \cite{kohlenbach2}*{Prop.\ 3.15}.  
According to Simpson in \cite{simpson2}*{IX.3.18}, the versions of the Lindel\"of lemma as in $\LIND^{\bs}$ and $\LIND_{\w}^{\bs}$ are thus \emph{reducible to finitistic mathematics in the sense of Hilbert}.} as follows.  % similar to Corollary \ref{kapaf}.  
\begin{cor}\label{pallap}
 $\RCAo+(\kappa_{0}^{3})$ proves $\LIND_{\w}^{\bs}$.  Adding $\QFAC^{0,1}$ yields $\LIND^{\bs}$.
\end{cor}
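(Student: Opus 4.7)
The plan is to exploit that $\Psi:C\to\Q^{2}$ takes values in a countable set, so a countable sub-covering can be manufactured directly from the range of $\Psi$, with no decision procedure over real quantifiers required; in particular, $(\exists^{2})$ never enters the argument. I would fix a primitive recursive bijection $\pi:\N\to\Q^{2}$ and, for each $n\in\N$, consider the type-$2$ functional $Y_{n}(x):=0$ if $\Psi(x)=\pi(n)$ and $1$ otherwise, which is quantifier-free in $\Psi$ uniformly in $n$. Applying $(\kappa_{0}^{3})$ then produces a function $\chi:\N\to\{0,1\}$ satisfying $\chi(n)=0\leftrightarrow(\exists x\in C)(\Psi(x)=\pi(n))$, so that $\chi$ is the characteristic function of the set of (codes of) intervals actually realised as some $J_{x}^{\Psi}$.

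For $\LIND_{\w}^{\bs}$, I would pick any $g_{0}\in C$ (say the zero sequence), let $n_{0}:=\pi^{-1}(\Psi(g_{0}))$, and define the sequence $(a_{n},b_{n})$ by $(a_{n},b_{n}):=\pi(n)$ when $\chi(n)=0$ and $(a_{n},b_{n}):=\pi(n_{0})$ otherwise. Each term is then of the form $J_{x}^{\Psi}$ for some $x\in C$ by construction. The covering condition follows because any $y\in\R$ lies in some $J_{g}^{\Psi}=\pi(m)$ with $m=\pi^{-1}(\Psi(g))$, and the witness $g$ forces $\chi(m)=0$, hence $y\in(a_{m},b_{m})$.

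For $\LIND^{\bs}$, I would additionally apply $\QFAC^{0,1}$ to the sentence
\[
(\forall n\in\N)(\exists g\in C)\big[\chi(n)=0\to\Psi(g)=\pi(n)\big],
\]
whose matrix is quantifier-free since $\chi$ is by now a concrete function. This yields $\Phi_{0}:\N\to C$ with $\chi(n)=0\to\Psi(\Phi_{0}(n))=\pi(n)$, and setting $\Phi(n):=\Phi_{0}(n)$ when $\chi(n)=0$ and $\Phi(n):=g_{0}$ otherwise produces the desired enumeration whose images generate the countable sub-covering.

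The main subtlety lies in the requirement that the sub-covering consist of intervals of the \emph{specific} form $J_{x}^{\Psi}$ rather than arbitrary rational intervals of the right shape; this is what forces the use of $(\kappa_{0}^{3})$ at exactly the right level, since $(\kappa_{0}^{3})$ decides existence over $C$, which is precisely the existence statement needed to certify that a rational pair lies in $\mathrm{ran}(\Psi)$. Every other step is routine in $\RCAo$ (primitive recursive coding of $\Q^{2}$, case splits on $\chi$, $\QFAC^{0,0}$ from the base theory), so the strength of $(\kappa_{0}^{3})+\QFAC^{0,1}$ turns out to be pinpoint-calibrated for this Borel--Schoenflies reformulation of the Lindel\"of lemma.
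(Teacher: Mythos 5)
Your proof is correct, but the route is genuinely different from the paper's. The paper re-uses its excluded-middle trick: it case-splits on $(\exists^{2})\vee\neg(\exists^{2})$, in the first case applying the preceding $\Z_{2}^{\Omega}$-theorem via $[(\exists^{2})+(\kappa_{0}^{3})]\asa(\exists^{3})$, and in the second case extracting the countable sub-covering from the sequences $\sigma*00\dots$ since $\neg(\exists^{2})$ makes every type-two functional continuous on Baire space. You instead avoid any case distinction by exploiting the discreteness of the codomain $\Q^{2}$: the range of $\Psi$ lies in a countable set, and $(\kappa_{0}^{3})$ applied uniformly to the $Y_{n}$ decides exactly which $\pi(n)$ are attained, after which the sub-covering is read off by a primitive recursive case split on $\chi$. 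Your argument is more uniform and pinpoints $(\kappa_{0}^{3})$ as precisely the decision oracle for the range of $\Psi$, which is informative; the paper's argument, though indirect, extends to $\Psi:C\di\R^{2}$ as noted in Remark~\ref{subm}, whereas your enumeration argument relies essentially on the countability of $\Q^{2}$ and breaks down once the codomain becomes uncountable. One small technicality worth making explicit: the constraint $g\in C$ is $\Pi_{1}^{0}$, not quantifier-free, so when invoking $\QFAC^{0,1}$ you should fold the binary truncation $\hat{g}:=\lambda k.\min(g(k),1)$ into the matrix, obtaining a functional $\Phi_{0}$ with $\chi(n)=0\di\Psi(\hat{g})=\pi(n)$ for $g=\Phi_{0}(n)$, and then take $\Phi(n)$ to be the truncation of $\Phi_{0}(n)$, which lands in $C$; this is the same standard device the paper uses implicitly in its own $\QFAC^{0,1}$ applications.
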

\begin{proof}
In case of $(\exists^{2})$, the theorem applies, using $[(\exists^{2})+(\kappa^{3}_{0})]\asa (\exists^{3})$.  % as in the proof of Corollary \ref{kapaf}.  
In case of $\neg(\exists^{2})$, all functions on Baire space are continuous, and the countable sub-covering is provided by listing $J_{\sigma*00\dots}^{\Psi}$ for all finite binary $\sigma$.  
\end{proof}
Before we continue, we discuss why our restriction to $\Psi:C\di \Q^{2}$ is insignificant.  
\begin{remark}\label{subm}\rm
By way of a practical argument, while we \emph{could} have formulated $\LIN^{\bs}$ using $\Psi:\R\di \R^{2}$, we already obtain $\QFAC^{0,1}_{\R}$ with the above version, i.e.\ $\Psi:C\di \Q^{2}$ `is enough', and this choice makes the above proofs easier.
On a more conceptual level, $\exists^{2}$ computes a functional converting reals in the unit interval into a binary representation, which combines nicely with our `excluded middle trick' in the proof of Theorem \ref{corekl}.  Moreover, $\RCAo+(\kappa_{0}^{3})$ seems to be the weakest system that still proves $\LIND^{\bs}_{\w}$, and this system also readily generalises $\LIND_{\w}^{\bs}$ from $\Psi:C\di \Q^{2}$ to $\Psi:C\di \R^{2}$. 
\end{remark}
As noted above, $\ZF$ proves the equivalence between the fact that $\R$ is a Lindel\"of space and the axiom of countable choice for subsets of $\R$ (\cite{heerlijk}). 
The base theory in the following theorem is significantly weaker than $\ZF$. 
\begin{cor}
The system $\RCAo$ proves $\LIND^{\bs}\asa [\QFAC^{0,1}_{\R}+ \LIND^{\bs}_{\w}]$, while $\Z_{2}^{\Omega}$ proves $\LIND^{\bs}\asa \QFAC^{0,1}_{\R}$.  
\end{cor}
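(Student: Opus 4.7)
The forward direction is essentially free: given $\Phi:\N\di C$ witnessing $\LIND^{\bs}$ for some $\Psi$, setting $(a_{n},b_{n}):=J^{\Psi}_{\Phi(n)}$ witnesses $\LIND^{\bs}_{\w}$, and the implication $\LIND^{\bs}\di \QFAC^{0,1}_{\R}$ is precisely Theorem \ref{corekl}. So the substantive work lies in the reverse direction, where the plan is to exploit the excluded-middle trick $(\exists^{2})\vee\neg(\exists^{2})$ used throughout this section (cf.\ Corollary \ref{eessje}).

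Fix $\Psi:C\di\Q^{2}$ providing an open covering of $\R$. In the case $\neg(\exists^{2})$, \cite{kohlenbach2}*{Prop.~3.7} yields that $\Psi$ is continuous on Cantor space, hence for each $f\in C$ there is some $n$ with $\Psi(f)=\Psi(\overline{f}n*00\dots)$. Enumerating the finite binary sequences as $(\sigma_{k})_{k\in\N}$ (primitive recursively, hence in $\RCAo$), the function $\Phi(k):=\sigma_{k}*00\dots$ then witnesses $\LIND^{\bs}$ for $\Psi$ with no appeal to $\LIND^{\bs}_{\w}$ or $\QFAC^{0,1}_{\R}$: each $x\in\R$ lies in some $J^{\Psi}_{f}$ by hypothesis, and by continuity $x\in J^{\Psi}_{\overline{f}n*00\dots}=J^{\Psi}_{\Phi(k)}$ for the $k$ coding $\overline{f}n$.

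In the case $(\exists^{2})$, I would apply $\LIND^{\bs}_{\w}$ to obtain intervals $(a_{n},b_{n})$ covering $\R$ with $(\forall n)(\exists f\in C)[\Psi(f)=(a_{n},b_{n})]$, and then convert this existence statement into a form digestible by $\QFAC^{0,1}_{\R}$. The bridge between $C$ and $\R$ will be the Cantor-set embedding $\iota:C\di[0,1]$ defined by $\iota(f):=\sum_{i=0}^{\infty}2f(i)/3^{i+1}$, which is injective and whose partial inverse on $\iota(C)$ is $(\exists^{2})$-definable by reading off ternary digits one at a time. Setting $F:\R\times\N\di\N$ by $F(x,n)=0$ iff $x\in\iota(C)$ and $\Psi(\iota^{-1}(x))=(a_{n},b_{n})$, and $F(x,n)=1$ otherwise, we have $(\forall n)(\exists x\in\R)(F(x,n)=0)$; applying $\QFAC^{0,1}_{\R}$ yields $Y:\N\di\R$ with $F(Y(n),n)=0$, and $\Phi(n):=\iota^{-1}(Y(n))$ then witnesses $\LIND^{\bs}$ for $\Psi$.

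For the second assertion, $\Z_{2}^{\Omega}$ already proves $\LIND^{\bs}_{\w}$ by Corollary \ref{pallap}, so the $\RCAo$-argument above reduces $\LIND^{\bs}$ to $\QFAC^{0,1}_{\R}$ over $\Z_{2}^{\Omega}$. The main obstacle will be the $(\exists^{2})$-case encoding: one must verify that $\iota$ and its partial inverse are well-defined from $(\exists^{2})$ alone and interact correctly with $\Psi$. Using ternary rather than binary expansions sidesteps the non-uniqueness of binary representations of dyadic rationals, which would otherwise cause $\iota^{-1}\circ\iota$ to fail to be the identity and thus destroy the correctness of the extracted sequence $\Phi$.
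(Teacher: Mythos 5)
Your proof is correct and takes essentially the same approach the paper intends: the forward directions are immediate (restriction and Theorem \ref{corekl}), and the reverse direction runs through the $(\exists^{2})\vee\neg(\exists^{2})$ case split exactly as in Corollary \ref{pallap}. The ternary-digit embedding $\iota:C\to[0,1]$ is a clean and careful way to justify the step the paper leaves implicit ("Apply $\QFAC^{0,1}$ and $(\exists^{2})$ to the final formula in $\LIND^{\bs}_{\w}$"), and your observation that ternary rather than binary is needed to make $\iota$ genuinely injective---so that $\QFAC^{0,1}_{\R}$, rather than full $\QFAC^{0,1}$, suffices---is exactly the right point to pin down.
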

The following theorem provides a nice classification of the above theorems.
\begin{cor}
The system $\RCAo$ proves $[\HBU^{\bs}+\QFAC^{0,1}_{\R}]\asa [\LIN^{\bs} +\WKL]$. 
\end{cor}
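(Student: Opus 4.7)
The plan is to establish both directions of the biconditional, leaning on Theorem \ref{corekl} for one half and on the classical equivalence of $\WKL$ with Heine--Borel for countable coverings (\cite{simpson2}*{IV.1}) for the other.

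For the forward direction, assume $\HBU^{\bs} + \QFAC^{0,1}_{\R}$. First I derive $\WKL$: any countable sequence $(p_n, q_n)_{n \in \N}$ of rational open intervals covering $[0,1]$ can be reparametrized as $\Psi : C \to \Q^{2}$ via a surjection $C \twoheadrightarrow \N$ (e.g., send each $f \in C$ to the first $n$ with $f(n) = 1$, defaulting to $0$); $\HBU^{\bs}$ applied to this $\Psi$ yields a finite sub-covering whose underlying indices constitute a finite sub-covering of the original sequence, which is countable Heine--Borel and hence equivalent to $\WKL$ over $\RCAo$ by \cite{simpson2}*{IV.1}. For $\LIN^{\bs}$, fix any $\Psi : C \to \Q^{2}$ providing a covering of $\R$. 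As noted in the proof of the theorem preceding Corollary \ref{pallap}, $\HBU^{\bs}$ generalizes by shifting and scaling to $[-N, N]$ for each $N \in \N$, so for each $N$ there exists $w \in C$ coding a finite sequence $\langle y_1, \ldots, y_k\rangle \in C^{*}$ with $[-N, N] \subseteq \bigcup_{i \leq k} J_{y_i}^{\Psi}$. Since $\Psi$ takes values in $\Q^{2}$, the check that such a finite list of rational intervals actually covers $[-N, N]$ is decidable by rational arithmetic, i.e.\ quantifier-free. Applying $\QFAC^{0,1}_{\R}$ (via the standard coding of $C^{*}$ as a subset of $\R$) produces a function $W : \N \to C$ selecting such a $w$ uniformly in $N$; concatenating the finite sub-coverings decoded from the $W(N)$ for $N \in \N$ yields the $\Phi : \N \to C$ witnessing $\LIN^{\bs}$.

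For the reverse direction, assume $\LIN^{\bs} + \WKL$. Theorem \ref{corekl} already gives $\QFAC^{0,1}_{\R}$ from $\LIN^{\bs}$, so it remains to derive $\HBU^{\bs}$. Given $\Psi : C \to \Q^{2}$ covering $[0,1]$, I extend it to a covering $\Psi' : C \to \Q^{2}$ of $\R$ by partitioning $C$ along the first bit: on $\{f \in C : f(0) = 0\}$ reproduce $\Psi$, and on $\{f \in C : f(0) = 1\}$ enumerate a canonical covering of $\R \setminus [0,1]$ by rational intervals of length $2$ around each integer outside $[0,1]$. Applying $\LIN^{\bs}$ to $\Psi'$ yields a countable $\Phi : \N \to C$ covering $\R$; restricting to those $n$ with $\Phi(n)(0) = 0$ produces a countable open covering of $[0,1]$ by intervals arising from the original $\Psi$, with rational endpoints. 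Then $\WKL$, via countable Heine--Borel in \cite{simpson2}*{IV.1}, extracts a finite sub-covering, which is exactly what $\HBU^{\bs}$ demands.

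The principal technical subtlety lies in the forward-direction invocation of $\QFAC^{0,1}_{\R}$: the existential witness is a finite sequence in $C$ rather than a single real, but the standard coding of $C^{*}$ (and of $C$ itself) as a subset of $\R$ makes this routine, once one observes that the matrix ``this finite list of rational intervals covers $[-N, N]$'' is genuinely quantifier-free in the rationals. The reverse direction is essentially a bookkeeping argument, combining the content of Theorem \ref{corekl} with the second-order equivalence of $\WKL$ and countable Heine--Borel.
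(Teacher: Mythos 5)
The overall decomposition matches the paper's: reverse direction via Theorem \ref{corekl} plus countable Heine--Borel, forward direction via generalising $\HBU^{\bs}$ to $[-N,N]$ and applying $\QFAC^{0,1}_{\R}$. But your proof omits the $(\exists^{2})\vee\neg(\exists^{2})$ excluded-middle split that the paper uses throughout this section, and that omission opens a genuine gap. The functional you describe as ``send each $f\in C$ to the first $n$ with $f(n)=1$, defaulting to $0$'' is discontinuous at $0^{\infty}$, and its existence is \emph{not} provable in $\RCAo$: producing such a total type-two functional is essentially what $(\exists^{2})$ asserts (cf.\ \cite{kohlenbach2}*{\S3}). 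Without it, you cannot reparametrise a countable covering as a $\Psi:C\di\Q^{2}$, so your derivation of $\WKL$ from $\HBU^{\bs}$ does not go through in $\RCAo$ alone. The analogous problem re-appears in your reverse direction, where you ``enumerate a canonical covering of $\R\setminus[0,1]$ by rational intervals of length $2$ around each integer'' on $\{f:f(0)=1\}$ --- again a discontinuous surjection of a cone of $C$ onto $\Z$. The paper deals with this systematically: under $(\exists^{2})$ such functionals exist; under $\neg(\exists^{2})$ all type-two functionals on Baire space are continuous (\cite{kohlenbach2}*{Prop.\ 3.7}) and a separate, elementary argument applies (e.g.\ the countable sub-covering is listed by enumerating $\Psi(\sigma*00\dots)$). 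Your proof should be split the same way, or the discontinuity obstruction otherwise addressed.

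Your observation that the covering check in the forward direction is directly decidable by rational arithmetic --- rather than expressed as a $\Pi^{0}_{1}$ formula over $[-N,N]\cap\Q$ as in \eqref{whynotsaidthegruffalo} --- is a legitimate streamlining: it lets $F(w,N)$ be defined without $\exists^{2}$ in that specific step, and thus removes one of the two reasons the paper invokes the excluded-middle split. But that observation does not remove the other uses, namely those tied to the discontinuous surjections above, and that is where the argument currently fails.
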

\begin{proof}
The reverse implication follows from the theorem and the equivalence between $\WKL$ and the Heine-Borel theorem for countable coverings (see \cite{simpson2}*{IV.1}). 
For the forward implication, $\neg(\exists^{2})$ implies the continuity of all functionals on Baire space, and a countable sub-covering as in $\LIN^{\bs}$ is in this case provided by 
the sequence of all finite binary sequences.  In the case of $(\exists^{2})$, note that $\HBU^{\bs}$ implies:
\be\label{whynotsaidthegruffalo}
(\forall N \in \N)(\exists  x_{0}, \dots, x_{k}\in [-N, N])\big[(\forall y\in [-N, N]\cap \Q)(\exists i\leq k)(y\in I_{x_{i}}^{\Psi}  )\big].
\ee 
Since the formula in big square brackets in \eqref{whynotsaidthegruffalo} only involves numerical quantifiers, it is decidable modulo $\exists^{2}$.  
Hence, we may apply $\QFAC^{0,1}_{\R}$ to \eqref{whynotsaidthegruffalo} to obtain $\Phi^{0\di 1^{*}}$ such that for all $N$, the finite collection of intervals $\cup_{i<|\Phi(N)|}I^{\Psi}_{\Phi(N)(i)}$ covers all rationals in $[-N, N]$. 
For any $N^{0}$, adding the intervals that cover the end-points of the intervals in this collection, one obtains a covering of $[-N, N]$, finishing this case. 
The law of excluded middle $(\exists^{2})\vee \neg(\exists^{2})$ finishes the proof.  
\end{proof}
By \cite{simpson2}*{p.\ 54, Note 1}, $\WKL_{0}\asa \Pi_{1}^{0}\textsf{-AC}_{0}$ over $\RCA_{0}$, yielding the elegant equation:
\[
[\HBU^{\bs}+\QFAC^{0,1}_{\R}]\asa [\LIN^{\bs} + \Pi_{1}^{0}\textsf{\textup{-AC}}_{0}].
\]
In conclusion, we have formulated two versions of the Lindel\"of lemma based on the Borel-Schoenflies framework; one version is provable in (a weak fragment of) $\Z_{2}^{\Omega}$, while the other one is not provable in $\ZF$.  
The latter is due to the `hidden presence of the axiom of choice' in $\LIND^{\bs}$: an open covering in the sense of the latter only tells us that $x\in \R$ is in some interval, but not which one. 
The sequence $\Phi$ however provides such an interval for $x\in \R$ by applying $\QFAC^{1,0}$ to $(\forall x\in \R)(\exists n\in \N)(x\in J_{\Phi(n)}^{\Psi})$.
In a nutshell, the Lindel\"of lemma only becomes unprovable in $\ZF$ \emph{if} we build some choice into it, something of course set theory is wont to do.  

\subsection{More on the Lindel\"of lemma}\label{kurzweil}
We show that the Lindel\"of lemma for Baire space, called $\LIND(\N^{\N})$ hereafter, when combined with $(\exists^{2})$, yields the strongest `Big Five' system $\FIVE$.
A higher-order version of this result was proved in \cite{dagsamIII}*{\S4.2.2}, namely that the existence of the Suslin functional $(S^{2})$, i.e.\ higher-order $\FIVE$, can be proved from $(\exists^{2})$ and the existence of a \emph{realiser} $\Xi^{3}$ for $\LIND(\N^{\N})$.
Note that $\LIND(\N^{\N})$ and $(\exists^{2})$ are part of the language of \emph{third-order} arithmetic. 

\smallskip

Firstly, we point out that Lindel\"of already proved that Euclidean space is \emph{hereditarily Lindel\"of} in \cite{blindeloef} around 1903.  
Now, the latter hereditary property implies that $\N^{\N}$ has the Lindel\"of property, since $\N^\N$ is homeomorphic to the irrationals in $[0,1]$ using continued fractions expansion.
Thus, for any $\Psi^{2}$, the corresponding `canonical covering' of $\N^{\N}$ is $\cup_{f\in \N^{\N}}\big[\overline{f}\Psi(f)\big]$ where $[\sigma^{0^{*}}]$ is the set of all extensions in $\N^{\N}$ of $\sigma$.  By the Lindel\"of lemma for $\N^{\N}$, there is $f_{(\cdot)}^{0\di 1}$ such that the set of $\cup_{i\in \N}[\bar f_{i} \Psi(f_i)]$ still covers $\N^{\N}$, i.e.\
\be\tag{$\LIND(\N^{\N})$}
(\forall \Psi^{2})(\exists f_{(\cdot)}^{0\di 1})(\forall g^{1})(\exists n^{0})( g \in  \big[\overline{f_{n}}\Psi(f_{n})\big] ).
\ee
We also require the following version of the Lindel\"of lemma which expresses that for a \emph{sequence} of open coverings of $\N^{\N}$, there is a \emph{sequence} of countable sub-coverings. 
\be\tag{$\LIND_{\seq}$}
(\forall \Psi_{(\cdot)}^{0\di 2})(\exists f_{(\cdot, \cdot)}^{(0\times 0)\di 1})(\forall m^{0})\big[(\forall g^{1})(\exists n^{0})\big( g \in  [\overline{f_{n,m}}\Psi_{m}(f_{n,m})] \big)\big].
\ee
Note that such `sequential' theorems are well-studied in RM, starting with \cite{simpson2}*{IV.2.12}, and can also be found in e.g.\ \cites{fuji1,fuji2,hirstseq,dork2,dork3}.  
\begin{thm}\label{theultimate}
The system $\ACAo+\LIND_{\seq}$ proves $\FIVE$.
\end{thm}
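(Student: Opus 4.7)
The plan is to reduce $\Pi^1_1$-comprehension to the existence, uniformly in $n$, of a decision procedure for ill-foundedness of a sequence of trees on $\N$, and then to obtain such a procedure by applying $\LIND_{\seq}$ to a sequence of canonical coverings built from the trees using $\exists^2$. Concretely, given an arithmetical formula $\varphi(n,f)$, I would first (within $\ACAo$) translate the $\Pi^1_1$ predicate $(\forall f)\varphi(n,f)$ into the well-foundedness of a recursive sequence of trees $T_n \subseteq \N^{<\N}$, each of which I may assume contains the empty sequence. It then suffices to show that the set $A := \{n : T_n \text{ is ill-founded}\}$ exists.

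Using $\exists^2$ (which is available in $\ACAo$), define $\Psi_n^2 : \N^\N \to \N$ uniformly in $n$ by
\[
\Psi_n(f) :=
\begin{cases}
 \mu k\,(\overline{f}k \notin T_n) & \text{if such $k$ exists,}\\
 0 & \text{otherwise,}
\end{cases}
\]
so that $\Psi_{(\cdot)}$ is a type $0 \to 2$ object. Apply $\LIND_{\seq}$ to this sequence to obtain $f_{(\cdot,\cdot)}^{(0\times 0)\to 1}$ such that for every $n$, the countable union $\bigcup_{i\in\N}[\overline{f_{i,n}}\Psi_n(f_{i,n})]$ still covers $\N^\N$.

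The central claim I would establish is:
\[
T_n \text{ is ill-founded} \iff (\exists i^0)(\forall k^0)\bigl(\overline{f_{i,n}}k \in T_n\bigr).
\]
The direction $(\Leftarrow)$ is immediate, since then $f_{i,n}$ is itself an infinite path. For $(\Rightarrow)$, fix a path $p$ of $T_n$; by the covering property, some $i$ satisfies $\overline{p}\Psi_n(f_{i,n}) = \overline{f_{i,n}}\Psi_n(f_{i,n})$. If $f_{i,n}$ were \emph{not} a path, then $\Psi_n(f_{i,n}) = \mu k\,(\overline{f_{i,n}}k \notin T_n)$ is well-defined, hence $\overline{f_{i,n}}\Psi_n(f_{i,n}) \notin T_n$, and therefore $\overline{p}\Psi_n(f_{i,n}) \notin T_n$, contradicting that $p$ is a path. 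Thus $f_{i,n}$ must be a path of $T_n$, proving the claim. The right-hand side of the equivalence is arithmetical in the sequence $f_{(\cdot,\cdot)}$ together with $\exists^2$, so $A$ exists by $\ACAo$, yielding $\FIVE$.

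The main obstacle I foresee is purely a bookkeeping one: ensuring that $\Psi_n$ is genuinely definable as a single type-$(0\to 2)$ object in $\ACAo$ (not merely a family of type-$2$ functionals), and that the application of $\LIND_{\seq}$ produces a sequence parametrised uniformly by $n$, so that the final comprehension step really is a single arithmetical instance rather than a schematic one. Once this uniformity is set up correctly, the key argument above is robust; no compactness of $\N^\N$ is invoked, only the Lindelöf property, and the edge case $\Psi_n(f)=0$ for $f$ a path is harmless precisely because then $\overline{f}0=\langle\rangle \in T_n$ witnesses the path on the arithmetical side.
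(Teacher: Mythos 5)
Your proposal is correct and, modulo replacing $\Sigma^1_1$ normal forms by ill-foundedness of trees, is essentially the paper's own argument: the same canonical covering $\Psi_n$ (the paper's $F_m$, shifted by one so that $F_m(g)=0$ exactly when $g$ witnesses the normal form), the same application of $\LIND_{\seq}$, and the same observation that a path of $T_n$ lying outside the countable subcover would contradict $\LIND_{\seq}$. The closing step of forming the arithmetical set with $\mu^2$ in $\ACAo$ is likewise identical, and the uniformity bookkeeping you flag is handled silently in the paper.
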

\begin{proof}
First of all, by \cite{simpson2}*{V.1.4}, any $\Sigma_{1}^{1}$-formula can be brought into the `normal form' $(\exists g^{1})(\forall n^{0})(f(\overline{g}n) =0)$, given arithmetical comprehension. 
Thus, suppose $\varphi(m)\in \Sigma_{1}^{1}$ has normal form $(\exists g^{1})(\forall n^{0})(f(\overline{g}n,m) =0)$ and define $F^2_m$ as follows: $F_{m}(g)$ is $n+1$ if $n$ is minimal such that $f(\bar gn,m) > 0$, and $0$ if there is no such $n$.
Apply $\LIND_{\seq}$ for $\Psi^{2}_{(\cdot)}=F_{(\cdot)}$ and let $f_{(\cdot, \cdot)}$ be the sequence thus obtained.  We define $X\subset\N$ as follows:
\be\label{jawell}
X:=\{ m^{0} : (\exists n^{0})(F_{m}(f_{n,m})=0)   \}, 
\ee
using $(\mu^{2})$.  We now prove $(\forall m^{0})(m\in X\asa \varphi(m))$, establishing the corollary.  If $m\in X$, then there is $g^{1}$ such that $F_{m}(g)=0$, i.e.\ $(\forall n^{0})(f(\overline{g}n,m)=0)$ by definition, and hence $\varphi(m)$.  
Now assume $\varphi(m_{0})$ for fixed $m_{0}$, i.e.\ let $g_{0}$ be such that $(\forall n^{0})(f(\overline{g_{0}}n,m_{0}) =0)$, and note that for any $m^{0}, g^{1}, h^{1}$, if $F_m(h) > 0$ and $\bar gF_m(h) = \bar hF_m(h)$, then $F_m(g) = F_m(h)$.
In particular, if $F_{m_{0}}(h) > 0$, we have  $g_{0} \not\in [\bar h F_{m_{0}}(h)]$.  Hence, if $F_{m_{0}}(f_{n,m_{0}}) > 0$ \emph{for all $n^{0}$}, $g_{0}$ is not in the covering consisting of the union of $[\overline{f_{n,m_{0}}}F_{m_{0}}(f_{n,m_{0}})]$ for all $n^{0}$, contradicting $\LIND_{\seq}$.  
Thus, we must have $(\exists n^{0})(F_{m_{0}}(f_{n,m_{0}})=0)$, implying that $m_{0}\in X$ by \eqref{jawell}. 
\end{proof}
The previous proof is inspired by the results in \cite{dagsamIII}*{\S4.2.2}.
Due to the fact that $\N \times \N^\N$ is trivially homeomorphic to $\N^\N$, $\LIND_\seq$ is derivable from (and hence equivalent to) $\LIND(\N^{\N})$, and we obtain the following result.
\begin{cor}\label{jaweltoch}
The system $\RCAo+(\exists^{2})+\LIND(\N^{\N})$ proves $\FIVE$. 
\end{cor}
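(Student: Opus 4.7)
The plan is to reduce to Theorem \ref{theultimate} by deriving $\LIND_{\seq}$ from $\LIND(\N^{\N})$ via the standard homeomorphism $\pi : \N^{\N} \to \N \times \N^{\N}$ given by $\pi(g) := (g(0), g^{*})$ with $g^{*}(n) := g(n+1)$. Recall from Section~\ref{HCT} that $\RCAo + (\exists^{2})$ coincides with $\ACAo^{\omega}$ and hence proves every $\ACAo$-theorem used in the proof of Theorem~\ref{theultimate}, so the base theory there is available.

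First I would take a sequence $\Psi^{0\di 2}_{(\cdot)}$ as in $\LIND_{\seq}$ and fold it into the single type-two functional $\hat\Psi(g) := \Psi_{g(0)}(g^{*}) + 1$ on $\N^{\N}$. A direct unfolding shows
\[
[\,\bar g\, \hat\Psi(g)\,] = \bigl\{\, h \in \N^{\N} : h(0) = g(0) \,\wedge\, \overline{h^{*}}\,\Psi_{g(0)}(g^{*}) = \overline{g^{*}}\,\Psi_{g(0)}(g^{*}) \,\bigr\},
\]
so the canonical neighbourhood of $g$ determined by $\hat\Psi$ is the $\pi^{-1}$-image of the slice $\{g(0)\} \times [\overline{g^{*}}\,\Psi_{g(0)}(g^{*})]$. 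Applying $\LIND(\N^{\N})$ to $\hat\Psi$ then produces $\hat f_{(\cdot)}^{0\di 1}$ such that the $[\bar{\hat f_{n}}\,\hat\Psi(\hat f_{n})]$ cover $\N^{\N}$.

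Next I would use $(\exists^{2})$, which supplies Feferman's $\mu$, to extract the required double sequence: set $f_{n,m} := (\hat f_{n})^{*}$ whenever $\hat f_{n}(0) = m$, and a fixed default (say $\lambda k.0$) otherwise. The decidability of $\hat f_{n}(0) =_{0} m$ via $\mu^{2}$ is what makes $f_{(\cdot,\cdot)}$ definable in $\RCAo + (\exists^{2})$. To verify $\LIND_{\seq}$, fix $m^{0}$ and an arbitrary $h \in \N^{\N}$, and apply $\LIND(\N^{\N})$ to $g := m * h$ (the sequence with $g(0) = m$ and $g(n+1) = h(n)$) to obtain an index $k$ with $g \in [\bar{\hat f_{k}}\,\hat\Psi(\hat f_{k})]$. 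The first-coordinate clause forces $\hat f_{k}(0) = m$, whence $f_{k,m} = (\hat f_{k})^{*}$, and the second-coordinate clause gives $h \in [\overline{f_{k,m}}\,\Psi_{m}(f_{k,m})]$, as required.

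The argument is essentially bookkeeping; the only genuinely needed feature is that $(\exists^{2})$ allows both decidable case analysis on $\hat f_{n}(0) = m$ and the $\mu^{0}$-searches implicit in extracting the double sequence and in the proof of Theorem~\ref{theultimate}. Thus no real obstacle arises: $\LIND(\N^{\N})$ together with $(\exists^{2})$ yields $\LIND_{\seq}$ inside $\ACAo^{\omega}$, and Theorem~\ref{theultimate} then delivers $\FIVE$.
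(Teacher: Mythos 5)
Your proof is correct and takes essentially the same approach as the paper, which derives $\LIND_{\seq}$ from $\LIND(\N^{\N})$ via the homeomorphism $\N \times \N^{\N} \cong \N^{\N}$ and then applies Theorem~\ref{theultimate}; your argument simply spells out that homeomorphism (including the necessary $+1$ to pin down the first coordinate). One minor remark: the case distinction on $\hat f_{n}(0) =_{0} m$ is a quantifier-free numerical equality and hence decidable in $\RCAo$ alone, so $(\exists^{2})$ is not actually needed for defining the double sequence $f_{(\cdot,\cdot)}$.
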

The significance of the previous corollary for \emph{predicativist mathematics} is discussed in \cite{dagsamIII}*{Remark 4.16}.  
We finish this section with a folklore observation regarding $(\exists^{2})$.  By \cite{simpson2}*{I.9.3}, $\ACA_{0}$ is equivalent over $\RCA_{0}$ to the $\L_{2}$-sentence:
\be\tag{$\MCT$}
\text{\emph{An increasing sequence $x_{n}$ in $[0,1]$ has a least upper bound $x:=\sup_n x_{n}$.}}
\ee
Working in the $\L_{\omega}$-language, if we introduce real parameters in $x_{n}$ and $x$, by which they become third-order objects, the resulting `parameter-augmented' version of $\MCT$ is equivalent to $(\exists^{2})$.  
However, (real) parameters of the following kind:
\[\textstyle
g(y):= \sup_{x\in X} f(x, y) \quad\textup{(for suitable $X$ and possibly discontinuous $f$)} 
\]
are actually common in introductory/undergraduate texts (see e.g.\ \cite{bartle2}*{p.\ 330}, \cite{royfitz}*{p.\ 17}, \cite{nudyrudy}*{p.\ 56}, and  \cite{taoana2}*{p.\ 56}). 
In conclusion, allowing `real' real parameters in $\MCT$, we obtain not $\ACA_{0}$ but $(\exists^{2})$, and this `parameter-practice' 
may be found in basic mathematics.  However, $(\exists^{2})$ and $\LIND(\N^{\N})$  are quite `explosive' by Corollary~\ref{jaweltoch}, in that both are weak (in isolation) compared to $\FIVE$.  

\appendix
\section{Uniform proofs in the literature}\label{pproof}
We discuss numerous proofs of Heine's and Pincherle's theorem from the literature and show that 
these proofs actually establish the uniform versions, sometimes after minor modifications (only).  
Our motivation is to convince the reader that mathematicians like Dini, Pincherle, Lebesgue, Young, Riesz, and Bolzano were using strong axioms (like the centred theorem below) in their proofs, and the latter establish (sometimes after minor modification) highly uniform theorems.  
Another reason is that these uniform theorems were the initial motivation for this paper.  
 
\smallskip

Some of the aforementioned proofs are only discussed briefly due to their similarity to the above proofs.      
We first discuss Heine's theorem in Section \ref{heikel}, as Dini's proof of the latter (\cite{dinipi}) predates the proof of Pincherle's theorem from \cite{tepelpinch}; the latter theorem is discussed in Section \ref{forgopppp}.  
A comparison between the proofs by Dini and Pincherle suggests that Pincherle based his proof on Dini's.   
Both proofs make use of the following version of the Bolzano-Weierstrass theorem.  
\begin{quote}
If a function has a definite property infinitely often within a finite domain, then there is a point such that in any neighbourhood of this point there are infinitely many points with the property.
\end{quote}
Note that Weierstrass has indeed formulated this theorem in \cite{weihimself}*{p.\ 77}, while Pincherle mentions it in \cite{pinkersgebruiken}*{p.\ 237} (with an attribution to Weierstrass); Dini states a special case of the centred theorem in \cite{dinipi}*{\S36}.  

\smallskip

%%CHANGE
Finally, we stress the speculative nature of historical claims (say compared to mathematical ones).  
We have taken great care to accurately interpret all the mentioned proofs, but more certainty than the level of interpretation we cannot claim.

\subsection{Proofs of Heine's theorem}\label{heikel}
First of all, the proofs of Heine's  theorem in \cite{messias}*{\S4.20}, \cite{bartle2}*{p.~148}, \cite{botsko}*{Theorem 3}, \cite{gormon}*{Theorem 7}, \cite{hardy}*{V},  \cite{hobbelig}*{p.\ 239}, \cite{knapgedaan}*{p.\ 111}, \cite{langebaard}*{p.\ 35}, \cite{leaderofthepack}*{p.\ 14}, \cite{lebes1}*{p.\ 105}, \cite{mensiesson}*{p.~185}, \cite{munkies}*{p.\ 178}, \cite{protput}*{p.\ 82}, \cite{rudin}*{p.\ 91}, \cite{stillebron}*{p.\ 62}, \cite{thom2}*{Example~3, p.~474}, and \cite{younger}*{p.\ 218} are basic compactness arguments, i.e.\ they amount to little more than $\HBU_{\c}\di \UCT_{\u}$ from Theorem~\ref{roofer}. 

\smallskip

Secondly, the proof of Heine's theorem by Dini in \cite{dinipi}*{\S41} (Italian) and \cite{dinipi2}*{\S41} (German) is essentially as in Theorem \ref{heineforeal}, with one difference: Dini does not use the function from \eqref{hopla}, but introduces a modulus of continuity as follows:
\begin{quote}
the number $\eps$ should be interpreted as the supremum of all values of $\eps$ that, in reference to the point $x$, are compatible with those properties any $\eps$ should have. (see \S41 in \cite{dinipi, dinipi2})
%
% soll unter s die obere Grenze der Werthe von s ver-
%standen werden, welche in Bezug auf den Punkt x mit den
%Eigenschaften, die alle £ haben m\UTF{00FC}ssen, vereinbar sind (eine
%solche Grenze ist offenbar vorhanden. § 15). s(6, x) be-
\end{quote}
Thus, Dini's modulus modulus of continuity $\eps(x, \sigma)$ is the supremum of all $\eps'>0$ such that $(\forall x, y\in I)(|x-y|<\eps' \di |f(x)-f(y)|<\sigma)$. 
Our modulus $\eps_{0}(x, \sigma)$ from \eqref{hopla} is always below $\eps(x, \sigma)$, but does not depend on the function $f$ and hence yields \emph{uniform} Heine's theorem.  

\begin{thm}\label{heineforeal}
Any continuous $f:[a,b]\di \R$ is uniformly continuous on $[a,b]$.  
\end{thm}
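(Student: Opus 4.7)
The plan is to follow Dini's original argument in \cite{dinipi}*{\S41}, using the Bolzano--Weierstrass-style principle quoted at the start of the appendix, but to replace Dini's supremum-based modulus of continuity with the auxiliary modulus $\eps_{0}(x,\sigma)$ from \eqref{hopla}, so that the resulting bound depends only on the modulus $g$ and not on $f$ itself. This is the source of uniformity that turns the classical Heine statement into $\UCT_{\u}^{\R}$.

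First I would argue by contradiction: assume $f$ is continuous on $[a,b]$ but \emph{not} uniformly continuous. Then there exists $\sigma>0$ such that for every $\delta>0$ one can find $x,y \in [a,b]$ with $|x-y|<\delta$ and $|f(x)-f(y)|\geq \sigma$. Taking $\delta = 1/n$, this produces pairs $(x_{n},y_{n})$ in $[a,b]$ witnessing the failure of uniform continuity at scale $\sigma$. At this point I would invoke the Bolzano--Weierstrass principle recalled in the centred quotation: the property ``being one of the $x_{n}$'' is satisfied infinitely often in the finite domain $[a,b]$, hence there is a point $c \in [a,b]$ in every neighbourhood of which infinitely many $x_{n}$ lie. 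Passing to a subsequence, $x_{n_{k}} \to c$, and since $|x_{n_{k}} - y_{n_{k}}| < 1/n_{k} \to 0$ also $y_{n_{k}} \to c$.

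Now I would use continuity of $f$ at $c$ to derive the contradiction: given $\eps := \sigma/3 > 0$, there is a neighbourhood of $c$ on which $|f(x)-f(c)|<\eps$, and for $k$ large enough both $x_{n_{k}}$ and $y_{n_{k}}$ lie in this neighbourhood, forcing $|f(x_{n_{k}}) - f(y_{n_{k}})| < 2\sigma/3 < \sigma$, contradicting the choice of the pairs. This yields uniform continuity of $f$ on $[a,b]$.

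The main obstacle — and really the whole point for this paper — is not the contradiction step but \emph{bookkeeping the dependencies} so as to obtain the uniform version: the modulus $\delta$ produced by the argument must be expressible purely in terms of a given modulus of continuity $g$ for $f$, not in terms of $f$. This is why the replacement of Dini's $\eps(x,\sigma)$ by the smaller, $f$-independent $\eps_{0}(x,\sigma)$ from \eqref{hopla} is essential: running the Bolzano--Weierstrass argument against $\eps_{0}$ rather than against $\eps$, the bound extracted at $c$ depends only on $g$, and the same $\delta$ works for every function admitting $g$ as a modulus. I would conclude by observing that this is exactly the strengthening needed for $\UCT_{\u}^{\R}$, and that the only change from Dini's presentation is the choice of modulus.
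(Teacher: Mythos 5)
Your middle two paragraphs are a correct proof of the bare statement---they give the standard textbook proof of Heine's theorem by contradiction via sequential compactness---but this is not the proof the paper gives, and, more to the point, it does not deliver what you yourself set out to deliver. The pairs $(x_{n},y_{n})$ are selected using $f$ itself (you need $|f(x_{n})-f(y_{n})|\geq\sigma$), and the contradiction scaffolding never extracts a $\delta$; once the contradiction lands, nothing in that argument is usable to read off a uniform bound depending only on a modulus $g$. There is thus a disconnect between your opening plan (``follow Dini's argument, replacing his canonical modulus by $\eps_{0}$'') and what you actually carry out in the middle, which is neither Dini's argument nor one in which $\eps_{0}$ appears.

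Your first and last paragraphs do identify the right idea, but ``running the Bolzano--Weierstrass argument against $\eps_{0}$ rather than against $\eps$'' is not a small patch to a contradiction proof---it is a different, direct argument that has no counterexample pairs in it at all. The paper's proof takes a modulus $\eps:(I\times\R)\di\R^{+}$ as input, defines $\eps_{0}$ via \eqref{hopla}, puts $\lambda_{0}:=\inf_{z\in I}\eps_{0}(z,\sigma/2)$, and applies the accumulation-point principle not to $f$-witnesses of the failure of uniform continuity but to the function $z\mapsto\eps_{0}(z,\sigma/2)$: there is $x'\in I$ with $\inf_{z\in U}\eps_{0}(z,\sigma/2)=\lambda_{0}$ for every neighbourhood $U$ of $x'$. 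Specialising to $U_{0}=I_{x'}^{\frac{1}{2}\eps(x',\sigma/2)}$ and unwinding \eqref{hopla} gives $\eps_{0}(z,\sigma/2)\geq\eps(x',\sigma/2)$ for $z\in U_{0}$, hence $\lambda_{0}\geq\eps(x',\sigma/2)>0$, and $\delta:=\frac{1}{2}\eps(x',\sigma/2)$ is an explicit modulus of uniform continuity built from $\eps$ alone. To make your proposal go through you would need to actually develop that direct argument, rather than appending $\eps_{0}$ commentary to a contradiction argument in which $\eps_{0}$ plays no role.
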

\begin{proof}
For simplicity, we work over $I\equiv [0,1]$.  
Using Dini's notations, let $\eps:(I\times \R)\di \R^{+}$ be a modulus of (pointwise) continuity for $f:I\di \R$, i.e.\
\[
(\forall \sigma >_{\R}0)(\forall x, y\in I)(|x-y|<_{\R}\eps(x, \sigma)\di |f(x)-f(y)|<_{\R}\sigma ). 
\]
Without loss of generality, we may assume that $\eps(x, \sigma)<2$ for all $x\in I$.  
There are many moduli of continuity, and we need a `nice' modulus, or similar object.    
To this end, define $I_{x}^{\eps(x, \sigma)}$ as the interval $(x-\eps(x, \sigma), x+\eps(x, \sigma))$ and define
\be\label{hopla}
\eps_{0}(x, \sigma):= \sup \big\{  {\eps(y, \sigma)} : y\in I \wedge x\in  I_{y}^{\frac{1}{2}\eps(y, \sigma)}  \big\}.
%\eps_{0}(x, \sigma):= \sup \{ | I_{y}^{\eps(y, \sigma)}| : y\in I \wedge I_{x}^{\eps(x, \sigma)}\subseteq I_{y}^{\eps(y, \sigma)}  \}.\\
%\eps_{0}(x, \sigma):= \sup \{ I_{y}^{\eps(y, \sigma)} : y\in I \wedge I_{x}^{\eps(x, \sigma)}\subseteq I_{y}^{\eps(y, \sigma)}  \}.\\
%\eps_{0}(x, \sigma):= \sup \{ (a,b) : (\forall y\in I)( I_{x}^{\eps(x, \sigma)}\subseteq I_{y}^{\eps(y, \sigma)}\di (a, b)\subseteq I_{y}^{\eps(y, \sigma)})  \}.
\ee
Note that if $|x-z| < \eps_{0}(x,\sigma)/2$, then $|f(x) - f(z)| < 2\sigma$, i.e.\ $\eps_{0}$ is essentially a modulus of continuity for $f$ too.  
Now fix $\sigma>_{\R}0$ and let $\lambda_{0}$ be $\inf_{z\in I}\eps_{0}(z, \sigma/2)$.  Then there is a point $x'\in I$ such that for any neighbourhood $U$ of $x'$, no matter how small, we have $\inf_{z\in U} \eps_{0}(z, \sigma/2) = \lambda_{0}$.  
Now consider $U_{0}=I_{x'}^{\frac{1}{2}\eps(x', \sigma/2)}$ and note that $\inf_{z\in U_{0}} \eps_{0}(z, \sigma/2) = \lambda_{0}$ by definition.
However, for $z\in U_{0}$, \eqref{hopla} (for $\sigma/2$) implies that $\eps_{0}(z, \sigma/2)$ is at least ${\eps(x', \sigma/2)}$, i.e.\ $\eps_{0}(z, \sigma/2)\geq \eps(x', \sigma/2)$. 
Taking the infimum, $\lambda_{0}=\inf_{z\in U_{0}} |\eps_{0}(z, \sigma/2)| \geq \eps(x', \sigma/2)$.  Define $\eps_{1}:=\frac{1}{2}\eps(x', \sigma/2)$ and note
\[
(\forall x, y\in I)(|x-y|<_{\R}\eps_{1})\di |f(x)-f(y)|<_{\R}\sigma ),  
\]
and the uniform continuity of $f$ follows. 
\end{proof}
L\"uroth's proof of Heine's theorem \cite{grosselul} proceeds in the same way: a \emph{nice} modulus of continuity is defined, for which it is argued that 
the infimum cannot be zero anywhere in the interval, establishing uniform continuity.  With inessential modification, L\"uroth's proof also yields \emph{uniform} Heine's theorem. 

\smallskip

Incidentally, Weierstrass' proof from \cite{amaimennewekker}*{p.\ 203-204} establishes the Heine-Borel theorem (without explicit formulation) and also starts with the introduction of a nice modulus (in casu: of uniform convergence).   
A detailed motivation for this observation is in \cite{medvet}*{p. 96-97}.  
The following corollary is now immediate.  % from the proof. 
\begin{cor}
For any $\eps>_{\R}0$ and $g:(I\times \R)\di \R^{+}$, there is $\delta>_{\R}0$ such that for any $f:I\di \R$ with modulus of continuity $g$, we have
\[
(\forall x, y\in I)(|x-y|<_{\R}\delta)\di |f(x)-f(y)|<_{\R}\eps ), 
\]
%i.e.\ there is a 
\end{cor}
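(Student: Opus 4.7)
The plan is to observe that the proof of Theorem \ref{heineforeal} already produces the uniform conclusion essentially verbatim, so the corollary is obtained by rereading that proof and tracking exactly which data $\delta$ depends on. Concretely, I rename the given $g$ in the corollary to play the role of $\eps(x,\sigma)$ and the given $\eps$ to play the role of $\sigma$; then I claim that $\delta := \tfrac{1}{2}\eps(x',\sigma/2)$ constructed at the end of the proof of Theorem \ref{heineforeal} is a witness.

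First, I would inspect the auxiliary modulus $\eps_{0}(x,\sigma)$ from \eqref{hopla} and note that its definition refers only to the modulus $\eps$ (i.e.\ to $g$) and to $\sigma$; it never mentions $f$. Next, I would revisit the selection of the point $x'$: it is obtained from the infimum $\lambda_{0} = \inf_{z\in I}\eps_{0}(z,\sigma/2)$, and the argument that in every neighbourhood of $x'$ this infimum is still attained depends only on the function $\eps_{0}(\cdot,\sigma/2)$, hence again only on $g$ and $\sigma$. Finally, the number $\eps_{1} = \tfrac{1}{2}\eps(x',\sigma/2)$ defined at the end of the proof is a function of $g$ and $\sigma$ alone, and the verification
\[
(\forall x,y\in I)\bigl(|x-y|<_{\R}\eps_{1}\di |f(x)-f(y)|<_{\R}\sigma\bigr)
\]
from the proof used no property of $f$ other than that $\eps$ is a modulus of continuity for $f$. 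Hence the same $\eps_{1}$ works for every $f:I\di\R$ with modulus of continuity $g$, which is exactly the content of the corollary after renaming.

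The main issue to address is that in the proof of Theorem \ref{heineforeal} the existence of $x'$ relies on the Bolzano--Weierstrass-style principle recorded in the opening of Section \ref{pproof}, so one should verify that this selection really is a function of $g$ and $\sigma$ (and not also of some tacitly chosen convergent subsequence in $f$). Since Theorem \ref{heineforeal} is proved classically and the corollary only claims \emph{existence} of $\delta$, this causes no difficulty: the same $x'$ extracted from the proof serves all admissible $f$ simultaneously. Thus the corollary follows without any further argument, and it is the \emph{uniform} strengthening of Heine's theorem advertised in the appendix.
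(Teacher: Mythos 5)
Your proposal is correct and takes the same route the paper intends: the paper offers no separate argument for the corollary beyond the remark that it is ``now immediate'' from the proof of Theorem \ref{heineforeal}, and you have carried out exactly the required bookkeeping, namely that $\eps_{0}$, $\lambda_0$, $x'$, and $\eps_1=\tfrac12\eps(x',\sigma/2)$ all depend only on the modulus $g$ and on $\eps$, never on $f$ itself.
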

Thirdly, as discussed in Remark \ref{kowlk}, Pincherle mentions a variation of Pincherle's theorem in \cite{tepelpinch}*{Footnote 1} and states it is a generalisation of Heine's theorem as proved by Dini in \cite{dinipi}*{\S41}.  
As discussed in Section \ref{forgopppp}, Pincherle's proof of Pincherle's theorem \emph{with minor modification} also establishes the uniform version, and the uniform version of the variation from Remark \ref{kowlk} immediately yields \emph{uniform} Heine's theorem when applied to a modulus of continuity.  
Hence, Pincherle's proof from \cite{tepelpinch} establishes uniform Heine's theorem  \emph{with minor modification}.  

\smallskip

Fourth, Bolzano provides an incorrect proof of Heine's theorem in \cite{nogrusser}*{p.\ 575, \S6}.  However, Rusnock claims in \cite{russje}*{p.\ 113} that Bolzano's basic strategy is solid;  Rusnock 
also provides a corrected proof, which he calls \emph{a Bolzanian proof of Heine's theorem}, in \cite{russje}*{Appendix}.  The latter proof can establish uniform Heine's theorem as it is is similar in spirit to the proof of Theorem \ref{heineforeal}: one starts from a modulus of continuity, then defines a certain sequence in terms of the latter, and the cluster point of this sequence is used to define a modulus of \emph{uniform} continuity.  

\smallskip

Fifth, Lebesgue provides (what he refers to as) a `pretty proof' of Heine's theorem in \cite{lebes1}*{p.\ 105, Footnote 1} as an application of the Heine-Borel theorem for \emph{uncountable coverings}. 
Lebesgue's proof establishes $\HBU\di \UCT_{\u}^{\R}$ as follows: Lebesgue's notion of (uniform) continuity (see \cite{lebes1}*{p.\ 22}) involves a \emph{modulus} of (uniform) continuity.  
Given a modulus of continuity $g$ for $f$ on $[a, b]$, the ball $(x-g(x, \eps), x+g(x, \eps))$ is such that the oscillation of $f(x)$ is at most $\eps$.  
Hence, applying $\HBU$ to the covering $\cup_{x\in I}I^{g}_{x}$, immediately implies $\UCT_{\u}^{\R}$.  The proofs by Bromwich, Riesz, Hardy, and Young in \cites{manon, younger, hardy,bromance} amount to the same proof.  

\smallskip

Sixth, Thomae's proof (\cite{thomeke}*{p.\ 5}) of Heine's theorem is not correct, but actually suggests using \eqref{hopla}.  Indeed, for the associated canonical covering, build a sequence in which the first interval covers zero, and the next one the right end-point of the previous one, as in Thomae's proof.  The latter now yields \emph{uniform} Heine's theorem.    

\smallskip

Finally, neither Weierstrass' proof in \cite{weihimself}, or Heine's proof in \cite{keine}, or Dirichlet's proof in \cite{didi2} establish the uniform version of Heine's theorem, as far as we can see.

\subsection{Proofs of Pincherle's theorem}\label{forgopppp}
First of all, the proofs of Pincherle's theorem in \cite{bartle2}*{p.\ 149}, \cite{gormon}*{p.\ 111}, and \cite{thom2}*{p.\ 185} are basic compactness arguments, amounting to little more than the proof of $\HBU_{\c}\di \PIT_{\u}$ in Theorem \ref{mooi}.  

\smallskip

Secondly, the proof of Pincherle's theorem by Pincherle himself is essentially as follows (see \cite{tepelpinch}*{p.\ 67 for the Italian original}).    
\begin{thm}[Pincherle]\label{gemtoo}
Let $E$ be a closed, bounded subset of $\mathbb{R}^{n}$ and let $f : E \di \R$ be
locally bounded with realisers $L, r:\R\di \R^{+}$. Then $f$ is bounded on $E$.
\end{thm}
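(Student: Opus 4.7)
The plan is to follow a Bolzano--Weierstrass style argument, closely mirroring what one can extract from Pincherle's original 1882 text and which is a natural companion to the proof of Heine's theorem sketched above as Theorem~\ref{heineforeal}. First I would argue by contradiction: suppose $f$ is unbounded on $E$. Then for every $n \in \N$ there exists $x_n \in E$ with $|f(x_n)| > n$. Since $E \subseteq \R^n$ is closed and bounded, hence sequentially compact, the sequence $(x_n)$ has a subsequence converging to some $x^* \in E$; this is exactly the application of the centred Bolzano--Weierstrass theorem highlighted in the paragraph before Section~\ref{heikel}, which Pincherle attributes to Weierstrass in \cite{pinkersgebruiken}.

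Next I would use the realisers $r, L : E \to \R^+$ at the limit point. By $\LOC$-type hypothesis, $|f(y)| \leq L(x^*)$ for every $y \in E$ with $|y - x^*| < r(x^*)$. Pick $N$ so large that $N > L(x^*)$ and so that all but finitely many terms of the convergent subsequence lie inside the ball $B(x^*, r(x^*))$. Then for any such term $x_{n_k}$ with $n_k > N$ we have simultaneously $|f(x_{n_k})| > n_k > L(x^*)$ and $|f(x_{n_k})| \leq L(x^*)$, a contradiction. Hence $f$ must be bounded on $E$.

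The main obstacle (in the sense of the \emph{uniform} reading of the proof) is to observe that the bound produced by this argument depends only on $r$ and $L$, not on $f$ itself, since one never manipulates values of $f$ except to compare them to $L(x^*)$. Concretely, once the contradiction has been established, the actual global bound can be taken as $\max_{i \le k} L(y_i)$ where $\{y_1,\dots,y_k\}$ is any finite subset of $E$ whose associated neighbourhoods $B(y_i, r(y_i))$ cover $E$; existence of such a finite subcover is precisely what the Bolzano--Weierstrass contradiction delivers (if none existed, one could build an unbounded $f$). This is the ``minor modification'' alluded to in Section~\ref{sum} that upgrades the original argument from $\PIT_o$ to $\PIT_{\u}$: the bound in the conclusion is computed from $r, L$ alone, hence works for \emph{every} $f$ satisfying $\LOC^*(f, r, L)$ simultaneously.

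I expect the only delicate point to be handling the case where $r(x^*)$ is very small relative to the tail of the subsequence, but this is automatic from convergence; no uniformity of $r$ anywhere off the trajectory of $(x_{n_k})$ is needed. Everything else is the standard compactness dance, exactly as in the successful proofs enumerated in Sections~\ref{heikel} and~\ref{forgopppp}.
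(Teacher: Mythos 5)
Your first two paragraphs give a correct proof of the stated boundedness claim, but along a genuinely different route from the paper's. You assume unboundedness, extract a sequence $(x_n)$ with $|f(x_n)|>n$, pass to a convergent subsequence by sequential compactness of $E$, and derive a contradiction with $L(x^{*}), r(x^{*})$ at the limit point $x^{*}$. This is precisely the argument used in Theorem~\ref{ofinterest} to show that $\ACAo+\QFAC^{0,1}$ proves $\PIT_{o}$: a Bolzano--Weierstrass contradiction whose content rests on countable choice and sequential compactness. The paper's proof of Theorem~\ref{gemtoo} is different: following Pincherle, it argues directly rather than by contradiction, defining the auxiliary quantity $L(x):=\limsup_{y\in E\cap B(x,r(x))}|f(y)|$ and $L:=\limsup_{x\in E}L(x)$, and then invoking the centred Weierstrass accumulation-point principle (about a property holding infinitely often, not about a sequence of $f$-values) to locate an $x'\in E$ at which the local bound $L(x')$ controls $L$. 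Both arguments prove the non-uniform statement.

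Where the two routes genuinely diverge is your third paragraph, and the claim made there does not hold up. You assert that the global bound can be taken to be $\max_{i\le k}L(y_i)$ for a finite $\{y_1,\dots,y_k\}\subseteq E$ with $\bigcup_{i\le k}B(y_i,r(y_i))\supseteq E$, and that ``existence of such a finite subcover is precisely what the Bolzano--Weierstrass contradiction delivers.'' It is not. Your first two paragraphs refute the unboundedness of a \emph{given} $f$; they never produce, even non-constructively, a finite subcover of $\bigcup_{x\in E}B(x,r(x))$. Extracting such a subcover from $L,r$ alone is a Heine--Borel step, which the paper treats as a separate matter (cf.\ $\HBU$ and Theorem~\ref{mooi}) and which is essentially the content of $\PIT_{\u}$. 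Your parenthetical ``if none existed, one could build an unbounded $f$'' is a further, non-trivial construction that your proof does not carry out. By contrast, the paper's direct proof has exactly one point where the bound depends on $f$, namely the $\limsup$ used to define $L(x)$, and the ``minor modification'' consists in replacing that definition by \eqref{dorkioplk}, a formula in the realisers alone, after which the identical argument returns a bound computed purely from $L',r$. Your proof by contradiction admits no analogous local repair: stripping out the dependence on $f$ there means abandoning the strategy wholesale, which is exactly the distinction between $\PIT_{o}$ and $\PIT_{\u}$ the paper is at pains to exhibit.
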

\begin{proof}
We start with a note regarding references: Pincherle motivates the crucial step in the proof in \cite{tepelpinch}*{p.\ 67} as follows: \emph{per le proposizioni generali sulle grandezze variabili}, which translates to \emph{due to general propositions on variable magnitudes}.    
Pincherle does not provide references, but it is clear from his proof that he meant the version of the Bolzano-Weierstrass theorem from the beginning of this section. 

\smallskip

Now suppose $f:E\di \R$ is locally bounded with realisers $L', r:\R\di \R^{+}$, i.e.\ for every $x\in E$ and $y\in E\cap B(x, r(x)) $, we have $|f(y)|\leq L'(x)$. 
Let $L(x)$ be the lim sup of $|f(y)|$ for $y \in E\cap B(x, r(x))$.  By assumption $L:E\di\R^+$ is always finite (and well-defined) for inputs from $E$.
Now let $L\in \R^{+}\cup\{+\infty\}$ be the lim sup of $L(x)$ for $x\in E$; we show that $L$ is a finite number.  

\smallskip

In fact, there is, due to the first paragraph, 
a point $x'\in E$
such that for any neighbourhood $U$ of $x'$, however small, the lim sup of $L(y)$ for $y\in U$ is $L$.  By locally boundeness, the lim sup of $|f(y)|$ for $y\in B(x', r(x'))$ is a finite number, namely less than $L':=L(x')$.    
By the previous, the lim sup of $|L(y)|$ for $y\in B(x', r(x')/2)$ is $L$.  But since $B(x', r(x')/2)\subset B(x', r(x')$, we have $L\leq L'$, and $L$ is indeed finite.  
\end{proof}
A minor modification of the previous proof now yields the uniform version.  
\begin{cor}
Let $E$ be a closed, bounded subset of $\mathbb{R}^{n}$ and let $f : E \di \R$ be
locally bounded with realisers $L, r:\R\di \R^{+}$. Then $|f|$ has an upper bound on $E$ that only depends on the latter.
\end{cor}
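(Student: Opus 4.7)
The plan is to follow Pincherle's proof of Theorem~\ref{gemtoo} with one small adjustment: the Bolzano--Weierstrass step, which Pincherle uses to locate a single point $x'\in E$ depending on $f$, is replaced by the (classical) extraction of a finite subcover depending only on $r$ and $E$. This makes the resulting bound manifestly independent of $f$, as required by the uniform statement.

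Concretely, since $E$ is closed and bounded in $\R^n$, it is compact. The family $\{B(x,r(x))\}_{x\in E}$ is therefore an open cover of $E$ with a finite subcover $B(x_1,r(x_1)),\dots,B(x_k,r(x_k))$, where the points $x_1,\dots,x_k\in E$ are chosen using only $r$ and $E$. Set
\[
M:=\max_{1\leq i\leq k} L(x_i),
\]
a finite number determined by $L$, $r$, and $E$ alone. For any $y\in E$, pick $i$ with $y\in B(x_i,r(x_i))$; the realiser property gives $|f(y)|\leq L(x_i)\leq M$. Hence $\sup_{y\in E}|f(y)|\leq M$ for every $f$ admitting $L,r$ as realisers.

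The only substantive deviation from Pincherle's original proof concerns how compactness is invoked: Pincherle applies Bolzano--Weierstrass to the $f$-dependent function $x\mapsto\limsup_{y\in B(x,r(x))\cap E}|f(y)|$ and pinpoints a single $x'$ at which this lim sup is finite, but the resulting bound $L(x')$ then inherits an $f$-dependence through $x'$. Replacing this step with the extraction of a finite list of $f$-independent points $x_1,\dots,x_k$ via the cover $\{B(x,r(x))\}$ (which is built purely from $r$) removes the dependence. The main (modest) obstacle is simply to verify that this subcover can be selected from $r$ and $E$ alone, which is transparent here since the cover itself does not mention $f$; the same Bolzano--Weierstrass principle Pincherle cites suffices for the extraction, now applied to sequences in $E$ rather than to values of $|f|$.
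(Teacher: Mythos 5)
Your proof is correct as mathematics, but it takes a genuinely different route from the paper's, and that difference matters here. You discard Pincherle's Bolzano--Weierstrass argument and replace it wholesale with a Heine--Borel finite-subcover argument: extract $B(x_1,r(x_1)),\dots,B(x_k,r(x_k))$ covering $E$ and take $M=\max_i L(x_i)$. This is precisely the modern textbook proof (Bartle, Gordon, Thomson--Bruckner), which the paper already flags at the start of Section~\ref{forgopppp} as ``amounting to little more than the proof of $\HBU_{\c}\di \PIT_{\u}$ in Theorem~\ref{mooi}.'' The paper's own proof of this corollary does something quite different and deliberately minimal: it leaves Pincherle's limsup/Bolzano--Weierstrass structure in Theorem~\ref{gemtoo} untouched and only swaps the $f$-dependent quantity $L(x)=\limsup_{y\in E\cap B(x,r(x))}|f(y)|$ for the $f$-independent $L(x):=\inf_{z\in E}\{L'(z): I_{x}^{r}\subseteq I_{z}^{r}\}$ as in \eqref{dorkioplk}, exactly parallel to how $\eps_0$ is built from $\eps$ in \eqref{hopla} in the Heine case. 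The reason for this choice is methodological, not just technical: the whole point of Appendix~\ref{pproof} is to establish that the \emph{historical} proofs (Dini, Pincherle, Bolzano, \dots) already yield the uniform statements after only a cosmetic change, and in particular that Pincherle's 1882 Bolzano--Weierstrass argument does so. Your proof establishes the statement but not that claim. Note also that your opening framing (``one small adjustment'') undersells the change; by the end of your own write-up you correctly concede that replacing the Bolzano--Weierstrass step with a finite-subcover extraction is ``the only substantive deviation,'' and it is indeed a replacement of the proof's engine rather than a tweak of it.
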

\begin{proof}
It suffices to define a suitable $L(x)$ in terms of $L'(x)$ (rather than in terms of $f(x)$).  This can be done in the same way as $\eps_{0}(x, \sigma)$ in \eqref{hopla} is defined in terms of $\eps(x, \sigma)$.  
For instance, define $L:E\di \R^{+}$ as follows:
\be\label{dorkioplk}
L(x):=\inf_{z\in E}\{ L'(z): I_{x}^{r}\subseteq I_{z}^{r}   \},
\ee
where $L', r:E\di \R^{+} $ are realisers for the local boundedness of $f$. 
\end{proof}
In conclusion, Dini \emph{almost} establishes $\UCT_{\u}^{\R}$ in \cites{dinipi, dinipi2}, while Pincherle later probably adapted Dini's proof to obtain Pincherle's theorem in \cite{tepelpinch}.  
Pincherle's proof is uniform \emph{if} we define $L(x)$ as in \eqref{dorkioplk} rather than in terms of $f$ itself, i.e.\ similar to \eqref{hopla}.  Moreover, the proof in \cite{russje}*{Appendix} 
seems to establish $\UCT_{\u}^{\R}$, and is claimed by the historian Rusnock to be a \emph{Bolzanonian proof of Heine's theorem}.  
Finally, Lebesgue, Riesz, and Young prove $\HBU\di\UCT_{\u}^{\R}$ in \cite{lebes1,manon, younger}.  

\smallskip

In a nutshell, we observe that the version of the Bolzano-Weierstrass theorem 
from the beginning of this section, as well as the Heine-Borel theorem for uncountable coverings, was (or could be) used to prove \emph{uniform} versions of Heine's and Pincherle's theorems.  
Weierstrass' more `constructive' approach as in \cites{weihimself, didi2} later became the norm however, until  the redevelopment of analysis as in e.g.\ \cite{bartle2} based on techniques from gauge integration.  
With that, both history and this paper have come full circle, which constitutes a nice ending for this section.  

\begin{ack}\rm
Our research was supported by the John Templeton Foundation via the grant \emph{a new dawn of intuitionism} with ID 60842, the Alexander von Humboldt Foundation, LMU Munich (via the Excellence Initiative and the Center for Advanced Studies of LMU), and the University of Oslo.
We express our gratitude towards these institutions. 
We thank Fernando Ferreira, Paul Rusnock, and Anil Nerode for their valuable advice.  
We also thank the anonymous referee for the helpful suggestions.  
Opinions expressed in this paper do not reflect those of the John Templeton Foundation.    
\end{ack}

\begin{bibdiv}
\begin{biblist}
%\bibselect{allkeida}
\bib{messias}{book}{
  author={Apostol, Tom M.},
  title={Mathematical analysis: a modern approach to advanced calculus},
  publisher={Addison-Wesley Publishing Company, Inc., Reading, Mass.},
  date={1957},
  pages={xii+553},
}

\bib{avi2}{article}{
  author={Avigad, Jeremy},
  author={Feferman, Solomon},
  title={G\"odel's functional \(``Dialectica''\) interpretation},
  conference={ title={Handbook of proof theory}, },
  book={ series={Stud. Logic Found. Math.}, volume={137}, },
  date={1998},
  pages={337--405},
}

\bib{bartle3}{book}{
  author={Robert {Bartle}},
  title={{The elements of real analysis.}},
  year={1976},
  publisher={John Wiley\&Sons. XV, 480 p.},
}

\bib{bartle2}{book}{
  author={Robert {Bartle} and Donald {Sherbert}},
  title={{Introduction to real analysis}},
  pages={xi + 404},
  year={2000},
  publisher={Wiley},
}

\bib{nogrusser}{book}{
  author={Bernard {Bolzano}},
  title={{The mathematical works of Bernard Bolzano. Edited by Steve Russ. Translated from the German.}},
  pages={xxx + 698},
  year={2004},
  publisher={Oxford University Press},
}

\bib{botsko}{article}{
  author={Botsko, Michael},
  title={A Unified Treatment of Various Theorems in Elementary Analysis},
  journal={Amer. Math. Monthly},
  volume={94},
  date={1987},
  number={5},
  pages={450--452},
}

\bib{berserk}{article}{
  author={Bors\'ik, J\'an},
  journal={Real Anal. Exchange},
  number={2},
  pages={339--350},
  title={Points of Continuity, Quasicontinuity, Cliquishness, and Upper and Lower Quasicontinuity},
  volume={33},
  year={2007},
}

\bib{brich}{book}{
  author={Bridges, Douglas},
  author={Richman, Fred},
  title={Varieties of constructive mathematics},
  series={London Mathematical Society Lecture Note Series},
  volume={97},
  publisher={Cambridge University Press},
  place={Cambridge},
  date={1987},
  pages={x+149},
}

\bib{bromance}{book}{
  author={Bromwich, Thomas},
  title={An introduction to the theory of infinite series},
  publisher={Macmillan Publishing Co.\ London},
  date={1908},
  pages={540},
}

\bib{opborrelen}{article}{
  author={Borel, Emile},
  title={Sur quelques points de la th\'eorie des fonctions},
  journal={Ann. Sci. \'Ecole Norm. Sup. (3)},
  volume={12},
  date={1895},
  pages={9--55},
}

\bib{chorda}{article}{
  author={Chorlay, Renaud},
  title={``Local-global'': the first twenty years},
  journal={Arch. Hist. Exact Sci.},
  volume={65},
  date={2011},
  number={1},
  pages={1--66},
}

\bib{cousin1}{article}{
  author={Cousin, Pierre},
  title={Sur les fonctions de $n$ variables complexes},
  journal={Acta Math.},
  volume={19},
  date={1895},
  number={1},
  pages={1--61},
}

\bib{dinipi}{book}{
  author={U. {Dini}},
  title={{Fondamenti per la teorica delle funzioni di variabili reali}},
  year={1878},
  publisher={{Nistri, Pisa}},
}

\bib{dinipi2}{book}{
  author={U. {Dini}},
  title={{Grundlagen f\"ur eine Theorie der Functionen einer ver\"anderlichen reelen Gr\"osse}},
  year={1892},
  publisher={{Leipzig, B.G. Teubner}},
}

\bib{didi3}{book}{
  author={Dirichlet, Lejeune P.~G.},
  title={\"Uber die Darstellung ganz willk\"urlicher Funktionen durch Sinus- und Cosinusreihen},
  year={1837},
  publisher={Repertorium der physik, von H.W. Dove und L. Moser, bd. 1},
}

\bib{didi2}{book}{
  author={Dirichlet, Lejeune P.~G.},
  title={{Vorlesungen \"uber die Lehre von den einfachen und mehrfachen bestimmten Integralen}},
  year={1904},
  publisher={{Vieweg \& Sohn. XXIII u. 476 S. $8^\circ $.}},
}

\bib{didi1}{article}{
  author={Dirichlet, Lejeune P.~G.},
  title={Sur la convergence des s\'eries trigonom\'etriques qui servent \`a repr\'esenter une fonction arbitraire entre des limites donn\'ees},
  journal={arXiv},
  year={2008},
  note={\url {https://arxiv.org/abs/0806.1294}},
}

\bib{dork2}{article}{
  author={Dorais, Fran\d {c}ois G.},
  title={Classical consequences of continuous choice principles from intuitionistic analysis},
  journal={Notre Dame J. Form. Log.},
  volume={55},
  date={2014},
  number={1},
  pages={25--39},
}

\bib{dork3}{article}{
  author={Dorais, Fran\d {c}ois G.},
  author={Dzhafarov, Damir D.},
  author={Hirst, Jeffry L.},
  author={Mileti, Joseph R.},
  author={Shafer, Paul},
  title={On uniform relationships between combinatorial problems},
  journal={Trans. Amer. Math. Soc.},
  volume={368},
  date={2016},
  number={2},
  pages={1321--1359},
}

\bib{damirzoo}{misc}{
  author={Dzhafarov, Damir D.},
  title={Reverse Mathematics Zoo},
  note={\url {http://rmzoo.uconn.edu/}},
}

\bib{fourierlachaud}{book}{
  author={Fourier, Joseph},
  title={Th\'{e}orie analytique de la chaleur},
  language={French},
  note={Reprint of the 1822 original},
  publisher={\'{E}ditions Jacques Gabay, Paris},
  date={1988},
  pages={xxii+644},
}

\bib{fried}{article}{
  author={Friedman, Harvey},
  title={Some systems of second order arithmetic and their use},
  conference={ title={Proceedings of the International Congress of Mathematicians (Vancouver, B.\ C., 1974), Vol.\ 1}, },
  book={ },
  date={1975},
  pages={235--242},
}

\bib{fried2}{article}{
  author={Friedman, Harvey},
  title={ Systems of second order arithmetic with restricted induction, I \& II (Abstracts) },
  journal={Journal of Symbolic Logic},
  volume={41},
  date={1976},
  pages={557--559},
}

\bib{voller}{article}{
  author={Fuller, Richard V.},
  title={{Relations among continuous and various non-continuous functions.}},
  journal={{Pac. J. Math.}},
  volume={25},
  pages={495--509},
  year={1968},
}

\bib{fuji2}{article}{
  author={Fujiwara, Makoto},
  author={Higuchi, Kojiro},
  author={Kihara, Takayuki},
  title={On the strength of marriage theorems and uniformity},
  journal={MLQ Math. Log. Q.},
  volume={60},
  date={2014},
  number={3},
}

\bib{fuji1}{article}{
  author={Fujiwara, Makoto},
  author={Yokoyama, Keita},
  title={A note on the sequential version of $\Pi ^1_2$ statements},
  conference={ title={The nature of computation}, },
  book={ series={Lecture Notes in Comput. Sci.}, volume={7921}, publisher={Springer, Heidelberg}, },
  date={2013},
  pages={171--180},
}

\bib{supergandy}{article}{
  author={Gandy, R. O.},
  title={General recursive functionals of finite type and hierarchies of functions},
  journal={Ann. Fac. Sci. Univ. Clermont-Ferrand No.},
  volume={35},
  date={1967},
  pages={5--24},
}

\bib{gordon3}{book}{
  author={Gordon, E.},
  author={{Kusraev}, A.},
  author={Kutateladze, S.},
  title={Infinitesimal analysis},
  pages={xiv + 422},
  year={2002},
  publisher={Dordrecht: Kluwer Academic Publishers},
}

\bib{gormon}{article}{
  author={Gordon, Russell A.},
  title={The use of tagged partitions in elementary real analysis},
  journal={Amer. Math. Monthly},
  volume={105},
  date={1998},
  number={2},
  pages={107--117},
}

\bib{gauwdief}{article}{
  author={Gowers, Timothy},
  title={When is proof by contradiction necessary?},
  journal={Blog: \url {https://gowers.wordpress.com/2010/03/28/when-is-proof-by-contradiction-necessary/}},
  date={2010},
}

\bib{withgusto}{article}{
  author={Giusto, Mariagnese},
  author={Simpson, Stephen G.},
  title={Located sets and reverse mathematics},
  journal={J. Symbolic Logic},
  volume={65},
  date={2000},
  number={3},
  pages={1451--1480},
}

\bib{hankelijkheid}{book}{
  author={H. {Hankel}},
  title={{Untersuchungen \"uber die unendlich oft oscillirenden und unstetigen Functionen.}},
  journal={{Math. Ann.}},
  volume={20},
  pages={63--112},
  year={1882},
  publisher={Springer},
}

\bib{hardy}{book}{
  author={Hardy, G. H.},
  title={A course of pure mathematics},
  note={2nd ed},
  publisher={Cambridge, at the University Press},
  date={1914},
}

\bib{keine}{article}{
  author={Heine, E.},
  title={Die Elemente der Functionenlehre},
  language={German},
  journal={J. Reine Angew. Math.},
  volume={74},
  date={1872},
  pages={172--188},
}

\bib{heerlijk}{article}{
  author={Horst {Herrlich}},
  title={{Choice principles in elementary topology and analysis.}},
  journal={{Commentat. Math. Univ. Carol.}},
  volume={38},
  number={3},
  pages={545--552},
  year={1997},
}

\bib{wildehilde}{article}{
  author={Hildebrandt, T. H.},
  title={The Borel theorem and its generalizations},
  journal={Bull. Amer. Math. Soc.},
  volume={32},
  date={1926},
  number={5},
  pages={423--474},
}

\bib{hirstseq}{article}{
  author={Hirst, Jeffry L.},
  author={Mummert, Carl},
  title={Reverse mathematics and uniformity in proofs without excluded middle},
  journal={Notre Dame J. Form. Log.},
  volume={52},
  date={2011},
  number={2},
  pages={149--162},
}

\bib{hobbelig}{book}{
  author={Hobson, Ernest William},
  title={The theory of functions of a real variable and the theory of Fourier's series},
  publisher={Cambridge: University Press},
  date={1907},
  pages={pp.\ 772},
}

\bib{hunterphd}{book}{
  author={Hunter, James},
  title={Higher-order reverse topology},
  note={Thesis (Ph.D.)--The University of Wisconsin - Madison},
  publisher={ProQuest LLC, Ann Arbor, MI},
  date={2008},
  pages={97},
}

\bib{ooskelly}{book}{
  author={Kelley, John L.},
  title={General topology},
  note={Reprint of the 1955 edition; Graduate Texts in Mathematics, No. 27},
  publisher={Springer-Verlag},
  date={1975},
  pages={xiv+298},
}

\bib{kermend}{article}{
  author={Keremedis, Kyriakos},
  title={Disasters in topology without the axiom of choice},
  journal={Arch. Math. Logic},
  volume={40},
  date={2001},
  number={8},
}

\bib{kleine}{book}{
  author={Kleiner, Israel},
  title={Excursions in the history of mathematics},
  publisher={Birkh\"auser/Springer, New York},
  date={2012},
}

\bib{knapgedaan}{book}{
  author={Knapp, Anthony W.},
  title={Basic real analysis},
  publisher={Birkh\"{a}user},
  date={2005},
  pages={xxiv+653},
}

\bib{kohlenbach3}{book}{
  author={Kohlenbach, Ulrich},
  title={Applied proof theory: proof interpretations and their use in mathematics},
  series={Springer Monographs in Mathematics},
  publisher={Springer-Verlag},
  place={Berlin},
  date={2008},
  pages={xx+532},
}

\bib{kohlenbach2}{article}{
  author={Kohlenbach, Ulrich},
  title={Higher order reverse mathematics},
  conference={ title={Reverse mathematics 2001}, },
  book={ series={Lect. Notes Log.}, volume={21}, publisher={ASL}, },
  date={2005},
  pages={281--295},
}

\bib{kohlenbach4}{article}{
  author={Kohlenbach, Ulrich},
  title={Foundational and mathematical uses of higher types},
  conference={ title={Reflections on the foundations of mathematics (Stanford, CA, 1998)}, },
  book={ series={Lect. Notes Log.}, volume={15}, publisher={ASL}, },
  date={2002},
  pages={92--116},
}

\bib{kohabil}{book}{
  author={Kohlenbach, Ulrich},
  title={Real Growth in Standard Parts of Analysis},
  publisher={Habilitationsschrift, J.W. Goethe Universit\"at Frankfurt, pp. xv+166},
  date={1995},
}

\bib{langebaard}{book}{
  author={Lang, Serge},
  title={Real analysis},
  edition={2},
  publisher={Addison-Wesley},
  date={1983},
  pages={xv+533},
}

\bib{leaderofthepack}{book}{
  author={Leader, Solomon},
  title={The Kurzweil-Henstock integral and its differentials},
  series={Monographs and Textbooks in Pure and Applied Mathematics},
  volume={242},
  publisher={Marcel Dekker, Inc., New York},
  date={2001},
  pages={viii+355},
}

\bib{lebes1}{book}{
  author={Henri Leon {Lebesgue}},
  title={{Le\c cons sur l'int\'egration et la recherche des fonctions primitives profess\'ees au Coll\`ege de France.}},
  note={Reprint of the 1904 ed.},
  pages={vii + 136},
  year={2009},
  publisher={Cambridge University Press},
}

\bib{lola}{article}{
  author={M.B. {Lignola} and J. {Morgan}},
  title={{Topological existence and stability for min sup problems.}},
  journal={{J. Math. Anal. Appl.}},
  volume={151},
  number={1},
  pages={164--180},
  year={1990},
}

\bib{blindeloef}{article}{
  author={Lindel\"of, Ernst},
  title={Sur Quelques Points De La Th\'eorie Des Ensembles},
  journal={Comptes Rendus},
  date={1903},
  pages={697--700},
}

\bib{loba}{book}{
  author={Lobachevsky, N. I.},
  title={On the vanishing of trigonometric series (1834)},
  year={1951},
  publisher={Works [Russian], T. 5, Gostekhizdat, Moscow-Leningrad},
  pages={31--80},
}

\bib{longmann}{book}{
  author={Longley, John},
  author={Normann, Dag},
  title={Higher-order Computability},
  year={2015},
  publisher={Springer},
  series={Theory and Applications of Computability},
}

\bib{grosselul}{article}{
  author={J. {L\"uroth}},
  title={{Bemerkung \"uber gleichm\"assige Stetigkeit.}},
  journal={{Math. Ann.}},
  volume={5},
  pages={319--320},
  year={1872},
  publisher={Springer, Berlin/Heidelberg},
}

\bib{mensiesson}{book}{
  author={Mendelson, Bert},
  title={Introduction to topology},
  publisher={Allyn and Bacon},
  date={1962},
  pages={ix+217},
}

\bib{medvet}{book}{
  author={Medvedev, Fyodor A.},
  title={Scenes from the history of real functions},
  series={Science Networks. Historical Studies},
  volume={7},
  publisher={Birkh\"auser Verlag, Basel},
  date={1991},
  pages={265},
}

\bib{migda}{book}{
  editor={Athanasios {Migdalas} and Panos M. {Pardalos} and Peter {V\"arbrand}},
  title={{Multilevel optimization: algorithms and applications.}},
  isbn={0-7923-4693-9},
  pages={xxii + 384},
  year={1998},
  publisher={Dordrecht: Kluwer Academic Publishers},
}

\bib{mizera}{article}{
  author={Mizera, I.},
  title={A remark on existence of statistical functionals},
  journal={Kybernetika},
  volume={31},
  date={1995},
  pages={315--319},
}

\bib{montahue}{article}{
  author={Montalb{\'a}n, A.},
  title={Open questions in reverse mathematics},
  journal={Bull. Symb. Logic},
  volume={17},
  date={2011},
  pages={431--454},
}

\bib{mullingitover}{book}{
  author={Muldowney, P.},
  title={A general theory of integration in function spaces, including Wiener and Feynman integration},
  series={Pitman Research Notes in Mathematics Series},
  volume={153},
  publisher={Longman Scientific \& Technical, Harlow; John Wiley \& Sons, Inc., New York},
  date={1987},
  pages={viii+115},
}

\bib{munkies}{book}{
  author={Munkres, James R.},
  title={Topology},
  publisher={Prentice-Hall},
  date={2000, 2nd edition},
  pages={xvi+537},
}

\bib{mummy}{article}{
  author={Mummert, Carl},
  author={Simpson, Stephen G.},
  title={Reverse mathematics and $\Pi _2^1$ comprehension},
  journal={Bull. Symbolic Logic},
  volume={11},
  date={2005},
  number={4},
  pages={526--533},
}

\bib{noiri}{article}{
  author={Noiri, Takashi},
  title={Sequentially subcontinuous functions},
  year={1975},
  journal={Accad. Naz. dei Lincei},
  volume={58},
  pages={370--373},
}

\bib{noortje}{book}{
  author={Normann, Dag},
  title={Recursion on the countable functionals},
  series={LNM 811},
  volume={811},
  publisher={Springer},
  date={1980},
  pages={viii+191},
}

\bib{dagcie18}{article}{
  author={Normann, Dag},
  title={Functionals of Type 3 as Realisers of Classical Theorems in Analysis},
  year={2018},
  journal={Proceedings of CiE18, Lecture Notes in Computer Science 10936},
  pages={318--327},
}

\bib{dagsam}{article}{
   author={Normann, Dag},
   author={Sanders, Sam}
   title={Nonstandard Analysis, Computability Theory, and their connections},
   journal={Journal of Symbolic Logic},
   volume={84}, 
   number={4},
   pages={1422-1465},
   date={2019},
}

\bib{dagsamII}{article}{
   author={Normann, Dag},
   author={Sanders, Sam},
   title={The strength of compactness in computability theory and
   nonstandard analysis},
   journal={Ann. Pure Appl. Logic},
   volume={170},
   date={2019},
   number={11},
%   pages={102710, 42},
%   issn={0168-0072},
%   review={\MR{4012249}},
%   doi={10.1016/j.apal.2019.05.007},
}

\bib{dagsamIII}{article}{
  author={Normann, Dag},
  author={Sanders, Sam},
  title={On the mathematical and foundational significance of the uncountable},
  journal={Journal of Mathematical Logic, doi: \url {10.1142/S0219061319500016}},
  date={2018},
}

\bib{dagsamVI}{article}{
  author={Normann, Dag},
  author={Sanders, Sam},
  title={Representations in measure theory: between a non-computable rock and a hard to prove place},
  journal={Submitted, arXiv: \url {https://arxiv.org/abs/1902.02756}},
  date={2019},
}

\bib{dagsamVII}{article}{
  author={Normann, Dag},
  author={Sanders, Sam},
  title={Open sets in Reverse Mathematics and computability theory},
  journal={Submitted, arXiv: \url {https://arxiv.org/abs/1910.02489}},
  date={2019},
  pages={pp.\ 30},
}

\bib{systemenouveau}{article}{
  author={Novotn\'{y}, Branislav},
  title={On subcontinuity},
  journal={Real Anal. Exchange},
  volume={31},
  date={2005/06},
  number={2},
  pages={535--545},
}

\bib{pinkersgebruiken}{article}{
  author={Pincherle, Salvatore},
  title={{Saggio di una introduzione alla teoria delle funzioni analitiche secondo i principii del Prof. C. Weierstrass.}},
  journal={{Giornale di Matematiche}},
  volume={18},
  pages={178--254},
  year={1880},
}

\bib{werkskes}{article}{
  author={Pincherle, Salvatore},
  title={Notice sur les travaux},
  journal={Acta Math.},
  volume={46},
  date={1925},
  number={3-4},
  pages={341--362},
}

\bib{tepelpinch}{article}{
  author={Pincherle, Salvatore},
  title={Sopra alcuni sviluppi in serie per funzioni analitiche (1882)},
  journal={Opere Scelte, I, Roma},
  date={1954},
  pages={64--91},
}

\bib{protput}{article}{
  author={M.H. {Protter} and C.B. {Morrey}},
  title={{A first course in real analysis.}},
  journal={{Undergraduate Texts Math.}},
  year={1977},
  publisher={Springer},
}

\bib{manon}{article}{
  author={Riesz, F.},
  title={Sur un th\'eor\`eme de M. Borel},
  journal={Comptes rendus de l'Acad\'emie des Sciences, Paris, Gauthier-Villars},
  volume={140},
  date={1905},
  pages={224--226},
}

\bib{rudin}{book}{
  author={Rudin, Walter},
  title={Principles of mathematical analysis},
  edition={3},
  note={International Series in Pure and Applied Mathematics},
  publisher={McGraw-Hill},
  date={1976},
  pages={x+342},
}

\bib{russje}{book}{
  author={Paul {Rusnock}},
  title={Bolzano’s contributions to real analysis},
  year={2003},
  publisher={Academia Verlag, Beitr\"age zur Bolzano-Forschung, Band 16, p.\ 99-116},
}

\bib{ruskesnokken}{article}{
  author={Paul {Rusnock} and Angus {Kerr-Lawson}},
  title={{Bolzano and uniform continuity.}},
  journal={{Hist. Math.}},
  volume={32},
  number={3},
  pages={303--311},
  year={2005},
}

\bib{roykes}{article}{
  author={Roy, M.},
  author={Wingler, E.},
  title={Locally bounded functions},
  year={1998},
  journal={Real Analysis Exchange},
  pages={251--258},
}

\bib{royfitz}{book}{
  author={Royden, H. L.},
  author={Fitzpatrick, H. L.},
  title={Real analysis},
  edition={4},
  publisher={Pearson Education},
  date={2010},
  pages={vi+505},
}

\bib{nudyrudy}{book}{
  author={Rudin, Walter},
  title={Real and complex analysis},
  edition={3},
  publisher={McGraw-Hill},
  date={1987},
  pages={xiv+416},
}

\bib{yamayamaharehare}{article}{
  author={Sakamoto, Nobuyuki},
  author={Yamazaki, Takeshi},
  title={Uniform versions of some axioms of second order arithmetic},
  journal={MLQ Math. Log. Q.},
  volume={50},
  date={2004},
  number={6},
  pages={587--593},
}

\bib{Sacks.high}{book}{
  author={Sacks, Gerald E.},
  title={Higher recursion theory},
  series={Perspectives in Mathematical Logic},
  publisher={Springer},
  date={1990},
  pages={xvi+344},
}

\bib{samGH}{article}{
  author={Sanders, Sam},
  title={The Gandy-Hyland functional and a computational aspect of Nonstandard Analysis},
  year={2018},
  journal={Computability},
  volume={7},
  pages={7-43},
}

\bib{sahotop}{article}{
  author={Sanders, Sam},
  title={Reverse Mathematics of topology: dimension, paracompactness, and splittings},
  year={2018},
  journal={Submitted, arXiv: \url {https://arxiv.org/abs/1808.08785}},
  pages={pp.\ 17},
}

\bib{samsplit}{article}{
  author={Sanders, Sam},
  title={Splittings and disjunctions in Reverse Mathematics},
  year={2018},
  journal={To appear in NDJFL, arXiv: \url {https://arxiv.org/abs/1805.11342}},
  pages={pp.\ 18},
}

\bib{samcie19}{article}{
author={Sanders, Sam},
title={Nets and Reverse Mathematics: initial results},
year={2019},
journal={Proceedings of CiE19, Lecture Notes in Computer Science 11558, Springer},
pages={pp.\ 12},
}

\bib{samwollic19}{article}{
author={Sanders, Sam},
title={Reverse Mathematics and computability theory of domain theory},
year={2019},
journal={Proceedings of WoLLIC19, Lecture Notes in Computer Science 11541, Springer},
pages={pp.\ 20},
}

\bib{samnet}{article}{
  author={Sanders, Sam},
  title={Nets and Reverse Mathematics: a pilot study},
  year={2019},
  journal={To appear in \emph {Computability}, \url {arxiv.org/abs/1905.04058}},
  pages={pp.\ 30},
}

\bib{samph}{article}{
  author={Sanders, Sam},
  title={Plato and the foundations of mathematics},
  year={2019},
  journal={Submitted, arxiv: \url {https://arxiv.org/abs/1908.05676}},
  pages={pp.\ 40},
}

\bib{schoen2}{book}{
  author={Shoenflies, Arthur},
  title={Die Entwickelung der Lehre von den Punktmannigfaltigkeiten},
  year={1900},
  publisher={Jahresbericht der deutschen Mathematiker-Vereinigung, vol 8,b Leipzig: B.G. Teubner},
}

\bib{simpson2}{book}{
  author={Simpson, Stephen G.},
  title={Subsystems of second order arithmetic},
  series={Perspectives in Logic},
  edition={2},
  publisher={CUP},
  date={2009},
  pages={xvi+444},
}

\bib{stillebron}{book}{
  author={Stillwell, John},
  title={Reverse mathematics, proofs from the inside out},
  pages={xiii + 182},
  year={2018},
  publisher={Princeton Univ.\ Press},
}

\bib{shorecomp}{article}{
  author={Shore, Richard A.},
  title={Reverse mathematics, countable and uncountable},
  conference={ title={Effective mathematics of the uncountable}, },
  book={ series={Lect. Notes Log.}, volume={41}, publisher={Assoc. Symbol. Logic, La Jolla, CA}, },
  date={2013},
  pages={150--163},
}

\bib{taokejes}{collection}{
  author={Tao, {Terence}},
  title={{Compactness and Compactification}},
  editor={Gowers, Timothy},
  pages={167--169},
  year={2008},
  publisher={The Princeton Companion to Mathematics, Princeton University Press},
}

\bib{taoana2}{book}{
  author={Tao, Terence},
  title={Analysis. II},
  series={Texts and Readings in Mathematics},
  volume={38},
  edition={3},
  publisher={Springer},
  date={2014},
  pages={xvi+218},
}

\bib{thomeke}{book}{
  author={Thomae, Carl J.T.},
  title={Einleitung in die Theorie der bestimmten Integrale},
  publisher={Halle a.S. : Louis Nebert},
  date={1875},
  pages={pp.\ 48},
}

\bib{thom2}{book}{
  author={Thomson, B.},
  author={Bruckner, J.},
  author={Bruckner, A.},
  title={Elementary real analysis},
  publisher={Prentice Hall},
  date={2001},
  pages={pp.\ 740},
}

\bib{tournedous}{book}{
  author={Tourlakis, George},
  title={Lectures in logic and set theory. Vol. 2},
  series={Cambridge Studies in Advanced Mathematics},
  volume={83},
  note={Set theory},
  publisher={Cambridge University Press},
  date={2003},
  pages={xvi+575},
}

\bib{troeleke1}{book}{
  author={Troelstra, Anne Sjerp},
  author={van Dalen, Dirk},
  title={Constructivism in mathematics. Vol. I},
  series={Studies in Logic and the Foundations of Mathematics},
  volume={121},
  publisher={North-Holland},
  date={1988},
  pages={xx+342+XIV},
}

\bib{amaimennewekker}{book}{
  author={Weierstrass , Karl},
  title={Ausgew\"ahlte Kapitel aus der Funktionenlehre},
  series={Teubner-Archiv zur Mathematik},
  volume={9},
  publisher={BSB B. G. Teubner Verlagsgesellschaft, Leipzig},
  date={1988},
  pages={272},
}

\bib{weihimself}{book}{
  author={Weierstrass, Karl},
  title={{Einleiting in die Theorie der analytischen Funktionen}},
  year={1986},
  publisher={{Schriftenr. Math. Inst. Univ. M\"unster, 2. Ser. 38, 108 S.}},
}

\bib{younger}{book}{
  author={Young, W. H.},
  author={Young, G. C.},
  title={The Theory of Sets of Points},
  publisher={Cambridge University Press},
  date={1906},
  pages={v+311},
}

\end{biblist}
\end{bibdiv}

\bye